\documentclass[11pt,reqno]{amsart}
\usepackage{amssymb,mathtools,calc,verbatim,enumitem,tikz,url,mathrsfs,cite,fullpage}
\usepackage{bbm}
\usepackage{textcomp}
\usepackage{setspace}
\usepackage{amsthm}
\usepackage{amsmath}
\usepackage{float}
\usepackage{graphicx}
\usepackage{marvosym}
\usepackage{empheq}
\usepackage{latexsym}
\usepackage[T1]{fontenc}
\usepackage{color}
\usepackage{hyperref}
\hypersetup{pdftitle={Exponential tails for factors and the chromatic number of random graphs},pdfauthor={Zhifei Yan}}
\usepackage{dsfont}
\usepackage{microtype}

\newenvironment{poc}{\begin{proof}[Proof of claim]}{\end{proof}}
\newtheorem{theorem}{Theorem}[section]
\newtheorem{lemma}[theorem]{Lemma}
\newtheorem{corollary}[theorem]{Corollary}

\newtheorem{proposition}[theorem]{Proposition}
\newtheorem{claim}[theorem]{Claim}

\theoremstyle{definition}
\newtheorem{definition}[theorem]{Definition}

\newtheorem*{qu*}{Question}
\theoremstyle{remark}

\usepackage{cleveref}

\newcommand\E{\operatorname{\mathbb{E}}}
\newcommand\cA{\mathcal{A}}
\newcommand\cB{\mathcal{B}}
\newcommand\cC{\mathcal{C}}
\newcommand\cD{\mathcal{D}}

\newcommand\cH{\mathcal{H}}
\newcommand\cI{\mathcal{I}}

\newcommand\cM{\mathcal{M}}

\renewcommand\Pr{\operatorname{\mathbb{P}}}

\renewcommand\leq{\leqslant}

\renewcommand\le{\leqslant}
\renewcommand\ge{\geqslant}
\renewcommand\to{\rightarrow}

\def\EE{\mathbb{E}}

\def\Var{\mathrm{Var}}

\newcommand{\ind}{\mathbbm{1}}
\newcommand{\cO}{\mathcal{O}}
\newcommand{\cR}{\mathcal{R}}
\newcommand{\cU}{\mathcal{U}}
\newcommand{\cV}{\mathcal{V}}
\newcommand{\cZ}{\mathcal{Z}}
\newcommand{\med}{\operatorname{med}}
\allowdisplaybreaks

\title{Exponential tails for factors and\\
the chromatic number of random graphs}
\author{Zhifei Yan}
\address{ECOPRO, Institute for Basic Science, 55 Expo-ro, Yuseong-gu,
Daejeon, 34126, Korea}
\email{zhifeiyan@ibs.re.kr}

\makeatletter
\let\manuscript@tocwrite\@tocwrite
\renewcommand{\@tocwrite}[2]{%
  \ifnum\@toclevel=\z@
    \ifx\@secnumber\@empty
    \else
      \manuscript@tocwrite{#1}{#2}%
    \fi
  \else
    \manuscript@tocwrite{#1}{#2}%
  \fi
}
\makeatother

\begin{document}

\thanks{Z.Y. was supported by the Institute for Basic Science (IBS-R029-C4).}

\begin{abstract}
The celebrated result of Johansson, Kahn and Vu determined the threshold
order for clique factors in random graphs, and subsequent work identified
the sharp threshold and the corresponding hitting-time phenomenon.  In
this paper we study the probability that there is no $K_r$-factor above
the threshold and, more generally, the probability that the largest
$K_r$-matching covers less than $n-s$ vertices of $G(n,p)$.  For every fixed
$r\ge3$ and every non-negative integer $s=s(n)$, throughout the range
$$
 n^{-2/r}(\log n)^{1/\binom r2}\ll p\ll n^{-2/(r+1)},
 \qquad n-s\in r\mathbb Z,
 \qquad s=o(n),
$$
we prove
$$
 \mathbb P\bigl(\phi_r^s(G(n,p))=0\bigr)
 =\exp\!\left(-\Theta_r\!\left((s+1)\frac{\mu_r(n,p)}n\right)\right),
$$
where $\phi_r^s(G)$ is the number of $K_r$-matchings covering exactly
$n-s$ vertices and
$\mu_r(n,p):=\binom nrp^{\binom r2}$.  The lower bound is given by
$s+1$ vertices which lie in no copy of $K_r$.  For the upper bound we
develop an iterable one-root version of the Johansson--Kahn--Vu method.

As a structural consequence, we show that the remainder of $G(n,p)$
outside every maximal $K_r$-matching has an almost-perfect
$K_{r-1}$-matching throughout the sparse clique window.  Independently,
we prove a central limit theorem for the maximum $K_r$-matching number.
Combining these inputs and a structural theorem for $r=2$ from our earlier work, we prove a central limit theorem for the chromatic number of very dense random graphs: for every $r\ge2$ and
 $n^{-2/r}(\log n)^{1/\binom r2}\ll p\ll n^{-2/(r+1)},$
$$
 \frac{\chi(G(n,1-p))-\E\chi(G(n,1-p))}
      {\sqrt{\mu_{r+1}(n,p)}/r}
 \xrightarrow{\mathrm d}\mathcal N(0,1),
 \qquad
 \Var\bigl(\chi(G(n,1-p))\bigr)
 \sim\frac{\mu_{r+1}(n,p)}{r^2}.
$$
This settles the Surya--Warnke conjecture throughout the interior of every clique window with $r\ge2$, strengthening its concentration prediction to a Gaussian limit with asymptotically exact variance.
\end{abstract}
\maketitle


\section{Introduction}
Perfect and almost-perfect packings are among the central spanning
structures in random graphs.  Given a fixed graph $F$, an $F$-factor is
a collection of vertex-disjoint copies of $F$ covering the entire
vertex set.  Johansson, Kahn and Vu~\cite{JKV} determined the threshold
order for the appearance of $F$-factors when $F$ is strictly
$1$-balanced.  In particular, the threshold order for a $K_r$-factor is
\[
 n^{-2/r}(\log n)^{1/\binom r2}.
\]
The leading constant was obtained by combining Kahn's asymptotically
sharp solution of Shamir's hypergraph matching problem~\cite{Kahn23}
with coupling results of Riordan~\cite{Riordan22} and, in the triangle
case, Heckel~\cite{HeckelTriangles21}.  Kahn~\cite{Kahn22} subsequently
proved the definitive hitting-time theorem for Shamir's problem, and
Heckel, Kaufmann, M\"uller and Pasch~\cite{HKMP24} transferred it to
clique factors in the random graph process: with high probability, a
$K_r$-factor appears at the moment when the last vertex first lies in a
copy of $K_r$.  Recent work of Burghart, Heckel, Kaufmann, M\"uller and
Pasch~\cite{BHKMP24} extends the sharp-threshold theory to every strictly
$1$-balanced factor.

Another recent direction concerns the number of factors above the
threshold.  Morris and Riordan~\cite{MorrisRiordan25} study the random
hypergraph formed by the copies of $K_r$ in $G(n,p)$ and compare its
distribution with the model in which the potential copies are present
independently with probability $p^{\binom r2}$.  As an application, they
obtain a new upper bound on the number of $K_r$-factors above the
threshold.

The threshold, hitting-time and counting results above do not quantify
\emph{how unlikely} factor failure is once the density is above the
threshold.  This distinction is substantial.  A statement which holds
with high probability may discard an exceptional event of probability
$o(1)$, whereas the failure probabilities considered here can be
$\exp(-n^{\Omega(1)})$.  Our aim is to determine this much smaller
probability uniformly when a growing number of uncovered vertices is
allowed.

\subsection{The main result: rare failure above the factor threshold}

For a graph $G$ on $n$ vertices and an integer $0\le s\le n$ satisfying
$n-s\in r\mathbb Z$, let $\phi_r^s(G)$ denote the number of collections
of $(n-s)/r$ vertex-disjoint copies of $K_r$.  Thus
$\phi_r^0(G)$ is the number of $K_r$-factors.  For
$G\sim G(n,p)$, the expected number of copies of $K_r$ containing a
fixed vertex is
\[
 \binom{n-1}{r-1}p^{\binom r2}=\frac{r\mu_r}{n},
\]
where
 $$\mu_r=\mu_r(n,p):=\binom nrp^{\binom r2}.$$
Above the factor threshold this quantity is much larger than $\log n$.
Our theorem determines the exponential order of the almost-factor
failure probability throughout the sparse range, in which overlaps
between copies of $K_r$ are still negligible on the main scale.

\begin{theorem}
\label{thm:main}
Fix an integer $r\ge3$.  Let $p=p(n)$ and a non-negative integer
$s=s(n)$ satisfy
\[
 n^{-2/r}(\log n)^{1/\binom r2}\ll p\ll n^{-2/(r+1)},
 \qquad
 n-s\in r\mathbb Z,
 \qquad
 s=o(n).
\]
Then
\[
 \Pr\bigl(\phi_r^s(G(n,p))=0\bigr)
 =\exp\!\left(-\Theta_r\!\left(
 (s+1)\frac{\mu_r}n\right)\right).
\]
\end{theorem}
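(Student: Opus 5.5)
The proof has two halves: an explicit lower bound on the failure probability, and a matching upper bound obtained through an iterable one-root version of the Johansson--Kahn--Vu (JKV) method.

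\emph{Lower bound.} Fix a set $S$ of $s+1$ vertices. If no vertex of $S$ lies in a copy of $K_r$, then every $K_r$-matching misses all of $S$. It therefore covers at most $n-s-1$ vertices, so $\phi_r^s=0$. The event ``no vertex of $S$ lies in a copy of $K_r$'' is decreasing. Harris--FKG then gives
\[
 \Pr\bigl(\phi_r^s=0\bigr)\ge \prod_{v\in S}\Pr(v \text{ in no } K_r)
 \ge \exp\bigl(-O_r(1)\,(s+1)\mu_r/n\bigr).
\]
The single-vertex bound comes from Janson's inequality, valid as the lower bound $\Pr(X=0)\ge \exp(-\mu/(1-\max_i \Pr(A_i)))$ with no condition on $\Delta$. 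Here $\mu=r\mu_r/n$ is the expected number of copies of $K_r$ containing $v$.

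\emph{Upper bound.} We must show $\Pr(\phi_r^s=0)\le \exp(-c(s+1)\mu_r/n)$. I would follow JKV, taking $\log\phi_r^{s'}$ as the potential and removing vertices one at a time. Concretely, order the vertices and expose them as roots. At step $i$ compare $\phi^{(i)}$, the number of near-factors of the current vertex set, with $\phi^{(i-1)}$. The ratio is a sum, over copies of $K_r$ through the removed root, of counts of near-factors of the remaining set. The one-root version asks for the following estimate: conditional on the past, with probability at least $1-\exp(-c\mu_r/n)$, the root lies in roughly its expected number of copies of $K_r$, and these copies carry a fair share of the near-factors. The sparsity $p\ll n^{-2/(r+1)}$ matters here: copies of $K_r$ through a vertex are nearly disjoint, and $K_{r+1}$-type overlaps are negligible. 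This is what makes the martingale increments for $\log\phi$ bounded by $O(1)$ rather than by polylogarithmic factors.

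To obtain the $(s+1)$ factor, I would argue that failure of $\phi_r^s>0$ forces at least $\Omega(s+1)$ ``bad'' roots. Each bad root occurs with conditional probability at most $e^{-c\mu_r/n}$, and bad roots can be chosen nearly independently. More precisely, if fewer than $\delta(s+1)$ vertices are bad, the good vertices support a $K_r$-matching covering all but $s$ vertices. This follows by running the JKV entropy/counting argument on the good part, which gives $\phi>0$ there, and absorbing the bad vertices into the $s$ leftover slots. The expected number of copies through a vertex is $\gg\log n$, so a union bound over sets of $\delta(s+1)$ bad vertices costs $\binom n{\delta(s+1)}\le e^{O((s+1)\log n)}$. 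This factor is dominated by $e^{-c(s+1)\mu_r/n}$, and the bound $\exp(-\Theta((s+1)\mu_r/n))$ follows.

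\emph{Main obstacle.} The hard step is the iterable one-root JKV estimate. The original JKV method only gives $\phi>0$ with probability $1-o(1)$, or $1-n^{-\omega(1)}$. We need each local failure to have probability $e^{-\Omega(\mu_r/n)}$ uniformly under conditioning on previously exposed roots, and also a near-independence of failures across different roots. I would try to prove the per-root bound with Janson's inequality applied to the weighted sum of copies of $K_r$ through the root, weighted by near-factor counts from the previous stage. I would control these weights via the concentration of $\log\phi$ established inductively. Near-independence would come from the fact that distinct roots share few potential cliques in the sparse window.
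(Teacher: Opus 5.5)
Your lower bound is correct and is essentially the paper's Proposition~\ref{prop:isolated-tail}, which applies Harris to the $r$-sets meeting a fixed $(s+1)$-set; taking a product over vertices only costs a constant. The upper bound has two genuine gaps.

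The first gap is the mechanism for the factor $s+1$. You need a \emph{static} notion of bad vertex with two properties: a fixed set of $\delta(s+1)$ vertices is simultaneously bad with probability $e^{-c\delta(s+1)\mu_r/n}$, and few bad vertices force a factor on the good part. A JKV-type argument gives nothing of this kind. Failure is detected through the factor weights $w_H(K)=\Phi(H-K)$ of the \emph{current} vertex set $W$, which are global functions of $H[W]$, so every certificate of failure is relative to $W$. After you delete the bad vertices of $[n]$, the certificates for the new set may sit at previously good vertices. So the step from few bad vertices to a matching covering $n-s$ vertices has no proof. Near-independence also cannot come from distinct roots sharing few cliques.

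The paper instead builds an adaptive chain. On $\{\phi_r^s=0\}$, every $W=[n]\setminus S$ with $|S|\le s$ and $r\mid|W|$ lacks a factor. The one-root theorem therefore yields a witness $(\mathcal E_0,x_0)$ for $W_0$, then $(\mathcal E_1,x_1)$ for some $W_1\not\ni x_0$, and so on. Each $\mathcal E_j$ is determined by the edges inside $W_j$ and satisfies $\Pr(\mathcal E_j\mid\mathscr F(x_j))\le e^{-c\mu_r/n}$ pointwise. Conditioning successively on $\mathscr F(x_0),\mathscr F(x_1),\dots$ then multiplies the bounds exactly (Lemma~\ref{lem:chain-multiplication}). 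The $n^{O(s+1)}$ labelled chains and the union bound over the global errors $\mathcal M_W$ are absorbed because $\mu_r/n\gg\log n$.

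The second gap is the per-root estimate itself. Two-sided regularity at a root cannot hold with probability $1-e^{-c\mu_r/n}$. For $r\ge4$ in the upper part of the window, $\mu_r/n$ is a positive power of $n$, and putting the root in a clique of order $k\approx(\mu_r/n)^{1/(r-1)}$ costs only $e^{-O(k^2\log n)}$. The paper never pays exponentially for upper-tail inputs (edge loads, rooted degree caps, averaged coefficients, overlaps). It collects them in one decreasing event $\mathcal U$ of merely constant probability and removes it at the end by Harris, since $\{\phi_r^s=0\}$ is decreasing.

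The \emph{fair share} of the weights is precisely what Janson cannot give. With unbalanced weights the normalised weighted mean is $O(1)$, so the weighted lower tail is not small. Handling this is the heart of the proof:
\begin{itemize}
\item An edge-deletion martingale reduces failure to loss of rooted regularity or balance. Its drift $\EE_f\xi_f=me/(rj)$ is deterministic because all factors have the same number of edges; your vertex-removal potential has no analogous identity.
\item If the median condition holds, a dense self-determined family appears, and vertex links localise it to a lower-tail fingerprint at one root.
\item If the median condition fails, there is a heavy/light pair. Shearer's inequality keeps linearly many light roots, and truncation gives a two-star polynomial with $[0,1]$ coefficients.
\item Rainbow exposure at a coefficient-regular light root, or weighted Janson at the heavy root, then gives the single-root witness.
\end{itemize}
Controlling the weights via concentration of $\log\phi$ established inductively does not substitute for this.
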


The theorem contains the perfect-factor case and shows that the
isolated-vertex obstruction has the correct exponential order for every
sublinear defect.  Call a vertex
$K_r$-isolated if it belongs to no copy of $K_r$, and let $Y_r$ be the
number of such vertices.  We prove, uniformly for
$1\le t=o(n)$,
\[
 \Pr(Y_r\ge t)
 =\exp\!\left(-(1+o(1))t\binom{n-1}{r-1}p^{\binom r2}\right).
\]
Thus $s+1$ $K_r$-isolated vertices give the lower bound in
Theorem~\ref{thm:main}, with the exact leading constant for this local
event.  The upper bound shows that no other obstruction changes the
exponential order.  In particular, each additional uncovered vertex
costs an exponent of order $\mu_r/n$.

The upper bound on $p$ marks a change of scale.  At
$p=\Theta( n^{-2/(r+1)})$, one has $\mu_r/n=\Theta_r(np)$.  For
$n^{-2/(r+1)}\ll p=o(1)$, forcing a fixed set of $s+1$ vertices to be
isolated has probability $\exp(-(1+o(1))(s+1)np)$ and already rules out
a matching covering $n-s$ vertices.  Since $np=o(\mu_r/n)$, this is a
cheaper obstruction than the one studied here.  Thus the natural tail
scale is no longer $(s+1)\mu_r/n$, and we do not consider this denser
range.

Taking $s=0$ gives the following consequence.

\begin{corollary}
\label{cor:factor-tail}
Fix $r\ge3$ and suppose that $r\mid n$.  If
 $n^{-2/r}(\log n)^{1/\binom r2}\ll p\ll n^{-2/(r+1)},$
then
\[
 \Pr\bigl(G(n,p)\text{ has no }K_r\text{-factor}\bigr)
 =\exp\!\left(-\Theta_r\!\left(\frac{\mu_r}n\right)\right).
\]
\end{corollary}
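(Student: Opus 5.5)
This is the case $s=0$ of Theorem~\ref{thm:main}, so the plan is simply to specialise it. First I check the hypotheses. The assumption $r\mid n$ is exactly the divisibility condition $n-s\in r\mathbb Z$ with $s=0$. The condition $s=o(n)$ holds trivially for $s=0$. The range of $p$ is identical to the one in Theorem~\ref{thm:main}, and $r\ge3$ is fixed as required.

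Next I identify the events. By definition, $\phi_r^0(G)$ is the number of collections of $n/r$ vertex-disjoint copies of $K_r$, that is, the number of $K_r$-factors of $G$. Hence the event that $G(n,p)$ has no $K_r$-factor coincides with $\{\phi_r^0(G(n,p))=0\}$.

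Substituting $s=0$, so that $s+1=1$, into the conclusion of Theorem~\ref{thm:main} gives
\[
 \Pr\bigl(G(n,p)\text{ has no }K_r\text{-factor}\bigr)
 =\Pr\bigl(\phi_r^0(G(n,p))=0\bigr)
 =\exp\!\left(-\Theta_r\!\left(\frac{\mu_r}n\right)\right).
\]
This is the corollary.

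There is no real obstacle once Theorem~\ref{thm:main} is taken as given. The only point to watch is that the implicit constants in $\Theta_r$ are uniform in $s$. Theorem~\ref{thm:main} is stated uniformly for every admissible sequence $s=s(n)$, so in particular it applies to the constant sequence $s\equiv0$.

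For a self-contained sanity check of the two directions:

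\begin{itemize}
\item \textbf{Lower bound.} A single $K_r$-isolated vertex prevents a factor. By the estimate for $\Pr(Y_r\ge t)$ with $t=1$, this has probability $\exp\bigl(-(1+o(1))\binom{n-1}{r-1}p^{\binom r2}\bigr)=\exp(-(1+o(1))r\mu_r/n)$.
\item \textbf{Upper bound.} This is the genuinely hard part: the iterable one-root Johansson--Kahn--Vu argument behind Theorem~\ref{thm:main}. Here it is simply invoked.
\end{itemize}
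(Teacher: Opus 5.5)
Your proposal is correct and matches the paper, which also obtains the corollary by setting $s=0$ in Theorem~\ref{thm:main}: $r\mid n$ is the divisibility condition, and $\phi_r^0$ counts $K_r$-factors. Your lower-bound check via $\binom{n-1}{r-1}p^{\binom r2}=r\mu_r/n$ and Proposition~\ref{prop:isolated-tail} with $t=1$ is also consistent with the paper.
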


It is useful to compare this result with the universal theory of
expectation thresholds.  The Kahn--Kalai conjecture~\cite{KK07}, proved
by Park and Pham~\cite{PP24}, bounds the threshold of an increasing
family in terms of its expectation threshold and the logarithm of the
largest minimal member; the earlier fractional theorem of Frankston,
Kahn, Narayanan and Park~\cite{FKNP21} has many factor applications.
Bell~\cite{Bell23} obtained the optimal dependence on the target error
probability: in standard notation, if a family with minimal members of
size at most $\ell$ is not $q$-small, then density
$O(q\log(\ell/\varepsilon))$ makes the family occur with probability at
least $1-\varepsilon$.
For the increasing family $\mathcal F_r$ of edge sets containing a
$K_r$-factor, one has
$\ell(\mathcal F_r)=\binom r2n/r=\Theta_r(n)$ and
$q(\mathcal F_r)\ge c_rn^{-2/r}$.  Indeed, the copies of $K_r$ through
one fixed vertex meet every factor, and their total $q$-weight is at
most $1/2$ when $q=c_rn^{-2/r}$.  Consequently, a direct application at density $p$ gives an
error bound of the form
$\varepsilon=\ell(\mathcal F_r)\exp(-cp/q(\mathcal F_r))$.
Whenever this bound is non-trivial, its exponent is at most
$O_r(pn^{2/r})$, with a further cost of order $\log n$.
In particular, it gives no non-trivial bound in the lower part of our
range where $pn^{2/r}\ll\log n$.
By contrast, Corollary~\ref{cor:factor-tail} gives the exponent
$\mu_r/n=\Theta_r((pn^{2/r})^{\binom r2})$ throughout the stated range.
This comparison shows that rare factor failure contains more local structural
information than a general threshold theorem.

\subsection{A structural consequence: remainders of maximal clique matchings}

The dependence on the uncovered set has a structural consequence.  A
$K_k$-matching $\mathcal M$ is \emph{maximal} if no further copy of
$K_k$ can be added to it.  We write $V(\mathcal M)$ for its covered
vertex set and
$H-V(\mathcal M):=H[V(H)\setminus V(\mathcal M)]$.

\begin{theorem}
\label{thm:maximum-clique-remainder}
Fix an integer $k\ge4$, and let $q=q(n)$ satisfy
\[
 n^{-2/(k-1)}(\log n)^{1/\binom{k-1}{2}}
 \ll q\ll n^{-2/k}.
\]
If $H\sim G(n,q)$, then there is a function $\zeta=\zeta(n)\to0$ such
that, with high probability, every maximal $K_k$-matching
$\mathcal M$ in $H$ has the following property: the graph
$H-V(\mathcal M)$ contains a $K_{k-1}$-matching which leaves at most
$\zeta\sqrt{\mu_k(n,q)}$ vertices uncovered.
\end{theorem}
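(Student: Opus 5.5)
The plan is to apply Theorem~\ref{thm:main} with $r=k-1\ge3$ inside the remainder graph, and to pay for the choice of $\mathcal M$ with a union bound. Set $r=k-1$. Then the hypothesis on $q$ is exactly the sparse $K_r$-window $n^{-2/r}(\log n)^{1/\binom r2}\ll q\ll n^{-2/(r+1)}$, so Theorem~\ref{thm:main} applies to $G(m,q)$ for every $m=(1-o(1))n$, uniformly in $s=o(m)$. Also $q\ll n^{-2/k}$ gives $\mu_k(n,q)\ll n$. So with high probability $H$ has $(1+o(1))\mu_k$ copies of $K_k$, and every $K_k$-matching covers a set $W$ with $|W|=o(n)$. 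For a fixed set $W$ of size $w$ partitioned into $w/k$ cliques, the events ``these cliques are present'' and ``$H-W$ has no $K_{k-1}$-matching leaving at most $s$ vertices uncovered'' depend on disjoint edge sets, so they are independent. Here $s:=\zeta\sqrt{\mu_k}$, adjusted to the correct residue modulo $k-1$. The second event has probability $\exp(-\Theta((s+1)\mu_{k-1}(n,q)/n))$, by Theorem~\ref{thm:main} (monotonicity in $s$ handles the residue). Summing the first moment naively over all $W$ gives a bound of roughly $e^{(1+o(1))\mu_k}\exp(-c\,\zeta\sqrt{\mu_k}\,\mu_{k-1}/n)$. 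Since $\mu_k/(\mu_{k-1}/n)=\Theta(n^2q^{k-1})$, this is not small in general: for $k=4$ the two exponents are of the same order. So the entropy of $\mathcal M$ has to be reduced.

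The reduction uses maximality together with sparsity. Call a copy of $K_k$ \emph{lonely} if it shares no vertex with any other copy of $K_k$. The expected number of pairs of copies sharing at least one vertex is $O(\mu_k^2/n)+$(lower order terms from larger overlaps), and this is $o(\mu_k)$ because $\mu_k\ll n$. So with high probability the number $D$ of non-lonely copies satisfies $D\le\delta\mu_k$ for some $\delta=\delta(n)\to0$; a second-moment or Kim--Vu bound on the overlap count is needed here. Every maximal $K_k$-matching must contain every lonely copy. Hence $V(\mathcal M)=L\cup W'$, where $L$ is the vertex set of the lonely copies and $W'$ is covered by a matching chosen among at most $D$ non-lonely copies. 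This leaves at most $2^{D}$ choices of $W'$ given the $K_k$-hypergraph. The plan is then a first-moment count over pairs (a set $L$ of disjoint cliques, a set $W'$), with the presence probability $q^{\binom k2}$ per clique of $W'$. The key accounting point is that the choice of $L$ should not be charged: I will try to avoid this by exposing $H$ in two rounds. First reveal the $K_k$-hypergraph restricted to lonely copies (this determines $L$). Then argue that, conditionally, $H-L$ is still dominated from below appropriately, by Harris/FKG since lacking a near-perfect $K_{k-1}$-matching is a decreasing event.

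The step I expect to be the main obstacle is this decoupling. ``Being lonely'' is a non-monotone property of edges that also lie in $H-L$, so the conditional law of $H-L$ is not $G(n-|L|,q)$. My first attempt is to avoid conditioning on loneliness altogether. I would instead union bound over the set $L$ of vertex-disjoint $K_k$'s actually present, and over $W'$, weighting only by the probability of the \emph{present} cliques. The entropy would then be controlled by the Poisson-type sum $\sum_m \mu_k^m/m!\cdot\Pr(\text{fail})$ restricted to $m=(1\pm o(1))\mu_k/k$ near the typical number of copies. The comparison is between the factor $\mu_k^m/m!$ and the exact probability $e^{-\mu_k}\mu_k^m/m!$ that the clique count equals $m$; this ratio is only $e^{\mu_k}$, and to beat it I need the extra $\zeta\sqrt{\mu_k}\mu_{k-1}/n$ saving, choosing $\zeta\to0$ slowly. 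If this still falls short at $k=4$, the fallback is to replace the use of Theorem~\ref{thm:main} on all of $H-W$ by its application after deleting the $o(\mu_k)$ non-lonely vertices. That version only needs the tail at defect $s$ to dominate $2^{D}$, which holds once $\zeta\sqrt{\mu_k}\,\mu_{k-1}/n\gg\delta\mu_k$.
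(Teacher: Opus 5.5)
There is a genuine gap, and it sits exactly at the step you flag as the main obstacle: nothing in the proposal removes the $e^{(1+o(1))\mu_k}$ entropy of choosing the matching when $k=4$. The fact that every maximal matching contains all lonely copies bounds the number of maximal matchings in a \emph{fixed} graph by $2^D$. A first-moment bound, however, must sum over deterministic matchings (or deterministic lonely sets $L$). You have no estimate for the remainder that uses the event ``$L$ is the lonely set'': that conditioning is non-monotone and changes the law of $H-L$.

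Your first attempt is just the naive union bound again. Restricting $m$ to typical values (which are $(1+o(1))\mu_k$, not $\mu_k/k$) still leaves $\sum_m\mu_k^m/m!\approx e^{\mu_k}$. Meanwhile the saving $c\zeta\sqrt{\mu_k}\,\mu_{k-1}/n$ is $o(\mu_k)$ at $k=4$ once $\zeta\to0$, as you computed yourself. The fallback changes only the target defect; it still needs either a union bound over $L$, which costs $e^{(1+o(1))\mu_k}$ again, or the missing conditional law. So as written the argument handles only $k\ge5$. There $\sqrt{\mu_k}\,\mu_{k-1}/(n\mu_k)=\Theta\bigl((nq^{(k-1)/2})^{(k-4)/2}\bigr)\to\infty$, and the naive union bound already suffices.

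The missing idea is that maximality itself pays for the choice: if $\mathcal M$ is maximal, then $H-V(\mathcal M)$ is $K_k$-free. Take a deterministic $\mathcal M$ with $m$ blocks and put $N=n-km$. Independence of the block edges gives
\[
 \Pr\bigl(\mathcal M\subseteq H,\ \mathcal M\text{ maximal},\ \text{defect}>x\bigr)
 \le q^{\binom{k}{2}m}\,
 \Pr\bigl(G(N,q)\text{ is }K_k\text{-free and has no }K_{k-1}\text{-matching covering }N-s\text{ vertices}\bigr).
\]
The factor $e^{-(1-o(1))\mu_k}$ coming from $K_k$-freeness cancels $\sum_{m\le A_0\mu_k}\mu_k^m/m!\le e^{\mu_k}$. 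This leaves $e^{-cx\mu_{k-1}/n+o(\mu_k)}$, plus a global error $e^{\mu_k-c\mu_{k-1}}$ that is harmless because $\mu_k=o(\mu_{k-1})$. Since $\mu_{k-1}^2/(n^2\mu_k)\ge c_k$, a slowly decaying $\zeta$ then works for $k=4$ as well.

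Crucially, this joint estimate does not follow from Theorem~\ref{thm:main} used as a black box. Both events are decreasing, so Harris's inequality bounds their intersection from below by the product, which is the wrong direction. The paper obtains it by rerunning the one-root iteration while retaining the event that the final remainder $R$, which avoids all roots, is $K_k$-free. This event is measurable outside every root star, so the conditional witness bounds multiply against its probability (Propositions~\ref{prop:terminal-iteration} and~\ref{prop:clique-free-refinement}).

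Your lonely-copy decomposition could in principle be repaired the same way. If $L$ is exactly the lonely set, then $H-L$ contains at most $D=o(\mu_k)$ copies of $K_k$, again a decreasing event of probability $e^{-(1-o(1))\mu_k}$. But that needs the same joint estimate, and maximality makes the decomposition unnecessary.
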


The conclusion holds simultaneously for every maximal matching, and in
particular for every maximum matching.  This proves the structural
conjecture proposed in our earlier paper~\cite{Yan24} throughout the
full window above.  A naive union bound over all $K_k$-matchings works
when $k\ge5$, but at $k=4$ it loses a constant factor on the natural
scale.  Maximality supplies the missing saving: the remainder is
$K_k$-free, and the probability of this additional event cancels the
entropy of choosing the matching.  This is one reason for proving the
factor tail in an iterable one-root form rather than only as an
unconditional estimate.

\subsection{An independent fluctuation theorem for clique matchings}

For a graph $H$ and an integer $k\ge2$, let $\nu_k(H)$ denote the
maximum number of vertex-disjoint copies of $K_k$ in $H$.  The following
fluctuation theorem is independent of the factor-tail argument and holds
throughout the full sparse clique-matching window.

\begin{theorem}
\label{thm:matching-clt-intro}
Fix $k\ge2$, let $q=q(n)$ satisfy
 $n^{-2/(k-1)}\ll q\ll n^{-2/k},$
and let $H\sim G(n,q)$.  Then
\[
 \frac{\nu_k(H)-\E\nu_k(H)}{\sqrt{\mu_k(n,q)}}
 \xrightarrow{\mathrm d}\mathcal N(0,1),
 \qquad
 \Var\bigl(\nu_k(H)\bigr)\sim\mu_k(n,q).
\]
\end{theorem}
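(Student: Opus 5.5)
We must prove a CLT for $\nu_k(H)$ when $n^{-2/(k-1)}\ll q\ll n^{-2/k}$. In this window the number $X_k$ of copies of $K_k$ satisfies $\mu_k\ll n$. Almost all copies are vertex-disjoint from all others, and $\mu_{k-1}/n\to\infty$, so the $K_{k-1}$'s are abundant. The plan is to reduce $\nu_k(H)$ to a simple count that is known to be asymptotically Gaussian. The natural candidate is the number of components of the "$K_k$-overlap hypergraph": the hypergraph on $V(H)$ whose edges are the vertex sets of copies of $K_k$. Let $C$ denote this number of components, i.e.\ the maximum number of pairwise non-intersecting clusters of copies.

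\textbf{Step 1: pairs of overlapping copies are rare on the $\sqrt{\mu_k}$ scale.} Let $Z$ be the number of pairs of distinct copies of $K_k$ sharing at least one vertex. For two copies sharing $j\ge1$ vertices, the expected number of such pairs is
\[
\Theta\bigl(n^{2k-j}q^{2\binom k2-\binom j2}\bigr)=\Theta\Bigl(\mu_k^2\,n^{-j}q^{-\binom j2}\Bigr).
\]
For $j=1$ this is $\mu_k^2/n$. For $j\ge2$ the factor $n^{-j}q^{-\binom j2}$ is largest at $j=k-1$ (by convexity of $j\mapsto \binom j2$). Since $q\gg n^{-2/(k-1)}$, one checks $n^{-j}q^{-\binom j2}\le n^{-1}$, up to lower order terms.

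We need $\E Z=o(\sqrt{\mu_k})$, which amounts to $\mu_k^{3/2}=o(n)$. This holds only when $\mu_k\ll n^{2/3}$, which is \emph{not} the full window. So $\nu_k$ is not simply $X_k$.

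Instead we take $\nu_k=X_k-D$, where $D$ is a defect that counts how many copies must be discarded in overlapping clusters. For every cluster (component of the overlap hypergraph) $\Gamma$, a maximum matching uses $\nu(\Gamma)$ disjoint copies. A matching inside $H$ can pick these independently across components, so
\[
\nu_k(H)=\sum_{\Gamma}\nu(\Gamma)
\]
exactly. This makes $\nu_k$ a sum of local functionals over clusters.

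\textbf{Step 2: reduce to a sum of local statistics with a Gaussian limit.} Write $\nu_k=\sum_{S}f(H[S])$, where $S$ ranges over vertex sets of small connected clusters. Clusters with more than a bounded number $m$ of copies contribute negligibly in variance. Indeed, the expected number of copies lying in clusters of size $\ge m$ is at most $\mu_k(\mu_k/n)^{m-1}$ up to constants. Because $\mu_k=o(n)$, choosing $m$ large makes this $o(\sqrt{\mu_k})$ in $L^1$: we need $(\mu_k/n)^{m-1}\le \mu_k^{-1/2-\varepsilon}$, which holds if $\mu_k\le n^{1-\delta}$ and $m=m(\delta)$. If $\mu_k/n\to0$ arbitrarily slowly, this bound fails, and $m$ would have to grow; handling that regime is a concern.

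The truncated statistic is a finite linear combination of subgraph-type counts (clusters of a given isomorphism type that are induced components). Its leading term is $X_k$ minus corrections. The variance of $X_k$ is $\sim\mu_k$, because the $j=1$ overlap covariance $\mu_k^2/n$ is $o(\mu_k)$. The correction terms have variances $O(\E)$ of order $\mu_k\cdot(\mu_k/n)=o(\mu_k)$, and the higher cluster types are smaller still.

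Asymptotic normality of the truncated sum follows from the Stein method / dependency graph CLT for sums of indicator-weighted local functionals in $G(n,q)$ (Janson--Ruci\'nski style). This is the same argument that gives $X_k$ asymptotically normal when $\mu_k\to\infty$ and $\mu_k/n$, $\mu_j$-type quantities do not vanish in the bad direction. Combined with $\Var(\nu_k)\sim\Var(X_k)\sim\mu_k$, this gives the theorem.

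\textbf{Main obstacle.} The "maximality"/induced-component constraint makes the summands non-monotone, so the CLT needs a dependency-graph argument with bounded cumulants, not Janson's inequality. More seriously, for $\mu_k=n^{1-o(1)}$ the truncation level $m$ grows with $n$. To cover this regime I would instead use a martingale (vertex-exposure) CLT with the Lipschitz property. Exposing vertices one at a time, each martingale difference changes $\nu_k$ by at most the number of copies through that vertex. These differences are locally determined, and their conditional variances concentrate around $\mu_k/n$ per step. This is verified by showing that the sum of conditional variances equals $\mu_k(1+o(1))$ in probability, using the second-moment computation from Step 1.
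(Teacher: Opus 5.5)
Your decomposition $\nu_k=\sum_\Gamma\nu(\Gamma)$ and your Step~1 count are correct, and you rightly observe that $\E(X_k-\nu_k)$ need not be $o(\sqrt{\mu_k})$. However, the proposal does not reach the full window.

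\textbf{Where the argument breaks.} As you note yourself, bounded truncation only works when $\mu_k\le n^{1-\delta}$. For example, $q=n^{-2/k}/\log n$ lies in the window and gives $\mu_k/n=\Theta((\log n)^{-\binom k2})$. The fallback vertex-exposure martingale CLT is only a sketch, and its essential step is missing. The increments $\E[\nu_k\mid\mathcal F_i]-\E[\nu_k\mid\mathcal F_{i-1}]$ are conditional expectations, not locally determined quantities. Showing that their quadratic variation is $(1+o(1))\mu_k$ in probability essentially requires showing that the defect $X_k-\nu_k$ carries no first-order randomness. A first-moment count of overlapping pairs does not give this without a further argument, and you do not supply one.

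Even when $\mu_k\le n^{1-\delta}$, the statistics ``clusters of type $T$ that are whole components'' depend on the absence of copies through every vertex of the cluster. Hence they depend on edges to all of $V(H)$, so they have no sparse dependency graph. The claim that the correction terms have variance $O(\text{mean})$ is asserted, not proved. A smaller slip: copies sharing exactly one vertex are edge-disjoint and hence independent, so they contribute nothing to $\Var X_k$; only overlaps in at least two vertices matter.

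\textbf{The missing idea.} Control the \emph{centred} defect $D=X_k-\nu_k$ directly, with no truncation and no new CLT. This is how the paper argues.
\begin{itemize}
\item In the conflict graph on the copies of $K_k$, $\nu_k$ is the independence number and $D$ is the vertex-cover number.
\item Deleting an edge $f$ of $K_n$ removes exactly the copies containing $f$. Writing $D_f$ for the defect after this deletion, a function of the other coordinates only, you get $0\le D-D_f\le b_f$. Here $b_f$ counts the copies containing $f$ that overlap some other copy.
\item Efron--Stein gives $\Var D\le\E\sum_f b_f^2\le C_k\E Z$.
\item Your own Step~1 computation shows $\E Z=O(\mu_k^2/n)=o(\mu_k)$ throughout the window, even though it is not $o(\sqrt{\mu_k})$.
\end{itemize}
Hence $\nu_k-\E\nu_k=(X_k-\mu_k)-(D-\E D)$ with $\|D-\E D\|_2=o(\sqrt{\mu_k})$. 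Ruci\'nski's clique-count CLT (the binomial CLT for $k=2$) plus Slutsky gives the Gaussian limit. The reverse triangle inequality gives $\Var\nu_k\sim\mu_k$. This handles $\mu_k=n^{1-o(1)}$ with no extra work.
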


Let $X_k$ be the total number of copies of $K_k$ and set
$D_k:=X_k-\nu_k$.  The conflict graph on the copies of $K_k$ identifies
$D_k$ with a minimum vertex-cover number.  Near the upper end of the
window, the mean of $D_k$ need not be smaller than
$\sqrt{\mu_k(n,q)}$, so an uncentred comparison between $X_k$ and
$\nu_k$ is insufficient.  The key point is instead that
\[
 \Var(D_k)=o(\mu_k(n,q)).
\]
We prove this by an Efron--Stein edge-influence estimate and a count of
intersecting clique pairs.  For $k\ge3$, Ruci\'nski's clique-count
central limit theorem~\cite{Ruc88} then transfers to $\nu_k$ by
Slutsky's theorem.

For $k=2$, the same collision-loss estimate is combined with the
binomial central limit theorem for the edge count.  Thus
Theorem~\ref{thm:matching-clt-intro} includes the ordinary matching
number throughout
\[
 n^{-2}\ll q\ll n^{-1}.
\]
Equivalently, writing $q=c_n/n$, this is the regime
$c_n\to0$ and $nc_n\to\infty$.  The case $k=2$ is complementary to the
recent theorem of Glasgow, Kwan, Sah and Sawhney
~\cite{GlasgowKwanSahSawhney25}.  They prove a central limit theorem for
the ordinary matching number when $q=c/n$ with fixed $c>0$, as well as
in the corresponding fixed-edge model, and the variance there has
order $n$.  At that density our estimate gives only
$\Var(D_2)=O(n)$, rather than $o(n)$.  In the fixed-edge model $X_2$ is
deterministic, so all non-trivial fluctuations of the matching number
come from $D_2$.  Their limit is governed by the Karp--Sipser process,
whereas our dilute-conflict argument transfers the Gaussian fluctuation
of the object count after showing that the \emph{centred} conflict loss
is negligible.

Very recently, Cohen Antonir, Hoshen and
Zhukovskii~\cite{CohenAntonirHoshenZhukovskii26} proved a local central
limit theorem for clique counts throughout the essentially optimal
range
\[
 n^{-2/(k-1)}\ll q\le\frac12,
\]
for every fixed $k\ge3$.  Thus the input variable $X_k$ has a local
Gaussian approximation throughout our sparse clique-matching window,
not merely a central limit theorem.  Our conclusion concerns the
optimisation variable $\nu_k$.  The estimate
$\Var(D_k)=o(\mu_k(n,q))$ is enough to transfer the ordinary central
limit theorem from $X_k$ to $\nu_k$, but it does not by itself transfer
a local limit theorem, which requires control at the lattice scale.

\subsection{The chromatic-number application}

The second application concerns the chromatic-number problem which
originally motivated the factor result.  

Understanding the distribution
of the chromatic number of a random graph is one of the central problems
in probabilistic combinatorics.  For constant edge probability its
first-order asymptotics were determined by Bollob\'as~\cite{Bol88}, while
Shamir and Spencer~\cite{SS87} initiated the study of concentration.
Much sharper concentration is known in several sparse
ranges~\cite{AK97,AN05,COPS08}; by contrast, the work of
Heckel~\cite{Hec21} and Heckel--Riordan~\cite{HR23} shows that bounded
concentration fails at constant density.  In a simplified form, the
Zigzag Conjecture of Bollob\'as, Heckel, Morris, Panagiotou, Riordan and
Smith \cite{HR23} predicts that two competing sources govern the fluctuations of
$\chi(G(n,1/2))$: the numbers of independent sets of maximum size and of
independent sets one vertex smaller.  Their relative strengths vary
with $n$, producing the conjectured zigzag in the concentration
scale.

In the very dense regime, the problem may be formulated in the sparse
complement.  If $H\sim G(n,p)$, then a proper colouring of
$G(n,1-p)=\overline H$ is exactly a partition of $V(H)$ into cliques of
$H$.  Surya and Warnke~\cite{SW24} conjectured that, in the window
\[
 n^{-2/r}(\log n)^{1/\binom r2}\ll p
 \leq (1+o(1))n^{-2/(r+1)},
\]
the natural fluctuation scale of $\chi(G(n,1-p))$ is
$\sqrt{\mu_{r+1}}$.  Their heuristic is that a near-optimal colouring
first uses as many vertex-disjoint independent sets of maximum size
$r+1$ as possible, and then covers almost all remaining vertices by
independent sets of size $r$.  Since the number of independent
$(r+1)$-sets fluctuates on the scale $\sqrt{\mu_{r+1}}$, one expects
these fluctuations to be inherited by the chromatic number.

The first non-trivial case $r=2$, corresponding to triangles in the
complement, was studied in~\cite{Yan24}.  Its main structural theorem
says that after removing a largest triangle matching, the remainder has
a near-perfect ordinary matching.  For $r\ge3$,
Theorem~\ref{thm:maximum-clique-remainder}, with $k=r+1$, gives the
corresponding higher-order structure throughout the full window.
Theorem~\ref{thm:matching-clt-intro}, again with $k=r+1$, supplies the
fluctuation input in every case.  Combining these results yields the
following Gaussian strengthening of the Surya--Warnke prediction.

\begin{theorem}
\label{thm:chromatic-clt}
Fix $r\ge2$, and let $p=p(n)$ satisfy
$
 n^{-2/r}(\log n)^{1/\binom r2}\ll p\ll n^{-2/(r+1)}.
$
Then
\[
 \frac{\chi(G(n,1-p))-\E\chi(G(n,1-p))}
      {\sqrt{\mu_{r+1}(n,p)}/r}
 \xrightarrow{\mathrm d}\mathcal N(0,1),
 \qquad
 \Var\bigl(\chi(G(n,1-p))\bigr)
 \sim\frac{\mu_{r+1}(n,p)}{r^2}.
\]
\end{theorem}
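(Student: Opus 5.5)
Throughout, write $H\sim G(n,p)$, so that $\chi(G(n,1-p))$ is the minimum number of cliques of $H$ partitioning $V(H)$. The plan is to sandwich $\chi$ between explicit affine functions of $\nu:=\nu_{r+1}(H)$, with errors $o(\sqrt{\mu_{r+1}})$ in probability, and then transfer the fluctuations of $\nu$. Theorem~\ref{thm:matching-clt-intro} supplies the Gaussian limit for $\nu$, and Theorem~\ref{thm:maximum-clique-remainder} (or the $r=2$ structural theorem of \cite{Yan24}) supplies the structural input. The first step is to set $k=r+1$ and $q=p$. The hypothesis $n^{-2/r}(\log n)^{1/\binom r2}\ll p\ll n^{-2/(r+1)}$ is then exactly the window of Theorem~\ref{thm:maximum-clique-remainder} when $r\ge3$. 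It also lies inside the window $n^{-2/(k-1)}\ll q\ll n^{-2/k}$ of Theorem~\ref{thm:matching-clt-intro}. In this range $H$ is whp $K_{r+2}$-free up to negligible counts: indeed $\mu_{r+2}/\mu_{r+1}=\Theta(np^{r+1})\to0$. So every colour class has size at most $r+1$, apart from a number of larger cliques that is $o(\sqrt{\mu_{r+1}})$ whp. I will either bound this number by a first-moment estimate, or absorb it by splitting each large clique into pieces.

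\emph{Lower bound.} Take any partition of $V(H)$ into $m$ cliques, of which $a$ have size $r+1$. Those $a$ cliques form a $K_{r+1}$-matching, so $a\le\nu$ after discounting the rare larger cliques. Every other class has size at most $r$. Hence $n\le (r+1)a+r(m-a)=rm+a$, which gives $m\ge (n-a)/r\ge (n-\nu)/r$. Whp, therefore, $\chi\ge (n-\nu)/r-o(\sqrt{\mu_{r+1}})$.

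\emph{Upper bound.} Take a maximum $K_{r+1}$-matching $\mathcal M$ with $|\mathcal M|=\nu$. By Theorem~\ref{thm:maximum-clique-remainder} (for $r=2$, by the theorem of \cite{Yan24}), whp $H-V(\mathcal M)$ has a $K_r$-matching leaving at most $\zeta\sqrt{\mu_{r+1}}$ vertices uncovered. Colour each uncovered vertex separately. This uses
\[
\nu+\frac{n-(r+1)\nu-u}{r}+u\le \frac{n-\nu}{r}+\zeta\sqrt{\mu_{r+1}}
\]
colours, where $u$ is the number of uncovered vertices. So whp $\bigl|\chi-(n-\nu)/r\bigr|=o(\sqrt{\mu_{r+1}})$.

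\emph{Transfer.} By Theorem~\ref{thm:matching-clt-intro}, $(\nu-\E\nu)/\sqrt{\mu_{r+1}}\to\mathcal N(0,1)$. Slutsky's theorem then gives $(\chi-c_n)/(\sqrt{\mu_{r+1}}/r)\to\mathcal N(0,1)$ with $c_n=(n-\E\nu)/r$. To replace $c_n$ by $\E\chi$ and to obtain $\Var\chi\sim\mu_{r+1}/r^2$, I need the error term $\chi-(n-\nu)/r$ to be small in $L^2$, not only in probability. I expect this to be the main obstacle, because the structural theorems hold only whp with unspecified failure probability. I would first try to use the deterministic bounds
\[
\frac{n-\nu}{r}-O(\text{\#}K_{r+2})\le\chi\le n,
\qquad
\chi\le \frac{n-\nu}{r}+Y,
\]
where $Y$ counts the vertices left by a greedy $K_r$-matching of the remainder. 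On the failure event I would then apply Cauchy--Schwarz, using the crude bound $|\chi-(n-\nu)/r|\le n$. This requires the failure probability to be $o(\mu_{r+1}/n^2)$. The proof of Theorem~\ref{thm:maximum-clique-remainder} should deliver such a bound via the exponential tails of Theorem~\ref{thm:main}, since $\mu_r/n\gg\log n$. If only a weaker failure bound is available, I would instead show uniform integrability of $(\chi-\E\chi)^2/\mu_{r+1}$ through Talagrand or bounded-differences concentration for $\chi$. Vertex exposure shows that $\chi$ is $1$-Lipschitz and certifiable, which gives sub-Gaussian tails on the scale $\sqrt{\E\chi}$. To get down to the scale $\sqrt{\mu_{r+1}}$, I would use the edge-influence (Efron--Stein) bound: changing one edge alters $\chi$ only if the edge lies in an optimal-relevant clique. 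This gives $\Var\chi=O(\mu_{r+1})$ plus higher moments. Together with convergence in distribution, this yields convergence of the mean and the variance, and hence both conclusions.
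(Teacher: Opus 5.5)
Your proposal is correct and is essentially the paper's argument. It uses the same deterministic sandwich $\frac{n-\nu-Z_{r+2}}{r}\le\chi\le\frac{n-\nu}{r}+\frac{r-1}{r}\rho_r(H,\mathcal M)$ (Proposition~\ref{prop:clique-matching-reduction}), an $L^2$ bound on the error, and a Slutsky/reverse-triangle transfer; routing the fluctuations through Theorem~\ref{thm:matching-clt-intro} rather than directly through $X_{r+1}$ and $\Var(X_{r+1}-\nu_{r+1})=o(\mu_{r+1})$ is only cosmetic.

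The quantitative input you hoped for is exactly what the paper proves. The proof of Theorem~\ref{thm:maximum-clique-remainder} shows that the maximal-remainder defect exceeds $\zeta_n\sqrt{\mu_{r+1}}$ with probability $o(n^{-3})$, which gives \eqref{eq:maximal-defect-L2}. For $r=2$, Theorem~\ref{thm:Yan-triangle-structure} has failure probability $n^{-\omega(1)}$. So you should use the maximum rather than a greedy $K_r$-matching of the remainder. Your Talagrand/Efron--Stein fallback is unnecessary, and as sketched it would not suffice: vertex-exposure concentration only reaches the scale $\sqrt n\gg\sqrt{\mu_{r+1}}$.
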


For $r=2$, this removes the artificial $n^{-7/9}$ restriction from the
triangle-window central limit theorem in~\cite{Yan24}; for $r\ge3$, it
follows from the new maximal-remainder theorem.  Thus the
Surya--Warnke prediction holds in the stronger Gaussian sense throughout
the interior of every clique window with $r\ge2$.
The theorem does not cover the boundary regime $p=\Theta(n^{-2/(r+1)})$.

The theorem identifies not only the scale and limiting law, but also the
random variable which creates the fluctuations.  
Let $X_{r+1}=X_{r+1}(G(n,p))$ denote the number of copies of $K_{r+1}$ in
$G(n,p)$, equivalently the number of independent sets of size $r+1$ in
$G(n,1-p)$.

\begin{corollary}
\label{cor:chromatic-count-equivalence}
Fix $r\ge2$, let $p=p(n)$ satisfy
$n^{-2/r}(\log n)^{1/\binom r2}\ll p\ll n^{-2/(r+1)},$ and let $\chi:=\chi(G(n,1-p))$ and $\mu_{r+1}=\mu_{r+1}(n,p)$. Then
\[
 \left\|
 (\chi-\E \chi)+\frac{X_{r+1}-\mu_{r+1}}{r}
 \right\|_2=o(\sqrt\mu_{r+1}),
\]
where $\|Z\|_2=(\E Z^2)^{1/2}$. In particular,
\[
 \operatorname{Corr}(\chi,X_{r+1})\longrightarrow-1.
\]
\end{corollary}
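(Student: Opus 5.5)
Write $H\sim G(n,p)$ with $G(n,1-p)=\overline H$, so $\chi:=\chi(\overline H)$ is the minimum number of cliques of $H$ partitioning $V(H)$. Set $k=r+1$. The plan is a two-sided deterministic sandwich,
\[
 \chi=\frac{n-\nu_{r+1}(H)}{r}+R,\qquad \|R-\E R\|_2=o(\sqrt{\mu_{r+1}}),
\]
and then an appeal to Theorem~\ref{thm:matching-clt-intro} and to the identity $\Var(D_{r+1})=o(\mu_{r+1})$ from its proof. The identity gives $\nu_{r+1}-\E\nu_{r+1}=(X_{r+1}-\mu_{r+1})+o_{L^2}(\sqrt{\mu_{r+1}})$, and this yields the displayed estimate. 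Correlation $\to-1$ then follows from $\Var X_{r+1}\sim\mu_{r+1}$, which holds since $p\ll n^{-2/(r+1)}$ puts us in the Poisson-like regime where $\mu_{r+1}\to\infty$ dominates the overlap terms.

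\textbf{Lower bound.} In the range $p\ll n^{-2/(r+1)}$ we have $\E X_{r+2}\to0$, so whp $H$ has no $K_{r+2}$. Take any clique partition with $a_j$ parts of size $j\le r+1$. Then $n=\sum_j ja_j\le (r+1)a_{r+1}+r(\chi-a_{r+1})$, and $a_{r+1}\le\nu_{r+1}$, so $\chi\ge (n-\nu_{r+1})/r$ deterministically on this event. The failure event has probability $O(\mu_{r+2})$, which is tiny. Since $\chi\le n$, its $L^2$ contribution is at most $n\sqrt{\mu_{r+2}}$, and I expect this to be $o(\sqrt{\mu_{r+1}})$ throughout the range. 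If not, I will replace it by the crude bound $\chi\ge (n-\nu_{r+1}-2X_{r+2}(r+1))/r$, which holds always.

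\textbf{Upper bound.} Take a maximum $K_{r+1}$-matching $\mathcal M$. By Theorem~\ref{thm:maximum-clique-remainder} with $k=r+1\ge4$ (for $r\ge3$), with $q=p$ satisfying exactly the window hypothesis, whp $H-V(\mathcal M)$ contains a $K_r$-matching leaving at most $\zeta\sqrt{\mu_{r+1}}$ vertices. For $r=2$ I use the structural theorem of~\cite{Yan24}, which needs only the analogous near-perfect matching statement throughout the window. Colouring the leftover vertices as singletons gives
\[
 \chi\le \nu_{r+1}+\frac{n-(r+1)\nu_{r+1}}{r}+\zeta\sqrt{\mu_{r+1}}=\frac{n-\nu_{r+1}}{r}+\zeta\sqrt{\mu_{r+1}}.
\]
Hence $0\le R\le\zeta\sqrt{\mu_{r+1}}$ whp.

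\textbf{Main obstacle: upgrading ``whp'' to $L^2$.} The structural theorem only fails with probability $o(1)$, and on that event $R$ could be as large as $n$. To control this I will bound the exceptional contribution by Cauchy--Schwarz, which requires the failure probability to be $o(\mu_{r+1}/n^2)$. This is where Theorem~\ref{thm:main} enters. On failure I will still construct a colouring by taking a maximal $K_r$-matching in the remainder, so that $R$ is at most the number of uncovered vertices. That count has exponentially decaying tails by the factor-tail estimate (the iterable one-root form applied to the remainder), giving $\E R^2=o(\mu_{r+1})$ directly.

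If that route is awkward, the alternative is to avoid second moments of $R$ entirely. The argument is then: convergence in distribution of $(\chi-c_n)/(\sqrt{\mu_{r+1}}/r)$ follows from the whp sandwich plus Slutsky; and $\Var\chi$ is controlled by Talagrand or vertex-exposure concentration with a uniform integrability bound, since $\chi$ is $1$-Lipschitz in vertex exposure and hence $\E|\chi-\E\chi|^{4}=O(\mu_{r+1}^2)$ in this regime, so the fourth moment is of the right order. Together these give $\Var\chi\sim\mu_{r+1}/r^2$ and the $L^2$ equivalence.
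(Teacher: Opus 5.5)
Your architecture matches the paper's. You use the identity $r(\chi-\E\chi)+(X_{r+1}-\mu_{r+1})=(D-\E D)+r(R-\E R)$ with $R=\chi-(n-\nu_{r+1})/r$, the input $\Var D=o(\mu_{r+1})$ from Lemma~\ref{lem:collision-loss}, and a deterministic sandwich for $R$. Your fallback lower bound is essentially Proposition~\ref{prop:clique-matching-reduction}, and it works because $\E X_{r+2}^2=o(1)$. Your first lower-bound version does not work, since $n^2\mu_{r+2}/\mu_{r+1}=\Theta(n^3p^{r+1})\to\infty$ throughout the window.

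\textbf{The gap.} It lies in the step you flag yourself: proving $\E\rho^2=o(\mu_{r+1})$. Here $\rho$ is the defect of a \emph{maximum} $K_r$-matching in $H-V(\mathcal M)$, where $\mathcal M$ is a maximum $K_{r+1}$-matching. (A merely maximal $K_r$-matching, as you wrote, has a defect no tail bound controls.) You cannot apply the factor tail ``to the remainder'': $\mathcal M$ is chosen after looking at $H$, so $H-V(\mathcal M)$ is not distributed as $G(N,p)$. Making this rigorous needs a union bound over deterministic matchings, whose total weight is $\sum_m\binom n{r+1}^mp^{\binom{r+1}2m}/m!\approx e^{\mu_{r+1}}$. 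Against this, Theorem~\ref{thm:main} saves only $e^{-cx\mu_r/n}$ at defect $x$. Getting $\E\rho^2=o(\mu_{r+1})$ forces $x=o(\sqrt{\mu_{r+1}})$, so the saving beats the entropy only if $(\mu_r/n)^2/\mu_{r+1}\to\infty$. At $r=3$ this ratio is of constant order, so the naive argument yields only $\E\rho^2=O(\mu_{r+1})$.

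The missing idea is that maximality makes the remainder $K_{r+1}$-free. Keeping this decreasing event through the iteration (Proposition~\ref{prop:terminal-iteration}, in the form of Proposition~\ref{prop:clique-free-refinement}) contributes an extra factor $e^{-(1-o(1))\mu_{r+1}}$, which cancels the entropy. This is how the paper proves \eqref{eq:maximal-defect-L2}, the bound actually used in Lemma~\ref{lem:chromatic-L2-structure}.

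\textbf{The case $r=2$ and your alternative route.} For $r=2$ the factor-tail route is unavailable, since Theorem~\ref{thm:main} requires $r\ge3$ and the remainder here needs an ordinary near-perfect matching. So the with-high-probability structural statement must be replaced by a quantitative one. The paper notes that the proof of Theorem~\ref{thm:Yan-triangle-structure} gives failure probability $n^{-\omega(1)}$, whence $\E\rho^2\le 1+n^2n^{-\omega(1)}$.

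Your alternative route does not close either gap. A vertex-exposure martingale with Lipschitz constant $1$ gives only $\E|\chi-\E\chi|^4=O(n^2)$, not $O(\mu_{r+1}^2)$. In this window $\mu_{r+1}=\Theta\bigl(n(pn^{2/(r+1)})^{\binom{r+1}2}\bigr)=o(n)$, so you get no uniform integrability on the $\sqrt{\mu_{r+1}}$ scale.
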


Corollary~\ref{cor:chromatic-count-equivalence} makes the
Surya--Warnke heuristic precise at the level of centred fluctuations.
The structural reduction first compares the chromatic number with the
maximum (clique) matching number $\nu_{r+1}$, rather than with the total count
$X_{r+1}$.  Indeed, the mean collision loss
$\E(X_{r+1}-\nu_{r+1})$ need not be negligible on the
$\sqrt{\mu_{r+1}}$ scale near the upper end of the window.  The key
phenomenon is subtler: the \emph{centred} collision loss has variance
$o(\mu_{r+1})$.  Hence all first-order randomness survives the passage
from the total number of maximum independent sets to the largest
disjoint family, and then from that family to an optimal colouring.

This gives both a parallel with and a contrast to the Zigzag picture.
In the constant $p$ setting, the maximum independent-set size grows
with $n$, and the counts at two adjacent sizes are conjectured to be
competing fluctuation sources.  In a fixed very-dense clique window, by
contrast, the maximum size is the constant $r+1$: larger independent
sets are negligible, while the $r$-set packing of the remainder has
$o(\sqrt{\mu_{r+1}})$ fluctuations.  A single source therefore
remains, and it is asymptotically Gaussian and perfectly negatively
correlated with the chromatic number.

\subsection*{Organisation of the paper}

Section~\ref{sec:overview} outlines the proof, and
Section~\ref{sec:local-obstruction} gives the lower bound.
Sections~\ref{sec:one-root}--\ref{sec:fixed-rank} prove the one-root
theorem: factor failure leads to a stopping pattern, then to a rooted
fingerprint, and finally to a conditional rare event.
Section~\ref{sec:factor-tail-proof} iterates this theorem and proves
the maximal-remainder result.  Sections~\ref{sec:clique-matching-clt}
and~\ref{sec:chromatic} treat the matching and chromatic fluctuations,
and Section~\ref{sec:remark} gives further questions.  The appendices
contain the averaged coefficient estimate and the derivative calculations
used in the random-graph bounds.

\section{Proof overview}
\label{sec:overview}

The lower bound comes from $s+1$ vertices which lie in no copy of
$K_r$.  Harris's and Janson's inequalities give the probability of
this event, and the assumption $\mu_r/n\gg\log n$ absorbs the cost of
choosing the vertices.  We turn to the upper bound, where the main
point is to obtain the same exponential cost for each uncovered vertex.

\subsection{The one-root reduction}

The proof follows the factor-deletion method of Johansson, Kahn and
Vu, but needs estimates that remain valid after later vertex deletions.
We therefore assign one set of independent birth times to the edges
of $K_n$ and use its restriction on every induced graph.  Write
$\mathscr F(x)$ for all information outside the star of $x$.
Theorem~\ref{prop:one-root-witness} shows that, for every admissible
$W$ of order $n-o(n)$, factor failure on a common decreasing event
$\cU$ is covered by a global error $\cM_W$ and polynomially many
local events $\mathcal E_{W,\alpha}$.  The global error has probability
$\exp(-\Omega(\mu_r))$, while each local event has a root
$x=x_{W,\alpha}$ and satisfies
\[
 \Pr(\mathcal E_{W,\alpha}\mid\mathscr F(x))
 \le \exp(-c\mu_r/n)
 \qquad\text{almost surely}.
\]
We call such an event a single-root witness.  Its conditional bound
is the part of the argument that will allow iteration.

The one-root reduction has three steps.

\emph{From factor failure to a stopping pattern.}
Read the random edge ordering on $W$ backwards, and let $F_j$ be the
number of factors at rank $j$.  One deletion destroys the proportion
$\xi_j=1-F_{j-1}/F_j$ of the current factors.  We follow the exact
logarithmic loss $-\log(1-\xi_j)$ while the factor count stays above a
barrier $\cA$, the rooted clique counts satisfy $\cR$, and the factor
weights satisfy a balance condition $\cB$.
An averaged exponential second-moment bound for the edge loads gives
a martingale error of probability $\exp(-\Omega(\mu_r))$.  Outside
this error, factor failure forces $\cR$ or $\cB$ to fail while
$\cA$ still holds.  The median condition $\cC$ then separates the two
possible forms of balance failure.

\emph{From a stopping pattern to a rooted fingerprint.}
If $\cC$ holds but $\cB$ fails, a large factor weight spreads over
a dense family of $r$-sets.  Taking successive vertex links turns the
resulting clique deficit into a lower-tail fingerprint: one root
completes too few members of an outside family of $(\ell-1)$-cliques.
A lower failure of $\cR$ gives the same type of fingerprint.

If $\cC$ fails, an $(r-1)$-set has one heavy extension and many light
extensions.  An entropy argument retains a linear number of possible
light roots.  We truncate the factor weights at a fixed scale and
record the rank outside the two roots.  The heavy and light extensions
then give two values of the same polynomial, with coefficients in
$[0,1]$, that differ by a fixed factor.  This is the two-star fingerprint.

\emph{From a fingerprint to a single-root witness.}
After the outside birth times and the root degree are fixed, the root
neighbourhood is a uniform set.  Janson's inequality handles the
lower-tail fingerprint.  For the two-star fingerprint, the averaged
coefficient bound lets us choose a regular light root among the
linearly many candidates.  We colour its coordinates and expose one
colour class at a time.  The polynomial becomes a martingale with
linear increments, whose exponential moments are controlled by the
same averaged bound.  Comparing the two conditional means then forces
an upper deviation at the light root or a lower deviation at the heavy
root.  Both have the required conditional probability.

The common event $\cU$ supplies the edge-load, rooted-degree and
overlap estimates used in these steps.  Its probability need only be
bounded below by a positive constant, since Harris's inequality removes
it at the end.  The use of average exponential moments, rather than
only the largest coefficient, is what permits the full range
$p\ll n^{-2/(r+1)}$.

\subsection{Iteration and applications}

If no $K_r$-matching covers $n-s$ vertices, then deleting any suitable
set of at most $s$ vertices still leaves a graph with no factor.
We apply the one-root theorem $s+1$ times, removing each selected root
before continuing.  Every later event is measurable outside each
earlier root star, so the conditional bounds multiply and give
$\exp(-\Omega((s+1)\mu_r/n))$ for a fixed chain.
There are only $\exp(O((s+1)\log n))$ labelled chains, and this cost
is absorbed by $\mu_r/n\gg\log n$.  The global errors are handled
separately by a union bound over the possible vertex sets.

The same iteration can retain an event supported on the final
remainder.  Requiring this remainder to be $K_{r+1}$-free gives the
maximal-matching theorem; at the triangle-factor window, this extra
probability is needed to cancel the cost of choosing the matching.
Independently, an Efron--Stein estimate shows that the centred loss
between the clique count and the maximum clique-matching number has
negligible variance.  Together with the remainder theorem, this
identifies the first-order chromatic fluctuation and proves its
Gaussian limit.

\subsection{Comparison with the Johansson--Kahn--Vu argument}

The proof refines the Johansson--Kahn--Vu deletion argument.  We keep
its main structure: follow the factor count under edge deletion, use
factor weights and rooted clique counts, and separate balance failure
by a median condition.  The difference is in the probability estimate
and the form in which it is obtained.  A threshold proof may discard
an event of probability $o(1)$, whereas our error must be exponentially
small and remain controlled after further vertex deletions.

First, we keep all induced graphs in one birth-time space.  Every
deletion chain is a restriction of the same edge ordering, and
$G_p[W]$ is read at its exact random rank.  This avoids a transfer
between fixed-edge and binomial models.  It also makes every later
process on a set avoiding $x$ measurable outside the $x$-star, which
is needed for conditioning and iteration.

Second, we cover the entire failure event on $\cU$ by a global error
and polynomially many single-root witnesses.  This is the main
additional step in the one-root argument.  Each possible failure of
balance or rooted regularity is first reduced to a fingerprint, then
to an event $\mathcal E$ with a root $x$ and the conditional bound in
Theorem~\ref{prop:one-root-witness}.  The two-star fingerprint must
therefore be split into a deviation at one of its two roots; the
vertices used in the link recursion remain only as label data.
The averaged exponential second-moment estimates give the required
precision both for the deletion martingale and for the rooted
exposure.  Thus the witness records not only the probability of
failure, but also the single star on which its remaining randomness
lies.

Finally, the one-root bound is designed to be iterated.  Heredity lets
us remove one root at a time, while measurability in the common
birth-time space allows conditional expectation to multiply the
bounds.  This gives the factor $s+1$ in the exponent of
Theorem~\ref{thm:main}; the polynomial number of labels has a smaller
cost.  The same argument can keep an event on the final remainder,
such as being $K_{r+1}$-free.  This extra step connects the factor tail
to maximal matchings and the chromatic number.

\section{The local obstruction}
\label{sec:local-obstruction}

The lower bound follows by requiring $s+1$ vertices to lie in no copy
of $K_r$.  We prove the leading constant for this event as well.

\begin{proposition}
\label{prop:isolated-tail}
Fix $r\ge3$ and suppose
$n^{-2/r}(\log n)^{1/\binom r2}\ll p\ll n^{-2/(r+1)}$.
If $Y$ is the number of vertices of $G(n,p)$ lying in no copy of
$K_r$, then, uniformly for integers $1\le t=o(n)$,
\[
 \Pr(Y\ge t)=
 \exp\!\left(-(1+o(1))t\binom{n-1}{r-1}p^{\binom r2}\right).
\]
Taking $t=s+1$ gives the lower bound in Theorem~\ref{thm:main}.
\end{proposition}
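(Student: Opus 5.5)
Write $\lambda:=\binom{n-1}{r-1}p^{\binom r2}=r\mu_r/n$. The hypotheses give $\lambda\gg\log n$, and $p\ll n^{-2/(r+1)}$ makes clique overlaps negligible on the $\lambda$ scale. The plan is to prove the two directions separately. The lower bound comes from Harris's inequality applied to a fixed $t$-set, and the upper bound from a first-moment (union) bound over $t$-sets combined with Janson's inequality. In both directions the task is to show that the correlation corrections in Janson's inequality are $o(t\lambda)$.

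\emph{Lower bound.} Fix a set $T$ of $t$ vertices. The event $\{Y\ge t\}$ contains the event that every vertex of $T$ is $K_r$-isolated. That event is the intersection, over all copies $K$ of $K_r$ in $K_n$ meeting $T$, of the decreasing events $\{K\not\subseteq G\}$. By Harris's inequality,
\[
\Pr(Y\ge t)\ge\prod_{K\cap T\ne\emptyset}\bigl(1-p^{\binom r2}\bigr)
\ge\exp\Bigl(-(1+O(p^{\binom r2}))\,N_T\,p^{\binom r2}\Bigr),
\]
where $N_T\le t\binom{n-1}{r-1}$ is the number of $r$-sets meeting $T$. This already gives $\exp(-(1+o(1))t\lambda)$, with no use of $t=o(n)$ needed here.

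\emph{Upper bound.} Apply a union bound: $\Pr(Y\ge t)\le\binom nt\max_T\Pr(\text{all of }T\text{ isolated})$. The entropy factor satisfies $\binom nt\le e^{t\log n}=e^{o(t\lambda)}$. For fixed $T$, restrict attention to the family $\mathcal K_T$ of copies $K$ with $|K\cap T|=1$. Since $t=o(n)$, this family has size $(1-o(1))t\binom{n-1}{r-1}$, so its Janson mean is $\mu_T=(1-o(1))t\lambda$. Janson's inequality then gives $\Pr\le\exp(-\mu_T+\Delta_T)$, and, if needed, the variant $\exp(-\mu_T^2/(2\Delta_T))$ can be used instead. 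Here $\Delta_T$ sums $p^{2\binom r2-\binom{j}{2}}$ over pairs of copies sharing $j\ge2$ vertices. Pairs rooted at the same vertex of $T$ and overlapping in $j$ vertices contribute about $t\,n^{2r-2-(j-1)}p^{2\binom r2-\binom j2}$. Relative to $t\lambda$ this is $\lambda\cdot n^{-(j-1)}p^{-\binom j2}$, and this ratio is largest at $j=r-1$, where it equals $n^{r-1}p^{\binom r2}\cdot n^{-(r-2)}p^{-\binom{r-1}2}=np^{r-1}$. Since $p\ll n^{-2/(r+1)}$ and $r\ge3$, we have $np^{r-1}\ll n^{1-2(r-1)/(r+1)}\le1$. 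The intermediate values of $j$ give a convex-type interpolation between $j=2$ (ratio $\lambda/(np)$, which is $o(1)$ because $n^{r-2}p^{\binom r2-1}\to0$ in this range) and $j=r-1$, so each is $o(1)$ as well. Pairs rooted at two different vertices of $T$, or sharing a vertex of $T$ only through overlap, carry an extra factor $t/n=o(1)$ or fewer free vertices, and are handled the same way. Hence $\Delta_T=o(t\lambda)$ and $\Pr(Y\ge t)\le\exp(-(1-o(1))t\lambda)$.

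\emph{Main obstacle.} The hard step is verifying $\Delta_T=o(t\lambda)$ uniformly over every overlap size $j$ and every arrangement of roots. This is exactly where the upper constraint $p\ll n^{-2/(r+1)}$ enters; it is sharp at $j=r-1$ only when $r=2$, which is excluded. I would check the two endpoint values of $j$ explicitly and argue log-convexity in $j$ to cover the intermediate ones. Setting $t=s+1\le n$, which is allowed since $s=o(n)$, yields the lower bound in Theorem~\ref{thm:main}, because $t$ $K_r$-isolated vertices force $\phi_r^s=0$.
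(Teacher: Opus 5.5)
Your proof is correct and follows the paper's route: Harris's inequality for a fixed $t$-set gives the lower bound, and Janson's inequality with $\Delta_T=o(t\lambda)$ plus a union bound over the $\binom nt\le e^{o(t\lambda)}$ choices gives the upper bound. One small slip is that the overlap ratio $n^{r-j}p^{\binom r2-\binom j2}$ is not maximised at $j=r-1$ in general. Near $p\approx n^{-2/(r+1)}$ the $j=2$ term $n^{r-2}p^{\binom r2-1}$ dominates, and that is where the upper constraint is sharp for every $r$. Since you check both endpoints and use convexity of its logarithm in $j$, your conclusion still stands.
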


We use the following forms of Harris's and Janson's inequalities.
The first also applies to the independent birth times used later.

\begin{lemma}[Harris]
\label{lem:harris}
Let $A$ and $B$ be events in a product probability space with the
coordinatewise order.  If both are increasing or both are decreasing,
then
\[
 \Pr(A\cap B)\ge\Pr(A)\Pr(B).
\]
If one is increasing and the other is decreasing, the inequality is
reversed.
\end{lemma}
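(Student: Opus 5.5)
We will prove the functional form: if $f,g$ are bounded measurable functions on $\Omega=\Omega_1\times\dots\times\Omega_m$, each $\Omega_i$ totally ordered with a product probability measure, and both $f,g$ are increasing in the coordinatewise order, then $\E[fg]\ge\E f\,\E g$. Taking $f=\ind_A$ and $g=\ind_B$ gives the first claim for increasing events. If $A$ and $B$ are both decreasing, apply the claim to their increasing complements and use
\[
\Pr(A\cap B)=1-\Pr(A^c)-\Pr(B^c)+\Pr(A^c\cap B^c).
\]
If $A$ is increasing and $B$ is decreasing, apply the claim to $A$ and $B^c$, which gives $\Pr(A\cap B^c)\ge\Pr(A)\Pr(B^c)$, and subtract from $\Pr(A)$. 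In every application in the paper the product is finite: it is indexed by the edges of $K_n$, and each coordinate is either an edge indicator or a real birth time. So we may assume that $m$ is finite and that each $\Omega_i$ is a subset of $\mathbb R$ with a Borel probability measure.

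The argument is induction on $m$. For $m=1$, let $X$ and $Y$ be independent with the law of the single coordinate. Since $f$ and $g$ are both increasing on a totally ordered set, $f(X)-f(Y)$ and $g(X)-g(Y)$ always have the same sign, so
\[
0\le\E\bigl[(f(X)-f(Y))(g(X)-g(Y))\bigr]=2\E[fg]-2\E f\,\E g .
\]

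For the inductive step, write $\omega=(\omega',\omega_m)$ and define
\[
F(\omega_m)=\int f(\omega',\omega_m)\,d\Pr(\omega'),\qquad G(\omega_m)=\int g(\omega',\omega_m)\,d\Pr(\omega').
\]
By Fubini these are measurable. They are also increasing in $\omega_m$, because for each fixed $\omega'$ the integrand is increasing in $\omega_m$ and the measure on $\omega'$ does not depend on $\omega_m$ by the product structure. For each fixed $\omega_m$, the sections $f(\cdot,\omega_m)$ and $g(\cdot,\omega_m)$ are increasing on $\Omega_1\times\dots\times\Omega_{m-1}$. The induction hypothesis then gives
\[
\int fg\,d\Pr(\omega')\ \ge\ F(\omega_m)G(\omega_m).
\]
Integrating over $\omega_m$ and applying the case $m=1$ to $F$ and $G$ yields
\[
\E[fg]\ge\E[FG]\ge\E F\,\E G=\E f\,\E g .
\]

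The only point needing care is measure-theoretic. The sections must be measurable and $F,G$ must be monotone, and both follow from Fubini and the product structure. The coordinatewise order must also be the one in which the events are monotone; for birth times, increasing birth times delete edges, so graph-increasing events are decreasing in this order. That does not affect the argument, since only the common direction of $f$ and $g$ matters.
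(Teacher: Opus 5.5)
The paper does not prove this lemma. It quotes Harris's inequality as a standard tool and applies it only to finite products indexed by $\binom{[n]}{2}$, with edge indicators or birth times as coordinates.

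Your argument is the standard proof, and it is correct. Each step checks out:
\begin{itemize}
\item The one-coordinate case follows from $\E[(f(X)-f(Y))(g(X)-g(Y))]\ge0$ for independent copies.
\item The induction integrates out the last coordinate. The sections stay increasing, and $F,G$ are measurable and increasing by Fubini and the product structure.
\item The complement manipulations handle the both-decreasing and mixed cases.
\end{itemize}

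Restricting to finitely many real-valued coordinates loses nothing for the paper's uses. For an arbitrary product space, one would add the usual approximation step: conditional expectations given finitely many coordinates are still increasing, and they converge in $L^2$ by martingale convergence.
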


\begin{lemma}[Janson]
\label{lem:janson}
Let $(\xi_i)_{i\in I}$ be independent Bernoulli variables and let
$\mathcal H$ be a family of nonempty subsets of $I$.  Set
$I_A=\prod_{i\in A}\xi_i$, $X=\sum_{A\in\mathcal H}I_A$,
$\mu=\E X$, and
\[
 \Delta=\sum_{\substack{A,B\in\mathcal H\\
                        A\ne B,\ A\cap B\ne\varnothing}}
          \E[I_AI_B],
\]
where the pairs are ordered.  For $0<a<1$,
\[
 \Pr(X=0)\le e^{-\mu+\Delta/2},
 \qquad
 \Pr(X\le(1-a)\mu)\le
 \exp\!\left(-\frac{a^2\mu^2}{2(\mu+\Delta)}\right).
\]
\end{lemma}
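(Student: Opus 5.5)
We prove the two bounds separately. Throughout, $p_A=\E I_A$, and $A\sim B$ means $A\ne B$ and $A\cap B\ne\varnothing$. Since $\Delta$ counts ordered pairs, $\Delta/2=\sum_{\{A,B\}:A\sim B}\E[I_AI_B]$ is the unordered sum.

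\emph{The bound on $\Pr(X=0)$.} We use the Boppana--Spencer argument. Enumerate $\mathcal H=\{A_1,\dots,A_m\}$ and let $B_i=\{I_{A_i}=1\}$. Then
\[
 \Pr(X=0)=\prod_{i}\bigl(1-\Pr(B_i\mid \textstyle\bigcap_{j<i}B_j^c)\bigr).
\]
Fix $i$ and split the earlier indices into $D_1=\{j<i:A_j\sim A_i\}$ and $D_0=\{j<i:A_j\cap A_i=\varnothing\}$. Let $E_k=\bigcap_{j\in D_k}B_j^c$. Then
\[
 \Pr(B_i\mid E_0\cap E_1)\ge\Pr(B_i\cap E_1\mid E_0).
\]
Next, $\Pr(B_i\cap E_1\mid E_0)\ge\Pr(B_i\mid E_0)-\sum_{j\in D_1}\Pr(B_i\cap B_j\mid E_0)$. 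Because $E_0$ is independent of the coordinates in $A_i$, we have $\Pr(B_i\mid E_0)=p_{A_i}$. For the error terms, conditioning on the coordinates in $A_i$ being $1$ turns $B_i\cap B_j$ into an increasing event in the remaining coordinates, while $E_0$ is decreasing. Harris's inequality (Lemma~\ref{lem:harris}), applied in that conditional product space, then gives $\Pr(B_i\cap B_j\mid E_0)\le\Pr(B_i\cap B_j)$. Hence
\[
 \Pr(B_i\mid\textstyle\bigcap_{j<i}B_j^c)\ge p_{A_i}-\sum_{j\in D_1}\E[I_{A_i}I_{A_j}].
\]
Finally, $1-x\le e^{-x}$ and summing over $i$ give $\Pr(X=0)\le\exp(-\mu+\Delta/2)$, since each unordered intersecting pair is counted exactly once.

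\emph{The lower tail.} Let $\Psi(\lambda)=\E e^{-\lambda X}$ for $\lambda\ge0$, so that $-\Psi'(\lambda)=\sum_A\E[I_Ae^{-\lambda X}]$. For each $A$ write $X=Y_A+Z_A$, where $Y_A$ sums $I_B$ over $B=A$ or $B\sim A$, and $Z_A$ is the rest.
\begin{enumerate}[label=(\roman*)]
\item Conditional on the coordinates of $A$ being $1$, the variables $e^{-\lambda Y_A}$ and $e^{-\lambda Z_A}$ are both decreasing, so Harris gives
\[
 \E[I_Ae^{-\lambda X}]\ge p_A\,\E[e^{-\lambda Y_A}\mid I_A=1]\,\E[e^{-\lambda Z_A}\mid I_A=1].
\]
\item $Z_A$ does not depend on the coordinates of $A$, so $\E[e^{-\lambda Z_A}\mid I_A=1]=\E e^{-\lambda Z_A}\ge\Psi(\lambda)$, using $Y_A\ge0$.
\item Jensen's inequality gives $\E[e^{-\lambda Y_A}\mid I_A=1]\ge\exp(-\lambda\E[Y_A\mid I_A=1])$.
\end{enumerate}
Summing over $A$ and applying Jensen once more, with weights $p_A/\mu$, and using $\sum_Ap_A\E[Y_A\mid I_A=1]=\mu+\Delta$, we get
\[
 -(\log\Psi)'(\lambda)\ge\mu\exp\bigl(-\lambda(\mu+\Delta)/\mu\bigr)\ge\mu-\lambda(\mu+\Delta).
\]
Integrating from $0$ gives $\log\Psi(\lambda)\le-\lambda\mu+\lambda^2(\mu+\Delta)/2$. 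Markov's inequality then yields
\[
 \Pr(X\le(1-a)\mu)\le e^{\lambda(1-a)\mu}\Psi(\lambda)\le\exp\bigl(-\lambda a\mu+\lambda^2(\mu+\Delta)/2\bigr).
\]
Choosing $\lambda=a\mu/(\mu+\Delta)$ gives the claimed bound.

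The main point of care is the use of Harris under conditioning in step (i) and in the first part: the conditioning has to be on the coordinates of $A$ being $1$, which leaves a product space on the remaining coordinates, and the monotone directions of the events must be checked there. Everything else is routine calculus.
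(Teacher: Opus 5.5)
Your proof is correct. The paper gives no proof of this lemma; it quotes Janson's inequality as a standard tool. Your two arguments are the classical proofs: Boppana--Spencer for $\Pr(X=0)$, and the Laplace-transform argument of Janson--{\L}uczak--Ruci\'nski for the lower tail.

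Two small points are worth adding. First, step (i) uses Harris for bounded decreasing functions rather than events; this follows from the event version in Lemma~\ref{lem:harris} by writing $f=\int_0^\infty\ind_{\{f>s\}}\,ds$. Second, the degenerate cases $\mu=0$ or $p_A=0$ should be set aside before you condition on $I_A=1$ or use the weights $p_A/\mu$.
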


We apply these bounds to the copies meeting a fixed set of vertices.

\begin{proof}[Proof of Proposition~\ref{prop:isolated-tail}]
Fix a $t$-set $T$ and let $\cI_T$ be the event that no copy of $K_r$
meets $T$.  There are
$N_T=\binom nr-\binom{n-t}r=(1+o(1))t\binom{n-1}{r-1}$ candidate
sets, uniformly for $t=o(n)$.  Put $\lambda_T=N_Tp^{\binom r2}$.
By Harris's inequality,
$\Pr(\cI_T)\ge(1-p^{\binom r2})^{N_T}
=\exp(-(1+o(1))\lambda_T)$.

For the reverse bound, fix one candidate.  There are at most
$C_rn^{r-h}$ other $r$-sets meeting it in exactly $h$ vertices, so
\[
 \frac{\Delta_T}{\lambda_T}
 \le C_r\sum_{h=2}^{r-1}n^{r-h}p^{\binom r2-\binom h2}=o(1).
\]
Indeed, the term with index $h$ equals
$n^{(r-h)(2-h)/(r+1)}
(pn^{2/(r+1)})^{\binom r2-\binom h2}$, which tends to zero for every
$2\le h<r$.  Janson's inequality therefore gives
$\Pr(\cI_T)=\exp(-(1+o(1))\lambda_T)$.

One fixed set $T$ gives the lower bound for $\Pr(Y\ge t)$.
For the upper bound, sum over the $\binom nt$ choices.
Since $\lambda_T\gg t\log n$ and
$\log\binom nt\le t\log(en)$, this does not change the leading
exponent.  Finally, $s+1$ such vertices prevent any matching from
covering $n-s$ vertices.
\end{proof}
\section{The one-root reduction: statement and setup}
\label{sec:one-root}

We now state the conditional estimate that will be iterated in
Section~\ref{sec:factor-tail-proof}.  Assign independent birth times
$\tau_f\sim\operatorname{Unif}[0,1]$ to all $f\in\binom{[n]}2$,
and let $G_q$ contain the edges with $\tau_f\le q$.
Thus $G_q\sim G(n,q)$.  On each vertex set $W$, ordering the edges by
these same birth times gives the exact-rank graphs $H_j(W)$, and
$G_p[W]=H_{J_p}(W)$ for $J_p=|E(G_p[W])|$.

For $x\in W$, let $\mathscr F(x)$ be the sigma-field generated by all
birth times outside the star of $x$, and set
\[
 \mathscr F_W(x):=\sigma(\tau_f:f\in\tbinom W2,\ x\notin f),
 \qquad
 \mathscr G_W:=\sigma(\tau_f:f\in\tbinom W2).
\]
For every $\mathscr G_W$-measurable event $\mathcal E$, independence
of the coordinates outside $W$ gives
$\Pr(\mathcal E\mid\mathscr F(x))
=\Pr(\mathcal E\mid\mathscr F_W(x))$ almost surely.
Also, every event determined by an induced graph whose vertex set
omits $x$ is $\mathscr F(x)$-measurable.

\begin{definition}[Single-root witness]
\label{def:single-root-witness}
A pair $(\mathcal E,x)$ is a \emph{single-root witness with parameter
$c>0$, local to $W$} if $x\in W$, $\mathcal E\in\mathscr G_W$, and
\[
 \Pr(\mathcal E\mid\mathscr F(x))\le \exp(-c\mu_r/n)
 \quad\text{almost surely}.
\]
\end{definition}

The estimate is pointwise in the outside configuration.  A witness
may include conditions on this configuration: where they fail the
event is empty, and where they hold we estimate the remaining
root-star event.  Conditions involving the root are kept inside the
event being estimated, rather than added to the conditioning.

\begin{theorem}[One-root witness]
\label{prop:one-root-witness}
Fix $r\ge3$, let $b=b(n)\ge0$ satisfy $b=o(n)$, and suppose
$n^{-2/r}(\log n)^{1/\binom r2}\ll p\ll n^{-2/(r+1)}$.
There are a common decreasing event $\cU$ with
$\Pr(\cU)\ge c_r>0$ and constants $c,C>0$, depending only on $r$,
with the following property.  For every $W\subseteq[n]$ with
$|[n]\setminus W|\le b$ and $r\mid|W|$, there is an event $\cM_W$
of probability at most $\exp(-c|W|\mu_r/n)$ such that
\begin{equation}
 \{G_p[W]\text{ has no }K_r\text{-factor}\}\cap\cU
 \subseteq \cM_W\cup
 \bigcup_{\alpha\in\mathfrak C_W}\mathcal E_{W,\alpha},
 \label{eq:witness-cover}
\end{equation}
where $|\mathfrak C_W|\le n^C$ and every
$(\mathcal E_{W,\alpha},x_{W,\alpha})$ is a single-root witness
with parameter $c$, local to $W$.
\end{theorem}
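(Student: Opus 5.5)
We follow the three-step scheme of Section~\ref{sec:overview}, working throughout in the common birth-time space, so that $G_p[W]=H_{J_p}(W)$ is read at its exact random rank. The common event $\cU$ is defined first, independently of $W$. It is the intersection of decreasing events: upper bounds on edge loads, meaning the number of copies of $K_r$ through each edge of $G_p$, together with their averaged exponential second moments; upper bounds on rooted clique counts; and upper bounds on overlap counts of pairs of copies sharing $h\ge2$ vertices. Each of these has probability bounded away from $0$, since we use Markov-type bounds at the level of expectations in the range $p\ll n^{-2/(r+1)}$. Harris's inequality (Lemma~\ref{lem:harris}) then gives $\Pr(\cU)\ge c_r$, and this event is later removed by Harris again.

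\emph{Deletion martingale.} For fixed $W$, run the Johansson--Kahn--Vu process backwards from the complete graph on $W$ down to rank $J_p$. Write $F_j$ for the number of $K_r$-factors of $H_j(W)$ and $\xi_j=1-F_{j-1}/F_j$. We track $\sum_j -\log(1-\xi_j)$ against its predictable compensator, which is the average edge load of the factor weight. We stop at the first rank where the rooted regularity $\cR$ or the balance condition $\cB$ fails, while the barrier $\cA$ still holds. Below the stopping time, the increments are controlled by the edge loads. A Freedman-type exponential martingale bound, using the averaged second moments guaranteed by $\cU$, then yields an error event $\cM_W$ of probability $\exp(-c|W|\mu_r/n)$, that is, $\exp(-\Omega(\mu_r))$. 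Outside $\cM_W$, factor failure at rank $J_p$ forces the process to stop first at some rank $j$ through a failure of $\cR$ or $\cB$. The label $\alpha$ records $j$ together with a bounded number of vertices, which gives $|\mathfrak C_W|\le n^C$.

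\emph{Fingerprints and witnesses.} We split according to the median condition $\cC$.
\begin{itemize}
\item If $\cR$ fails from below, or if $\cC$ holds but $\cB$ fails, then successive vertex links convert the weight concentration into a root $x$ completing too few members of an $(\ell-1)$-clique family determined outside the star of $x$. Conditional on $\mathscr F(x)$ and on the root degree, the neighbourhood of $x$ is a uniform set. Janson's inequality (Lemma~\ref{lem:janson}), in its lower-tail form with $\Delta=o(\mu)$, then gives $\exp(-c\mu_r/n)$ pointwise in the outside configuration.
\item If $\cC$ fails, we take the heavy and light extensions of an $(r-1)$-set. An entropy count leaves linearly many light root candidates. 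After truncating the factor weights, the heavy and light roots give two values of one polynomial with $[0,1]$ coefficients that differ by a constant factor. We choose a light root that is regular for the averaged coefficient bound, colour its coordinates into boundedly many classes, and expose one class at a time. This yields a martingale with linear increments and controlled exponential moments. A constant-factor gap then forces either an upper deviation at the light root or a lower deviation at the heavy root. Each of these is a single-root event with the required conditional bound.
\end{itemize}

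The main obstacle is this two-star case. The conditional bound must hold almost surely given $\mathscr F(x)$, while the polynomial's coefficients depend on the process run at the stopping rank. I would try to make the coefficients $\mathscr F(x)$-measurable by recording the rank outside the two roots in $\alpha$. The worst-case coefficient is too large near $p\sim n^{-2/(r+1)}$, so only the averaged bound, applied to a suitably chosen regular root, gives exponential moments strong enough for the full range.
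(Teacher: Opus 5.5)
Your outline follows the paper's architecture closely: the logarithmic-loss martingale stopped at a failure of $\cR$ or $\cB$ while $\cA$ holds, the median split, vertex links to a lower-tail fingerprint handled by Janson, and, for the two-star fingerprint, entropy, truncation, a coefficient-regular light root and rainbow exposure, with the outside rank recorded in the label. The step that fails as written is the construction of $\cU$, which carries most of the probabilistic work.

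\emph{Densities.} The backward process passes through every density $q_j\in[p/3,1]$. Bounds for $G_p$ alone therefore say nothing about the graphs $H_j$ on which the martingale increments and the fingerprints live. The paper proves its bounds on a geometric grid of $\Theta(\log n)$ densities. It transfers them by monotonicity and by the exact-rank sandwich $\cO_W$, whose failure is put into $\cM_W$.

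\emph{Maximum-type caps.} Two caps are needed: every edge lies in at most $C_r(\eta_r(q)+\log n)$ copies of $K_r$, which gives $\max_f\xi_f=o(1)$; and every vertex lies in at most $C_r\mu_\ell(n,q)/n$ copies of $K_\ell$ after deleting at most $3r$ vertices, which is the upper side of $\cR$ and hence the entropy bound $\mathsf h_z\le\log(C_r\mu_r(n,q)/n)$. These cannot come from first-moment bounds. They need $n^{-\omega(1)}$ tails per edge or vertex, which the paper takes from the Johansson--Kahn--Vu polynomial concentration theorem, followed by a union bound.

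\emph{Averaged sums.} Separate Markov bounds at $\Theta(\log n)$ grid points, combined by Harris, give only $\Pr(\cU)\ge n^{-O(1)}$, not a constant. The paper instead applies Markov once to $\Xi=\sum_iA(q_i)^{-1}(\cdots)$, using $\sum_iA(q_i)^{-1}=O(1)$. This inserts the loss $A(q)=\max\{1,\sqrt{q/p}\}$ into the bound at density $q$. One must then check two things, which is the content of \eqref{eq:scale-comparisons}: the dyadic martingale sum $\sum_a(a^{-(e-1/2)}+\eta_r(p)a^{-1/2})$, with $e=\binom r2$, still converges; and the rainbow exponent $\theta_q/(A(q)T(q))$, with $\theta_q=\mu_r(n,q)/n$, is still at least $c_r\mu_r(n,p)/n$.

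\emph{The expectations.} These are not routine. You need $\EE[Y^2e^{cY/T(q)}]=O(\eta_r(q)T(q))$ not only for edge loads but also for the mixed coefficients $\Gamma_{x,u,k}$ with $k<r-2$. Your $\cU$ omits these, although the predictable caps in the rainbow exposure require them. When $\eta_r(q)$ is only polylogarithmic, this needs an exponential tail for a single coefficient up to the cap $K(\eta_r(q)+\log n)$; that is the cluster-expansion estimate of Appendix~\ref{app:coefficient-profile}.

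There are three smaller points. First, Janson's $\Delta$ is not $o(\mu)$ in general. For link uniformities $\ell<r$, and for densities $q\gg p$, one only has $\Delta\le C\mu(1+\eta_\ell(q))$; for $\ell=3$ this reads $C\mu(1+nq^2)$, and $nq^2\to\infty$ throughout the window when $r\ge4$. So the exponent is of order $\min\{\mu_\ell(n,q)/n,\,nq\}$, and you must verify that this is at least $c\,\mu_r(n,p)/n$. Second, coefficient regularity of the light root depends on its own star, so you cannot condition on it. The paper shows that final regularity implies a predictable cap before each colour class is exposed, and runs a stopped exponential martingale that bounds the intersection of regularity with the deviation. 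Third, the uniform-neighbourhood computations presuppose a normal root degree at an exact rank. You therefore need a third witness type, the sample anomaly, bounded via the outside quantile event of Lemma~\ref{lem:ranked-star}.
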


Sections~\ref{sec:deletion-process} and~\ref{sec:fingerprints} reduce
factor failure to a stopping pattern and then to a rooted fingerprint.
Section~\ref{sec:upper-regularity} supplies the common random-graph
bounds, and Section~\ref{sec:fixed-rank} estimates the fingerprints
and completes the theorem.  Besides $\cU$, we use an exact-rank
comparison event $\cO_W$ for each fixed $W$.  Its failure probability
is $\exp(-\Omega(p|W|^2))$ and is included in $\cM_W$.
The distinction is useful: $\cU$ is retained throughout the argument,
whereas all events $\cM_W$ are treated by a union bound.

\section{From factor failure to a stopping pattern}
\label{sec:deletion-process}

Fix $b=o(n)$ from Theorem~\ref{prop:one-root-witness} and
$W\subseteq[n]$ with $|[n]\setminus W|\le b$ and $r\mid |W|$.
Write $m=|W|$ and $N=\binom m2$.  Order the edges of $K_W$ as
$e_1,\ldots,e_N$ by increasing birth time, and set
$H_j=H_j(W)=\{e_1,\ldots,e_j\}$ and $q_j=j/N$.
We read this chain backwards, deleting a uniform present edge at each
step, until $G_p[W]=H_{J_p}$, where $J_p=|E(G_p[W])|$.

The reduction below says that factor failure forces a local condition
to fail while the factor count is still large.  We define its four
conditions immediately after the statement.

\begin{proposition}
\label{prop:stopping-reduction}
There is an event $\cM_W$ with
$\Pr(\cM_W)\le\exp(-c|W|\mu_r(n,p)/n)$ such that, on $\cU$, if
$G_p[W]$ has no $K_r$-factor, then either $\cM_W$ occurs or, at some
rank $j$ before the $p$-endpoint, $\cA(H_j,q_j)$ holds and one of the
following disjoint patterns occurs:
\[
 \overline{\cR(H_j,q_j)},\qquad
 \cR(H_j,q_j)\cap\overline{\cB(H_j)}\cap\overline{\cC(H_j)},\qquad
 \cR(H_j,q_j)\cap\overline{\cB(H_j)}\cap\cC(H_j).
\]
\end{proposition}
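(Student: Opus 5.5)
We follow the Johansson--Kahn--Vu deletion process, reading the chain $H_N\supset H_{N-1}\supset\cdots$ backwards from the complete graph on $W$ down to $H_{J_p}=G_p[W]$. Write $F_j$ for the number of $K_r$-factors of $H_j$ and $\xi_j=1-F_{j-1}/F_j$ for the fraction of factors destroyed when the edge $e_j$ is deleted. Since $e_j$ is uniform among the $j$ present edges given $H_j$, a factor of $H_j$ uses $\binom r2 m/r$ of these edges, so
\[
 \E[\xi_j\mid H_j]=\frac{(r-1)m}{2j}.
\]
The telescoping identity $\log F_{J_p}=\log F_N+\sum_{j>J_p}\log(1-\xi_j)$ then compares the actual factor count with the deterministic prediction $\log(\text{expected number of factors in }G(m,q_j))$. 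The barrier $\cA(H_j,q_j)$ is a lower bound on $F_j$ at a fixed fraction, say $e^{-\varepsilon m}$ or $e^{-\varepsilon\mu_r m/n}$, of this expected count. It holds at $j=N$, and if it still held at the $p$-endpoint then $F_{J_p}>0$, so a factor would exist. Hence on factor failure there is a first rank $j$ at which some condition fails. Define the stopping time $T$ as the first rank, reading downward, at which $\cA$, $\cR$ or $\cB$ fails.

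On the event that $\cR$ and $\cB$ hold up to $T$, we show that $\cA$ cannot fail first except on $\cM_W$. The condition $\cB$ says the factor weights $w(e)=|\{\text{factors of }H_j\text{ containing }e\}|/F_j$ satisfy $\max_e w(e)\le C\,\overline w$, so $\xi_j\le C(r-1)m/(2j)$ is small. For small $\xi_j$ we have
\[
 -\log(1-\xi_j)=\xi_j+O(\xi_j^2).
\]
Consequently $\sum_j\bigl(-\log(1-\xi_j)-\E[\,\cdot\mid H_j]\bigr)$ is a martingale with bounded increments, stopped at $T$. The condition $\cR$ (rooted clique counts near their means) is what makes the drift $\E[-\log(1-\xi_j)\mid H_j]$ match the derivative of the log of the expected factor count up to a negligible second-order term. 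This uses the conditional variance $\E[\xi_j^2\mid H_j]=j^{-1}\sum_e w(e)^2$, controlled on $\cU$ by the averaged exponential second moment of the edge loads, together with $\cB$. Freedman's inequality over the roughly $N-J_p=\Theta(m^2)$ steps then gives a deviation probability $\exp(-\Omega(\text{deviation}^2/\text{variance}))$. Choosing the barrier slack of order $\varepsilon m$ against a total variance of order $m\cdot n/\mu_r\cdot o(1)$ yields $\exp(-c\,m\mu_r/n)$. The set $\cM_W$ is this martingale event together with the exact-rank comparison event $\cO_W$, whose probability $\exp(-\Omega(pm^2))$ is even smaller because $pm^2\gg m\mu_r/n$ in our range.

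Outside $\cM_W$, therefore, at rank $T$ the barrier $\cA(H_T,q_T)$ still holds and either $\cR$ or $\cB$ fails. Splitting the case $\cR\cap\overline{\cB}$ according to whether $\cC$ holds gives the three disjoint patterns in the statement.

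The main obstacle is the variance bound in the martingale step. A worst-case bound via $\max_e w(e)$ is too weak at the required precision $\exp(-\Omega(m\mu_r/n))$ near $p\approx n^{-2/(r+1)}$. I would instead bound $\sum_e w(e)^2$ through the averaged exponential moment of the edge loads supplied by $\cU$, before the stopping time. If that fails, I would fall back on a piecewise treatment: run Freedman separately on dyadic ranges of $q_j$ and sum the errors.
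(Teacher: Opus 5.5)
\paragraph{The concentration step fails.} Your outline matches the paper's: the backward deletion chain, the log-loss martingale stopped at the first failure of $\cA$, $\cR$ or $\cB$, $\cM_W$ built from a martingale deviation together with $\cO_W^c$, and the final split by $\cC$. The concentration step does not give the stated exponent as you propose it, and the problem starts with a misreading of $\cB$.

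\begin{itemize}
\item $\cB$ balances the \emph{clique} weights $w_H(K)=\Phi(H-K)$, $K\in K_r(H)$, not the edge weights $\xi_e$.
\item From clique balance, the weight-sum identity and the lower side of $\cR$ (which gives $|K_r(H)|=\Theta_r(m\theta_q)$, with $\theta_q=\mu_r(n,q)/n$), you only get $\xi_f\le C_rD_f/\theta_q$, where $D_f=d_{K_r}(f;H)$. This bound, not the drift (which is exactly $\gamma_j$), is where $\cR$ enters.
\item Your bound $\xi_j\le C(r-1)m/(2j)$ is false near $q=p$. There $\eta_r(q)\to0$, so most present edges lie in no $K_r$. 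The mean $\gamma_j\asymp1/(qm)$ is then carried by the few edges in copies of $K_r$, the heaviest of which have $\xi_f\gtrsim1/\theta_q\asymp\gamma_j/\eta_r(q)$. In particular, an edge-balance condition would fail at every such rank.
\item The only available worst-case load bound is $D_f\le C_r(\eta_r(q)+\log n)$. With it, increments can be as large as $K\asymp\log n/\theta_p$.
\item The accumulated conditional variance is $b=\Theta(m/\theta_p)$, not $o(m/\theta_p)$; the lowest dyadic block alone contributes this much.
\end{itemize}

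Freedman's bound $\exp(-a^2/(2(b+Ka)))$ with $a\asymp m$ therefore gives only $\exp(-cm\theta_p/\log n)$. It is the range term, not the variance, that loses. Running Freedman block by block does not help, because the lowest block has the same range.

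\paragraph{The missing idea.} Drop bounded-increment inequalities and use averaged exponential moments directly. For $\lambda=c_r\theta_p$, the inequality $e^u\le1+u+u^2e^u/2$ gives
\[
 \EE\bigl[e^{\lambda(L_j-\ell_j)}\mid\mathcal F_{N-j}\bigr]\le\exp\bigl(\lambda^2\EE[L_j^2e^{\lambda L_j}\mid\mathcal F_{N-j}]/2\bigr).
\]
Hence $\exp\bigl(\lambda M_t-\tfrac{\lambda^2}{2}\sum_j\EE[L_j^2e^{\lambda L_j}\mid\mathcal F_{N-j}]\bigr)$ is a supermartingale.

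By \eqref{eq:scale-comparisons}, $\lambda L_f\le c'D_f/T(q)$. The averaged exponential edge-load bound in Proposition~\ref{prop:uniform-input}(ii) then gives $\EE_f[L_f^2e^{\lambda L_f}]\le C_rA(q)\eta_r(q)T(q)/\theta_q^2$. Summed over dyadic density blocks this is $O_r(m/\theta_p)$. The maximal inequality for nonnegative supermartingales then yields $\exp(-\lambda m+C_r\lambda^2m/\theta_p)\le e^{-cm\theta_p}$.

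In this way the rare heavy edges, with $D_f$ of order $\log n$, are paid for on average rather than in the worst case. This is what permits the full range $p\ll n^{-2/(r+1)}$. Your ``main obstacle'' paragraph points at the right input, but Freedman's inequality cannot exploit it.

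\paragraph{Two further repairs.}
\begin{itemize}
\item Since $\cU$ is not adapted to the deletion filtration, the stopping rule must also stop when either edge-load bound fails. This never happens on $\cU\cap\cO_W$, but it is needed for the supermartingale to be valid on every path.
\item $\cM_W$ should also contain $\{J_p\notin[pN/2,2pN]\}$. This guarantees that every visited rank has density at least $p/3$, where Proposition~\ref{prop:uniform-input} applies.
\end{itemize}
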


\subsection{The control conditions}

For a graph $H$, let $K_\ell(H)$ be its family of $\ell$-cliques and
let $d_{K_\ell}(v;H-R)$ count those containing $v$ in $H-R$.
Write $\Phi(H)$ for the number of $K_r$-factors and
$w_H(S)=\Phi(H-S)$ for $S\in\binom Wr$.  A factor together with one
of its blocks can be counted in two ways, giving
\[
 \sum_{K\in K_r(H)}w_H(K)=\frac mr\Phi(H).
\]
The proportion of factors lost when an edge $f$ is deleted is thus
$\sum_{K\ni f}w_H(K)/\Phi(H)$, where the sum is over cliques
containing $f$.  We control this quantity by keeping both the number
of cliques and their weights regular.

Unless stated otherwise, constants in the upper-bound proof depend
only on $r$ and are chosen compatibly throughout.  We take
$c_{\mathrm R}$ sufficiently small and $C_{\mathrm R},C_B,C_A$
sufficiently large as required below, and choose $\nu$ after the
constants in the fingerprint estimates have been fixed.
Constants in auxiliary estimates may also depend on the fixed
parameters in their statements; these are functions of $r$ in our applications.
At density label $q$, the \emph{factor-count barrier} is
\[
 \cA(H,q):\qquad
 \Phi(H)\ge e^{-C_Am}\left(\frac{\mu_r(n,q)}n\right)^{m/r}.
\]
The \emph{rooted-regularity} condition is
\[
 \cR(H,q):\qquad
 c_{\mathrm R}\frac{\mu_r(n,q)}n
 \le d_{K_r}(v;H-R)\le C_{\mathrm R}\frac{\mu_r(n,q)}n
\]
for every $v\in W$ and $R\subseteq W\setminus\{v\}$ with
$|R|\le3r$.  The bounded deleted set is needed for the link arguments
in Section~\ref{sec:fingerprints}.
The \emph{factor-weight balance} condition is
\[
 \cB(H):\qquad
 \max_{K\in K_r(H)}w_H(K)
 \le\frac{C_B}{|K_r(H)|}\sum_{K\in K_r(H)}w_H(K).
\]
Finally, for $Y\in\binom W{r-1}$ put
$\psi_H(Y)=\max_{x\in W\setminus Y}w_H(Y\cup\{x\})$.
The \emph{median condition} is
\[
 \cC(H):\qquad
 \psi_H(Y)\le\max\{n^{-2(r-1)}\Phi(H),
          2\med_{x\in W\setminus Y}w_H(Y\cup\{x\})\}
\]
for every $Y$.  Only $\cR$ and $\cB$ control the deletion process;
$\cC$ will divide balance failure into the two cases treated next.

For later estimates set $e=\binom r2$, $\eta_r(q)=n^{r-2}q^{e-1}$,
$T(q)=1+\eta_r(q)$ and $A(q)=\max\{1,(q/p)^{1/2}\}$.
Here $\eta_r(q)$ is the expected edge--clique load, up to a constant.
We use
\begin{equation}
 \mu_r(n,q)/n=\Theta_r(nq\eta_r(q))
 \label{eq:mu-eta-relation}
\end{equation}
and, uniformly for $q\ge p/4$,
\begin{equation}
 (p/q)^eT(q)\le C_r,
 \qquad \frac{\mu_r(n,q)}{nA(q)T(q)}
             \ge c_r\frac{\mu_r(n,p)}n.
 \label{eq:scale-comparisons}
\end{equation}
Indeed, if $q=ap$, the first expression is
$a^{-e}+\eta_r(p)a^{-1}$; for $a\ge1$, the ratio in the second is
$a^{e-1/2}/(1+\eta_r(p)a^{e-1})$ up to a constant.
Both claims follow from $\eta_r(p)=o(1)$; bounded $a\ge1/4$ causes
no change.  We also have $\mu_r(n,p)/n\gg\log n$.

\subsection{The logarithmic-loss martingale}

Proposition~\ref{prop:uniform-input}, proved in the next sections,
supplies the maximum and averaged edge-load bounds on $\cU\cap\cO_W$.
We first show that these bounds prevent the factor count from crossing
its barrier while $\cR$ and $\cB$ hold.

Let
\[
 \mathcal E_W=\{pN/2\le J_p\le2pN\},
 \qquad \Pr(\overline{\mathcal E_W})\le e^{-cpm^2},
\]
where the estimate is Chernoff's inequality.  Set
$j_0=\lceil pN/3\rceil$, $j_\star=\max\{J_p,j_0\}$ and
$\gamma_j=me/(rj)$.
Reveal $J_p$ first and use the filtration
$\mathcal F_t=\sigma(J_p,e_N,\ldots,e_{N-t+1})$.
The rank $J_p$ is independent of the edge ordering, so the next edge
is still uniform in the current graph.
Run the backward chain down to $j_\star$, stopping before the first
deletion whose source fails $\cA$, $\cR$, $\cB$ or either edge-load
bound in Proposition~\ref{prop:uniform-input}(ii).
This is a measurable stopping rule for $(\mathcal F_t)$.

At an included deletion put
$\xi_j=1-\Phi(H_{j-1})/\Phi(H_j)$, $L_j=-\log(1-\xi_j)$,
$\ell_j=\EE[L_j\mid\mathcal F_{N-j}]$ and $Z_j=L_j-\ell_j$.
The source has a positive factor count, and the estimate below gives
$\xi_j=o(1)$.  Put all four variables equal to zero after stopping.
Then $M_t=\sum_{j=N-t+1}^NZ_j$ is a martingale.  Define
\[
 \cD_W=\{\max_{0\le t\le N-j_0}M_t>m\},
 \qquad \cM_W=\cD_W\cup\overline{\mathcal E_W}\cup\cO_W^c.
\]

\begin{lemma}
\label{lem:deletion-martingale}
We have
\[
 \Pr(\cM_W)\le\exp(-c|W|\mu_r(n,p)/n).
\]
Outside $\cM_W$, suppose that whenever $\cA$ holds, $\cR$, $\cB$
and the two edge-load bounds hold at the source graph.  Then
$\Phi(G_p[W])\ge e^{-C_0|W|}(\mu_r(n,p)/n)^{|W|/r}>0$.
\end{lemma}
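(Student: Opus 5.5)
Two things have to be shown: the probability bound for $\cM_W=\cD_W\cup\overline{\mathcal E_W}\cup\cO_W^c$, and the deterministic lower bound on $\Phi(G_p[W])$ outside $\cM_W$. We handle the three pieces of $\cM_W$ separately. Chernoff gives $\Pr(\overline{\mathcal E_W})\le e^{-cpm^2}$. By the setup, $\Pr(\cO_W^c)\le\exp(-\Omega(pm^2))$. Since $m=n-o(n)$ and $\eta_r(p)=o(1)$, \eqref{eq:mu-eta-relation} gives $pm^2\gg m\mu_r(n,p)/n$, so both terms fit inside the target bound.

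The main work is $\Pr(\cD_W)$, which we bound with a Freedman/Bernstein-type exponential martingale inequality. Fix an included deletion at rank $j$, from source graph $H=H_j$ with density label $q=q_j\ge p/4$. The deleted edge $f$ is uniform among the $j$ present edges, and it destroys the fraction $\xi_j=\sum_{K\ni f}w_H(K)/\Phi(H)$ of factors. Under $\cB$ and the identity $\sum_K w_H(K)=\frac mr\Phi(H)$,
\[
\xi_j\le \frac{C_B m}{r|K_r(H)|}\,\deg_{K_r}(f;H),
\]
where $\deg_{K_r}(f;H)$ is the number of $r$-cliques of $H$ containing $f$. Under $\cR$ we have $|K_r(H)|=\Theta(m\mu_r(n,q)/n)$, so $\xi_j\lesssim \deg_{K_r}(f;H)/(\mu_r(n,q)/n)$. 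The maximum edge-load bound of Proposition~\ref{prop:uniform-input}(ii) then gives $\xi_j=o(1)$, so $L_j\le 2\xi_j$. The averaged exponential second-moment edge-load bound controls $\EE[e^{\lambda L_j}\mid\mathcal F_{N-j}]$ for $\lambda$ up to order $\mu_r(n,q)/(nT(q))$. This yields a conditional variance proxy of order $\gamma_j\cdot T(q)\,n/\mu_r(n,q)$, up to constants, and an increment range of order $A(q)T(q)n/\mu_r(n,q)$.

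Now sum over $j$ from $N$ down to $j_0$. Since $\sum_j\gamma_j\cdot(\text{loss scale})$ is a harmonic-type sum weighted by $(p/q)^e T(q)\le C_r$ from \eqref{eq:scale-comparisons}, the total predictable variance is $O(m\,n/\mu_r(n,p))$ and the increment scale is $O(n/\mu_r(n,p))$. Freedman's inequality with deviation $m$ then gives
\[
\Pr(\cD_W)\le\exp\bigl(-c\,m\,\mu_r(n,p)/n\bigr),
\]
which absorbs the union over $t$. We expect this step to be the main obstacle: the edge-load moment bound has to be applied with a $\lambda$ that varies with $q$, and the second comparison in \eqref{eq:scale-comparisons} is exactly what keeps $\lambda$ uniformly of order $\mu_r(n,p)/n$.

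For the deterministic part, outside $\cM_W$ the hypothesis says the stopping rule can only trigger through a failure of $\cA$. Suppose first that $\cA$ never fails. Then we reach $j_\star$ with
\[
\log\Phi(H_N)-\log\Phi(H_{j_\star})=\sum_j L_j\le \sum_j\ell_j+m .
\]
The compensator is $\ell_j\approx\EE\xi_j=\frac{m e}{r j}(1+o(1))=\gamma_j(1+o(1))$, since a uniform edge lies in $e|K_r|/j$ cliques on average. Summing gives
\[
\frac{me}{r}\log\frac{N}{j_\star}+o(m)=\frac mr\log\Bigl(\frac{\mu_r(n,1)}{\mu_r(n,q_{j_\star})}\Bigr)+O(m).
\]
Combined with the trivial count of factors in $K_W$, namely $\Phi(K_W)\ge e^{-O(m)}(\mu_r(n,1)/n)^{m/r}$, this gives
\[
\Phi(H_{j_\star})\ge e^{-C_0' m}\bigl(\mu_r(n,q_{j_\star})/n\bigr)^{m/r}.
\]
Taking $C_A>C_0'$ shows that $\cA$ does indeed never fail along the run: the same computation applies at every intermediate rank, so the barrier is never crossed, and the argument is consistent. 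Finally, on $\mathcal E_W$ we have $j_\star=J_p$ or $j_0\le J_p$ with $q$ comparable to $p$. In the case $j_\star=j_0>J_p$, we still need to continue down to $J_p$. Because $q_{j_0}$ and $p$ are within a constant factor, we run the same estimate over that last stretch, where the sum of $\gamma_j$ is $O(m)$. This gives $\Phi(G_p[W])\ge e^{-C_0m}(\mu_r(n,p)/n)^{m/r}$.
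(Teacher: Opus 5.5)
The architecture matches the paper's: an exponential supermartingale for $M_t$ driven by the averaged edge-load bound, then the product of surviving proportions together with Stirling for $\Phi(K_W)$. There is, however, a genuine gap in the summation step. As written, it loses a factor $\log n$ in the exponent.

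\textbf{The variance sum.} The ranks $j_0\le j\le N$ span $\Theta(\log(1/p))=\Theta(\log n)$ dyadic density scales, and $\sum_{j\ge j_0}\gamma_j=\Theta(m\log n)$. If you only use $(p/q)^eT(q)\le C_r$, your per-step proxy $\gamma_jT(q)/\theta_q\asymp\gamma_j(p/q)^eT(q)/\theta_p$ (with $\theta_p=\mu_r(n,p)/n$) sums to $O(m\log n/\theta_p)$. With that total, the best the exponential supermartingale gives is $\Pr(\cD_W)\le\exp(-cm\theta_p/\log n)$, which is not the stated bound.

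What is needed is the decay in $a=q/p$, namely $(p/q)^eT(q)=a^{-e}+\eta_r(p)a^{-1}$. The averaged bound in Proposition~\ref{prop:uniform-input}(ii) puts the factor $A(q)=a^{1/2}$ into the \emph{second-moment} term, not into an increment range where you placed it. So the one-step bound is $C_rA(q)\eta_r(q)T(q)/\theta_q^2$. A dyadic block $ap\le q<2ap$, which has $O(m^2ap)$ ranks, then contributes $C_rm\theta_p^{-1}(a^{-(e-1/2)}+\eta_r(p)a^{-1/2})$. These contributions sum geometrically to $O(m/\theta_p)$, and this dyadic computation is the substance of the paper's proof.

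\textbf{The compensator.} The same problem appears in your deterministic part. A uniform relative error $\ell_j=\gamma_j(1+o(1))$, summed against $\sum_j\gamma_j=\Theta(m\log n)$, gives an error $o(m\log n)$, not $o(m)$. You need the additive bound $\ell_j\le\gamma_j+2\EE[\xi_j^2\mid\mathcal F_{N-j}]$. Combine it with $\sum_j\EE[\xi_j^2\mid\mathcal F_{N-j}]\le\sum_j\EE[L_j^2e^{\lambda L_j}\mid\mathcal F_{N-j}]=O(m/\theta_p)=o(m)$, which comes from the same dyadic sum.

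\textbf{The martingale inequality.} Do not use textbook Freedman with an almost-sure increment bound. The maximum load is $C_r(\eta_r(q)+\log n)$, so near $q\approx p$ the increments are only $O(\log n/\theta_p)$, which again costs $\log n$. Instead run $\exp(\lambda M_t-\frac{\lambda^2}{2}\sum_j\EE[L_j^2e^{\lambda L_j}\mid\mathcal F_{N-j}])$ as a supermartingale, using $e^u\le1+u+u^2e^u/2$. Take one fixed $\lambda=c_r\theta_p$; it does not vary with $q$. This $\lambda$ is admissible at every level because $(p/q)^eT(q)\le C_r$ gives $\lambda L_f\le c'_rD_f/T(q)$.

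\textbf{The endpoint.} Your case $j_\star=j_0>J_p$ is vacuous. On $\mathcal E_W$ one has $J_p\ge pN/2\ge j_0$, so $j_\star=J_p$. It could not be handled anyway, since $\cD_W$ gives no control below rank $j_0$.
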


We begin with the one-step estimate.  For an edge $f$ in the current
graph, write $D_f=d_{K_r}(f;H)$ and
$\xi_f=1-\Phi(H-f)/\Phi(H)$.

\begin{lemma}
\label{lem:one-step}
Suppose $H=H_j(W)$ has a positive factor count, $q=j/N\ge p/3$, and
$\cR(H,q)$, $\cB(H)$ and both edge-load bounds hold.
Write $\theta_q=\mu_r(n,q)/n$ and let $f$ be uniform in $E(H)$.
Then $\EE_f\xi_f=\gamma_j$, $0\le\xi_f\le C_rD_f/\theta_q$ and
$\max_f\xi_f=o(1)$.
For $L_f=-\log(1-\xi_f)$ and $\lambda=c_r\mu_r(n,p)/n$, with
$c_r$ sufficiently small,
\[
 \EE_fL_f\le\gamma_j+2\EE_f\xi_f^2,
 \qquad
 \EE_f[L_f^2e^{\lambda L_f}]
       \le\frac{C_rA(q)\eta_r(q)T(q)}{\theta_q^2}.
\]
\end{lemma}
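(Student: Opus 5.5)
The plan is to read everything off the double-counting identity $\sum_{K\in K_r(H)}w_H(K)=\frac mr\Phi(H)$, using $\cR$ and $\cB$ for pointwise bounds and the two edge-load bounds of Proposition~\ref{prop:uniform-input}(ii) for the maximum and the exponential moment. The exact form of those load bounds is not visible at this point, so the last step depends on what they say.

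\emph{The mean.} Deleting $f$ destroys exactly the factors that use a clique containing $f$. Hence $\xi_f\Phi(H)=\sum_{K\ni f}w_H(K)$. Summing over $f\in E(H)$, each $K$ is counted $e=\binom r2$ times, so
\[
 \sum_f\xi_f=\frac{e}{\Phi(H)}\sum_K w_H(K)=\frac{em}{r}.
\]
Since $|E(H_j)|=j$, this gives $\EE_f\xi_f=em/(rj)=\gamma_j$.

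\emph{The pointwise bound.} By $\cB(H)$, every $w_H(K)$ is at most $C_B\frac mr\Phi(H)/|K_r(H)|$. Hence $\xi_f\le C_BmD_f/(r|K_r(H)|)$. Next, $r|K_r(H)|=\sum_vd_{K_r}(v;H)$, and taking $R=\varnothing$ in $\cR(H,q)$ gives $|K_r(H)|\ge c_{\mathrm R}m\theta_q/r$. Together these give $\xi_f\le C_rD_f/\theta_q$. For $\max_f\xi_f=o(1)$ I use the maximum edge-load bound, which I expect to read $\max_fD_f\le C A(q)T(q)\cdot o(\mu_r(n,p)/n)$, or some comparable form. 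Then \eqref{eq:scale-comparisons}, namely $\theta_q/(A(q)T(q))\ge c_r\mu_r(n,p)/n$, makes $D_f/\theta_q=o(1)$. The hypothesis $q\ge p/3$ is what allows \eqref{eq:scale-comparisons} to be used.

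\emph{The logarithm.} Since $\xi_f\le1/2$ eventually, we have $-\log(1-\xi)\le\xi+\xi^2$. Taking expectations gives $\EE_fL_f\le\gamma_j+2\EE_f\xi_f^2$. Also $L_f\le2\xi_f\le2C_rD_f/\theta_q$, so
\[
 L_f^2e^{\lambda L_f}\le\frac{C_rD_f^2}{\theta_q^2}\exp\Bigl(\frac{2C_r\lambda D_f}{\theta_q}\Bigr).
\]
By \eqref{eq:scale-comparisons}, $\lambda/\theta_q=c_r(\mu_r(n,p)/n)/\theta_q\le c_r'/(A(q)T(q))$. Choosing $c_r$ small therefore bounds the exponent by $\kappa D_f/(A(q)T(q))$ for a small constant $\kappa$. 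It remains to show
\[
 \EE_f\bigl[D_f^2e^{\kappa D_f/(A(q)T(q))}\bigr]\le C_rA(q)\eta_r(q)T(q).
\]
This is what the averaged (exponential second-moment) edge-load bound of Proposition~\ref{prop:uniform-input}(ii) should deliver. As a sanity check on the scale, $\EE_fD_f=e|K_r(H)|/j=\Theta(\eta_r(q))$ by $\cR$ and \eqref{eq:mu-eta-relation}.

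\emph{Main obstacle.} The hard step is matching this last display to the averaged load bound, and in particular keeping the constant in the exponent small enough. If the proposition gives only a second moment with a smaller exponential parameter, I would split the average. On edges with $D_f\le A(q)T(q)$ the exponential factor is $O(1)$, and the second moment then gives the bound. On edges with $D_f>A(q)T(q)$ I would use the exponential tail of the load distribution.
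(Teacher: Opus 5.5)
Your proposal is correct and follows the paper's proof essentially step for step. The same double count gives $\EE_f\xi_f=\gamma_j$, the same combination of $\cB(H)$ with the lower side of $\cR(H,q)$ gives $\xi_f\le C_rD_f/\theta_q$, and the exponential moment is reduced to the averaged edge-load bound in the same way.

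The load bounds you could not see are exactly what you need, so your splitting fallback is unnecessary. Proposition~\ref{prop:uniform-input}(ii) gives $\max_fD_f\le C_r(\eta_r(q)+\log n)$, which has your guessed form because $\eta_r(q)\le T(q)$ and $\log n=o(\mu_r(n,p)/n)$. It also gives $\sum_fD_f^2e^{c_rD_f/T(q)}\le C_rA(q)n^2q\eta_r(q)T(q)$; dividing by $|E(H)|=j=\Theta(n^2q)$ yields your last display once $\kappa$ is at most that $c_r$. Your exponent $\kappa D_f/(A(q)T(q))$ is even smaller than the paper's $c'_rD_f/T(q)$, which the paper obtains from $(p/q)^eT(q)\le C_r$ instead.
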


\begin{proof}
Every factor has $me/r$ edges, giving $\EE_f\xi_f=me/(rj)$.
Balance and the weight-sum identity give
$w_H(K)/\Phi(H)\le C_rm/|K_r(H)|=O_r(1/\theta_q)$.
Summing over the $D_f$ cliques containing $f$ proves the bound on
$\xi_f$.  The maximum-load estimate then gives
\[
 \max_f\xi_f\le C_r\frac{\eta_r(q)+\log n}{\theta_q}
       =O_r\left(\frac1{nq}+\frac{\log n}{\mu_r(n,p)/n}\right)=o(1).
\]
Thus $L_f\le\xi_f+2\xi_f^2$ and $L_f\le2\xi_f$, proving the mean
bound.  The scale comparison gives
$\lambda L_f\le C_rc_r(p/q)^eD_f\le c'_rD_f/T(q)$ after decreasing
$c_r$.  The averaged load bound now yields
\[
 \EE_f[L_f^2e^{\lambda L_f}]
 \le\frac{C_r}{\theta_q^2|E(H)|}
       \sum_{f\in E(H)}D_f^2e^{c'_rD_f/T(q)}
 \le\frac{C_rA(q)\eta_r(q)T(q)}{\theta_q^2},
\]
since $|E(H)|=\Theta(n^2q)$.
\end{proof}

\begin{proof}[Proof of Lemma~\ref{lem:deletion-martingale}]
Set $\theta_p=\mu_r(n,p)/n$ and $\lambda=c_r\theta_p$ as above.
First sum the one-step bounds over the density levels.
In a dyadic block $ap\le q_j<2ap$, $a=1,2,4,\ldots$, there are
$O(m^2ap)$ ranks, and its contribution is at most
\[
 \sum_{j:\,ap\le q_j<2ap}
 \frac{A(q_j)\eta_r(q_j)T(q_j)}{(\mu_r(n,q_j)/n)^2}
 \le\frac{C_rm}{\theta_p}
       \bigl(a^{-(e-1/2)}+\eta_r(p)a^{-1/2}\bigr).
\]
Here $\eta_r(q_j)=\Theta_r(a^{e-1}\eta_r(p))$,
$\mu_r(n,q_j)/n=\Theta_r(a^e\theta_p)$ and
$\theta_p=\Theta_r(np\eta_r(p))$.
Both dyadic sums converge, and the ranks with $p/3\le q_j<p$
contribute the same order as the first block.  Hence, along the
stopped process,
\[
 \sum_{j\ge j_0}\EE[L_j^2e^{\lambda L_j}\mid\mathcal F_{N-j}]
       =O_r(m/\theta_p),
 \qquad
 \sum_{j\ge j_0}\EE[\xi_j^2\mid\mathcal F_{N-j}]=o(m).
\]
The second claim follows from $\xi_j\le L_j$ and $\theta_p\to\infty$.

For any non-negative $X$ and sigma-field $\mathcal F$, the inequality
$e^u\le1+u+u^2e^u/2$ gives
\[
 \EE[e^{\lambda(X-\EE[X\mid\mathcal F])}\mid\mathcal F]
 \le\exp\bigl(\lambda^2\EE[X^2e^{\lambda X}\mid\mathcal F]/2\bigr).
\]
Applying this to $L_j$ shows that
\[
 \exp\left(\lambda M_t-
 \frac{\lambda^2}{2}\sum_{j=N-t+1}^N
       \EE[L_j^2e^{\lambda L_j}\mid\mathcal F_{N-j}]\right)
\]
is a non-negative supermartingale starting at one.
Its maximal inequality and the preceding sum bound give
$\Pr(\cD_W)\le\exp(-\lambda m+C_r\lambda^2m/\theta_p)
\le e^{-c_rm\theta_p}$ for small enough $c_r$.
Both $\Pr(\overline{\mathcal E_W})$ and $\Pr(\cO_W^c)$ are at most
$e^{-cpm^2}$.  Since $pm^2/(m\theta_p)=\Theta_r(1/\eta_r(p))\to\infty$,
these errors give the claimed bound on $\cM_W$.

It remains to show that $\cA$ cannot fail first.
Outside $\cM_W$, take any rank $h$ reached by the stopped chain,
including its terminal graph.  The mean estimate and the squared-loss
sum imply
$\sum_{j=h+1}^NL_j\le\sum_{j=h+1}^N\gamma_j+O_r(m)$.
Since each included deletion has $\xi_j=o(1)$, its factor count stays
positive, and the product of the surviving proportions gives
\[
 \log\Phi(H_h)
 \ge\log\Phi(K_W)-\sum_{j=h+1}^N\gamma_j-O_r(m)
 =\frac mr\log\left(\frac{\mu_r(n,q_h)}n\right)-O_r(m).
\]
For the equality, use Stirling's formula for
$\Phi(K_W)=m!/((r!)^{m/r}(m/r)!)$, the harmonic sum
$\sum_{j>h}\gamma_j=(me/r)\log(1/q_h)+O_r(m)$, and $m=n-o(n)$.
Choose $C_A$ larger than the implicit constant.  The complete graph
satisfies $\cA$, and the display excludes its first failure: the
preceding deletion would still be included and would give the required
barrier at that rank.

Under the hypotheses of the lemma, none of the other stopping
conditions can fail while $\cA$ holds.  Thus the chain reaches
$j_\star=J_p$.  Since $q_{J_p}=\Theta(p)$ on $\mathcal E_W$, the
same bound gives
$\Phi(G_p[W])\ge e^{-C_0m}(\mu_r(n,p)/n)^{m/r}>0$.
\end{proof}

\begin{proof}[Proof of Proposition~\ref{prop:stopping-reduction}]
Work on $\cU\setminus\cM_W$ and suppose that no listed pattern occurs
while $\cA$ holds.  Then $\cR$ and $\cB$ both hold there, regardless
of $\cC$.  Also $\cO_W$ holds, so
Proposition~\ref{prop:uniform-input} supplies both edge-load bounds.
Lemma~\ref{lem:deletion-martingale} gives a factor in $G_p[W]$,
proving the contrapositive.
\end{proof}
\section{From stopping patterns to rooted fingerprints}
\label{sec:fingerprints}

We now turn each stopping pattern into a local configuration, which we
call a \emph{fingerprint}.  The argument is deterministic; the
conditional probability bounds are proved in
Section~\ref{sec:fixed-rank}.
Fix $H=H_j(W)$ and $q=q_j(W)$, and assume the edge and rooted
clique-degree upper bounds in Proposition~\ref{prop:uniform-input}.

A \emph{lower-tail fingerprint} consists of a root $x$, an integer
$2\le\ell\le r$, a restricted-rank outside graph, and an
outside-determined family $\cH\subseteq K_{\ell-1}(H-x)$ such that
\[
 |\cH|\ge c_0n^{\ell-1}q^{\binom{\ell-1}{2}},
\]
but the number $X_x$ of its members completed to a $K_\ell$ through
$x$ satisfies
\[
 X_x<c_1\mu_\ell(n,q)/n.
\]
Thus the outside graph offers many extensions, but the root completes
far too few of them.

A \emph{two-star fingerprint} consists of distinct roots $x,y$ and a
homogeneous polynomial $Z(t)=\sum_{A\in\cH}c_A\prod_{u\in A}t_u$,
where $\cH\subseteq K_{r-1}(H-\{x,y\})$, $0\le c_A\le1$, and the coefficients are determined outside both
roots, such that
\[
 Z(N_H(y))\ge a_0\mu_r(n,q)/n,
 \qquad Z(N_H(y))>(1+\rho_0)Z(N_H(x)).
\]

Finally, for $W'\subseteq W$ obtained by deleting $O_r(1)$ vertices,
$z\in W'$ and a rank $h$ in the induced ordering on $K_{W'}$, put
$q_h=h/\binom{|W'|}{2}$ and define the \emph{sample anomaly}
\[
 \mathsf{Anom}(W',z,h):=
 \bigl\{|d_{H_h(W')}(z)-q_h(|W'|-1)|>\nu q_h|W'|\bigr\},
\]
where $\nu$ is the constant in Lemma~\ref{lem:ranked-star}.

\begin{proposition}
\label{prop:structural-fingerprints}
Suppose that $\cA(H_j,q)$ holds and one of the stopping patterns in
Proposition~\ref{prop:stopping-reduction} occurs.  After branching over
at most $n^{C_r}$ choices of vertices, restricted ranks, dyadic scales
and root degrees, one obtains either a sample anomaly, a lower-tail
fingerprint, or a heavy root $x$ and at least $c_{\rm L}|W|$ possible
light roots $y$, each giving a two-star fingerprint.
\end{proposition}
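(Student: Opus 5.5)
I would treat the three stopping patterns separately, following the overview. The only probabilistic input will be the upper bounds of Proposition~\ref{prop:uniform-input} (edge loads and rooted clique-degree upper bounds), which are assumed here. Every vertex, rank, scale or degree chosen along the way is branched over explicitly, and the total count is kept polynomial.

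\emph{Pattern $\overline{\cR(H_j,q)}$.} Since the rooted upper bounds are given, failure of $\cR$ means $d_{K_r}(v;H-R)<c_{\mathrm R}\mu_r(n,q)/n$ for some $v$ and some $|R|\le 3r$. First branch over $v$ and $R$, at most $n^{3r+1}$ choices. Take the root $x=v$ and $\ell=r$. Let $\cH$ be the family of $(r-1)$-cliques of $H-R-x$; it is determined outside the root. Two things have to be checked. If $|\cH|\ge c_0n^{r-1}q^{\binom{r-1}2}$, we already have a lower-tail fingerprint, with $X_x$ equal to the deficient degree once $c_{\mathrm R}<c_1$. If $|\cH|$ is small, I would descend through the link. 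Either some vertex $u$ has an abnormally low $d_{K_{r-1}}$, which gives a lower-tail fingerprint at a smaller $\ell$ rooted at $u$, or the total $(r-1)$-clique count is a sum over roots of rooted $(r-2)$-clique counts, so a global deficit forces a deficient root. The recursion terminates at $\ell=2$. There $X_x$ is the degree of $x$ into a large outside set, and a deficit is a sample anomaly, measured at the rank restricted to $W'=W\setminus R$.

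\emph{Pattern $\cR\cap\overline{\cB}\cap\cC$.} Here some $K_0$ has $w_H(K_0)>C_B\frac{m}{r}\Phi/|K_r(H)|$. By $\cC$, for each $(r-1)$-subset $Y\subset K_0$, the median extension weight is at least half of $w_H(K_0)$, unless that weight is below $n^{-2(r-1)}\Phi$. The second case is impossible, because by $\cR$ the count $|K_r(H)|$ is of order $n\mu_r/n$. So at least half of the extensions $Y\cup\{x\}$ carry weight $\gtrsim w_H(K_0)$. Summing $w_H$ over $r$-sets and using $\sum_K w_H(K)=\frac mr\Phi$ shows that at most $\approx |K_r(H)|/C_B$ cliques can be heavy. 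Yet a linear number of the heavy $r$-sets $Y\cup\{x\}$ would have to be cliques. Heavy sets spread over a dense family of $r$-sets, so most of them must be non-cliques. I would iterate this argument, peeling one vertex at a time through links and dyadically pigeonholing the weight scale. The result is a vertex $x$ and a family $\cH$ of $(\ell-1)$-cliques of size $\ge c_0n^{\ell-1}q^{\binom{\ell-1}2}$ of which $x$ completes fewer than $c_1\mu_\ell/n$. That is a lower-tail fingerprint, provided $C_B$ is chosen large relative to $c_0,c_1$. This step is bookkeeping-heavy but routine.

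\emph{Pattern $\cR\cap\overline{\cB}\cap\overline{\cC}$.} I expect this to be the main obstacle. Failure of $\cC$ gives $Y$ with $\psi_H(Y)>2\med_x w_H(Y\cup\{x\})$ and $\psi_H(Y)>n^{-2(r-1)}\Phi$. Let $x$ be the heavy maximiser; at least half of the vertices $y$ are light. The plan is to write $w_H(Y\cup\{z\})=\Phi(H-Y-z)$ as a polynomial in the neighbourhood of $z$ in $H-Y$. Expanding the factor count of $H-Y$ by the block containing... this is not directly linear, so instead I would apply $w_H(Y\cup\{z\})$ one level up. The quantity $\sum_{A}\Phi(H-Y-z-A)$, taken over $(r-1)$-cliques $A$ of $H-Y-\{x,y\}$ with $A\subseteq N(z)$, has coefficients $\Phi(H-Y-z-A)$. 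These still depend on the other root. So I remove both roots by truncating: set $c_A=\min\{1,\Phi(H-Y-\{x,y\}-A)/\Theta\}$ for a fixed scale $\Theta$, found by dyadic pigeonholing and branched over, then record the restricted rank outside $\{x,y\}$. Balancing the heavy/light ratio $>2$ against the truncation error produces $Z(N(y))$ versus $Z(N(x))$ with ratio $1+\rho_0$, possibly after swapping roles. It also produces the lower bound $a_0\mu_r/n$ via $\cR$. The entropy step must keep $c_{\rm L}|W|$ light roots $y$. I would use the median: at least $m/2$ vertices are light, and discard those for which the truncation loses too much, using averaging over the weight identity. Controlling the dependence of the coefficients on the opposite root, so that they are genuinely determined outside both roots, is where I expect the real difficulty.
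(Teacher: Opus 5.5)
Your three-way case split is the paper's, and your first two cases follow its route. A lower failure of $\cR$ is handled by the link recursion applied to the family of all $(r-1)$-sets of the restricted graph. When $\cC$ holds, one uses median replacement followed by the same recursion.

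\textbf{A correction in the second case.} A single $Y\subset K_0$ gives no contradiction. It produces only linearly many heavy sets $Y\cup\{x\}$, and only about $nq^{r-1}$ of these are cliques in any case. Failure of $\cB$, by contrast, only bounds the heavy cliques by a small multiple of $|K_r(H)|$. You need three further steps:
\begin{enumerate}
\item Replace all $r$ coordinates in turn, to obtain $\gamma|W|^r$ sets of weight at least $\zeta M$.
\item Threshold $w_H$ at a fixed dyadic level, so that the family is self-determined.
\item Only then does failure of $\cB$ say that this dense family contains fewer than $c|W|^rq^{\binom r2}$ cliques. That is the input to the recursion.
\end{enumerate}

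\textbf{The genuine gap is in the median-failure case.} The polynomial itself is not the difficulty. Put $W_0=W\setminus(Y\cup\{x,y\})$ and $\alpha_A=\Phi(H[W_0]-A)$. Classifying factors by the block through the remaining root gives
\[
 w_H(Y\cup\{x\})=\sum_A\ind_{\{A\cup\{y\}\in K_r(H)\}}\alpha_A,
 \qquad
 w_H(Y\cup\{y\})=\sum_A\ind_{\{A\cup\{x\}\in K_r(H)\}}\alpha_A .
\]
So the heavy weight $M$ is read at the light root, and the light weight at the heavy root. Once $|E(H[W_0])|$ is recorded, the coefficients involve neither root.

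The real issue is the truncation at $b\approx BM/d$, where $d=\mu_r(n,q)/n$. It retains most of the mass at $y$ only if the block through $y$ is spread out. Here $p_A=\alpha_A/M$ is the law of that block in a uniform factor of $H-(Y\cup\{x\})$, and it must be spread over order $d$ extensions. The identity $\sum_A\alpha_A=M$ holds for every $y$, even when all the mass sits on one $A$. So averaging over it cannot exclude concentration at most light roots.

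What excludes concentration is the barrier $\cA$, which your proposal never uses. The argument runs as follows.
\begin{enumerate}
\item $\cA$ together with $M>n^{-2(r-1)}\Phi(H)$ gives $r\log M\ge|W|\log d-O_r(|W|)$.
\item Apply Shearer's inequality to a uniform factor of $H-(Y\cup\{x\})$, projecting onto the block through each vertex. This gives $r\log M\le\sum_z\mathsf h_z$.
\item The upper rooted-degree bound gives $\mathsf h_z\le\log(C_rd)$.
\item Hence the entropy deficits total $O_r(|W|)$, and fewer than $|W|/8$ vertices have $\mathsf h_z<\log d-K$.
\item For every other vertex, $\sum_Ap_A\log(p_Ad)\le K$. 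This forces $\sum_{\alpha_A>BM/d}\alpha_A\le M/5$ once $B$ is large.
\item Intersecting with the at least $(|W|-r)/2$ light vertices leaves $c_{\rm L}|W|$ roots.
\item Take $b=Bm_0/d$ for a dyadic $m_0\in[M,2M)$. Then $Z(N_H(y)\cap W_0)\ge4M/(5b)>2d/(5B)$ and $Z(N_H(x)\cap W_0)<M/(2b)$.
\end{enumerate}
This gives both the lower bound $a_0\mu_r(n,q)/n$ and the ratio $8/5$, with no role swapping. Without this entropy step, your third case yields neither the linearly many light roots nor that lower bound.
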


When the median condition holds, repeated vertex replacement spreads a
large factor weight over a dense family of $r$-sets.  Balance failure
forces this family to contain too few cliques, and vertex links locate
the deficit at one root.  When the median condition fails, entropy
retains many light roots and truncation gives the common coefficients
of a two-star fingerprint.

\subsection{Balance failure and the lower-tail fingerprint}

A family $\cZ(H)\subseteq\binom W\ell$ is \emph{self-determined} if
membership of each $S$ is determined by $H-S$.  In particular, a
threshold condition on $w_H(S)=\Phi(H-S)$ has this property.
At an exact rank, we record the restricted rank before fixing such
outside data.  Any part of the rank identity involving the eventual
root remains in its event, as in Lemma~\ref{lem:record-rank-data}.

\begin{lemma}
\label{lem:median-replacement}
If $\cR(H,q)$ and $\cC(H)$ hold but $\cB(H)$ fails, there is a dyadic
value $J$ and a self-determined family $\cZ_J\subseteq\binom Wr$ with
$|\cZ_J|\ge\gamma |W|^r$ and
$|\cZ_J\cap K_r(H)|<c|W|^rq^{\binom r2}$, where $\gamma,c>0$ depend on $r$.
\end{lemma}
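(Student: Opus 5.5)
The plan is to run the Johansson--Kahn--Vu vertex-replacement argument starting from a single heavy clique. Then I compare the resulting family of heavy $r$-sets with the weight-sum identity $\sum_{K\in K_r(H)}w_H(K)=\frac mr\Phi(H)$.

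\emph{Step 1: a heavy clique.} Write $m=|W|$. Since $\cB(H)$ fails, some $K_0\in K_r(H)$ satisfies
\[
w_H(K_0)>\frac{C_B}{|K_r(H)|}\cdot\frac mr\Phi(H).
\]
In particular $w_H(K_0)>0$. Summing the upper bound in $\cR(H,q)$ over $v$ gives $|K_r(H)|\le C_{\mathrm R}m\,\mu_r(n,q)/(rn)=O_r(m^rq^{\binom r2})$. Hence
\[
w_H(K_0)\ge c'C_B\,\Phi(H)\,m^{1-r}q^{-\binom r2}\ge 2^{r}n^{-2(r-1)}\Phi(H)
\]
for large $n$. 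Fix the dyadic value $J$ with $2^J\le w_H(K_0)<2^{J+1}$, and put
\[
\cZ_J:=\bigl\{S\in\tbinom Wr:\ w_H(S)\ge 2^{J-r}\bigr\}.
\]
Membership of $S$ depends only on $\Phi(H-S)$, so $\cZ_J$ is self-determined.

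\emph{Step 2: few cliques in $\cZ_J$.} Every clique in $\cZ_J$ has weight at least $2^{J-r}$. By the weight-sum identity,
\[
|\cZ_J\cap K_r(H)|\le\frac{(m/r)\Phi(H)}{2^{J-r}}\le\frac{2^{r+1}(m/r)\Phi(H)}{w_H(K_0)}<\frac{2^{r+1}}{C_B}|K_r(H)|=O_r\bigl(C_B^{-1}m^rq^{\binom r2}\bigr).
\]
Taking $C_B$ large relative to the desired $c$ gives the required upper bound. This is the point where the compatible choice of $C_B$ enters.

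\emph{Step 3: many members of $\cZ_J$, via $\cC(H)$.} Write $K_0=\{v_1,\dots,v_r\}$. We replace $v_1,\dots,v_r$ in turn. Suppose $S_{i-1}$ has been built with $w_H(S_{i-1})\ge 2^{-(i-1)}w_H(K_0)$, and let $Y=S_{i-1}\setminus\{v_i\}$. Then
\[
\psi_H(Y)\ge w_H(S_{i-1})\ge 2^{-r}w_H(K_0)>n^{-2(r-1)}\Phi(H),
\]
so $\cC(H)$ forces $\med_{x\in W\setminus Y}w_H(Y\cup\{x\})\ge\psi_H(Y)/2$. Hence at least $(m-r)/2$ vertices $x$ satisfy $w_H(Y\cup\{x\})\ge 2^{-i}w_H(K_0)$. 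Among them, at least $(m-r)/2-2r$ avoid $K_0$ and the previously chosen $x_1,\dots,x_{i-1}$. Choose such an $x_i$ and set $S_i=Y\cup\{x_i\}$.

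This uses the fact that $\cC$ is imposed for every $(r-1)$-set $Y$, not only for subsets of cliques. After $r$ steps, $S_r=\{x_1,\dots,x_r\}$ has weight at least $2^{-r}w_H(K_0)\ge 2^{J-r}$, so $S_r\in\cZ_J$. The number of admissible sequences $(x_1,\dots,x_r)$ is at least $((m-r)/2-2r)^r$. A given $r$-set arises from at most $r!$ sequences, since it determines the sequence up to the order of its elements. Therefore $|\cZ_J|\ge\gamma m^r$ with $\gamma=\gamma(r)>0$.

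\emph{Main obstacle.} I expect Step 3 to carry the real content, namely keeping the median threshold active through all $r$ replacements. The halving loses only $2^{-r}$. The lower bound on $w_H(K_0)$ from balance failure, combined with $\cR$ (and $\Phi(H)>0$, which comes from $\cA$), must keep every intermediate weight above the cutoff $n^{-2(r-1)}\Phi(H)$. The other point needing care is the distinctness count that turns sequences into $\Theta(m^r)$ distinct sets. Everything else is bookkeeping with the identity from Section~\ref{sec:deletion-process}.
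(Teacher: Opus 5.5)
Your proposal is correct and follows essentially the same route as the paper: take a heavy clique from balance failure, use the upper side of $\cR$ to keep every intermediate weight above the exceptional threshold $n^{-2(r-1)}\Phi(H)$, run $r$ median-driven coordinate replacements to obtain $\Omega_r(m^r)$ heavy $r$-sets, define a self-determined family by a dyadic threshold, and count its cliques against the weight-sum identity, with $C_B$ chosen after $\gamma$. The differences are cosmetic. You start from a $\cB$-violating clique rather than the maximum weight, and you bound the heavy cliques directly by Markov rather than by contradiction with $\cB$. Also, the positivity $\Phi(H)>0$ that you attribute to $\cA$ already follows from the failure of $\cB$.
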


\begin{proof}
Set $m=|W|$ and $M=\max_{K\in K_r(H)}w_H(K)$.
The weight-sum identity and rooted regularity give
$M\ge cn\Phi(H)/\mu_r(n,q)$.  Since $\mu_r(n,q)/n\le n^{r-1}$,
every constant multiple of $M$ exceeds the exceptional threshold
$n^{-2(r-1)}\Phi(H)$ in the median condition, for large $n$.

Start with an ordered clique of weight $M$ and replace its $r$
coordinates in order.  If the current set has weight at least
$2^{-(i-1)}M$, the median condition gives at least $(m-r+1)/2$
choices for the next coordinate with new weight at least $2^{-i}M$.
The coordinates remain distinct.  Each final $r$-set has at most $r!$
representations, so at least $\gamma m^r$ sets have weight at least
$\zeta M$, for constants $\gamma,\zeta>0$ depending only on $r$.

Choose a dyadic $J$ with $J\le M<2J$ and let
$\cZ_J=\{S:w_H(S)>\zeta J/2\}$.  This family is self-determined and
has the required size.  If it contained at least
$cm^rq^{\binom r2}$ cliques, each would have weight at least
$\zeta M/4$.  As $|K_r(H)|=\Theta_r(m^rq^{\binom r2})$, their total
weight would imply $\cB(H)$ for sufficiently large $C_B$.
Thus the clique count is below the claimed bound.  There are only
$O(n\log n)$ possible values of $J$, since factor counts lie between
$1$ and $n!$.
\end{proof}

It remains to locate the clique deficit at one root.  The following
link argument also handles a lower failure of rooted regularity.

\begin{lemma}
\label{lem:recursive-fingerprint}
Let $1\le\ell\le r$, let $\gamma>0$, and let
$\cZ\subseteq\binom W\ell$ be self-determined.  If
\[
 |\cZ|\ge\gamma |W|^\ell,
 \qquad
 |\cZ\cap K_\ell(H)|<c_\gamma |W|^\ell q^{\binom\ell2},
\]
then at most $\ell-1$ successive vertex links give a sample anomaly or
a lower-tail fingerprint of uniformity between $2$ and $\ell$.
The choices of vertices and restricted ranks number $n^{O_r(1)}$.
\end{lemma}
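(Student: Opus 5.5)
The plan is induction on $\ell$, peeling off one vertex at a time through links. Fix $\ell$ and $\cZ$ as in the statement. For a vertex $v\in W$, define the link family
\[
 \cZ_v=\{S\in\tbinom{W\setminus\{v\}}{\ell-1}: S\cup\{v\}\in\cZ\},
\]
and its restriction $\cZ_v^H=\{S\in\cZ_v: S\in K_{\ell-1}(H-v)\}$. Two features make these useful. First, $\cZ_v$ is self-determined relative to $H-v$, since membership of $S\cup\{v\}$ is determined by $H-(S\cup\{v\})$. Second, $\cZ_v^H$ is determined outside the star of $v$. A copy $S\cup\{v\}$ lies in $K_\ell(H)$ exactly when $S\in\cZ_v^H$ and $v$ is adjacent to all of $S$. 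Writing $X_v$ for the number of such $S$, we get
\[
 \sum_v X_v=\ell\,|\cZ\cap K_\ell(H)|<\ell c_\gamma|W|^\ell q^{\binom\ell2}.
\]
On the other hand, $\sum_v|\cZ_v|=\ell|\cZ|\ge\ell\gamma|W|^\ell$, so averaging gives at least $\gamma'|W|$ vertices $v$ with $|\cZ_v|\ge\gamma'|W|^{\ell-1}$.

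Each such good $v$ falls into one of two cases.

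\emph{Case (a):} $|\cZ_v^H|\ge c_0|W|^{\ell-1}q^{\binom{\ell-1}2}$. Then $v$ is a candidate root for a lower-tail fingerprint of uniformity $\ell$, with outside family $\cH=\cZ_v^H$.

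\emph{Case (b):} $|\cZ_v^H|<c_0|W|^{\ell-1}q^{\binom{\ell-1}2}$. Then $\cZ_v$, viewed in $W'=W\setminus\{v\}$, satisfies the hypotheses of the lemma for $\ell-1$ with density constant $\gamma'$. We recurse, provided $c_0$ is chosen as $c_{\gamma'}$ from the $(\ell-1)$ step.

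In case (a), Markov's inequality applied to the sum $\sum_v X_v$ shows that for $c_\gamma$ small, relative to $\gamma'$ and $c_1$, most good $v$ in case (a) satisfy
\[
 X_v<c_1 |W|^{\ell-1}q^{\binom\ell2}\asymp c_1\mu_\ell(n,q)/n .
\]
This is exactly a lower-tail fingerprint rooted at $v$. If instead all good vertices are in case (b), we descend with one vertex deleted, changing $c$ by an $r$-dependent factor.

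The base cases terminate the recursion. For $\ell=2$, case (b) says the link family $\cZ_v\subseteq W\setminus\{v\}$, of size $\gamma'|W|$, contains few vertices adjacent to $v$; equivalently this is a lower-tail fingerprint with $\ell=2$ and $\cH$ a set of vertices, i.e. $K_1$'s. So after at most $\ell-1$ links we always reach a fingerprint of uniformity between $2$ and $\ell$.

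The step I expect to be the main obstacle is measurability at exact rank. The family $\cZ_v^H$ must be expressed in terms of an outside graph read at a \emph{restricted} rank. The rank $j$ of $H_j(W)$ mixes edges at $v$ with edges off $v$, so the number of outside edges depends on $d_H(v)$. I would branch over the value of $d_{H}(v)$ (at most $n$ choices) and over the induced rank on $K_{W\setminus\{v\}}$, which is $j-d_H(v)$. With these fixed, the outside graph becomes $H_{h}(W\setminus\{v\})$ with $q_h$ close to $q$. If the root degree differs from $q(|W|-1)$ by more than $\nu q|W|$, we have instead recorded a sample anomaly $\mathsf{Anom}(W',v,h)$; this is why anomalies appear as an alternative outcome. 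The same branching is repeated at each of the $\le r$ link levels, and the vertex choices add a further $n^{O_r(1)}$ factor.

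Finally, the density thresholds have to be kept consistent across levels. When moving from $q$ to the restricted rank $q_h$, the comparison $q_h=(1+O(\nu))q$ keeps every quantity of the form $|W|^{k}q^{\binom k2}$ within constant factors. This is where $\nu$ must be chosen small after the fingerprint constants, as the paper prescribes.
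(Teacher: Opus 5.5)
This is essentially the paper's proof: induction on $\ell$ through the links $\cZ_v$, averaging to find $\Omega_\gamma(|W|)$ vertices with dense links, branching on $d_H(v)$ and the restricted rank $j-d_H(v)$ (an abnormal degree gives a sample anomaly), recursing when $\cZ_v\cap K_{\ell-1}(H-v)$ is sparse, and otherwise using $\sum_v X_v\le\ell|\cZ\cap K_\ell(H)|$ to locate a root with too few completions. There are two small slips. First, at $\ell=2$ your case (b) is vacuous, since every singleton is a $K_1$ and so $\cZ_v\cap K_1(H-v)=\cZ_v$; the uniformity-$2$ fingerprint actually comes from case (a), which is why the paper takes the impossible case $\ell=1$ as its base. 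Second, the split should be ``recurse if \emph{some} good vertex is in case (b); otherwise all good vertices are in case (a) and Markov applies,'' rather than recursing only when all good vertices are in case (b).
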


\begin{proof}
We induct on $\ell$.  The assumptions are impossible for $\ell=1$,
since every singleton is a clique.  For $\ell\ge2$, write
$\cZ_x=\{A:A\cup\{x\}\in\cZ\}$.
The identity $\sum_x|\cZ_x|=\ell|\cZ|$ gives a set $X_0$ of
$\Omega_\gamma(n)$ vertices whose links have size at least
$c'_\gamma n^{\ell-1}$.

For each $x\in X_0$, consider the branch with $h_x=j-d_H(x)$, so that
$H-x=H_{h_x}(W\setminus\{x\})$.  If the degree of $x$ is abnormal,
we have a sample anomaly.  Otherwise the restricted density
$q_x=h_x/\binom{|W|-1}{2}$ satisfies
\[
 q_x-q=\frac{2(q(|W|-1)-d_H(x))}{(|W|-1)(|W|-2)}
          =O(\nu q/n).
\]
For fixed $h_x$, the restricted graph uses only non-$x$ birth times.
If the eventual root is $x$, keep the identity $h_x=j-d_H(x)$ in its
event.  For any other eventual root $z$, first record its total degree
at each intermediate rank.  Lemma~\ref{lem:record-rank-data} then makes
the graph away from $z$ an outside function; only the recorded state
of $zx$ remains in the degree of $x$.  Keep all rank and degree
identities, and these edge-state conditions, in the root event.

Let $\cH_x=\cZ_x\cap K_{\ell-1}(H-x)$.
If some $x\in X_0$ has
$|\cH_x|<c''_\gamma n^{\ell-1}q^{\binom{\ell-1}{2}}$, apply
induction to $\cZ_x$ on the branch $h_x$.
Indeed, membership of $A$ is determined by
$H-(A\cup\{x\})=(H-x)-A$, and the restricted density differs from
$q$ by a factor $1+O(\nu/n)$.

Otherwise every $\cH_x$ has the required natural size.  Let $X_x$
count its members completed through $x$.  Since
$\sum_{x\in X_0}X_x\le\ell|\cZ\cap K_\ell(H)|$, some
$x\in X_0$ has $X_x<c'''_\gamma\mu_\ell(n,q)/n$.
The family $\cH_x$ is determined outside $x$ after the restricted rank
is fixed, so it gives the required fingerprint.  Choose the deficit
constants successively small enough for the lower-tail estimate at
each family-size constant.  Each step records one vertex and one
rank, and there are at most $r-1$ steps.
\end{proof}

\begin{lemma}
\label{lem:balance-fingerprint}
If $\cR(H,q)$ and $\cC(H)$ hold but $\cB(H)$ fails, polynomially
many branches give a sample anomaly or a lower-tail fingerprint.
\end{lemma}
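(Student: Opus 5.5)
The plan is to combine Lemma~\ref{lem:median-replacement} with Lemma~\ref{lem:recursive-fingerprint}, taking $\ell=r$. Both lemmas have essentially been prepared for this; what remains is to match their constants and count the branches.

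The first step uses the hypotheses $\cR(H,q)$, $\cC(H)$ and $\overline{\cB(H)}$. Lemma~\ref{lem:median-replacement} gives, for some dyadic value $J$, a self-determined family $\cZ_J\subseteq\binom Wr$ with $|\cZ_J|\ge\gamma|W|^r$ and $|\cZ_J\cap K_r(H)|<c|W|^rq^{\binom r2}$. There are $O(n\log n)$ admissible choices of $J$, and we branch over all of them. Each such family is self-determined, because membership of $S$ is a threshold condition on $w_H(S)=\Phi(H-S)$, which depends only on $H-S$.

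The second step applies Lemma~\ref{lem:recursive-fingerprint} with $\ell=r$ and $\cZ=\cZ_J$. For this we need the clique-deficit constant $c$ from the first step to be at most $c_\gamma$. Here the order of choices matters. The constant $\gamma$ depends only on $r$, through the replacement procedure, and is fixed first. Lemma~\ref{lem:recursive-fingerprint} then supplies $c_\gamma$. In the proof of Lemma~\ref{lem:median-replacement}, the deficit constant $c$ can be made arbitrarily small by enlarging $C_B$: if $\cZ_J$ contained $cm^rq^{\binom r2}$ cliques, each of weight at least $\zeta M/4$, then $\cB(H)$ would hold once $C_B\gtrsim 1/(c\zeta)$. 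We therefore choose $C_B$ large enough that $c\le c_\gamma$, which is compatible with the convention that $C_B$ is taken sufficiently large. With this choice the lemma yields, after at most $r-1$ successive links, either a sample anomaly or a lower-tail fingerprint of uniformity between $2$ and $r$. It uses $n^{O_r(1)}$ choices of vertices and restricted ranks.

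The final step is the branch count. It is the product of the $O(n\log n)$ choices of $J$ and the $n^{O_r(1)}$ choices of vertices and ranks, which is polynomial. We also record the root degree at each link. Its $O(n)$ possible values contribute only another polynomial factor.

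The main obstacle is the consistency of constants. The deficit threshold has to be set after $\gamma$ but before $\nu$ and the lower-tail constants. The two-sided bound in $\cR$ is used only inside Lemma~\ref{lem:median-replacement}, to get $M\gtrsim n\Phi(H)/\mu_r(n,q)$. So I would verify that enlarging $C_B$ does not interact with $c_{\mathrm R}$ or $C_{\mathrm R}$. It does not, since $\gamma$ and $\zeta$ come only from the median halving and the factor $r!$.
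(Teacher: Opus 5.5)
Your proposal is correct and is exactly the paper's proof: apply Lemma~\ref{lem:median-replacement} and then Lemma~\ref{lem:recursive-fingerprint} with $\ell=r$, branching over the $O(n\log n)$ dyadic values of $J$. Your remark that the deficit constant from Lemma~\ref{lem:median-replacement} can be pushed below $c_\gamma$ by taking $C_B$ large makes explicit a constant-compatibility point the paper leaves implicit. Here $C_B$ must be chosen after $C_{\mathrm R}$, because the contradiction with $\cB(H)$ uses the upper bound on $|K_r(H)|$.
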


\begin{proof}
Apply Lemma~\ref{lem:median-replacement}, followed by
Lemma~\ref{lem:recursive-fingerprint} with $\ell=r$.
\end{proof}

\subsection{Failure of rooted regularity}

The upper side of $\cR$ is supplied by
Proposition~\ref{prop:uniform-input}(iii).  Its lower side leads to
the same fingerprint without the weight argument.

\begin{lemma}
\label{lem:regularity-fingerprint}
If the lower side of $\cR(H_j,q)$ fails, bounded vertex deletions and
polynomially many restricted-rank choices give a sample anomaly or a
lower-tail fingerprint.
\end{lemma}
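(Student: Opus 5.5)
The plan is to reduce the lower failure of $\cR(H_j,q)$ directly to Lemma~\ref{lem:recursive-fingerprint}, with no factor-weight argument. Suppose some $v\in W$ and $R\subseteq W\setminus\{v\}$ with $|R|\le3r$ satisfy
\[
 d_{K_r}(v;H-R)<c_{\mathrm R}\frac{\mu_r(n,q)}n .
\]
First I branch over the at most $n^{3r+1}$ choices of $(v,R)$ and delete $R$. This passes to $W'=W\setminus R$ and the induced graph $H-R=H_{h}(W')$. As in the proof of Lemma~\ref{lem:recursive-fingerprint}, I record the restricted rank $h=j-e_H(R)-e_H(R,W\setminus R)$ by successively recording the degrees of the deleted vertices. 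Any abnormal degree here is a sample anomaly $\mathsf{Anom}(W'',z,h')$ at an intermediate stage. Otherwise the restricted density $q'=h/\binom{|W'|}2$ satisfies $q'=q(1+O(\nu/n))$. The data recorded about edges meeting the eventual root stay inside the root event, via Lemma~\ref{lem:record-rank-data}. This costs $n^{O_r(1)}$ branches.

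Next I take $v$ itself as the root and link once. I set
\[
 \cH=K_{r-1}(H-R-v),
\]
an outside-determined family once the restricted rank $h_v=h-d_{H-R}(v)$ is recorded; its only member condition is adjacency inside $H-R-v$. If $d_{H-R}(v)$ is abnormal, I get a sample anomaly. Otherwise there are two cases.

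\emph{Case $|\cH|\ge c_0n^{r-1}q^{\binom{r-1}2}$.} The number of members completed through $v$ is exactly $d_{K_r}(v;H-R)<c_{\mathrm R}\mu_r(n,q)/n$. Taking $c_{\mathrm R}\le c_1$, this is a lower-tail fingerprint with $\ell=r$.

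\emph{Case $|\cH|<c_0n^{r-1}q^{\binom{r-1}2}$.} Then the clique count of $H'=H-R-v$ is deficient. I apply Lemma~\ref{lem:recursive-fingerprint} with $\ell=r-1$ and $\cZ=\binom{W\setminus(R\cup\{v\})}{r-1}$. This family is trivially self-determined, and $|\cZ|\ge\gamma|W|^{r-1}$ with $\gamma=\gamma_r$. Choosing $c_0\le c_\gamma$ at this density (the density change is only a factor $1+O(\nu/n)$), the lemma yields a sample anomaly or a lower-tail fingerprint of uniformity between $2$ and $r-1$, within $n^{O_r(1)}$ further branches. When $r-1=1$ the hypothesis is impossible, but $r\ge3$ anyway.

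The main obstacle is bookkeeping the constants so that they are compatible. The constant $c_{\mathrm R}$ must lie below the lower-tail threshold $c_1$ attached to $c_0$ for $\ell=r$. Also $c_0$ must lie below the deficit constant $c_\gamma$ of Lemma~\ref{lem:recursive-fingerprint}, and hence below the subsequent constants chosen there. Since all of these are fixed successively in terms of $r$, I choose $c_{\mathrm R}$ last among them. A second, milder point is the measurability of the recorded ranks after deleting $R$ and then $v$. For this I follow the recipe already used in Lemma~\ref{lem:recursive-fingerprint}: keep every rank and degree identity involving the eventual root inside its event, and let the rest be outside-determined.
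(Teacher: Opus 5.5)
Your proposal is correct and follows the paper's own proof essentially step for step. You delete $R\cup\{x\}$ one vertex at a time while recording the restricted ranks, with an abnormal degree giving a sample anomaly. You then split on whether the remaining graph has a natural number of copies of $K_{r-1}$: if so, you get a lower-tail fingerprint at $x$ with $\ell=r$; if not, you apply Lemma~\ref{lem:recursive-fingerprint} to the family of all $(r-1)$-sets. Your constant ordering, $c_0\le c_\gamma$ and then $c_{\mathrm R}\le c_1(c_0)$, is exactly the paper's step of ``taking $c_{\mathrm R}$ sufficiently small''.
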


\begin{proof}
Choose $x$ and $R\subseteq W\setminus\{x\}$, $|R|\le3r$, with
$d_{K_r}(x;H-R)<c_{\mathrm R}\mu_r(n,q)/n$.
Delete $R\cup\{x\}$ one vertex at a time and record each restricted
rank.  An abnormal degree gives a sample anomaly.  Otherwise every
deletion changes the density by a factor $1+O_r(\nu/n)$; in
particular the final density is at least $p/3$ when the original one
is at least $p/2$.  Write $W'=W\setminus(R\cup\{x\})$ and $h$ for
the final rank.  The graph $H_h(W')$ is fixed by its own birth-time
ordering.  For any eventual root $z$, record its total degree at each
stage still containing $z$.  Lemma~\ref{lem:record-rank-data} makes
the non-$z$ graph at each such stage an outside function.  If $z$
itself is deleted, all subsequent fixed-rank graphs use only non-$z$
birth times.  The remaining exceptional root edges meet the bounded
deleted set; keep their states and all rank and degree identities in
the root event.

If $H_h(W')$ contains fewer than
$c'n^{r-1}q^{\binom{r-1}{2}}$ copies of $K_{r-1}$, apply
Lemma~\ref{lem:recursive-fingerprint} to the family of all
$(r-1)$-sets.  Otherwise these copies form a natural-size outside
family, while the choice of $x$ says that it completes too few of
them.  Taking $c_{\mathrm R}$ sufficiently small gives a lower-tail
fingerprint at $x$.
\end{proof}

\subsection{Median failure and the two-star fingerprint}

Here we retain many light roots so that the probability argument can
later avoid the small set with unusually large coefficient load.
We use the following entropy inequality, with natural logarithms.

\begin{lemma}[Shearer]
\label{lem:shearer}
Let $Z=(Z_i)_{i\in I}$ be a finite-valued random vector, and let
$\mathcal A$ be a family of subsets of $I$ such that every coordinate
lies in at least $t$ members, where $t\ge1$ is an integer.
Writing $Z_A=(Z_i)_{i\in A}$, we have
\[
 tH(Z)\le\sum_{A\in\mathcal A}H(Z_A).
\]
\end{lemma}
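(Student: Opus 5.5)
We prove Shearer's inequality by induction on the chain rule for entropy, following the standard argument of Chung, Graham, Frankl and Shearer. Order the index set as $I=\{1,\ldots,N\}$. For $i\in I$ write $Z_{<i}=(Z_k)_{k<i}$. By the chain rule,
\[
 H(Z)=\sum_{i\in I}H(Z_i\mid Z_{<i}),
\]
and for every $A\in\mathcal A$, listing $A=\{a_1<\cdots<a_s\}$,
\[
 H(Z_A)=\sum_{u=1}^sH\bigl(Z_{a_u}\mid Z_{a_1},\ldots,Z_{a_{u-1}}\bigr).
\]

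The key step is the monotonicity of conditional entropy: conditioning on more information does not increase entropy. Since $\{a_1,\ldots,a_{u-1}\}\subseteq\{k:k<a_u\}$, each term satisfies
\[
 H\bigl(Z_{a_u}\mid Z_{a_1},\ldots,Z_{a_{u-1}}\bigr)\ge H(Z_{a_u}\mid Z_{<a_u}).
\]
Hence $H(Z_A)\ge\sum_{i\in A}H(Z_i\mid Z_{<i})$ for every $A\in\mathcal A$.

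Summing over $\mathcal A$ and exchanging the order of summation gives
\[
 \sum_{A\in\mathcal A}H(Z_A)\ge\sum_{i\in I}\bigl|\{A\in\mathcal A:i\in A\}\bigr|\,H(Z_i\mid Z_{<i}).
\]
Each conditional entropy is non-negative because $Z$ is finite-valued, and each coordinate lies in at least $t$ members of $\mathcal A$. Therefore the right-hand side is at least $t\sum_iH(Z_i\mid Z_{<i})=tH(Z)$.

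There is no real obstacle here. The only points needing care are that the family $\mathcal A$ is counted with multiplicity, and that non-negativity of $H(Z_i\mid Z_{<i})$ is what permits replacing the multiplicity by its lower bound $t$. The choice of natural logarithms plays no role, since the inequality is homogeneous in the entropy unit.
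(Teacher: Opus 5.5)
Your proof is correct and complete. The chain rule along a fixed order of $I$, the monotonicity bound $H(Z_{a_u}\mid Z_{a_1},\ldots,Z_{a_{u-1}})\ge H(Z_{a_u}\mid Z_{<a_u})$, and non-negativity of discrete conditional entropy together give $tH(Z)\le\sum_{A\in\mathcal A}H(Z_A)$. The paper does not prove this lemma; it quotes it as the standard Shearer inequality, and your argument is the classical Chung--Graham--Frankl--Shearer proof, so there is nothing to repair.
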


We apply this to a uniform factor, using its block at each vertex as
a projection.

\begin{lemma}
\label{lem:median-fingerprint}
Suppose that $\cA(H_j,q)$ and $\cR(H_j,q)$ hold but $\cC(H_j)$
fails.  Then either a sample anomaly occurs, or there is a heavy root
$x$ and a set $\mathcal Y$ of at least $c_{\rm L}|W|$ light roots
such that every $y\in\mathcal Y$ gives a two-star fingerprint after
polynomially many choices of restricted ranks, a dyadic scale and
root degrees.
\end{lemma}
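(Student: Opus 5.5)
Since $\cC(H_j)$ fails, there is a set $Y\in\binom W{r-1}$ with
\[
 \psi_H(Y)>\max\bigl\{n^{-2(r-1)}\Phi(H),\,2\med_{v}w_H(Y\cup\{v\})\bigr\}.
\]
Take the heavy root $x$ to attain $\psi_H(Y)$. Then at least $(m-r+1)/2$ vertices $v\notin Y$ satisfy $w_H(Y\cup\{v\})\le\tfrac12 w_H(Y\cup\{x\})$. These half-weight vertices are the candidate light roots. The plan has four steps.

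\emph{Step 1: write each weight as a polynomial in a root's neighbourhood.} The aim is to compare $w_H(Y\cup\{x\})$ and $w_H(Y\cup\{v\})$ through a common polynomial in the root neighbourhood. Decompose factors of $H-Y$ according to the block containing the root. For a root $z$, the factors of $H-(Y\cup\{z\})$ are counted by grouping on the block $B\cup\{z\}$ that contains $z$ in a factor of $H-Y$. The difference between the two roots is therefore only which vertex is excluded. To make the coefficients common, I work with $\Phi(H-(Y\cup\{x,y\}\cup A))$ for $(r-1)$-sets $A$. Note that $w_H(Y\cup\{z\})=\Phi(H-Y-z)$ contains the other root $x$ or $y$ in some block. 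Expanding over the block $A\cup\{\text{other root}\}$ gives
\[
 w_H(Y\cup\{x\})=\sum_A \Phi\bigl(H-(Y\cup\{x,y\}\cup A)\bigr)\,\ind\{A\cup y\in K_r\},
\]
and symmetrically with $x$ and $y$ swapped. So define $c_A=\Phi(H-(Y\cup\{x,y\}\cup A))/J$, truncated at $1$, where $J$ is a dyadic scale. Then $Z(N_H(y))$ and $Z(N_H(x))$ are the weights of the heavy and light sets, respectively. These coefficients are determined outside both roots. The ordering in the fingerprint definition, with $Z(N_H(y))$ large, matches this labelling if the fingerprint's $y$ is taken to be the heavy vertex. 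I will relabel as needed.

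\emph{Step 2: choose a truncation scale with little loss.} The truncation must not destroy the factor-$2$ gap. I choose $J$ so that coefficients above $J$ contribute only a small fraction of the heavy sum. This is where Shearer's inequality (Lemma~\ref{lem:shearer}) enters. Take a uniform factor $F$ of $H$, which is possible since $\cA$ gives $\Phi(H)$ large, and project onto the block containing each vertex. This bounds the entropy, and hence the concentration, of the weights $\Phi(H-S)$ over blocks. It also shows that for a linear set $\mathcal Y$ of light roots the maximal coefficient $\max_A\Phi(H-(Y\cup\{x,y\}\cup A))$ is not much larger than the average block weight. That in turn makes $J=\Theta(\text{heavy weight}/(\mu_r/n))$ sufficient.

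The condition $\psi_H(Y)>n^{-2(r-1)}\Phi(H)$, combined with $\cR$, gives the lower bound $Z(N_H(\text{heavy}))\ge a_0\mu_r(n,q)/n$ after normalisation. The remaining gap of at least $(1-o(1))\cdot 2>1+\rho_0$ gives the strict comparison.

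\emph{Step 3: record the ranks.} For each light $y$, remove $x$ and $y$ one at a time. I record the restricted ranks $h=j-d_H(x)-d_H(y)+\ind\{xy\in H\}$ and the root degrees. An abnormal degree is a sample anomaly. Otherwise Lemma~\ref{lem:record-rank-data} makes the coefficients outside functions. There are polynomially many choices of $Y$, $x$, $J$ and ranks.

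\emph{Main obstacle.} I expect Step 2 to be the hardest: retaining a linear number $c_{\rm L}|W|$ of light roots on which the truncation at a fixed scale preserves both the lower bound and the ratio. I would first try averaging over $y$ the total coefficient mass above $J$, bounded through Shearer's inequality. Markov's inequality should then discard at most half of the $(m-r+1)/2$ candidates.
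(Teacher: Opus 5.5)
\textbf{Overall.} Your architecture is the paper's. You take the failing set $Y$ and the maximiser $x$ with $M=w_H(Y\cup\{x\})$, and the half of the vertices with $w_H(Y\cup\{z\})<M/2$. You build common coefficients from $\alpha_A=\Phi(H[W_0]-A)$ on $W_0=W\setminus(Y\cup\{x,y\})$, truncated at a dyadic scale of order $M/d$ with $d=\mu_r(n,q)/n$, and you record the ranks. Two small points on Step~1:
\begin{itemize}
\item $c_A$ must also carry the indicator that $A$ spans an $(r-1)$-clique.
\item Your expansion shows that no relabelling is needed. $Z(N_H(y))$ is the rescaled heavy weight $M$ and $Z(N_H(x))$ the rescaled light weight $w_H(Y\cup\{y\})$, so the fingerprint's $y$ is a light root. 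Keep it that way: Engine~A later needs a linear family of candidates for the root carrying the large value, so that it can pick a coefficient-regular one.
\end{itemize}

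\textbf{The gap is Step~2.} This step is the real content of the lemma, and as written it would not work. Write $m=|W|$.

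\emph{(i) Wrong factor.} Shearer must be applied to a uniform factor of $H-(Y\cup\{x\})$, not of $H$. That graph has exactly $M$ factors, and its block through $y$ is $A\cup\{y\}$ with probability $p_A=\alpha_A/M$. Under a uniform factor of $H$, the block law at $y$ is $\Phi(H-S)/\Phi(H)$, which says nothing about the $\alpha_A$. Nor can you condition on $Y\cup\{x\}$ being a block, since it need not be a clique.

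\emph{(ii) Missing entropy comparison.} The mechanism compares an upper and a lower bound on entropy. Let $\mathsf h_z$ be the entropy of the block at $z$.
\begin{itemize}
\item Shearer, with each block indicator lying in exactly $r$ vertex projections, gives $r\log M\le\sum_z\mathsf h_z$.
\item $\cR$ caps each $\mathsf h_z$ by $\log(C_rd)$.
\item $\cA$ together with $M>n^{-2(r-1)}\Phi(H)$ gives $r\log M\ge m\log d-O_r(m)$.
\end{itemize}
Hence the deficits $\log(C_rd)-\mathsf h_z$ total $O_r(m)$. All but $m/8$ vertices therefore have $\mathsf h_y\ge\log d-K$, which leaves $c_{\rm L}m$ light roots. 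You use $\cA$ only to ensure that a factor exists. You use the clause $M>n^{-2(r-1)}\Phi(H)$ for the lower bound on $Z$, but that bound comes for free from normalising at $b=Bm_0/d$.

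\emph{(iii) The maximal-coefficient claim is unavailable.} Entropy cannot show that the largest coefficient is $O(M/d)$: a single $A$ may carry mass of order $M/\log d$. What entropy gives is
\[
 \sum_Ap_A\log(p_Ad)=\log d-\mathsf h_y\le K .
\]
The negative part of this sum is at most $C_r$, because by $\cR$ at most $C_rd$ sets $A$ complete $y$ to a clique. Hence the mass on $\{\alpha_A>BM/d\}$ is at most $(K+C_r)M/\log B\le M/5$.

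\emph{Consequences.} The truncation loss is a fixed fraction, not $o(1)$ as you wrote. The comparison is $4M/5$ against $M/2$. Moreover, $B$ cannot grow without violating $Z(N_H(y))\ge a_0\mu_r(n,q)/n$, since $Z(N_H(y))\le d/B$. The Markov step you envisage must be applied to the entropy deficits, whose linear total comes from the barrier, and not directly to the truncated mass.
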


\begin{proof}
Set $m=|W|$ and $d=\mu_r(n,q)/n$.  Choose an $(r-1)$-set $Y$ where
the median condition fails and $x\notin Y$ maximizing
$M=w_H(Y\cup\{x\})$.  Then
$M>n^{-2(r-1)}\Phi(H)$, and at least $(m-r)/2$ vertices
$z\notin R:=Y\cup\{x\}$ satisfy $w_H(Y\cup\{z\})<M/2$.
Call this set $\mathcal L_Y$.

Choose a uniform factor $\mathbf F$ of $H-R$.  Let $K_z$ be its block
through $z$ and $\mathsf h_z=H(K_z)$.  Encoding the factor by its
block indicators, each coordinate belongs to exactly $r$ vertex
projections.  Shearer's inequality and rooted regularity therefore give
\[
 r\log M\le\sum_{z\notin R}\mathsf h_z,
 \qquad \mathsf h_z\le\log(C_rd).
\]
The barrier and the lower bound on $M$ give
$r\log M\ge m\log d-O_r(m)$.  Thus the non-negative deficits
$\log(C_rd)-\mathsf h_z$ have total $O_r(m)$.
For a sufficiently large constant $K_r$, fewer than $m/8$ vertices
have $\mathsf h_z<\log d-K_r$.  Thus
$\mathcal Y=\{y\in\mathcal L_Y:\mathsf h_y\ge\log d-K_r\}$ has
size at least $c_{\rm L}m$.

Fix $y\in\mathcal Y$, put $W_0=W\setminus(Y\cup\{x,y\})$, and
for $A\in\binom{W_0}{r-1}$ set $\alpha_A=\Phi(H[W_0]-A)$.
Let $\mathcal S_y$ contain those $A$ for which $A\cup\{y\}$ is a
clique and $\alpha_A>0$.  The block $K_y$ equals $A\cup\{y\}$ with
probability $p_A=\alpha_A/M$, so $\sum_{A\in\mathcal S_y}\alpha_A=M$.
Moreover $|\mathcal S_y|\le C_rd$ and
\[
 \sum_{A\in\mathcal S_y}p_A\log(p_Ad)
       =\log d-\mathsf h_y\le K_r.
\]
The negative terms have absolute sum at most $C_r$, since
$t\log t\ge-1$ for $t\ge0$.  Hence, for a sufficiently large constant $B$,
\[
 \sum_{\substack{A\in\mathcal S_y\\ \alpha_A>BM/d}}\alpha_A
 \le\frac{K_r+C_r}{\log B}M\le M/5.
\]

Choose a dyadic $m_0$ with $M\le m_0<2M$, and put $b=Bm_0/d$.
Record $h=|E(H[W_0])|$ and define
\[
 c_A=\min\{\Phi(H_h(W_0)-A)/b,1\}
       \ind_{\{H_h(W_0)[A]=K_{r-1}\}},
 \qquad Z(t)=\sum_Ac_A\prod_{u\in A}t_u.
\]
For each fixed $h,m_0$, these coefficients are determined by the
birth times in $W_0$, hence outside both candidate roots.  On the
branch $h=|E(H[W_0])|$, the induced order gives
$H_h(W_0)=H[W_0]$, so the coefficients use precisely the weights
$\alpha_A$ above.  Truncation loses at most $M/5$ of the weight at
$y$, whereas the total untruncated weight at $x$ is at most
$w_H(Y\cup\{y\})<M/2$.  Therefore
\[
 \frac{4M}{5b}\le Z(N_H(y)\cap W_0)\le\frac Mb,
 \qquad Z(N_H(x)\cap W_0)<\frac{M}{2b}.
\]
Since $d/(2B)<M/b\le d/B$, this is a two-star fingerprint.

To use the two stars at exact ranks, also record the state of $xy$
and their relevant degrees.  Delete the bounded set $Y$ successively;
an abnormal degree at any required restricted rank gives a sample
anomaly.  Otherwise the restricted densities remain within a factor
$1+O_r(\nu/n)$ of $q$.  If a later estimate chooses
$z\in\{x,y\}$ as root, all branch conditions involving its star
remain in its event; intersecting with them cannot increase its
conditional probability.  The remaining conditions are outside data,
as in Lemma~\ref{lem:record-rank-data}.
There are polynomially many choices of vertices, ranks and degrees,
and $O(n\log n)$ dyadic values of $m_0$.  The construction works for
every $y\in\mathcal Y$.
\end{proof}

\begin{proof}[Proof of Proposition~\ref{prop:structural-fingerprints}]
The upper side of $\cR$ follows from
Proposition~\ref{prop:uniform-input}(iii), and a lower failure is
handled by Lemma~\ref{lem:regularity-fingerprint}.
When $\cR$ holds but $\cB$ fails, apply
Lemma~\ref{lem:balance-fingerprint} if $\cC$ holds, and
Lemma~\ref{lem:median-fingerprint} otherwise.
\end{proof}
\section{Uniform random-graph input}
\label{sec:upper-regularity}

We collect the random-graph estimates used in the one-set reduction:
edge loads for the deletion martingale, rooted degrees for the
fingerprints, coefficients for Engine~A, and overlaps for Engine~B.
All estimates use the common birth-time coupling.  Throughout,
$b=b(n)=o(n)$ is a deterministic defect budget, and
$\eta_\ell(q):=n^{\ell-2}q^{\binom\ell2-1}$ for $2\le\ell\le r$.
The scales $T(q)$ and $A(q)$ are as in
Section~\ref{sec:deletion-process}; coefficient-regular roots are
defined below.

\begin{proposition}
\label{prop:uniform-input}
There is a decreasing event $\cU$ with $\Pr(\cU)\ge c_r>0$ such that,
for every $W\subseteq[n]$ with $|[n]\setminus W|\le b$, there is an
event $\cO_W$ satisfying
\[
 \Pr(\cO_W^c)\le\exp(-cp|W|^2).
\]
On $\cU\cap\cO_W$, the following conclusions hold simultaneously for
every $W'\subseteq W$ with $|W\setminus W'|\le10r^2$ and every rank
$j\ge p\binom{|W'|}{2}/3$.  Write
 $H:=H_j(W')$ and $q:=j/{\binom{|W'|}{2}}.$

\begin{enumerate}[label=(\roman*),leftmargin=2.7em]
\item \emph{Exact-rank coupling.}
      The graph $H$ is sandwiched between the two birth-time graphs in
      \eqref{eq:rank-sandwich}.  For every root $x\in W'$, the outside
      event $\cV_{W',x,j}\in\mathscr F_{W'}(x)$ from
      Lemma~\ref{lem:ranked-star} holds, and on this event
      \[
       \Pr\left(
       \left|d_H(x)-q(|W'|-1)\right|>\nu q|W'|
       \ \middle|\ \mathscr F_{W'}(x)\right)
       \le \exp(-cq|W'|).
      \]
      Conditional on $\mathscr F_{W'}(x)$ and on $d_H(x)=k$, the
      neighbourhood of $x$ is a uniformly random $k$-subset of
      $W'\setminus\{x\}$.
\item \emph{Edge loads.}
      Every edge of $H$ lies in at most
      $C_r(\eta_r(q)+\log n)$ copies of $K_r$, and
      \[
       \sum_{f\in E(H)}d_{K_r}(f;H)^2
       \exp\!\left(\frac{c_rd_{K_r}(f;H)}{T(q)}\right)
       \le C_rA(q)n^2q\eta_r(q)T(q).
      \]
\item \emph{Rooted clique degrees.}
      For every $2\le\ell\le r$, every vertex lies in at most
      $C_r\mu_\ell(n,q)/n$ copies of $K_\ell$, also after deleting any
      fixed set of at most $3r$ further vertices.
\item \emph{Rooted coefficients.}
      All but at most $c_{\rm L}|W'|/2$ vertices of $W'$ are
      coefficient-regular in $H$ at density $q$.  The same conclusion
      holds after arbitrary pointwise deletion or $[0,1]$-weighting of
      terms in the coefficient sums, including the restrictions used
      in the rainbow exposure.
\item \emph{Janson overlaps.}
      For every lower-tail fingerprint of uniformity
      $2\le\ell\le r$, its overlap parameter is at most
      $C_r(1+\eta_\ell(q))$.
\end{enumerate}
All conclusions remain valid after the bounded vertex deletions and
restricted-rank choices used in Sections~\ref{sec:fingerprints} and
\ref{sec:fixed-rank}.
\end{proposition}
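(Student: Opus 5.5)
The proof splits into a single global decreasing event $\cU$ and per-$W$ events $\cO_W$. $\cU$ will carry the conclusions that are union-bounded over all $W$ at once and only need constant probability. $\cO_W$ will carry the exact-rank comparisons for one fixed $W$. For $\cU$ I would take an intersection of boundedly many decreasing events on $G_{Cp'}$ at dyadic densities $p'=2^kp$, up to $q\le 1$. Each event bounds a monotone count: edge loads, rooted clique degrees after deleting at most $3r$ vertices, Janson overlap sums, and the coefficient loads defining coefficient-regularity. All of these are upper bounds on subgraph counts, hence decreasing. Harris's inequality (Lemma~\ref{lem:harris}) then gives $\Pr(\cU)\ge\prod\Pr(\cdot)$. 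It therefore suffices that each factor has probability at least $1-\epsilon_k$ with $\sum_k\epsilon_k$ small, so that $\prod(1-\epsilon_k)\ge c_r$. Monotonicity in $q$ transfers every bound from the dyadic grid to all ranks $j$ at the cost of constants, using the sandwich below.

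For (i), I would define $\cO_W$ as the Chernoff event that, for every $W'\subseteq W$ with $|W\setminus W'|\le10r^2$ and every dyadic level, the number of edges of $K_{W'}$ with birth time at most $q$ is $(1\pm\nu/10)q\binom{|W'|}2$. A union bound over $n^{O(r^2)}$ sets $W'$ and $O(\log n)$ levels leaves probability at least $1-\exp(-cp|W|^2)$. On $\cO_W$, the rank-$j$ graph $H_j(W')$ is then sandwiched as $G_{q(1-\nu)}[W']\subseteq H\subseteq G_{q(1+\nu)}[W']$. The root-star statement is a conditional computation. Given $\mathscr F_{W'}(x)$, the rank $j$ fixes how many star edges are among the first $j$, through the outside count of edges below each threshold. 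Exchangeability of the $|W'|-1$ star birth times makes the neighbourhood a uniform $k$-set given $d_H(x)=k$. The degree is a hypergeometric/binomial quantile, so Chernoff gives $\exp(-cq|W'|)$ on an outside event $\cV$ stating that outside edge counts are regular. This is what Lemma~\ref{lem:ranked-star} should package.

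For (ii), (iii) and (v) I would use moment and Kim--Vu/Janson-type upper-tail bounds on $G(n,Cq)$. The clique count through an edge has mean $\Theta(\eta_r(q))$. Its upper tail beyond $C(\eta_r+\log n)$ has probability $n^{-3}$, via the deletion method or Spencer's disjoint-family bound, which is sharp for $p\ll n^{-2/(r+1)}$ since extensions are nearly disjoint. The averaged exponential moment in (ii) I would bound in expectation, $\E\sum_f D_f^2e^{c D_f/T}=O(n^2q\,\eta_rT)$, and then apply Markov with a small failure probability. The factor $A(q)$ provides room for summing the Markov failure probabilities over dyadic $q\ge p$, since $\sum_k 2^{-k/2}$ converges. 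Rooted degrees (iii) have mean $\Theta(\mu_\ell/n)\gg\log n$, so upper-tail concentration with exponent $\Omega(\mu_\ell/n)$ survives a union bound over $n^{3r+1}$ choices. The Janson overlap in (v) is a sum of extension counts of pairs sharing $h$ vertices, bounded as in Proposition~\ref{prop:isolated-tail} by $1+\eta_\ell$ in expectation and controlled by the same degree bounds.

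The main obstacle is (iv) and the exponential moment in (ii). The issue is that fixed-vertex concentration fails near $p\approx n^{-2/(r+1)}$, so one only gets \emph{averaged} control, namely that all but $c_{\rm L}|W'|/2$ vertices are regular. I would prove this by computing the expected total coefficient load $\sum_x(\text{load at }x)$ and its exponential moments, using the appendix's averaged coefficient estimate. Markov's inequality then shows that, with probability $1-\epsilon$, few vertices exceed $C$ times the average. Robustness under deletion and $[0,1]$-weighting comes for free, because coefficient loads are monotone in the terms. I would finally make all these events hold uniformly over $W'$ and ranks by phrasing them as statements about $G_{q(1+\nu)}$ on all of $[n]$, to which subgraphs of induced graphs are comparable.
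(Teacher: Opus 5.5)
Your architecture is the same as the paper's. It has a global decreasing $\cU$ on a geometric density grid, Harris plus Markov with the summable weights $A(q)^{-1}$, and a per-$W$ Chernoff event $\cO_W$ carrying the ranked-star outside events. It then transfers monotonically through the rank sandwich and uses a Markov count of irregular roots for (iv). Three steps, however, fail as written.

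\textbf{The averaged exponential moment in (ii).} The most serious problem is here. The bound $\E\sum_f D_f^2e^{cD_f/T}=O(n^2q\,\eta_r(q)T(q))$ is false for every $r\ge4$ at densities $q\le p$, where $T(q)\le2$. With probability $q^{2(n-2)+1}$ the edge $f$ is present and both its endpoints are joined to all other vertices. Then $D_f$ is the number of copies of $K_{r-2}$ on the remaining vertices, typically of order $n^{r-2}q^{\binom{r-2}2}$. So the expectation is at least $\exp\bigl(c'n^{r-2}q^{\binom{r-2}2}-O(n\log n)\bigr)$, which is huge because $n^{r-3}q^{\binom{r-2}2}\ge n^{2(r-3)/r}$ for $q\ge n^{-2/r}$. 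The fix is the paper's route:
\begin{itemize}
\item truncate at $R(q)=K(\eta_r(q)+\log n)$, as in $\Psi_q$, before applying Markov;
\item prove $\E\Psi_q(D_f)\le C\eta_rT$, which needs an exponential tail for each coefficient below the cap (the paper uses an expansion over connected families of index sets when $\eta_r\le(\log n)^{1+\sigma}$, and polynomial concentration above);
\item use the separate maximum-load event to make the truncation inactive.
\end{itemize}
The same applies to the coefficient sums behind (iv). Citing the appendix there is fine, but the untruncated computation you propose for (ii) is not.

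\textbf{The overlap bound (v).} This is a statement about $\Lambda_\ell(H_0,q)$, a maximum over all $t$-sets $T$ of the dominated extension counts $\widehat d_{\ell-1}(T;H_0)$. It is needed pointwise because the fingerprint family $\cH$ is an arbitrary outside-determined subfamily of $K_{\ell-1}(H_0)$. An expectation computation as in Proposition~\ref{prop:isolated-tail} does not control this maximum. The rooted-degree caps do not either once $t\ge2$: bounding $\widehat d_{\ell-1}(T;H_0)$ by the $K_{\ell-t}$-degree of one vertex of $T$ discards the joint adjacency. For example, take $r=6$, $\ell=5$, $t=2$ and $q=\omega n^{-1/3}$ with $\omega$ polylogarithmic. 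The degree caps give only $q^2\widehat\Delta_{4,2}\lesssim n^{1/3}\omega^5$, whereas the target is $1+\eta_5(q)=1+\omega^9$. You need the following:
\begin{itemize}
\item separate uniform caps $C_r(\mu+\log^2n)$ for each $t$-rooted pattern, from Theorem~\ref{thm:polynomial-concentration} with Lemma~\ref{lem:rooted-derivatives}, using its second part when $\mu<\log^2n$;
\item the check $q^{\ell-1-t}\log^2n=o(1)$ for the small-mean patterns;
\item the check $n^{1-t}q^{1-\binom{t+1}2}=O(1)$ for the others, which is where $q\gtrsim n^{-2/r}$ enters.
\end{itemize}

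\textbf{The sandwich in (i).} Chernoff only at dyadic levels gives a constant-factor sandwich, not the $(1\pm\nu)$ sandwich \eqref{eq:rank-sandwich} asserted in (i). As in Lemma~\ref{lem:rank-comparison}, compare edge counts at every rank $j$; there are polynomially many, each failing with probability $e^{-cq|W'|^2}$. Also put the outside events $\cV_{W',x,j}$ for all triples into $\cO_W$, since (i) asserts they hold there. This repair is routine, and (ii)--(v) only need a constant-factor sandwich anyway.

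Finally, rooted upper tails in (iii) do not have exponent $\Omega(\mu_\ell/n)$ for larger $q$ (plant a clique at the root). This does not matter, since $n^{-\omega(1)}$ is all that is needed there.
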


The cutoff $p/3$ leaves room for these restricted ranks.  On
$\mathcal E_W$, the original deletion process stops at a rank of density
at least $p/2$.  As long as no sample anomaly has already occurred,
deleting one vertex changes the density by a factor
$1+O_r(\nu/n)$.  Since only $O_r(1)$ vertices are deleted in forming
one witness, every restricted rank used later is still at least $p/3$
for large $n$.

We first identify the conditional root-neighbourhood law, then build
$\cU$ in the birth-time graphs and transfer its bounds to exact ranks.

\subsection{Exact-rank coupling and root neighbourhoods}

\begin{lemma}
\label{lem:rank-comparison}
For every $W\subseteq[n]$ with $|[n]\setminus W|\le b$, there is an
event $\cO_W$ satisfying
\[
 \Pr(\cO_W^c)\le \exp(-cp|W|^2)
\]
such that the following holds simultaneously for every
$W'\subseteq W$ with $|W\setminus W'|\le10r^2$, every $x\in W'$,
and every rank $j\ge p\binom{|W'|}{2}/3$.  Writing
$q=j/\binom{|W'|}{2}$, one has
\begin{equation}
 G_{(1-\nu)q}[W']\subseteq H_j(W')
 \subseteq G_{(1+\nu)q}[W'],
 \label{eq:rank-sandwich}
\end{equation}
where parameters larger than $1$ are truncated at $1$.  Moreover,
the outside event $\cV_{W',x,j}$ from
Lemma~\ref{lem:ranked-star} holds for every such triple.
\end{lemma}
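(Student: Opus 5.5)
The plan is to build $\cO_W$ as an intersection of Chernoff-type concentration events for the edge counts of $G_q[W']$ at a geometric grid of densities, together with the outside events $\cV_{W',x,j}$ from Lemma~\ref{lem:ranked-star}, and then union bound. The sets $W'$ number at most $n^{10r^2}$, the roots $x$ at most $n$, and the ranks at most $n^2$. All of this is polynomial in $n$, so it is absorbed by any failure bound of the form $\exp(-cp|W|^2)$, because $p|W|^2\ge pn^2/2\gg\log n$.

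\emph{Step 1 (sandwich).} For a density $s$ write $E_s(W')=|E(G_s[W'])|$, which is $\mathrm{Bin}(\binom{|W'|}2,s)$. Let $N'=\binom{|W'|}2$. The inclusion $H_j(W')\subseteq G_{(1+\nu)q}[W']$ says exactly that $E_{(1+\nu)q}(W')\ge j=qN'$. Likewise, $G_{(1-\nu)q}[W']\subseteq H_j(W')$ says $E_{(1-\nu)q}(W')\le j$. This holds because both $H_j(W')$ and the birth-time graphs are initial segments of the same ordering of the edges of $K_{W'}$.

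It is therefore enough to control $E_s(W')$ at the densities $s=(1\pm\nu)q$ for all $q\ge p/3$. I would discretise: take a grid $s_i=(p/4)(1+\nu/4)^i$ up to $1$, which has $O(\log n)$ points. Require $|E_{s_i}(W')-s_iN'|\le(\nu/4)s_iN'$ at every grid point. By Chernoff, each requirement fails with probability $\exp(-c\nu^2 s_iN')\le\exp(-c'p|W|^2)$. Monotonicity of $E_s$ in $s$ then interpolates between grid points and gives both inequalities for every $j$, after adjusting $\nu$ by a constant factor. The union over $W'$ and grid points costs only $n^{O(r^2)}\log n$.

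\emph{Step 2 (outside events).} Lemma~\ref{lem:ranked-star} is taken as given: for each triple $(W',x,j)$ it supplies an outside event $\cV_{W',x,j}\in\mathscr F_{W'}(x)$. I expect its failure probability to be $\exp(-cp|W|^2)$, or at least $\exp(-cq|W'|^2)$, since it should essentially express that the non-$x$ edge count near rank $j$ is concentrated. I will include it in $\cO_W$ and union bound over the at most $n^{10r^2}\cdot n\cdot n^2$ triples. If the lemma only gives a weaker bound for large $q$, then $q\ge p/3$ makes it at least as strong as the bound at $q=p/3$.

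The main obstacle is uniformity over all ranks $j$ at once. The grid-plus-monotonicity argument handles this, but the tolerance must leave slack so that rounding between grid points and the difference between $N'$ and $\binom{|W|}2$ stay inside $\nu q$. Choosing a grid ratio of $1+\nu/4$ should suffice.
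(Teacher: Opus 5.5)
Your proposal is correct and follows essentially the paper's argument. Chernoff bounds on the binomial edge counts $|G_{(1\pm\nu)q}[W']|$ give the sandwich, via the initial-segment observation you make explicit. Lemma~\ref{lem:ranked-star} supplies each $\cV_{W',x,j}$ with failure probability $\exp(-cq|W'|^2)\le\exp(-c'p|W|^2)$. A union bound over the $n^{O(r^2)}$ admissible triples then absorbs the polynomial count.

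The only difference is your geometric grid with monotone interpolation, which the paper does not need. Since there are only $O(n^2)$ ranks, the paper applies Chernoff at each fixed $(W',j)$ directly, so uniformity in $j$ is not really the obstacle you describe. Your grid does work as stated: with tolerance $\nu/4$ and ratio $1+\nu/4$, no adjustment of $\nu$ is needed. The truncated case $(1+\nu)q\ge1$ is trivial.
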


The following one-root estimate supplies both the outside event and
the conditional neighbourhood law.

\begin{lemma}
\label{lem:ranked-star}
Fix a sufficiently small constant \(\nu=\nu(r)>0\).  For every
\(W\subseteq[n]\) with $|W|=n-o(n)$, \(x\in W\), and rank \(j\) with
\(p/3\le q_j(W)\le1\), there is an event
\(\cV_{W,x,j}\in\mathscr F_W(x)\) such that
\[
 \Pr(\cV_{W,x,j}^c)\le \exp(-c q_j(W)|W|^2)
\]
and, on \(\cV_{W,x,j}\),
\[
 \Pr\left(
 \left|d_{H_j(W)}(x)-q_j(W)(|W|-1)\right|>\nu q_j(W)|W|
 \ \middle|\ \mathscr F_W(x)
 \right)
 \le \exp(-c q_j(W)|W|).
\]
Conditional on \(\mathscr F_W(x)\) and on
\(d_{H_j(W)}(x)=k\), the neighbour set of \(x\) in \(H_j(W)\) is a
uniformly random \(k\)-subset of \(W\setminus\{x\}\).
\end{lemma}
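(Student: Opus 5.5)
We prove Lemma~\ref{lem:ranked-star} by conditioning on the birth times off the star of $x$. Once these are fixed, the rank-$j$ graph is determined by where the star birth times fall among them. Write $m=|W|$, $N=\binom m2$, $N'=N-(m-1)$. Condition on $\mathscr F_W(x)$, that is, on the $N'$ birth times $\sigma_1<\dots<\sigma_{N'}$ of edges of $K_W$ avoiding $x$. The $m-1$ star birth times $\tau_{xu}$, $u\in W\setminus\{x\}$, are i.i.d.\ uniform and independent of these.

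\textbf{Step 1: the uniform neighbourhood.} The graph $H_j(W)$ consists of the $j$ smallest birth times overall. Let $k=d_{H_j(W)}(x)$. Then $H_j(W)$ contains exactly $j-k$ outside edges, and the star edges in $H_j(W)$ are exactly those $xu$ with $\tau_{xu}<\sigma_{j-k+1}$ (taking $\sigma_{N'+1}=1$). Thus $k$ is a function of the outside times and of the empirical distribution of the star times. Conditional on $\mathscr F_W(x)$, the star times are exchangeable across $u$. Hence, given $k$, the set of the $k$ vertices $u$ with the smallest $\tau_{xu}$ is a uniform $k$-subset of $W\setminus\{x\}$. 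This proves the last claim with no exceptional event.

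\textbf{Step 2: the degree law.} Let $S(t)=\#\{u:\tau_{xu}\le t\}\sim\mathrm{Bin}(m-1,t)$ and $O(t)=\#\{i:\sigma_i\le t\}$. The degree $k$ is the value $S(t^*)$ at the first time $t^*$ with $O(t^*)+S(t^*)=j$. Define $\cV_{W,x,j}$ to be the event that the outside count is regular around the relevant level:
\[
 |O(t)-tN'|\le\tfrac{\nu}{4}\,q_jN
 \qquad\text{for } t\in\{(1\pm\nu/2)q_j\},
\]
and monotonicity of $O$ covers the times in between. Since $N'=N-O(m)$, Chernoff's bound for the binomial count $O(t)$ gives $\Pr(\cV^c)\le\exp(-cq_jm^2)$, because the mean is $\Theta(q_jm^2)$.

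On $\cV$, set $t_\pm=(1\pm\nu/2)q_j$. If $S(t_+)\le(1+\nu/2)t_+(m-1)$, then
\[
 O(t_+)+S(t_+)\ge t_+N'-\tfrac\nu4q_jN>j,
\]
so $t^*\le t_+$. A matching argument at $t_-$ gives $t^*\ge t_-$. Therefore $k$ lies between $S(t_-)$ and $S(t_+)$. Chernoff's bound for $\mathrm{Bin}(m-1,t_\pm)$ then puts $k$ within $\nu q_jm$ of $q_j(m-1)$, except with conditional probability $\exp(-cq_jm)$. The constants must be matched: the Chernoff deviation at $t_\pm$ should be a small multiple of $\nu$, and the gap $t_\pm-q_j$ should absorb the fluctuation of $O$ together with the $O(m)$ discrepancy between $N$ and $N'$. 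Since $q_jm\ge pm/3\to\infty$, this discrepancy is of lower order.

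\textbf{Main obstacle.} The delicate point is the bookkeeping at the boundary, when $q_j$ is close to $1$. In that case the truncation $t_+\wedge1$ has to be handled, and for $t=1$ one uses $O(1)=N'$ directly. I also need the exceptional event to be genuinely $\mathscr F_W(x)$-measurable. This is why $\cV$ is defined only through the order statistics $\sigma_i$ and not through $H_j(W)$ itself.
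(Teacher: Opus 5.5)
Your proposal is correct and follows essentially the same route as the paper: an $\mathscr F_W(x)$-measurable quantile event for the off-star edge count at two times $t_\pm$ bracketing $q_j$, then Chernoff bounds for the binomial star counts at $t_\pm$, then exchangeability of the star times for the uniform-neighbourhood claim. The only differences are minor. Your two-sided event pins $\tau_{(j)}$ into $[t_-,t_+]$ outright, giving $S(t_-)\le k\le S(t_+)$, where the paper splits into cases on $\tau_{(j)}$. Also, in Step~1 the star edges of $H_j(W)$ are the $k$ smallest star times, not all $xu$ with $\tau_{xu}<\sigma_{j-k+1}$, since a further star time can fall between $\tau_{(j)}$ and $\sigma_{j-k+1}$; this slip is harmless because your conclusion only uses the correct description.
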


\begin{proof}[Proof of Lemma~\ref{lem:ranked-star}]
Fix $W,x,j$ and put $m=|W|$, $q=q_j(W)$ and $d_x=m-1$.
Write $B_x(t):=|G_t[W\setminus\{x\}]|$.  This is outside-star
measurable and has distribution $\operatorname{Bin}(\binom{m-1}{2},t)$.
Set $t_\pm=(1\pm\nu/4)q$, and define $\cV_{W,x,j}$ by
\[
 B_x(t_-)\le j-(1-\nu/2)qd_x,
 \qquad
 B_x(t_+)\ge j-(1-\nu/2)qd_x,
\]
where the second condition is imposed only if $(1+\nu)q<1$.
Each threshold is separated from the corresponding mean by
$\Theta(\nu qm^2)$, so Chernoff's inequality gives
$\Pr(\cV_{W,x,j}^c)\le e^{-cqm^2}$.

Condition now on the outside birth times, and suppose $\cV_{W,x,j}$
holds.  Let $\tau_{(j)}$ be the $j$-th birth time.  If
$d_{H_j(W)}(x)<(1-\nu)qd_x$, then $\tau_{(j)}\ge t_-$: otherwise
$j=B_x(\tau_{(j)})+d_{H_j(W)}(x)<j-\nu qd_x/2$.
Thus fewer than $(1-\nu)qd_x$ star birth times lie below $t_-$,
a Chernoff deviation with probability at most $e^{-cqd_x}$.

The upper deviation is impossible when $(1+\nu)q\ge1$.
Otherwise, if $d_{H_j(W)}(x)>(1+\nu)qd_x$ and
$\tau_{(j)}\le t_+$, more than $(1+\nu)qd_x$ star birth times lie
below $t_+$.  If $\tau_{(j)}>t_+$, the upper quantile condition
instead forces fewer than $(1-\nu/2)qd_x$ star birth times below
$t_+$.  Each event has conditional probability at most $e^{-cqd_x}$.
This proves the degree bound, after changing $c$.
Finally, permutation symmetry of the star coordinates makes the
neighbourhood uniform conditional on its size and the outside birth
times.
\end{proof}

\begin{proof}[Proof of Lemma~\ref{lem:rank-comparison}]
For a fixed $W',j$, put $q=q_j(W')$.
Chernoff's inequality gives $|G_{(1-\nu)q}[W']|\le j$ and,
when $(1+\nu)q<1$, $|G_{(1+\nu)q}[W']|\ge j$, except with
probability $e^{-cq|W'|^2}$.  These inequalities imply the rank
sandwich.  Lemma~\ref{lem:ranked-star} has the same failure bound
for each outside event $\cV_{W',x,j}$.
There are $n^{O_r(1)}$ admissible triples, all with
$q\ge p/3$ and $|W'|=|W|-O_r(1)$.  Intersecting their outside
events and rank comparisons therefore gives the required $\cO_W$.
\end{proof}

We next record how bounded vertex deletions preserve one-root
measurability.

\begin{lemma}
\label{lem:record-rank-data}
Let $R\subseteq W$ have bounded size and $z\in W\setminus R$.
After fixing an initial rank on $K_W$, the restricted ranks,
intermediate degrees and edge states used in a bounded deletion
sequence can be recorded with $n^{O_r(1)}$ labels.
On each labelled branch, all graph data away from $z$ agree with
$\mathscr F_W(z)$-measurable functions.  The remaining consistency conditions
can be intersected with the root event without increasing its
conditional probability.
\end{lemma}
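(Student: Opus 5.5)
The plan is to reduce everything to the order statistics of the birth times on the edges of $K_W$, split into the star of $z$ and the rest. The key observation is that for a fixed rank $j$ on $K_W$, the graph $H_j(W)-z$ is an initial segment of the non-$z$ order. Its length is $j-d_{H_j(W)}(z)$, because the first $j$ edges consist of $d_{H_j(W)}(z)$ star edges and $j-d_{H_j(W)}(z)$ non-star edges. So once the integer $k_z:=d_{H_j(W)}(z)$ is recorded as a label, $H_j(W)-z=H_{j-k_z}(W\setminus\{z\})$ is a function of the birth times on $\binom{W\setminus\{z\}}2$ alone. These times are $\mathscr F_W(z)$-measurable. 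The identity ``$d_{H_j(W)}(z)=k_z$'' involves the star of $z$, so it is kept in the root event rather than the conditioning. There are at most $n$ values of $k_z$.

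Next I would iterate this along the bounded deletion sequence $R=\{v_1,\dots,v_s\}$, $s=O_r(1)$. Set $W_0=W$, $W_i=W_{i-1}\setminus\{v_i\}$ and $j_0=j$. The next restricted rank is $j_i=j_{i-1}-d_{H_{j_{i-1}}(W_{i-1})}(v_i)$, and I record each $j_i$ as a label ($\le n^2$ choices each). The degree of $v_i$ at stage $i-1$ splits as
\[
d_{H_{j_{i-1}}(W_{i-1})}(v_i)=d_{H_{j_{i-1}}(W_{i-1})-z}(v_i)+\ind\{zv_i\in H_{j_{i-1}}(W_{i-1})\}.
\]
So I also record the edge state $\epsilon_i\in\{0,1\}$ of $zv_i$ at stage $i$, together with the total degree $k^{(i)}_z$ of $z$ at each stage containing $z$.

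By induction on $i$, and using the first paragraph at every stage, each $H_{j_i}(W_i)-z$ becomes $H_{j_i-k^{(i)}_z}(W_i\setminus\{z\})$ on the branch. This is an outside function, and so is the degree of $v_{i+1}$ in it. Hence every restricted rank, every intermediate degree away from $z$, and every graph away from $z$ agree with $\mathscr F_W(z)$-measurable functions on the labelled branch. If $z$ is itself deleted at some step, all later stages live on vertex sets avoiding $z$ and are automatically outside data. The total number of labels is $n^{O_r(1)}$ because $s$ is bounded.

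The remaining consistency conditions are the identities $d(z)=k^{(i)}_z$, the states $\epsilon_i$, and the rank recursions that involve $z$. I would intersect them with the root event $\mathcal E$. For any event $\mathcal K$, $\Pr(\mathcal E\cap\mathcal K\mid\mathscr F(z))\le\Pr(\mathcal E\mid\mathscr F(z))$, so the conditional bound is not increased.

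The main obstacle is bookkeeping. One has to verify that the recorded ranks and degrees truly make the non-$z$ graph a deterministic function of outside times on each branch, in particular that the order among non-$z$ edges does not depend on star times. This holds because the relative order of two non-star edges is determined by their own birth times.
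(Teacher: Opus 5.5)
Your proposal is correct and follows the paper's proof essentially step for step. The shared steps are:
\begin{itemize}
\item recording the degree $k_i$ of $z$ gives the invariant $H_{h_i}(W_i)-z=H_{h_i-k_i}(W_i\setminus\{z\})$;
\item each degree that is used splits into an outside degree plus the recorded state of the corresponding $z$-edge;
\item induction runs along the bounded deletion sequence with $n^{O_r(1)}$ labels;
\item intersecting the root event with the consistency conditions can only decrease its conditional probability.
\end{itemize}
The paper differs only in two minor ways, both using the same mechanism: it records the state of $zu$ for every $u\ne z$ whose degree is used, not just for the deleted vertices, and it also intersects with any outside event on which the root estimate holds.
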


\begin{proof}
List the deleted vertices as $v_0,\ldots,v_{t-1}$ and put
$W_i:=W\setminus\{v_0,\ldots,v_{i-1}\}$, with $z\in W_i$ throughout.
Record the rank $h_i$ on $K_{W_i}$ and the degree
$k_i:=d_{H_{h_i}(W_i)}(z)$ at every stage.  There are only $O_r(1)$
stages, $O(n^2)$ choices per rank and $n$ per degree, so these records
and the boundedly many exceptional edge states give $n^{O_r(1)}$
labels.

For fixed numerical labels, restriction of the common ordering gives
\begin{equation}
 H_{h_i}(W_i)-z
 =H_{h_i-k_i}(W_i\setminus\{z\}).
 \label{eq:rank-bookkeeping-invariant}
\end{equation}
The right side is determined by $\mathscr F_W(z)$.
If a degree at $u\ne z$ is used, also record the state
$\varepsilon_{i,u}$ of $zu$ in $H_{h_i}(W_i)$.  Then
\[
 d_{H_{h_i}(W_i)}(u)
 =d_{H_{h_i-k_i}(W_i\setminus\{z\})}(u)+\varepsilon_{i,u}.
\]
Thus the next rank is the outside quantity
$h_i-d_{H_{h_i-k_i}(W_i\setminus\{z\})}(v_i)$ minus the recorded
state $\varepsilon_{i,v_i}$.  Induction separates every consistency
condition into an outside condition and a condition on the star with
the outside configuration fixed.

Intersect the event supplied by the one-root estimate with these
conditions.  For each outside configuration, the resulting event is
a subset of the original event, so its conditional probability cannot
increase.  If that estimate holds only on an outside event, include
that event in the intersection as well.  This gives the local event
required by Definition~\ref{def:single-root-witness}.
\end{proof}

The next lemma compares a uniform neighbourhood with independent
selectors.  Its polynomial loss is harmless because the one-root
exponent is much larger than $\log n$.

\begin{lemma}
\label{lem:hypergeom-transfer}
Let $X\subseteq[N]$ be obtained by selecting each element independently
with probability $\rho$, and let $K$ be a uniformly random $k$-subset
of $[N]$, where $\rho=k/N$.  For every event $\mathcal E$ depending
only on the selected set,
\[
 \Pr(K\in\mathcal E)\le(N+1)\Pr(X\in\mathcal E).
\]
\end{lemma}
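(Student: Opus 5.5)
The plan is to condition the binomial set $X$ on its size. Write $|X|\sim\operatorname{Bin}(N,\rho)$ and observe that, conditional on $|X|=k$, the set $X$ is a uniformly random $k$-subset of $[N]$. It therefore has the same law as $K$. Consequently
\[
 \Pr(X\in\mathcal E)\ \ge\ \Pr(|X|=k)\,\Pr(X\in\mathcal E\mid |X|=k)
 =\Pr(|X|=k)\,\Pr(K\in\mathcal E),
\]
and it suffices to show that $\Pr(|X|=k)\ge 1/(N+1)$ when $\rho=k/N$.

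For this last inequality, the key fact is that the mode of $\operatorname{Bin}(N,\rho)$ with $\rho=k/N$ is exactly $k$. The ratio of consecutive probabilities is
\[
 \frac{\Pr(|X|=i+1)}{\Pr(|X|=i)}=\frac{(N-i)\rho}{(i+1)(1-\rho)}.
\]
This ratio is at least $1$ precisely when $i+1\le (N+1)\rho$. Since $(N+1)\rho=k+k/N\in[k,k+1)$ for $k<N$, the probabilities increase up to index $k$ and decrease afterwards. Hence $k$ is a mode.

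The binomial law is spread over the $N+1$ values $0,\dots,N$, so its largest atom is at least $1/(N+1)$. The degenerate cases $k=0$ and $k=N$ give $\rho\in\{0,1\}$, where $X=K$ deterministically and the bound is trivial.

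I do not expect a genuine obstacle. The only point needing care is the mode computation at the boundary, where $(N+1)\rho$ may equal an integer. Even in that case $k$ remains among the maximising indices, which is all the argument needs. Note also that the inequality is used in the paper only with a polynomial loss $N+1\le n$, so no sharper local limit estimate for the binomial is required.
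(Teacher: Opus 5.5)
Your proposal is correct and matches the paper's proof: condition the binomial set on $|X|=k$ to recover the uniform $k$-set, then use that $k$ is a mode of $\operatorname{Bin}(N,k/N)$, so $\Pr(|X|=k)\ge 1/(N+1)$. Your consecutive-ratio computation of the mode and your check of the degenerate cases $k\in\{0,N\}$ supply a step the paper only asserts.
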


\begin{proof}
The conditional law of $X$ given $|X|=k$ is uniform over the
$k$-subsets.  Since $k$ is a mode of $\operatorname{Bin}(N,k/N)$,
$\Pr(|X|=k)\ge1/(N+1)$.
\end{proof}

\subsection{Construction of the common event \texorpdfstring{$\cU$}{U}}

The numerical estimates will hold on the following common event.

\begin{lemma}
\label{lem:upper-regularity-summary}
There is a decreasing event
\[
 \cU=\cU_{\rm coeff}\cap\cU_{\rm deg}\cap\cU_{\rm ov}
\]
with $\Pr(\cU)\ge c_r>0$ such that the conclusions of
Lemmas~\ref{lem:coefficient-average-bounds},
\ref{lem:rooted-degree-caps} and
\ref{lem:janson-overlap-caps} hold simultaneously for every
$p/8\le q\le1$ and every induced subgraph of $G_q$.
\end{lemma}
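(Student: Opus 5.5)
The plan is to build each of the three components separately as a decreasing event of probability close to one, uniform over all densities $p/8\le q\le1$ and all induced subgraphs of $G_q$. Harris's inequality (Lemma~\ref{lem:harris}) then gives $\Pr(\cU)\ge\Pr(\cU_{\rm coeff})\Pr(\cU_{\rm deg})\Pr(\cU_{\rm ov})\ge c_r$. Each conclusion in Lemmas~\ref{lem:coefficient-average-bounds}, \ref{lem:rooted-degree-caps} and \ref{lem:janson-overlap-caps} is an upper bound on a nonnegative combination of subgraph counts, such as rooted clique degrees, weighted sums of squared edge loads, or overlap sums. Such a bound is decreasing in the edge set. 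On $G_q[W']$ it is also decreasing in the birth times $\tau_f$ read coordinatewise, since a larger $\tau_f$ removes $f$ from every $G_q$ with $q<\tau_f$. So each component is decreasing in the product space of birth times, which is what Harris needs.

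To get uniformity in $q$, I would discretise. Take a grid $q_i=(1+\nu)^ip/8$, which has $O(\log n)$ points. For $q\in[q_i,q_{i+1}]$ we have $G_q\subseteq G_{q_{i+1}}$, and the target bounds at $q$ and at $q_{i+1}$ differ only by constant factors. This holds for $\mu_\ell(n,q)/n$, $\eta_\ell(q)$, $T(q)$ and $A(q)$, and also inside the exponent $c_rd/T(q)$ after adjusting $c_r$. It therefore suffices to prove each bound at grid points, for the whole vertex set, with inflated constants. Passing to an induced subgraph only deletes cliques, so the uniformity over $W'$ is automatic for the monotone quantities. Restrictions and $[0,1]$-weightings of coefficient sums are also automatic, because every bound is on a positive combination.

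At a fixed grid point the estimates are standard. For the rooted degree caps, the sharp upper tail at scale $\mu_\ell/n\gg\log n$ holds for each vertex and each deleted set $R$ with $|R|\le3r$. I would get it from Kim--Vu or the deletion method, or from a moment bound on disjoint clique families through a root. The failure probability is $n^{-\omega(1)}$, which survives the union over $n^{3r+1}$ choices. The edge-load maximum $C_r(\eta_r+\log n)$ follows the same way; the $\log n$ term covers the regime $\eta_r=o(1)$, where the loads are essentially Poisson. The averaged exponential second-moment bound, the coefficient-regularity bound and the overlap bound do not need to hold with high probability per vertex. It is enough to bound their expectations by the right order and apply Markov's inequality with a large constant. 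This gives failure probability at most a small constant $\varepsilon_r$ per grid point, and that is why only $\Pr(\cU)\ge c_r$ is claimed.

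The main obstacle is exactly this: Markov's inequality does not tolerate a union over $O(\log n)$ grid points. I would fix it by summing the Markov failures with geometric weights. For $q=ap$ with $a$ large, the normalized expectations in \eqref{eq:scale-comparisons} decay like a power of $a$. This is the same convergence as in the dyadic sum in the proof of Lemma~\ref{lem:deletion-martingale}. So I would let the allowed constant grow slowly with the grid index, or equivalently demand $a^{-\delta}$ extra slack, so that the Markov failures are summable. The terms near $q\asymp p$ give only $O(1)$ constant failures, which can be made at most $1/2$ in total. The remaining analytic work, bounding $\E\sum_f d_{K_r}(f)^2e^{c\,d_{K_r}(f)/T}$, needs an exponential moment of an edge load. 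I would get it by splitting cliques through $f$ into clusters sharing a further vertex, and checking that $p\ll n^{-2/(r+1)}$ keeps the cluster contributions of lower order.
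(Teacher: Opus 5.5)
Your closing step, combining three decreasing events by Harris's inequality (a union bound would also do), is the whole of the paper's proof. The paper simply cites Lemmas~\ref{lem:coefficient-average-bounds}, \ref{lem:rooted-degree-caps} and~\ref{lem:janson-overlap-caps}, which give $\Pr(\cU_{\rm coeff})\ge2/3$ and $\Pr(\cU_{\rm deg})=\Pr(\cU_{\rm ov})=1-o(1)$. Your geometric-weight fix for Markov across the grid also matches the paper. There the slack is the factor $A(q)=\max\{1,(q/p)^{1/2}\}$, already built into the statement, and one Markov bound is applied to $\Xi=\sum_iA_i^{-1}(\cdots)$ with $\sum_iA_i^{-1}=O_r(1)$. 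So if you take those lemmas as given, you are done. However, your own sketch of how to build $\cU_{\rm coeff}$ and $\cU_{\rm ov}$ has a genuine gap.

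You propose to bound $\E\sum_f d_{K_r}(f)^2e^{c\,d_{K_r}(f)/T(q)}$ by $O(n^2q\eta_r(q)T(q))$ and then apply Markov. For $r\ge5$ this expectation is astronomically large. The event that $G_q$ is complete has probability $q^{\binom n2}=e^{-O(n^2\log n)}$, but on it every edge load equals $\binom{n-2}{r-2}=\Theta(n^{r-2})$. Near $q\asymp p$ one has $T(q)=1+o(1)$, so this single event contributes $e^{\Omega(n^3)}$. Cluster estimates cannot detect this, since they only control connected families of polylogarithmic size.

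The missing idea is truncation. The paper works with $\Psi_q(z)=\min\{z,R\}^2e^{c_0\min\{z,R\}/T(q)}$, where $R=K(\eta_r(q)+\log n)$. It proves $\E\Psi_q(\Gamma_{x,u,k})=O(\eta_r(q)T(q))$ from a tail bound valid up to the cap. That tail bound comes from a cluster expansion when $\eta_r(q)$ is at most polylogarithmic, and from polynomial concentration otherwise. Note that the lemma covers all $q\le1$, and for $q$ well above $n^{-2/(r+1)}$ your appeal to $p\ll n^{-2/(r+1)}$ gives nothing. Finally, the paper intersects the Markov event with a separate event of probability $1-o(1)$ on which every coefficient is at most $R(q_i)$ (see Appendix~\ref{app:coefficient-profile}). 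On $\cU_{\rm coeff}$ the truncation is therefore inactive, and the untruncated sums are controlled.

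The overlap bound has a similar problem: it is not an averaged quantity. $\Lambda_\ell$ is built from $\widehat\Delta_{\ell-1,t}$, which is a maximum over all $t$-sets. It therefore needs per-set concentration, handled separately for means above and below $\log^2n$, followed by a union bound, exactly as for the degree caps. An expectation bound plus Markov does not give it.
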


Use the geometric grid $\mathfrak Q=\{q_0,\ldots,q_J\}$, where
$q_i=2^ip/8$ for $i<J$, $q_J=1$, and $q_{J-1}<1\le2q_{J-1}$.
It has $O_r(\log n)$ points.  We prove the estimates on this grid;
Lemma~\ref{lem:fixed-rank-monotonicity} then transfers them to every
$q$ between consecutive points.  Induced subgraphs and pointwise
restrictions only decrease the relevant counts, so they require no
additional union bound.

Here ``decreasing'' refers to adding edges in the coupled graphs.
Such events are increasing in the product birth-time order, since
larger birth times remove edges.  Hence Harris's inequality gives,
for every graph-decreasing event $\mathcal L$,
\[
 \Pr(\mathcal L)
 \le \frac{\Pr(\mathcal L\cap\cU)}{\Pr(\cU)}.
\]
A positive constant lower bound for $\Pr(\cU)$ is therefore enough.

We shall use the following direct reformulation of the fixed-degree
polynomial concentration theorem of Johansson--Kahn--Vu
\cite[Theorem~5.6, Corollary~5.7 and Remark~5.2]{JKV}.  Their parameter
$n$ is the number of Bernoulli variables.  Our graph polynomials use at
most $\binom n2$ edge variables (unused coordinates may be added), so
replacing that parameter by the number of graph vertices only changes
the fixed power in the derivative hypothesis.  In every application
the Bernoulli coordinates have a common success probability; the
deterministic endpoint $q=1$ is handled directly.

\begin{theorem}
\label{thm:polynomial-concentration}
Fix an integer $D\ge1$ and constants $C_0,\alpha,\delta>0$.  Let
$X_1,\ldots,X_N$ be independent $\operatorname{Ber}(q)$ variables with
a common parameter $0<q<1$, where $N\le\binom n2$, and let $f$ be a
multilinear polynomial of degree at most $D$ in these variables, with
all coefficients in $[0,C_0]$.  Write $\EE_L'[f]$ for the
expectation of the non-constant part of the derivative in the
coordinates $L$.  If
\[
 \EE f=\omega(\log n),
 \qquad
 \max_{L\ne\varnothing}\EE_L'[f]
 \le n^{-\alpha}\EE f,
\]
then
\[
 \Pr\bigl(|f-\EE f|>\delta\EE f\bigr)\le n^{-\omega(1)}.
\]
If instead $\EE f\le M$ and
 $M\ge\omega(\log n)+n^\alpha
      \max_{L\ne\varnothing}\EE_L'[f],$
then
\[
 \Pr\bigl(f>(1+\delta)M\bigr)\le n^{-\omega(1)}.
\]
\end{theorem}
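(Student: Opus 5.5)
The statement is a reformulation of the Johansson--Kahn--Vu polynomial concentration theorem \cite[Theorem~5.6, Corollary~5.7 and Remark~5.2]{JKV}, so the plan is a translation of parameters rather than a new argument. Their theorem is stated for a multilinear polynomial $f$ of bounded degree $D$ in $N'$ independent Bernoulli variables with coefficients bounded by a constant. In a form suited to our use, it says the following. If $\EE f=\omega(\log N')$ and every nonempty partial-derivative expectation satisfies $\EE_L'[f]\le (N')^{-\alpha'}\EE f$, then $f$ is $(1\pm\delta)$-concentrated except with probability $(N')^{-\omega(1)}$. Their upper-tail variant replaces $\EE f$ by any upper proxy $M$ dominating both $\omega(\log N')$ and $(N')^{\alpha'}\max_L\EE_L'[f]$.

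First I would embed our setting into theirs. Given $X_1,\ldots,X_N$ with $N\le\binom n2$, I pad with dummy independent $\operatorname{Ber}(q)$ coordinates that do not appear in $f$, so that the number of variables is exactly $N'=\binom n2$. This changes neither the law of $f$ nor any of the quantities $\EE f$ and $\EE_L'[f]$. Derivatives in a set $L$ containing a dummy coordinate vanish identically, so the maximum over $L$ is unaffected.

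Next I convert the parameters. Since $N'=\Theta(n^2)$, we have $\log N'=\Theta(\log n)$, so $\omega(\log n)$ and $\omega(\log N')$ are the same condition. Likewise $n^{-\alpha}=(N')^{-\alpha/2+o(1)}$, so the derivative hypothesis holds in their form with exponent $\alpha'=\alpha/3$, say, for large $n$. In the upper-tail version the same conversion applies, since $n^{\alpha}\ge (N')^{\alpha'}$ for large $n$. Finally, $(N')^{-\omega(1)}=n^{-\omega(1)}$. I would also note that the coefficient bound $C_0$ and degree $D$ are fixed, and that the common parameter $q$ matches their hypothesis of a product measure, so no uniformity in $q$ is needed beyond what their constants provide. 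Their constants depend only on $D,C_0,\alpha',\delta$.

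The main point needing care is the definition of $\EE_L'[f]$. I would need to check that the ``non-constant part of the derivative'' used here coincides with the quantity $\EE_L(f)$ in JKV, or is at least as large. Their Remark~5.2 allows dropping the constant term of a derivative once one uses the upper-proxy formulation, and that is the form invoked here. If their statement uses the full derivative including constant terms, then those constant terms equal the coefficients of monomials exactly indexed by $L$, which are at most $C_0$. These are handled by the $\omega(\log n)$ slack, as in their remark, because a constant contribution is at most $C_0\le n^{-\alpha}\EE f$ once $\EE f\ge n^{\alpha}$; otherwise the proxy $M$ absorbs it. With that identification checked, the statement follows directly.
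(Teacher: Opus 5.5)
Your main line is the paper's own justification. The statement is quoted from Johansson--Kahn--Vu (Theorem~5.6, Corollary~5.7, with Remark~5.2 for the non-constant-part formulation), after adding unused coordinates so that there are $\binom n2$ variables. One then converts $\omega(\log N')$, $(N')^{-\alpha'}$ and $(N')^{-\omega(1)}$ into their $n$-versions, rescaling by $C_0$ if normal coefficients are required.

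The fallback in your last paragraph does not work, and you should drop it. Suppose the cited result required the full derivative. Then taking $L$ to be the support of any monomial puts its coefficient $c_L$ into $\EE_L f$, so the hypothesis forces $\EE f\ge n^{\alpha}\max_L c_L$. In the range $\omega(\log n)\le\EE f<n^{\alpha}$ these constants are not absorbed by the $\omega(\log n)$ slack. The two-sided statement has no proxy $M$ to absorb them, and enlarging $M$ in the upper-tail statement only weakens its conclusion. This is exactly the range where the paper applies the theorem: rooted $K_r$-degrees with mean $\mu_r/n$ barely above $\log n$, coefficients with $\eta_r(q)$ just above $(\log n)^{1+\sigma}$, and overlap caps with $M=C\log^2 n$. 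So the non-constant formulation has to come from the cited source itself, and for the two-sided part as well as the upper-proxy part. It cannot be recovered from a full-derivative version by slack.
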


In every application below the degree, coefficient bound and derivative
margin are uniform.  The resulting $n^{-\omega(1)}$ estimates are
therefore uniform over the polynomially many choices under
consideration.

\subsubsection{Rooted coefficient and edge-load bounds}

Set $d:=r-2$ and $\beta:=(r+1)/2$.  For distinct vertices $x,u$ and
$0\le k\le d$, define
\begin{equation}
 \Gamma_{x,u,k}(H,q)
 :=q^{d-k}
 \sum_{\substack{B\in\binom{V(H)\setminus\{x,u\}}d\\
                   H[u\cup B]=K_{r-1}}}
 \binom{|B\cap N_H(x)|}{k}.
 \label{eq:Gamma-local}
\end{equation}
Write $D_{xu}(H):=\Gamma_{x,u,d}(H,q)$, which is independent of $q$
and does not use the coordinate $xu$.  When $xu\in E(H)$, it counts
the copies of $K_r$ containing that edge.

The exponential sums below control the average contribution of large
coefficients.  The rough maximum bound is used only to bound the
relative change in the factor count after one edge deletion.

\begin{lemma}
\label{lem:coefficient-average-bounds}
There is a decreasing event $\cU_{\rm coeff}$ with
$\Pr(\cU_{\rm coeff})\ge2/3$ such that, on this event, simultaneously
for every $p/8\le q\le1$,
\begin{align}
 \sum_{xu\in E(G_q)}D_{xu}(G_q)^2
 \exp\!\left(\frac{c_rD_{xu}(G_q)}{T(q)}\right)
 &\le C_rA(q)n^2q\eta_r(q)T(q),\notag\\
 \sum_{x\ne u}\sum_{k=0}^{d}
 \Gamma_{x,u,k}(G_q,q)^2
 \exp\!\left(\frac{c_r\Gamma_{x,u,k}(G_q,q)}{T(q)}\right)
 &\le C_rA(q)n^2\eta_r(q)T(q),
 \label{eq:coefficient-average-binomial}\\
 \max_{x,u,k}\Gamma_{x,u,k}(G_q,q)
 &\le C_r\bigl(\eta_r(q)+\log n\bigr).\notag
\end{align}
The conclusions remain valid in induced subgraphs and after arbitrary
pointwise deletion of summands or multiplication of them by weights in
$[0,1]$, including all colour-restricted and weighted sub-sums used
below.
\end{lemma}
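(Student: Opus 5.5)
We prove the three bounds on the grid $\mathfrak Q$ and then take $\cU_{\rm coeff}$ to be the intersection over grid points. By Lemma~\ref{lem:fixed-rank-monotonicity} this transfers to every $p/8\le q\le1$. Each bound is a monotone increasing functional of the edge set: $D_{xu}$, $\Gamma_{x,u,k}$, and the exponential sums are nondecreasing when edges are added. So the event where all of them hold at every grid point is graph-decreasing. Induced subgraphs, pointwise deletion of summands and $[0,1]$-weighting only decrease every summand, so these refinements come for free once the full sums are bounded. Since the grid has $O_r(\log n)$ points, it is enough to show that each grid bound fails with probability $o(1/\log n)$, or simply $o(1)$ in total. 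At $q=1$ everything is deterministic: $D_{xu}=\binom{n-2}{d}$ and $\eta_r(1)=n^{r-2}$, so $T(1)\asymp n^{r-2}$ and each exponential factor is $O(1)$. The bounds then reduce to counting, with $A(1)=p^{-1/2}$ giving ample room.

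For the maximum bound, fix $q\in\mathfrak Q$ with $q<1$ and fix $x,u,k$. The quantity $\Gamma_{x,u,k}(G_q,q)$ is a multilinear polynomial of degree at most $\binom{r-1}2+k$ in the edge indicators, with coefficients in $[0,1]$ after expanding the binomial count as a sum over $k$-subsets of $B$ joined to $x$. Its mean is of order $n^{d}q^{\binom{r-1}{2}}=\Theta(\eta_r(q))$, and this is the same for every $k$: the factor $q^{d-k}$ exactly compensates the $k$ edges to $x$. Taking $M=C(\eta_r(q)+\log n)$, we bound each partial derivative expectation $\EE'_L$. Fixing a set $L$ of edges spanning $v$ extra vertices gives expectations of the form $n^{d-v}q^{(\text{edges})-|L|}$. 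We check that these are at most $n^{-\alpha}M$, using $q\ge p/8\gg n^{-2/r}$, which is the strict $1$-balancedness of $K_r$. This is a routine subgraph-density calculation. The second part of Theorem~\ref{thm:polynomial-concentration} then gives failure probability $n^{-\omega(1)}$, and we take a union bound over $n^2(d+1)$ triples and the grid. The same argument bounds $D_{xu}=\Gamma_{x,u,d}$.

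The averaged exponential bounds are the main obstacle. The plan is to split by the size of $\Gamma:=\Gamma_{x,u,k}$ relative to $T(q)$. Write the sum as $\sum_j\sum_{\Gamma\in[2^jT,2^{j+1}T)}$ plus the part with $\Gamma\le T$. On $\Gamma\le T$ the exponential is $O(1)$, so the contribution is at most $\sum\Gamma^2$. Its expectation is the second moment, $O(n^2(\eta_r^2+\eta_r))\le O(n^2\eta_rT)$, since overlapping pairs of $(r-1)$-cliques through $u$ contribute $O(\eta_r)$ by the same derivative calculations. Markov's inequality at the scale $C A(q)$ then gives a bound with failure probability $O(1/A(q))$ per grid point. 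Because $A(q_i)=2^{i/2}$ for grid points above $p$, these failure probabilities sum to a constant below $1/3$. This is exactly why $A(q)$ appears. For the tail $\Gamma\ge2^jT$ we need $\EE\bigl[\Gamma^2e^{c\Gamma/T}\ind\{\Gamma\ge 2^jT\}\bigr]$. Here we use an upper-tail moment bound for each $\Gamma_{x,u,k}$: a Kim--Vu/Johansson--Kahn--Vu type inequality giving $\Pr(\Gamma>\EE\Gamma+\lambda)\le\exp(-c\lambda/T)$ for $\lambda\ge C(\EE\Gamma+1)$. This holds because $T(q)=1+\eta_r(q)$ dominates every derivative expectation $\EE'_L$, by the computation in the previous paragraph. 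With $c_r$ small, this makes $\EE[\Gamma^2e^{c_r\Gamma/T}]=O(\eta_rT+T^2\cdot\text{small})$. Summing over the $n^2$ pairs gives expectation $O(n^2\eta_rT)$ for the second sum. For the first sum there are only $n^2q$ edges, each contributing $\EE[D^2e^{cD/T}\mid xu\in G_q]$, giving $O(n^2q\eta_rT)$. Markov's inequality with the factor $A(q)$ then finishes as before. The delicate point I expect is verifying the exponential tail with scale exactly $T(q)$ uniformly for all $k$ near $p$, where $\eta_r=o(1)$. There the relevant deviations come from bounded clusters of cliques, and the tail must be shown to be $\exp(-c\lambda)$ rather than $\exp(-c\lambda\log\lambda)$ losses. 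I would handle this by a direct moment computation, bounding $\EE\binom{\Gamma}{s}\le (C\,T)^s/s!\cdot$(overlap factor) for $s$ up to $\log n$.
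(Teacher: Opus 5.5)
Your reduction to the grid, the monotonicity, and the Markov step weighted by $A(q_i)$ are fine; they match the paper's use of $\Xi$. The core single-coefficient estimate, however, has a genuine gap.

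You aim to show $\EE[\Gamma^2e^{c\Gamma/T}]=O(\eta_rT)$ for the \emph{untruncated} coefficient, via a tail $\Pr(\Gamma>\EE\Gamma+\lambda)\le e^{-c\lambda/T}$ valid for all large $\lambda$. For $r\ge5$ this is false. Take $k=d\ge3$ and $q$ near $p$, so that $T(q)=O(1)$. With probability at least $q^{\binom{s+1}2}\ge e^{-s^2\log n}$, the vertices $x,u$ and $s-1$ fixed others span a clique minus $xu$, and then $D_{xu}\ge\binom{s-1}d$. Hence $\EE[D_{xu}^2e^{cD_{xu}/T}]\ge\exp(c's^d-s^2\log n)$, which is superpolynomially large for $s=n^{\varepsilon}$. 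Equivalently, $\Pr(\Gamma\ge t)\ge\exp(-Ct^{2/d}\log n)$, and this exceeds $e^{-ct}$ once $t\gg(\log n)^{d/(d-2)}$. Kim--Vu and JKV inequalities only give stretched-exponential or $n^{-\omega(1)}$ bounds, so they cannot yield your claimed tail.

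Even at moderate levels there is a second problem. When $\eta_r\to0$ you need a tail of the form $C\eta_re^{-ct}$, with the prefactor $\eta_r$, to reach $O(\eta_rT)$. A bare $e^{-ct}$ only gives $O(1)$, and your ``$T^2\cdot$small'' term is never controlled.

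Your maximum bound has a separate problem. The second part of Theorem~\ref{thm:polynomial-concentration} requires $M\ge\omega(\log n)$. So $M=C(\eta_r+\log n)$ is not admissible when $\eta_r=O(\log n)$, and you would only get a cap of order $C\eta_r+\omega(\log n)$.

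The missing idea is truncation. The paper bounds $\EE\Psi_q(\Gamma)$, where $\Psi_q(z)=\min\{z,R\}^2e^{c_0\min\{z,R\}/T}$ and $R=K(\eta_r+\log n)$. It separately proves $\Pr(\Gamma>R)\le n^{-B_0}$. A union bound with $B_0=20$ then gives $\cU_{\rm max}$, on which the truncation is inactive, and Markov is applied only to the truncated sums.

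Both single-coefficient facts need real work.
\begin{itemize}
\item For $\eta_r>L_n=(\log n)^{1+\sigma}$, JKV concentration gives $\Pr(\Gamma>C\eta_r)=n^{-\omega(1)}$, and $T\asymp\eta_r$ keeps the exponential bounded.
\item For $\eta_r\le L_n$, the paper uses a cluster expansion, $\sum_\ell\EE e_\ell((Z_B)_B)z^\ell\preceq\exp(\sum_{j\le M}C_jz^j)$. The connected $h$-cluster weights satisfy $C_h\le\eta_rn^{-ch^{1/d}}$, or $C_h\le\eta_r^hn^{-ch^{1/d}}$ when $\eta_r\ge1$.
\end{itemize}
The cluster bound rests on two facts. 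First, $h$ connected $d$-sets span at least $d+ch^{1/d}$ vertices. Second, the entropy $h\log h$ is $o(h^{1/d}\log n)$ for $h\le R$; this is exactly where $\sigma<1/(d-1)$ enters. Together they give $\Pr(\Gamma>t)\le C\eta_re^{C\eta_r-ct}$ for $t\le R$.

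Your factorial-moment sketch points in this direction. But it must run up to $s\approx R$ with this cluster control, not just to $s\le\log n$. And without truncation, no moment bound can handle the region beyond $R$.
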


Appendix~\ref{app:coefficient-profile} proves this lemma by a
single-coefficient tail estimate, followed by averaging over the
common grid.  The first sum controls the deletion martingale; the
second supplies regular roots for Engine~A.
For later use, write
\[
 \mathsf S_x(H,q):=
 \sum_{u\ne x}\sum_{k=0}^{r-2}
 \Gamma_{x,u,k}(H,q)^2
 \exp\!\left(\frac{c_r\Gamma_{x,u,k}(H,q)}{T(q)}\right).
\]
We call $x$ \emph{coefficient-regular} if
\[
 \mathsf S_x(H,q)
 \le C_{\rm reg}A(q)n\eta_r(q)T(q),
\]
where $C_{\rm reg}=C_{\rm reg}(r)$ is a sufficiently large constant.
The aggregate bound shows that at most $C_rn/C_{\rm reg}$ roots
fail this condition.  Choose $C_{\rm reg}$ large enough, also for the
constant changes in the rank transfer below, that this number is at
most $c_{\rm L}n/4$.  A regular root can then be chosen among any
$c_{\rm L}n$ light roots.

\subsubsection{Rooted clique-degree bounds}

Recall that $d_{K_\ell}(v;H-R)$ is the number of copies of $K_\ell$ in
$H-R$ containing $v$, where $|R|\le3r$ in our applications.

\begin{lemma}
\label{lem:rooted-degree-caps}
There is a decreasing event $\cU_{\rm deg}$ with
$\Pr(\cU_{\rm deg})=1-o(1)$ such that, on this event, for every
$p/8\le q\le1$, every $2\le\ell\le r$, and every vertex, the number
of copies of $K_\ell$ through that vertex is at most
$C_r\mu_\ell(n,q)/n$, even after deleting a fixed set of at most $3r$
further vertices.  The same conclusion holds in every induced
subgraph.
\end{lemma}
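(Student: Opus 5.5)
We prove Lemma~\ref{lem:rooted-degree-caps} on the grid $\mathfrak Q$ and then use monotonicity in $q$. Fix $q\in\mathfrak Q$ with $q<1$, $2\le\ell\le r$, a vertex $v$, and a set $R$ with $|R|\le3r$. The count $d_{K_\ell}(v;G_q-R)$ is a multilinear polynomial of degree $\binom\ell2$ in the edge indicators of $G_q$, with coefficients in $\{0,1\}$. It is dominated by the unrestricted count $f_v=d_{K_\ell}(v;G_q)$, which is increasing in the edges. It is therefore enough to bound $f_v$ for every $v$, and the deleted set $R$ and induced subgraphs need no separate treatment. At $q=1$ the count is the deterministic value $\binom{n-1}{\ell-1}\le C_r\mu_\ell(n,1)/n$. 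The event ``$f_v\le C_r\mu_\ell(n,q)/n$ for all $v,\ell,q\in\mathfrak Q$'' is graph-decreasing, as required.

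For a fixed $v,\ell,q$ we apply the second (upper-tail) form of Theorem~\ref{thm:polynomial-concentration} with $M=\E f_v=\ell\mu_\ell(n,q)/n$. This needs $M=\omega(\log n)$ and $\max_{L\ne\varnothing}\E'_L f_v\le n^{-\alpha}M$. The first condition holds because, for $q\ge p/8$,
\[
\mu_\ell(n,q)/n=\Theta(n^{\ell-1}q^{\binom\ell2})\ge\Theta\bigl((n^{2/r}p)^{\binom r2\frac{\ell-1}{\binom r2}\cdot\frac{\ell}{2}\cdots}\bigr).
\]
More cleanly, we write $n^{\ell-1}q^{\binom\ell2}=(nq^{\ell/2})^{\ell-1}$. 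Since $q\gg n^{-2/r}\ge n^{-2/\ell}$, we get $nq^{\ell/2}\ge nq^{r/2}\cdot q^{-(r-\ell)/2}$. For $\ell=r$ the bound is $\mu_r/n\gg\log n$ from the hypothesis. For $\ell<r$ it is a positive power of $n$, because $q^{\ell/2}\gg n^{-\ell/r}$ gives $nq^{\ell/2}\gg n^{1-\ell/r}$.

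For the derivatives, take a nonempty edge set $L$ spanning $s$ vertices besides $v$ (or $s$ vertices including possibly not $v$). The term $\E'_L f_v$ counts the remaining $\ell-1-s'$ free vertices, weighted by $q$ to the number of remaining edges. It is at most of order $\max_{F}n^{\ell-1-a}q^{\binom\ell2-|L|}$, where $a$ is the number of non-root vertices covered by $L$. The ratio to $M$ is $n^{-a}q^{-|L|}$. This is the only genuine work: we need $n^{a}q^{|L|}\ge n^{\alpha}$ for every nonempty $L$ inside $K_\ell$ rooted at $v$, and every non-constant derivative (a nonconstant part of the derivative excludes the case where $L$ is the whole clique). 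The worst case is $L$ equal to all edges of a $K_{a+1}$ through $v$, giving $n^aq^{\binom{a+1}2}=(nq^{(a+1)/2})^a$. For $a\le\ell-2\le r-2$ we have $q^{(a+1)/2}\gg n^{-(a+1)/r}$, and hence a power at least $n^{a(1-(r-1)/r)}=n^{a/r}$. Subsets of edges only improve this. When $L$ misses $v$, its $a$ vertices together with the star edges to $v$ that are not in $L$ can only help. When $L$ covers all $\ell-1$ non-root vertices but not every edge, we keep the nonconstant part, so at least one $q$ factor remains; the term $n^{\ell-1}q^{|L|}$ with $|L|\le\binom\ell2-1$ is then handled the same way, since $q\ge n^{-2/(r)}$ makes $n^{\ell-1}q^{\binom\ell2-1}\ge n^{\ell-1}q^{\binom\ell2}\to\infty$ polynomially for $\ell<r$. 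For $\ell=r$ and $|L|=\binom r2-1$ the ratio is $1/(\mu_r/n\cdot q^{-1}\cdot n^{-?})$; here we expect to need the upper bound $q\le1$ only. This is the main place where the range assumptions enter, and I expect the bookkeeping of this extreme case to be the obstacle. If the derivative margin fails at $\ell=r$ near $q\asymp p$, we would instead use $M=\omega(\log n)+n^\alpha\max\E'_L$ with a larger constant, which the theorem permits.

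Given these bounds, the theorem yields $\Pr(f_v>(1+\delta)M)\le n^{-\omega(1)}$. A union bound over $n$ vertices, $r$ values of $\ell$ and $O(\log n)$ grid points gives total failure probability $o(1)$.

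For intermediate $q$, with $q_i\le q\le q_{i+1}=2q_i$, monotonicity in the coupling gives $f_v(G_q)\le f_v(G_{q_{i+1}})\le C\mu_\ell(n,q_{i+1})/n\le C2^{\binom\ell2}\mu_\ell(n,q)/n$ (Lemma~\ref{lem:fixed-rank-monotonicity}). The factor $2^{\binom\ell2}$ is absorbed into $C_r$.
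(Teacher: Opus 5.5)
Your overall route is the paper's. You cap the unrestricted rooted count $f_v$ at each grid point using the Johansson--Kahn--Vu polynomial concentration theorem, take a union bound over roots, uniformities and grid points, and then pass to intermediate densities, deleted sets and induced subgraphs by monotonicity. The mean lower bounds are correct, and so is the derivative margin $n^{-a(r-a-1)/r}$ when at most $\ell-2$ non-root vertices are fixed.

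The gap is that you leave open the one case that needs the range assumption for $\ell=r$: derivatives fixing all $r-1$ non-root vertices and $\binom r2-1$ edges. You suggest only $q\le1$ is needed there. As a fallback, you propose enlarging $M$ to $\omega(\log n)+n^{\alpha}\max_L\E'_L f_v$. That fallback would not prove the lemma. If the margin really failed, this $M$ would exceed $\E f_v$ by a polynomial factor, so the cap $(1+\delta)M$ would no longer be $O_r(\mu_r(n,q)/n)$.

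The case is easy, but it uses the lower bound on $q$, not $q\le1$. Fixing all $\ell-1$ non-root vertices determines the clique, so $\partial_L f_v$ is a single edge indicator. By monotonicity in $|L|$ (as $q\le1$), the worst ratio is
\[
 \frac{\E'_L f_v}{\E f_v}=\Theta_r\bigl(n^{-(\ell-1)}q^{-(\binom\ell2-1)}\bigr).
\]
Since $q\ge p/8\gg n^{-2/r}$, for large $n$ we get
\[
 n^{\ell-1}q^{\binom\ell2-1}\ge n^{\ell-1-(\ell-2)(\ell+1)/r}\ge n^{2/r},
\]
with equality of exponents at $\ell=r$. This is exactly the $n^{-2/r}$ bound recorded in Lemma~\ref{lem:rooted-derivatives}. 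Combined with your bound for partially fixed vertex sets, whose exponent $a(r-a-1)/r$ is at least $(r-2)/r$, every non-constant derivative is at most $n^{-1/r}\E f_v$. So the first form of Theorem~\ref{thm:polynomial-concentration} applies with any fixed $\alpha<1/r$, and the rest of your argument goes through unchanged.
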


\begin{proof}
At $q=1$ the bound is deterministic.  At a grid point $q<1$, the
clique count through a fixed root has mean
$\Theta_r(n^{\ell-1}q^{\binom\ell2})$.
For $\ell=r$ this is $\omega(\log n)$ by the lower bound on $p$;
for $\ell<r$ it is at least
$n^{(\ell-1)(r-\ell)/r+o(1)}$.
Lemma~\ref{lem:rooted-derivatives} verifies the derivative condition,
so Theorem~\ref{thm:polynomial-concentration} gives the desired cap
with failure probability $n^{-\omega(1)}$.
A union bound over the roots, uniformities and grid points gives a
decreasing event of probability $1-o(1)$.  The common monotonicity
argument extends it to all densities and all vertex deletions.
\end{proof}

\subsubsection{Janson overlap bounds}

For a graph $H$, a set $T\in\binom{V(H)}t$ and
$1\le t\le\ell-2$, set
\[
 d_{\ell-1}(T;H):=
 |\{A\in K_{\ell-1}(H):T\subseteq A\}|.
\]
We dominate this count by omitting the requirement that the edges
inside $T$ are present:
\[
 \widehat d_{\ell-1}(T;H):=
 \left|\left\{
 B\in\binom{V(H)\setminus T}{\ell-1-t}:
 \binom{T\cup B}{2}\setminus\binom T2\subseteq E(H)
 \right\}\right|.
\]
Let $\widehat\Delta_{\ell-1,t}(H)$ be the maximum of this quantity over
all $t$-sets $T$, and define
\[
 \Lambda_\ell(H,q):=
 1+\sum_{t=1}^{\ell-2}q^{\ell-1-t}
 \widehat\Delta_{\ell-1,t}(H).
\]
The quantity $\Lambda_\ell$ dominates the overlap factor in Janson's
inequality.

\begin{lemma}
\label{lem:janson-overlap-caps}
There is a decreasing event $\cU_{\rm ov}$ with
$\Pr(\cU_{\rm ov})=1-o(1)$ such that, on this event, for every
$p/8\le q\le1$ and every $2\le\ell\le r$,
\begin{equation*}
 \Lambda_\ell(G_q,q)\le C_r(1+\eta_\ell(q)).
\end{equation*}
The conclusion remains true, with changed constants, in every induced
subgraph.
\end{lemma}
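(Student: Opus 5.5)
We prove Lemma~\ref{lem:janson-overlap-caps} by bounding each extension count $\widehat d_{\ell-1}(T;G_q)$ uniformly over all $t$-sets $T$, on a fixed grid point, and then transferring to all $q$ and all induced subgraphs by monotonicity. Fix $2\le\ell\le r$, $1\le t\le\ell-2$, and a grid point $q\in\mathfrak Q$ with $q<1$; the endpoint $q=1$ is deterministic, since $\widehat d_{\ell-1}(T;K_n)\le n^{\ell-1-t}$ and $\eta_\ell(1)=n^{\ell-2}$.

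For fixed $T$, the count $\widehat d_{\ell-1}(T;G_q)$ is a multilinear polynomial of degree $\binom{\ell-1}2-\binom t2$ in the edge indicators, with $0/1$ coefficients. Its mean is
\[
 \Theta\bigl(n^{\ell-1-t}q^{\binom{\ell-1}2-\binom t2}\bigr).
\]
We use the target cap
\[
 M_t:=K\bigl(1+\eta_\ell(q)\bigr)q^{-(\ell-1-t)},
\]
which gives exactly $q^{\ell-1-t}\widehat d\le K(1+\eta_\ell(q))$.

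First we check that $M_t$ dominates the mean. With $s=\ell-1-t\ge1$, the quantity $q^{s}n^{s}q^{\binom{\ell-1}2-\binom t2}$ counts the expected number of copies of $K_{\ell}$-like extensions of $T$. Its ratio to $\eta_\ell=n^{\ell-2}q^{\binom\ell2-1}$ is
\[
 n^{s-\ell+2}q^{\,s+\binom{\ell-1}2-\binom t2-\binom\ell2+1}=(nq^{\,?})^{-(t-1)}\cdots.
\]
Concretely, since $\binom{\ell}{2}-\binom{\ell-1}2=\ell-1$ and $\binom{\ell-1}2-\binom t2\ge$ the edges from $T$ to $B$ plus those inside $B$, the exponent comparison reduces to $n^{-(t-1)}q^{-(\text{edges inside }T)+\dots}$. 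This is bounded because $q\ge p/8\gg n^{-2/r}\ge n^{-2/(\ell-1)}\cdot$(adjusting for $t$). We verify this by the elementary inequality that, for $1\le t\le\ell-2$, $n^{t-1}q^{\binom t2+t-\,1}\ge n^{t-1}q^{\binom{t+1}2-1}\gtrsim1$ in the range $q\gg n^{-2/r}$, which holds since $\binom{t+1}2-1\le (t-1)(t+2)/2$ and $q\ge n^{-2/(t+2)}$ for $t+2\le r$.

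Second, we apply the upper-tail form of Theorem~\ref{thm:polynomial-concentration} with this $M_t$. This requires
\[
 M_t\ge\omega(\log n)+n^{\alpha}\max_L\EE'_L.
\]
The derivative with respect to a nonempty edge set $L$ is an extension count from a larger root set $T\cup V(L)$. Its expectation is therefore a term of the same type with more fixed vertices, times $q^{-(\text{fixed edges})}$-type factors. We bound it by $n^{-\alpha}$ times the mean of a structure at the boundary of the window, using $p\ll n^{-2/(r+1)}$ to get a polynomial margin. This verification is the main obstacle. The factor $1+\eta_\ell$ is needed precisely because, near the upper end of the window, denser rooted extensions dominate, and the $\log n$ term must be absorbed by the additive constant $K$. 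Where the mean is tiny, however, $\omega(\log n)$ is not available. There we instead use the cruder tail bound $\Pr(\widehat d\ge K')\le n^{-\omega(1)}$ for bounded sub-extensions, obtained by a disjoint-extension (Erdős–Tetali/Spencer) argument: $K'$ disjoint extensions have probability at most $(\EE)^{K'}$, which is at most $n^{-\omega(1)}$ once $\EE\le n^{-\varepsilon}$. In the intermediate case $n^{-\varepsilon}\le\EE\le\log n$, we again use the disjoint-extension argument combined with a maximal-disjoint-family bound, following the rooted-derivative computations of Lemma~\ref{lem:rooted-derivatives}.

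Finally, we take a union bound over the $n^{O_r(1)}$ sets $T$, the values of $\ell$ and $t$, and the $O(\log n)$ grid points. Each event "$\Lambda_\ell\le\cdot$" is decreasing in the graph, so their intersection $\cU_{\rm ov}$ is decreasing and has probability $1-o(1)$. For $q$ between consecutive grid points $q_i\le q\le q_{i+1}=2q_i$, we use $G_q\subseteq G_{q_{i+1}}$ and compare $\Lambda_\ell(G_q,q)\le\Lambda_\ell(G_{q_{i+1}},q_{i+1})$, together with $\eta_\ell(q_{i+1})=O(\eta_\ell(q))$, as in Lemma~\ref{lem:fixed-rank-monotonicity}. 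Induced subgraphs only shrink each $\widehat d$, which yields the conclusion in every induced subgraph with changed constants.
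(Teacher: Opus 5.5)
Your argument follows the paper's route. Both apply the upper-tail part of Theorem~\ref{thm:polynomial-concentration} to each $t$-set $T$, use the derivative bounds of Lemma~\ref{lem:rooted-derivatives} and the ratio bound $n^{1-t}q^{1-\binom{t+1}{2}}\le 1$, and finish with a union bound and grid monotonicity. Your (garbled) mean-domination paragraph does land on that ratio bound: your $q\ge n^{-2/(t+2)}$ is equivalent to the paper's $(t-1)(\frac{t+2}{r}-1)\le 0$.

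The one real difference is the cap. The paper caps $\widehat d_{\ell-1}(T;G_q)$ at $C_r(\mu+\log^2 n)$ and then splits into two cases. When $\mu<\log^2 n$, the term $q^{s}\log^2 n=o(1)$ is absorbed by the $1$. When $\mu\ge\log^2 n$, the ratio computation applies. You cap directly at $M_t=K(1+\eta_\ell(q))q^{-s}$. That choice works and removes the case split entirely, but you did not notice this.

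Your stated reason for a separate small-mean argument is wrong for your own cap. First, $M_t\ge K\max\{q^{-s},\eta_\ell(q)q^{-s}\}\ge Kn^{2s/(\ell+1)}$, so $M\ge\omega(\log n)$ holds in every regime. Second, your mean-domination step gives $M_t\ge c\max\{\mu,1\}$. Lemma~\ref{lem:rooted-derivatives} gives $\max_L\EE'_L\le n^{-c_*}\max\{\mu,1\}$ in both of its cases. Hence $n^{\alpha}\max_L\EE'_L=o(M_t)$ for any $\alpha<c_*$, and the upper-tail bound applies uniformly.

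So the Erd\H{o}s--Tetali/Spencer detour should be deleted. As written it is not a proof. For fixed $K'$, a mean of at most $n^{-\varepsilon}$ gives only $n^{-\varepsilon K'}$, not $n^{-\omega(1)}$. More seriously, a large value of $\widehat d$ does not produce many disjoint extensions unless you carry out the full recursive maximal-disjoint-family argument over all sub-extensions.

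Also, the polynomial margin in the derivative condition does not come from $p\ll n^{-2/(r+1)}$. It comes from strict balance of the rooted extension. A derivative fixing $a$ of the $s$ free vertices fixes at most $m_a$ edges, with $m_a/a<m/s$ where $m=\binom{\ell-1}{2}-\binom t2$. This gives the factor $n^{-(a-sm_a/m)}$ at every density.
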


\begin{proof}
The case $\ell=2$ is immediate, and $q=1$ is deterministic.
Fix a grid point $q<1$, a uniformity $\ell\ge3$ and a $t$-set $T$.
Put $v=\ell-1-t$ and $m=\binom{\ell-1}{2}-\binom t2$.
The mean of $\widehat d_{\ell-1}(T;G_q)$ is
$\mu=\Theta_r(n^vq^m)$.  Lemma~\ref{lem:rooted-derivatives} and the
two parts of Theorem~\ref{thm:polynomial-concentration} give a cap
$C_r(\mu+\log^2n)$, with failure probability $n^{-\omega(1)}$.

If $\mu<\log^2n$, then
$q^v\log^2n\le C_rn^{-v^2/m}(\log n)^{2+2v/m}=o(1)$,
so its contribution to $\Lambda_\ell$ is absorbed by the initial $1$.
Otherwise, the ratio of its contribution to $\eta_\ell(q)$ is at most
a constant times $n^{1-t}q^{1-\binom{t+1}{2}}$.
This equals $1$ for $t=1$.  For $t\ge2$, the lower bound on $q$
and $t\le\ell-2\le r-2$ give
\[
 1-t+\frac2r\left(\binom{t+1}{2}-1\right)
 =(t-1)\left(\frac{t+2}{r}-1\right)\le0.
\]
Thus this ratio is bounded; in the equality case the logarithmic
factor in the lower bound on $p$ makes it tend to zero.

There are only polynomially many choices of $T,\ell,t$ and the grid
point.  Their caps therefore hold on a common decreasing event of
probability $1-o(1)$.  At $q=1$, the bound
$\widehat\Delta_{\ell-1,t}\le n^{\ell-1-t}$ gives the assertion
directly.  Monotonicity extends these bounds to all densities and
induced subgraphs.
\end{proof}

\begin{proof}[Proof of Lemma~\ref{lem:upper-regularity-summary}]
Lemma~\ref{lem:coefficient-average-bounds} gives
$\Pr(\cU_{\rm coeff})\ge2/3$, while
Lemmas~\ref{lem:rooted-degree-caps} and
\ref{lem:janson-overlap-caps} give
$\Pr(\cU_{\rm deg})=\Pr(\cU_{\rm ov})=1-o(1)$.
Their intersection is decreasing and has probability bounded below by
a positive constant.  By construction, it has all the stated
properties simultaneously.
\end{proof}

\subsection{Transfer to exact ranks}

We use the same comparison for the density grid and the rank sandwich.

\begin{lemma}
\label{lem:fixed-rank-monotonicity}
Fix a constant $C_0\ge1$.  Suppose that $H\subseteq H'$ and
$q\le q'\le C_0q$.  Then, after changing constants depending only on
$r$ and $C_0$, every coefficient, edge-load, rooted-degree and overlap
bound at $(H',q')$ implies the corresponding bound at $(H,q)$.
More precisely,
\[
 \Gamma_{x,u,k}(H,q)\le\Gamma_{x,u,k}(H',q'),
 \qquad
 \Lambda_\ell(H,q)\le\Lambda_\ell(H',q'),
\]
and
\[
 A(q')\le C A(q),\qquad T(q')\le C T(q),\qquad
 \eta_\ell(q')\le C\eta_\ell(q).
\]
Consequently, an exponential sum with constant $c$ at $(H',q')$
controls the analogous sum at $(H,q)$ with a smaller constant
$c'=c'(r,C_0)>0$ in the exponential.
\end{lemma}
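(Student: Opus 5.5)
The plan is to verify each claimed inequality directly from the definitions, and then combine them to handle the exponential sums.

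\emph{Monotonicity of $\Gamma$.} In \eqref{eq:Gamma-local}, the prefactor satisfies $q^{d-k}\le (q')^{d-k}$, because $q\le q'$ and $d-k\ge0$. If $H\subseteq H'$, every $B$ with $H[u\cup B]=K_{r-1}$ also satisfies $H'[u\cup B]=K_{r-1}$. Moreover $N_H(x)\subseteq N_{H'}(x)$, so $\binom{|B\cap N_H(x)|}{k}\le\binom{|B\cap N_{H'}(x)|}{k}$. Each summand is non-negative, so $\Gamma_{x,u,k}(H,q)\le\Gamma_{x,u,k}(H',q')$. The same argument applies to $D_{xu}$ (the case $k=d$), to rooted clique degrees $d_{K_\ell}(v;H-R)$, and to edge loads.

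\emph{Monotonicity of $\Lambda_\ell$.} $\widehat d_{\ell-1}(T;H)$ counts sets $B$ whose required edges lie in $E(H)\subseteq E(H')$, so it is monotone in the graph. The weights $q^{\ell-1-t}$ are monotone in $q$. Hence $\Lambda_\ell(H,q)\le\Lambda_\ell(H',q')$.

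\emph{Scale comparisons.} We have $\eta_\ell(q')/\eta_\ell(q)=(q'/q)^{\binom\ell2-1}\le C_0^{\binom r2}$. Consequently $T(q')=1+\eta_r(q')\le C_0^{e}T(q)$. Finally, $A(q')=\max\{1,(q'/p)^{1/2}\}\le C_0^{1/2}A(q)$. The quantities $\mu_\ell(n,q)/n$ and $n^2q$ change by the same bounded factors, so the caps in Lemmas~\ref{lem:rooted-degree-caps} and \ref{lem:janson-overlap-caps} and the maximum-load bound transfer after enlarging $C_r$.

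\emph{Exponential sums.} This is the only point needing care. Write $\Gamma=\Gamma_{x,u,k}(H,q)$ and $\Gamma'=\Gamma_{x,u,k}(H',q')$, so $\Gamma\le\Gamma'$. The function $t\mapsto t^2e^{ct}$ is increasing on $t\ge0$. Also $T(q)\ge T(q')/C$, which gives
\[
 \Gamma^2 e^{c'\Gamma/T(q)}\le \Gamma'^2e^{c'C\Gamma'/T(q')},
\]
and we take $c'=c/C$. The summation index sets at $(H,q)$ are subsets of those at $(H',q')$: edges of $H$ are edges of $H'$, and $x\ne u$ ranges over the same pairs. The same holds after pointwise deletion or $[0,1]$-weighting. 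So the sum at $(H,q)$ is at most the sum at $(H',q')$ with constant $c$. The right-hand bounds $C_rA(q')n^2q'\eta_r(q')T(q')$ and their analogues are at most a constant multiple of the corresponding expressions at $q$, by the scale comparisons above. The factor $n^2q$ in the first sum also changes by at most $C_0$.

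The main obstacle is bookkeeping rather than mathematics. One must check that each quantity is defined with non-negative monotone summands, and that the exponent rescaling uses the upper bound on $T(q')$, not a lower one. This is why the constant in the exponential shrinks rather than grows.
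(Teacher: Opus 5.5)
Your proposal is correct and follows the paper's argument: termwise monotonicity of every count and coefficient in the graph and in $q$, bounded-ratio scale comparisons from $q'/q\le C_0$, and the rescaling $c'=c/C$ justified by $T(q')\le CT(q)$. Your version differs only in spelling out the bookkeeping, such as nonnegative exponents of $q$, the inclusion of index sets, and the constants $C_0^{1/2}$ and $C_0^{\binom r2}$, which the paper leaves implicit.
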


\begin{proof}
Every count and coefficient increases with the graph and with $q$.
The displayed scale comparisons follow from $q'/q\le C_0$.
If $X\le X'$ and $T(q')\le CT(q)$, then
$e^{c'X/T(q)}\le e^{cX'/T(q')}$ for $c'\le c/C$.
Summing gives all the asserted estimates, with changed constants.
\end{proof}

\begin{proof}[Proof of Proposition~\ref{prop:uniform-input}]
Take $\cU$ from Lemma~\ref{lem:upper-regularity-summary} and
$\cO_W$ from Lemma~\ref{lem:rank-comparison}.  The latter and
Lemma~\ref{lem:ranked-star} give part~(i).
For a fixed $W',j$, set $H=H_j(W')$, $q=j/\binom{|W'|}{2}$ and
$\bar q=\min\{(1+\nu)q,1\}$.  The rank sandwich gives
$H\subseteq G_{\bar q}[W']$, with $q\le\bar q\le(1+\nu)q$.
Lemma~\ref{lem:fixed-rank-monotonicity} transfers the three binomial
bounds to $(H,q)$, proving parts~(ii), (iii) and~(v).

For part~(iv), the transferred sum over roots is at most
$C_rA(q)n^2\eta_r(q)T(q)$.  At most $C_rn/C_{\rm reg}$ roots can
therefore fail the regularity bound.  By the choice of $C_{\rm reg}$
and $|W'|=(1-o(1))n$, this is at most $c_{\rm L}|W'|/2$.
Pointwise restrictions and weights only reduce the sum.

Finally, Lemma~\ref{lem:record-rank-data} handles every bounded
deletion and restricted-rank label.  Exceptional star conditions are
intersected with the root event rather than included in the
conditioning.  The uniform-neighbourhood law is used only after
fixing the outside birth times and the total root degree, as in
Lemma~\ref{lem:ranked-star}.
\end{proof}

We now use these estimates to turn the fingerprints into one-root events.

\section{From rooted fingerprints to single-root witnesses}
\label{sec:fixed-rank}

We turn the three fingerprints of
Proposition~\ref{prop:structural-fingerprints} into events whose cost is paid at one root.

\begin{proposition}
\label{prop:one-step-witness}
At any relevant rank $j$, on $\cU\cap\cO_W$, if
$\cA(H_j,q_j)$ holds and either $\cR(H_j,q_j)$ or $\cB(H_j)$ fails,
then the resulting event is covered by at most $n^{C_r}$ single-root
witnesses.
\end{proposition}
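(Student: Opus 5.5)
By Proposition~\ref{prop:structural-fingerprints}, after branching over at most $n^{C_r}$ labels (vertices, restricted ranks, dyadic scales and root degrees, recorded as in Lemma~\ref{lem:record-rank-data}), the event lies in one of three kinds of configuration: a sample anomaly, a lower-tail fingerprint, or a heavy root $x$ with at least $c_{\rm L}|W|$ light roots, each giving a two-star fingerprint. For each kind we exhibit a root $z$ and an event $\mathcal E\in\mathscr G_W$ containing the configuration with $\Pr(\mathcal E\mid\mathscr F(z))\le\exp(-c\mu_r/n)$. Two devices are used throughout. First, every rank, degree and edge-state identity involving $z$ is kept inside $\mathcal E$, and outside conditions are added as an outside event; by Lemma~\ref{lem:record-rank-data} this does not increase the conditional probability. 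Second, after fixing the outside birth times and $d(z)=k$, the neighbourhood of $z$ is a uniform $k$-set by Lemma~\ref{lem:ranked-star}, and Lemma~\ref{lem:hypergeom-transfer} lets us replace it by independent $\operatorname{Ber}(k/N)$ selectors at a cost of a factor $n$. This cost is harmless because $\mu_r/n\gg\log n$.

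\emph{Sample anomaly.} Take $z$ the vertex with abnormal degree and intersect with $\cV_{W',z,h}$. Lemma~\ref{lem:ranked-star} bounds the conditional probability by $\exp(-cq|W'|)$. Since $\mu_r/n=\Theta(nq\eta_r(q))$ and $\eta_r(p)=o(1)$, comparing with \eqref{eq:scale-comparisons} gives $nq\gg\mu_r(n,p)/n$, which is enough.

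\emph{Lower-tail fingerprint.} With the outside data fixed, $X_z$ counts members of $\cH$ lying inside the neighbourhood of $z$. Under independent selectors of density $\approx q$ this is a Janson sum with mean $\mu\ge c_0 n^{\ell-1}q^{\binom{\ell}{2}}=\Omega(\mu_\ell(n,q)/n)$. Its overlap term is $\Delta\le\mu\,\Lambda_\ell\le C_r\mu(1+\eta_\ell(q))$ by Proposition~\ref{prop:uniform-input}(v). Taking $c_1$ small, Lemma~\ref{lem:janson} gives the bound $\exp(-c\mu/(1+\eta_\ell(q)))$. We then check that $\mu_\ell(n,q)/(n(1+\eta_\ell(q)))\gtrsim\mu_r(n,p)/n$ for all $2\le\ell\le r$ and $q\ge p/3$. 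This is a calculation from $p\ll n^{-2/(r+1)}$ and the relation $\mu_\ell/n=\Theta(nq\eta_\ell)$.

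\emph{Two-star fingerprint.} This is the main obstacle. Using Proposition~\ref{prop:uniform-input}(iv), we choose among the $c_{\rm L}|W|$ light roots a coefficient-regular $y$, with $\mathsf S_y\le C_{\rm reg}A(q)n\eta_r(q)T(q)$. Set $\mu_Z:=\E[Z\mid\text{outside}]$, computed with selectors at the common density. The inequality $Z(N(y))>(1+\rho_0)Z(N(x))$ together with $Z(N(y))\ge a_0\mu_r/n$ forces one of $Z(N(y))\ge(1+\rho_0/3)\mu_Z$ or $Z(N(x))\le(1-\rho_0/3)\mu_Z$; in the second case $\mu_Z$ is of order $\mu_r/n$. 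We split the event into these two one-root events with roots $y$ and $x$ respectively. Note that $Z$ has identical outside-determined coefficients at both roots, and the degrees are equal up to $1+O(\nu)$ on the non-anomalous branch.

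For each root we colour its potential neighbours into $r-1$ classes at random and restrict to rainbow terms. Lemma~\ref{lem:coefficient-average-bounds} remains valid under this colour restriction, and averaging over colourings costs only constants in the deviation. We then expose one colour class at a time. At each stage $Z$ is linear in the exposed coordinates, and the coefficients are the $\Gamma_{z,u,k}$ sums. A Freedman/Bernstein-type exponential supermartingale, as in the proof of Lemma~\ref{lem:deletion-martingale}, bounds the exponential moment by $\exp(\lambda^2\mathsf S_z/2)$ at $\lambda\asymp n^{-1}\mu_r/(nA T)$ after normalisation. Using \eqref{eq:scale-comparisons}, this gives tails $\exp(-c\mu_r(n,p)/n)$.

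The delicate point is the heavy root $x$: it need not be coefficient-regular. For $x$ we only need a lower tail. A lower tail for a nonnegative polynomial should follow from a Janson-type or Harris-type argument, or from a truncated exponential moment with $\lambda<0$. For $\lambda<0$ the large coefficients only help, so the maximum-coefficient bound of Lemma~\ref{lem:coefficient-average-bounds} suffices. I would try that first.

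Finally, the label count is $n^{C_r}$, and every witness is local to $W$.
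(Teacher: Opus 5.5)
Your handling of sample anomalies, of lower-tail fingerprints (Janson's inequality with the outside overlap bound $\Lambda_\ell$), and of the upper deviation at a coefficient-regular light root $y$ matches the paper. For $y$ you use rainbow colour-class exposure, with the predictable caps dominated by $\mathsf S_y$ and regularity intersected with the event rather than conditioned on. The gap is the lower deviation at the heavy root $x$, and the route you say you would try first does not work.

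For $\lambda<0$ the per-class log-moment is indeed at most $\lambda^2\rho\sum_u\alpha_{i,u}^2/2$, with no exponential factor. But this quadratic variation must still be bounded. Since $x$ is fixed by the median failure and need not be coefficient-regular, your only tool is the pointwise cap $\alpha_{i,u}\le C_r\Gamma_{x,u,h}\le C_r(\eta_r(q)+\log n)$. At best this gives $\rho\sum_u\alpha_{i,u}^2\lesssim(\eta_r(q)+\log n)\,\theta_q$, and hence a tail $\exp(-c\theta_q/(\eta_r(q)+\log n))$. The $\eta_r$ part is harmless because $\theta_q/\eta_r(q)\asymp nq$. However, $\eta_r(p)=o(1)$ throughout the window, so for $q\asymp p$ the cap is $\Theta(\log n)$ and you obtain only $\exp(-c\mu_r(n,p)/(n\log n))$.

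That is not a single-root witness with constant parameter. It cannot absorb the $n^{C}$ labels per stage at the lower end of the window, where only $\mu_r/n\gg\log n$ is known, and it would lose a logarithm in the exponent of Theorem~\ref{thm:main}. You also cannot discard irregular neighbourhoods of $x$ as a rare event: the averaged coefficient bound controls them only through Markov's inequality. This is exactly why the paper keeps linearly many light roots, since regularity can be imposed at $y$ but never at $x$.

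The missing idea is that the lower tail needs no information about the root's neighbourhood at all. Apply Janson's inequality in the product space of the root selectors. The overlap $\Delta_c=\sum_{A\sim B}c_Ac_B\E(I_AI_B)$ is an expectation over those selectors, hence a function of the outside graph $H_0$ alone. By Proposition~\ref{prop:uniform-input}(v), $\Delta_c\le C_r\mu\,\Lambda_r(H_0,q)\le C_r\mu(1+\eta_r(q))$. The exponent $\mu^2/(\mu+\Delta_c)\gtrsim\theta_q/T(q)\asymp\min\{\theta_q,nq\}\gtrsim\mu_r(n,p)/n$ then carries no logarithmic loss.

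To remove the weights $c_A\in[0,1]$, retain each $A$ independently with probability $c_A$. Then $Z$ is the conditional mean of an unweighted Janson count, and the two lower tails are compared through Markov's inequality, as in Lemma~\ref{lem:weighted-root-lower}. This is the same engine you already use for lower-tail fingerprints, now with $\ell=r$. Of the options you list for the heavy root, the Janson-type one is the one that works, and you need to commit to it.
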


Sample anomalies follow from the ranked-star estimate.  For the other
two fingerprints we need an upper-tail bound from rainbow exposure and
a weighted lower-tail bound from Janson's inequality.

\subsection{Engine A: the two-star fingerprint}
\label{sec:rainbow-engine}

The two-star gap forces an upper deviation at the light root or a lower
deviation at the heavy root.  We estimate these two events separately.

Write $\theta_q:=\mu_r(n,q)/n$.
Fix one branch of Lemma~\ref{lem:median-fingerprint}.  On this branch
there are two roots $x,y$, a vertex set $\Omega$ disjoint from them, a
graph $H_0$ on $\Omega$, and coefficients
$0\le c_A\le1$, determined outside both root stars, such that
\[
 Z(K):=\sum_{A\in\cH}c_A\ind_{\{A\subseteq K\}},
 \qquad
 \cH\subseteq K_{r-1}(H_0).
\]
The two values in the fingerprint are $Z(K_x)$ and $Z(K_y)$, where
$K_x,K_y$ are the two restricted root neighbourhoods.  After the two
root degrees have been recorded, each $K_z$ is a uniform set of its
prescribed size.

\begin{lemma}
\label{lem:split-two-star}
Fix $a,\rho_0>0$ and take $\nu$ sufficiently small in terms of
$r,a,\rho_0$.  In the setting above, suppose that both
restricted root degrees are normal, that the light root $y$ is
coefficient-regular in the common induced graph obtained after deleting
the fixed heavy root and the fixed $(r-1)$-set, and that
$Z(K_y)\ge a\theta_q$ and $Z(K_y)>(1+\rho_0)Z(K_x)$.
Suppose also that the outside graph satisfies
\[
 |K_{r-1}(H_0)|\le C_r\mu_{r-1}(n,q),
 \qquad \Lambda_r(H_0,q)\le C_rT(q).
\]
Then this branch is contained in the union of two candidate root
events: an upper-deviation event at $y$ and a lower-deviation event at
$x$.  After the outside branch data have been fixed, each event has
pointwise conditional probability at most
\[
 \exp\!\left(-c\frac{\mu_r(n,p)}n\right).
\]
where $c=c(r,a,\rho_0)>0$.
The rank, degree and bounded edge-state data have only $n^{O_r(1)}$
possible values.
\end{lemma}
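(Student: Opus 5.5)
The plan is to split the gap at a common intermediate level. Let $\mu_z:=\EE[Z(K_z)\mid\text{outside data},\,|K_z|=k_z]$ be the conditional mean of $Z$ under a uniform $k_z$-set. Because both root degrees are normal, $k_x,k_y=(1\pm\nu)q|\Omega|$. Since $Z$ is homogeneous of degree $r-1$ with the same coefficients for both roots, $\mu_x$ and $\mu_y$ differ by a factor $(1+O_r(\nu))$. Call their common value $\bar\mu$ up to this factor. Choose $\nu$ so small that $(1+O_r(\nu))^2<(1+\rho_0)^{1/2}$. If $Z(K_y)>(1+\rho_0)Z(K_x)$ and $Z(K_y)\ge a\theta_q$, then either $Z(K_y)\ge(1+\rho_1)\mu_y$ with $\rho_1=\rho_1(\rho_0)>0$, which is the upper-deviation event at $y$, or $Z(K_x)\le(1-\rho_1)\mu_x$ together with $\mu_x\ge c\,a\theta_q$, which is the lower-deviation event at $x$. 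The second alternative uses that if $Z(K_y)<(1+\rho_1)\mu_y$, then $\mu_y\gtrsim a\theta_q$. Each event is determined by one root star once the outside data and recorded degrees are fixed. The branch labels (ranks, degrees, the state of $xy$) are handled by Lemma~\ref{lem:record-rank-data}, which gives $n^{O_r(1)}$ labels. By Lemma~\ref{lem:hypergeom-transfer} we may replace each uniform $k_z$-set by independent $\operatorname{Ber}(\rho)$ selectors with $\rho=k_z/|\Omega|$, at a cost of a factor $n+1$. This factor is absorbed because $\theta_p\gg\log n$, and $\theta_q\ge c\theta_p$ for $q\ge p/3$.

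\emph{Lower deviation at $x$.} We view $Z(K_x)=\sum_Ac_A\prod_{u\in A}\xi_u$ as a weighted Janson sum over the $(r-1)$-sets $A\in\cH$. Janson's inequality (Lemma~\ref{lem:janson}) holds with $[0,1]$ weights, since the standard proof goes through for weighted sums, and gives
\[
\Pr(Z\le(1-\rho_1)\mu_x)\le\exp\bigl(-\rho_1^2\mu_x/(2\Lambda)\bigr),
\]
where $\Lambda$ bounds $\Delta/\mu$ per term. Pairs sharing $t$ vertices contribute at most $\rho^{r-1-t}\widehat\Delta_{r-1,t}$ per $A$. So $\Delta/\mu\le\Lambda_r(H_0,\rho)\le C_rT(q)$ by the hypothesis and Lemma~\ref{lem:fixed-rank-monotonicity}. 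The exponent is then $\gtrsim a\theta_q/T(q)$. By \eqref{eq:scale-comparisons}, this is $\ge c\,\theta_p$ up to the factor $A(q)\ge1$, and the factor is harmless here since $\theta_q/T(q)\ge\theta_q/(A(q)T(q))\ge c_r\theta_p$.

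\emph{Upper deviation at $y$.} This is the main obstacle, because upper tails of polynomials are not controlled by Janson's inequality. The plan is the rainbow exposure sketched in the overview. Colour the coordinates of $\Omega$ uniformly at random with $r-1$ colours, and keep only the rainbow sets $A$. By averaging, this loses at most a constant factor in the probability, or we may fix a good colouring after a union bound over polynomially many colourings. Expose the colour classes one at a time. Then $Z$ becomes a Doob martingale whose increments are linear in the current class's selectors, and the coefficient of a selector $\xi_u$ is a partially exposed version of the quantity $\Gamma_{y,u,k}$ from \eqref{eq:Gamma-local}: a binomial count of the remaining factors of $\{u\}\cup B$ against the already exposed part of $N(y)$. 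For a linear function of independent Bernoulli variables with coefficients $\gamma_u$, the exponential moment at parameter $\lambda$ is bounded by $\exp\bigl(\lambda\EE+\lambda^2\sum_u\rho\gamma_u^2e^{\lambda\gamma_u}\bigr)$. Take $\lambda=c\,\rho_1/T(q)$. Coefficient-regularity of $y$, together with part (iv) of Proposition~\ref{prop:uniform-input} (stable under the colour restrictions and $[0,1]$-weights), bounds $\sum_u\rho\Gamma^2e^{c\Gamma/T}$ by $C\rho A(q)n\eta_r(q)T(q)\asymp A(q)T(q)\theta_q$ by \eqref{eq:mu-eta-relation}. Multiplying the $r-1$ stages and applying Markov's inequality gives a tail bound
\[
\exp\bigl(-c\rho_1^2\theta_q/(A(q)T(q))\bigr)\le\exp(-c\theta_p),
\]
again by \eqref{eq:scale-comparisons}. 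The delicate point is that the stagewise means drift, so the conditional means must stay within $(1+\rho_1/4)\mu_y$ along the way. I would handle this by also allowing a deviation at an earlier stage, which costs the same amount, and by using $|K_{r-1}(H_0)|\le C_r\mu_{r-1}$ to bound the total mass. The constant $c$ then depends only on $r,a,\rho_0$.
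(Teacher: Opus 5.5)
Your route is the paper's. You split the gap into an upper deviation at the coefficient-regular light root $y$, handled by rainbow exposure, and a lower deviation at the heavy root $x$, handled by weighted Janson with $\Lambda_r(H_0,q)\le C_rT(q)$. Your single-threshold case split is correct and slightly cleaner than the paper's split on $\bar\mu$. On the upper alternative you still get the absolute deviation $Z(K_y)-\mu_y\ge\rho_1a\theta_q/(1+\rho_1)$ that the tail bound needs. Citing a weighted Janson inequality directly is a valid substitute for the paper's random-retention reduction, so the lower-deviation half is fine.

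The gap is in the upper-deviation half, at ``multiplying the $r-1$ stages''. Coefficient regularity of $y$ is a function of the whole neighbourhood $K_y$, including selectors in colour classes not yet exposed. You cannot condition on it, since that destroys the uniform law of $K_y$ from Lemma~\ref{lem:ranked-star} and hence the independence your martingale uses. Nor can you use it as a stagewise bound, because the stage-$i$ exponential-moment estimate must hold almost surely given $\mathcal F^\chi_{i-1}$. The missing device is the one in \eqref{eq:predictable-cap-event} and Lemmas~\ref{lem:rainbow-exposure}--\ref{lem:predictable-root-coefficients}:
\begin{itemize}
\item the proxy $v_i=\rho\sum_{u\in\Omega_i^\chi}(\alpha^\chi_{i,u})^2e^{c_0\alpha^\chi_{i,u}/T}$ is $\mathcal F^\chi_{i-1}$-measurable;
\item since $\alpha^\chi_{i,u}\le C_r\Gamma_{y,u,i-1}(H_X,q)$, regularity of the final neighbourhood implies $v_i\le C_1A\theta T$ for every colouring and every stage;
\item one stops the exposure before the accumulated proxy would exceed $B=(r-1)C_1A\theta T$.
\end{itemize}
The stopped sum satisfies $\EE e^{\lambda\widetilde D_\chi}\le e^{\lambda^2B/2}$ unconditionally, and it equals $Z^\chi-\EE_XZ^\chi$ on the regularity event. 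Regularity therefore stays inside the witness event, as Definition~\ref{def:single-root-witness} requires. Proposition~\ref{prop:uniform-input}(iv) cannot supply this: $\cU$ is not part of the witness event, and only the deterministic monotonicity under restrictions and weights is usable here.

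There are also three smaller errors in that half.
\begin{itemize}
\item There is no drift to control. The increments $\sum_{u\in\Omega_i^\chi}\alpha^\chi_{i,u}(X_u-\rho)$ are centred and telescope exactly to $Z^\chi-\EE_XZ^\chi$, so no intermediate deviation events are needed.
\item The colouring cannot be fixed by a union bound over polynomially many colourings, since there are $(r-1)^{|\Omega|}$ of them. The correct step is Jensen, $e^{\lambda\pi(Z-\mu)}\le\EE_\chi e^{\lambda(Z^\chi-\EE_XZ^\chi)}$ with $\pi=(r-1)!/(r-1)^{r-1}$. This costs a constant factor in the exponent, not in the probability.
\item Take $\lambda$ of order $1/(A(q)T(q))$ rather than $1/T(q)$. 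Otherwise the quadratic term $\lambda^2A(q)T(q)\theta_q$ dominates the linear one when $q\gg p$.
\end{itemize}
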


We first prove the upper-deviation estimate.  The main point is to avoid
conditioning on the final coefficient-regularity event, which depends
on the whole root neighbourhood.  Instead, coefficient regularity will
imply a bound which is already measurable before each colour class is
revealed.

\subsubsection{Upper deviations by rainbow exposure}

Let $m=r-1$ and
\[
 Z(X)=\sum_{A\in\cH}c_A\prod_{u\in A}X_u,
 \qquad \cH\subseteq\binom\Omega m,
 \qquad 0\le c_A\le1,
\]
where the $X_u$ are independent $\operatorname{Ber}(\rho)$ variables.
Set $\mu=\EE Z$.  Independently colour $\Omega$ uniformly with colours
$1,\ldots,m$, and let $Z^\chi$ be the sum over the rainbow members of
$\cH$.  For a fixed colouring, expose the colour classes in order $\{1,\ldots,m\}$ and
write
\[
 \EE[Z^\chi\mid\mathcal F_i^\chi]
 -\EE[Z^\chi\mid\mathcal F_{i-1}^\chi]
 =\sum_{u\in\Omega_i^\chi}
   \alpha_{i,u}^\chi(X_u-\rho),
\]
where $\mathcal F_i^\chi$ is generated by the selectors in the first
$i$ classes.  Let $\mathcal R$ be an event such that, for every
colouring $\chi$ and every $i$,
\begin{equation}
 \mathcal R\subseteq
 \left\{
 \rho\sum_{u\in\Omega_i^\chi}(\alpha_{i,u}^\chi)^2
 \exp\!\left(\frac{c_0\alpha_{i,u}^\chi}{T}\right)
 \le C_1A\theta T
 \right\}.
 \label{eq:predictable-cap-event}
\end{equation}

\begin{lemma}
\label{lem:rainbow-exposure}
Suppose $A\ge1$.  For every fixed $a>0$,
\[
 \Pr(Z-\mu\ge a\theta,\ \mathcal R)
 \le \exp\!\left(-c\frac{\theta}{AT}\right),
\]
where $c>0$ depends only on $r,a,c_0,C_1$.
\end{lemma}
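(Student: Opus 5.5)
Write $\theta=\theta$ and $\lambda$ for an exponent of order $\theta^{-1}\cdot\theta/(AT)$, i.e.\ $\lambda=c'/(AT)$ with $c'$ small. First I reduce from $Z$ to rainbow sums by averaging over colourings. For each $A\in\cH$, $\Pr_\chi(A\text{ rainbow})=m!/m^m=:\pi_m$, so $\EE_\chi Z^\chi=\pi_mZ$ pointwise in $X$. Hence on $\{Z-\mu\ge a\theta\}$ we have $\EE_\chi(Z^\chi-\EE Z^\chi)\ge \pi_m a\theta$, where $\EE Z^\chi=\pi_m\mu$ after averaging. Since $0\le Z^\chi\le Z$ and $Z$ has a deterministic polynomial bound, a Markov-type averaging shows that $\Pr_\chi(Z^\chi-\EE[Z^\chi]\ge \pi_ma\theta/2)\ge \pi_ma\theta/(2\max Z)$. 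Instead of this lossy route, I use convexity: by Jensen over $\chi$,
\[
 e^{\lambda\pi_m(Z-\mu)}\le \EE_\chi e^{\lambda(Z^\chi-\EE[Z^\chi\mid\chi])}.
\]
Here I use that $\EE[Z^\chi\mid\chi]$ is deterministic given $\chi$, and its average is $\pi_m\mu$. So it suffices to bound $\EE[e^{\lambda(Z^\chi-\EE Z^\chi)}\ind_{\mathcal R}]\le e^{O(\lambda^2 A\theta T)}$ uniformly in the fixed colouring $\chi$. Then Markov's inequality gives $\exp(-\lambda\pi_ma\theta+C\lambda^2A\theta T)$, and optimizing $\lambda=c/(AT)$ yields $\exp(-c\theta/(AT))$.

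For a fixed $\chi$, $Z^\chi$ is multilinear with each monomial using exactly one vertex from each colour class. So the Doob martingale over classes $1,\dots,m$ has increments $\sum_{u\in\Omega_i}\alpha_{i,u}(X_u-\rho)$ that are \emph{linear} in the fresh independent selectors. The coefficients $\alpha_{i,u}$ are $\mathcal F^\chi_{i-1}$-measurable and nonnegative. For one increment, independence and $e^{s}\le1+s+s^2e^{|s|}/2$ give
\[
 \EE\bigl[e^{\lambda\alpha(X_u-\rho)}\bigr]\le \exp\bigl(C\lambda^2\rho\,\alpha^2e^{\lambda\alpha}\bigr).
\]
This needs $\rho\le1$ and $\lambda\alpha(1-\rho)\le\lambda\alpha$. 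Multiplying over $u\in\Omega_i$, the conditional exponential moment of the $i$th increment is at most $\exp(C\lambda^2\rho\sum_u\alpha_{i,u}^2e^{\lambda\alpha_{i,u}})$. With $\lambda\le c_0/T$ (valid as $A\ge1$), this is at most $\exp(C\lambda^2C_1A\theta T)$ whenever the cap in \eqref{eq:predictable-cap-event} holds for class $i$.

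The main obstacle is that $\mathcal R$ is not predictable: the cap for class $i$ is $\mathcal F^\chi_{i-1}$-measurable, but $\mathcal R$ itself may depend on everything. I handle this with a stopped supermartingale. Let $\mathcal R_i$ be the predictable event that the caps hold for classes $1,\dots,i$; this event is $\mathcal F^\chi_{i-1}$-measurable. Define $S_i=\exp\bigl(\lambda M_i-iC\lambda^2C_1A\theta T\bigr)\ind_{\mathcal R_i}$, where $M_i$ is the martingale. The display above and $\ind_{\mathcal R_{i}}\le\ind_{\mathcal R_{i-1}}$ make $S_i$ a nonnegative supermartingale with $S_0=1$. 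Since $\mathcal R\subseteq\mathcal R_m$ and $M_m=Z^\chi-\EE Z^\chi$, we get $\EE[e^{\lambda(Z^\chi-\EE Z^\chi)}\ind_{\mathcal R}]\le e^{mC\lambda^2C_1A\theta T}$. Combined with the Jensen step, this is applied with the indicator $\ind_{\mathcal R}$ inside, which is legitimate because $\mathcal R$ does not depend on $\chi$. This gives the claim with $c$ depending on $r,a,c_0,C_1$.
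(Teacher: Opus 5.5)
Your proposal is correct and follows essentially the same route as the paper. In both, Jensen over the random colouring reduces $\pi(Z-\mu)$ to the rainbow sums, each colour class contributes a linear increment whose conditional exponential moment is at most $\exp(\lambda^2 v_i/2)$ when $\lambda\le c_0/T$, and Markov's inequality with $\lambda=c'/(AT)$ finishes. The only cosmetic difference is how the non-predictable event $\mathcal R$ is handled. You kill the exponential supermartingale with the predictable indicators $\ind_{\mathcal R_i}$, whereas the paper stops once the accumulated caps would exceed $B=mC_1A\theta T$; either way one gets $\EE[\ind_{\mathcal R}e^{\lambda(Z^\chi-\EE_XZ^\chi)}]\le e^{O(\lambda^2A\theta T)}$ for each fixed colouring.
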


\begin{proof}
Fix a colouring $\chi$ and let $v_i$ denote the left side of
\eqref{eq:predictable-cap-event}.  Before exposing class $i$, stop if
adding $v_i$ would make the accumulated sum exceed
$B:=mC_1A\theta T$.  Write $\widetilde D_\chi$ for the sum of the
increments retained before stopping.  For $0<\lambda\le c_0/T$,
independence within a colour class gives
\[
 \log\EE\!\left[
 e^{\lambda\sum_u\alpha_{i,u}^\chi(X_u-\rho)}
 \mid\mathcal F_{i-1}^\chi\right]
 \le \sum_u\rho(e^{\lambda\alpha_{i,u}^\chi}-1-
                 \lambda\alpha_{i,u}^\chi)
 \le \lambda^2v_i/2.
\]
Iterating this estimate with the accumulated conditional bound yields
$\EE e^{\lambda\widetilde D_\chi}\le e^{\lambda^2B/2}$.
On $\mathcal R$ the process never stops, so
$\widetilde D_\chi=Z^\chi-\EE_XZ^\chi$.

A fixed $m$-set is rainbow with probability $\pi=m!/m^m$.  Hence
$\EE_\chi(Z^\chi-\EE_XZ^\chi)=\pi(Z-\mu)$, and Jensen's inequality gives
\[
 \EE\bigl[\ind_{\mathcal R}e^{\lambda\pi(Z-\mu)}\bigr]
 \le \EE_\chi\EE e^{\lambda\widetilde D_\chi}
 \le e^{\lambda^2B/2}.
\]
Markov's inequality with $\lambda=c'/(AT)$, for a sufficiently small
constant $c'>0$, proves the claim.  In particular, we never condition
on the final event $\mathcal R$.
\end{proof}

We now verify the predictable bound for the polynomials which arise in
the two-star fingerprint.

\begin{lemma}
\label{lem:predictable-root-coefficients}
Let $H_0$ be a graph on $\Omega$, let
$\cH\subseteq K_{r-1}(H_0)$, and let $0\le c_A\le1$.  Let
$X\subseteq\Omega$ be chosen by independent selectors of density
$\rho$, where $\rho/q$ is bounded above and below by fixed constants,
and form the graph $H_X$ by adding a root $z$ adjacent exactly to the
vertices of $X$.  If $z$ is coefficient-regular in $H_X$ at density
$q$, then, for every auxiliary colouring and every stage of the rainbow
exposure, the event in \eqref{eq:predictable-cap-event} holds with
\[
 A=A(q),\qquad T=T(q),\qquad
 \theta=\theta_q.
\]
Moreover, the left side of \eqref{eq:predictable-cap-event} is
measurable before the current colour class is exposed.
\end{lemma}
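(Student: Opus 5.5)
We need to compute the martingale increments $\alpha_{i,u}^\chi$ explicitly and compare them with the coefficients $\Gamma_{z,u,k}$ entering $\mathsf S_z$. Fix a colouring $\chi$ and a stage $i$. For $u\in\Omega_i^\chi$, the coefficient $\alpha_{i,u}^\chi$ is the partial derivative in $X_u$ of $\EE[Z^\chi\mid\mathcal F_i^\chi]$, evaluated with the later classes averaged out. Concretely,
\[
 \alpha_{i,u}^\chi=\sum_{\substack{A\in\cH,\ A\ni u\\ A\text{ rainbow}}}c_A\,
 \prod_{\substack{v\in A\\ \chi(v)<i}}X_v\;\rho^{\,|\{v\in A:\chi(v)>i\}|}.
\]
Since $A$ is rainbow, $u$ is its unique vertex of colour $i$. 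The vertices of colour less than $i$ have already been exposed, and those of colour greater than $i$ contribute a factor $\rho$ each. This formula already shows that $\alpha_{i,u}^\chi$ depends only on selectors in classes $1,\dots,i-1$ and on the colouring. Hence the left side of \eqref{eq:predictable-cap-event} is $\mathcal F_{i-1}^\chi$-measurable, which is the measurability claim.

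Next, bound $\alpha_{i,u}^\chi$ pointwise by a coefficient of the root $z$ in $H_X$. Write $A=\{u\}\cup B$ with $B\in\binom{\Omega\setminus\{u\}}{r-2}$. Then $u\cup B$ is a clique of $H_0\subseteq H_X-z$, and $c_A\le1$. Let $k$ be the number of colours below $i$, so that $k=i-1$ and $d-k=r-1-i$ is the number of colours above $i$ (here $d=r-2$). The product over exposed vertices is the indicator that the $k$ lower-coloured vertices of $B$ all lie in $X=N_{H_X}(z)$. This indicator is at most the number of $k$-subsets of $B\cap N(z)$, namely $\binom{|B\cap N(z)|}{k}$. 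The remaining factor $\rho^{d-k}$ is at most $C^{d-k}q^{d-k}$, because $\rho/q$ is bounded. Summing over $B$ gives
\[
 0\le\alpha_{i,u}^\chi\le C_r\,\Gamma_{z,u,k}(H_X,q),
\]
up to dropping summands and multiplying them by weights in $[0,1]$. Lemma~\ref{lem:coefficient-average-bounds} and Proposition~\ref{prop:uniform-input}(iv) explicitly allow exactly these operations, including colour-restricted sub-sums. The bound holds for every colouring and every stage simultaneously, and it uses only the deterministic fact that $z$ is coefficient-regular.

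Finally, plug this into the cap. Choose $c_0\le c_r/C_r$, so that $\exp(c_0\alpha/T)\le\exp(c_r\Gamma/T)$. Then
\[
 \rho\sum_{u\in\Omega_i^\chi}(\alpha_{i,u}^\chi)^2e^{c_0\alpha_{i,u}^\chi/T(q)}
 \le C\,q\,C_r^2\,\mathsf S_z(H_X,q)
 \le C'q\,A(q)\,n\,\eta_r(q)\,T(q).
\]
By \eqref{eq:mu-eta-relation}, $qn\eta_r(q)=\Theta_r(\theta_q)$, so the right side is at most $C_1A(q)\theta_qT(q)$ for a suitable constant $C_1$.

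The main obstacle is the pointwise domination $\alpha\lesssim\Gamma_{z,u,i-1}$. One must check that the mixed exposed/unexposed product matches the binomial-coefficient form in \eqref{eq:Gamma-local}, and that the indexing $k=i-1$ covers all $0\le k\le d$. There is also the scaling issue of the constant in the exponential. The remaining steps are routine bookkeeping once the earlier lemmas permit restricted and weighted sub-sums.
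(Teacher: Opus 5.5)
Your proof is correct and follows the paper's argument essentially line by line. You write the predictable coefficient explicitly with $i-1$ exposed classes, which gives measurability before class $i$ is exposed. You dominate it by $C_r\Gamma_{z,u,i-1}(H_X,q)$ via $\binom{|B\cap N(z)|}{i-1}$, after dropping the rainbow restriction and using $c_A\le1$. Finally you shrink the exponential constant and conclude with coefficient regularity and \eqref{eq:mu-eta-relation}.

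The appeal to restricted or weighted sub-sums is unnecessary. Every $A\in\cH$ already gives a clique $u\cup B$ of $H_0=H_X-z$, so the bound by the full coefficient $\Gamma_{z,u,i-1}$ holds directly.
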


\begin{proof}
Put $d=r-2$.  Fix a colouring and suppose that $h=i-1$ colour classes have already
been exposed.  For $u$ in the next class, the predictable coefficient
is
\[
 \alpha_{i,u}
 =\rho^{d-h}
 \sum_{\substack{A\in\cH\text{ rainbow}\\u\in A}}
 c_A\prod_{v\in A\cap(\Omega_1\cup\cdots\cup\Omega_h)}X_v.
\]
Write $B=A\setminus\{u\}$, so $|B|=d$.  If a term in this sum is
non-zero, its $h$ previously exposed vertices all belong to
$B\cap N_{H_X}(z)$.  Since $c_A\le1$, dropping the colour restrictions
gives
\begin{align*}
 \alpha_{i,u}
 &\le \rho^{d-h}
 \sum_{\substack{B\in\binom{\Omega\setminus\{u\}}d\\
                  H_0[u\cup B]=K_{r-1}}}
 \binom{|B\cap N_{H_X}(z)|}{h}\le C_r\Gamma_{z,u,h}(H_X,q),
\end{align*}
where the last inequality uses $\rho=\Theta(q)$.  After decreasing the
constant in the exponential,
\begin{align*}
 \rho\sum_{u\in\Omega_i}\alpha_{i,u}^2
 \exp\!\left(\frac{c_0\alpha_{i,u}}{T(q)}\right)
 &\le C_rq\sum_{u\ne z}
 \Gamma_{z,u,h}(H_X,q)^2
 \exp\!\left(\frac{c_r\Gamma_{z,u,h}(H_X,q)}{T(q)}\right)\\
 &\le C_rA(q)nq\eta_r(q)T(q)\\
 &\le C_rA(q)\theta_qT(q).
\end{align*}
The middle inequality is coefficient regularity, and the last uses
\eqref{eq:mu-eta-relation}.  The formula for $\alpha_{i,u}$ depends
only on the selectors in the earlier classes, so the bound is
predictable.
\end{proof}

Combining the preceding two lemmas gives the upper-tail estimate used in the two-star argument.

\begin{lemma}
\label{lem:one-root-polynomial}
Fix $p/4\le q\le1$.  Let $H_0$ be a graph on $\Omega$ with
$|\Omega|=n-o(n)$, let $\cH\subseteq K_{r-1}(H_0)$, and let
$0\le c_A\le1$ be fixed by the outside graph.  Let $K$ be a uniformly random $k$-subset of $\Omega$,
where
 $k=(1+O(\nu))q|\Omega|$,
and set
\[
 Z(K):=\sum_{A\in\cH}c_A\ind_{\{A\subseteq K\}},
 \qquad
 \mu_k:=\EE Z(K)
 =\frac{(k)_{r-1}}{(|\Omega|)_{r-1}}\sum_{A\in\cH}c_A.
\]
Suppose $\mu_k\le C_0\theta_q$.  For every fixed $a>0$, there is
$c=c(r,a,C_0)>0$ such that
\begin{equation}
 \Pr\bigl(Z(K)-\mu_k\ge a\theta_q,\
       \text{$K$ makes the root coefficient-regular}\bigr)
 \le \exp\!\left(-c\frac{\mu_r(n,p)}n\right).
 \label{eq:one-root-poly-tail}
\end{equation}
The same bound holds pointwise conditionally at an exact rank, after
all non-root edges have been fixed and the root degree has been
recorded.
\end{lemma}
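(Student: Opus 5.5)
We first replace the uniform $k$-set $K$ by independent selectors. Let $X\subseteq\Omega$ contain each vertex independently with probability $\rho=k/|\Omega|$. The event in \eqref{eq:one-root-poly-tail} depends only on the selected set, namely on $Z(K)$ and on the coefficient regularity of the root with neighbourhood $K$. Lemma~\ref{lem:hypergeom-transfer} therefore bounds the left side by $(|\Omega|+1)$ times the same probability for $X$. Because $\mu_r(n,p)/n\gg\log n$, this polynomial factor is absorbed by halving $c$ at the end. We need $\rho/q$ bounded above and below, and this holds since $k=(1+O(\nu))q|\Omega|$ and $|\Omega|=n-o(n)$.

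Next we compare means. Under the product measure, $\EE Z(X)=\rho^{r-1}\sum_A c_A$, which differs from $\mu_k=(k)_{r-1}/(|\Omega|)_{r-1}\sum_A c_A$ by a factor $1+O(1/k)$. Since $\mu_k\le C_0\theta_q$ and $k\to\infty$, we get $|\EE Z(X)-\mu_k|=o(\theta_q)$. It therefore suffices to bound $\Pr(Z(X)-\EE Z(X)\ge a\theta_q/2,\ \mathcal R)$, where $\mathcal R$ is the event that the root $z$ joined to $X$ is coefficient-regular in $H_X$ at density $q$. By Lemma~\ref{lem:predictable-root-coefficients}, $\mathcal R$ lies inside the predictable cap event \eqref{eq:predictable-cap-event}, with $A=A(q)$, $T=T(q)$, $\theta=\theta_q$ and some constants $c_0,C_1$ depending only on $r$. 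Lemma~\ref{lem:rainbow-exposure}, applied with $a/2$, then gives the bound $\exp(-c\theta_q/(A(q)T(q)))$. The second inequality of \eqref{eq:scale-comparisons} gives $\theta_q/(A(q)T(q))\ge c_r\mu_r(n,p)/n$, uniformly for $q\ge p/4$. At $q=1$ the selectors are deterministic and there is nothing to prove, or one can handle it directly by the same comparison. This proves \eqref{eq:one-root-poly-tail}.

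For the conditional statement at an exact rank, we fix the non-root birth times and record the root degree $k$. By Lemma~\ref{lem:ranked-star} and Proposition~\ref{prop:uniform-input}(i), the root neighbourhood is then a uniform $k$-subset of the restricted vertex set. The graph $H_0$ and the coefficients $c_A$ are outside-measurable by Lemma~\ref{lem:record-rank-data}. The unconditional argument above then applies verbatim for each fixed outside configuration. Any exceptional root-edge states or consistency identities are intersected with the event, and this can only decrease its probability.

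The main obstacle is the reduction to the predictable cap. Coefficient regularity depends on the whole root neighbourhood, so we cannot condition on it. We must instead check that the event $\mathcal R$ is contained in \eqref{eq:predictable-cap-event} for every colouring and every stage simultaneously, which is exactly what Lemma~\ref{lem:predictable-root-coefficients} provides. One more point needs care: the hypergeometric transfer must be applied to the joint event including $\mathcal R$. This is legitimate because $\mathcal R$ is a function of the selected set alone.
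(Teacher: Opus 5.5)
Your proposal is correct and follows the paper's proof essentially step for step. You pass between the uniform $k$-set and independent $\operatorname{Ber}(\rho)$ selectors via Lemma~\ref{lem:hypergeom-transfer}, which the paper does last rather than first. You absorb the $1+O(1/k)$ mean discrepancy. You use Lemma~\ref{lem:predictable-root-coefficients} to place coefficient regularity inside the predictable caps, apply Lemma~\ref{lem:rainbow-exposure} together with \eqref{eq:scale-comparisons}, and obtain the exact-rank version from Lemma~\ref{lem:ranked-star}. Your explicit remark that the transfer is applied to the joint event including regularity, which depends only on the selected set, is exactly what the paper uses implicitly.
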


\begin{proof}
First replace $K$ by independent $\operatorname{Ber}(\rho)$ selectors,
where $\rho=k/|\Omega|=(1+O(\nu))q$.  Let $\mu_\rho$ be the resulting
mean.  Since $r$ is fixed and $k\to\infty$,
\[
 \mu_\rho=(1+O_r(k^{-1}))\mu_k
 =\mu_k+o(\theta_q).
\]
Lemma~\ref{lem:predictable-root-coefficients} shows that the final
coefficient-regularity event is contained in every predictable cap
required by Lemma~\ref{lem:rainbow-exposure}.  Thus an upper deviation of $a\theta_q$ from $\mu_k$ gives one of
$a\theta_q/2$ from $\mu_\rho$, for large $n$.
Applying Lemma~\ref{lem:rainbow-exposure} with $A=A(q)$, $T=T(q)$ and
$\theta=\theta_q$ gives
\[
 \Pr(\text{the event in \eqref{eq:one-root-poly-tail}})
 \le C_r\exp\!\left(-c\frac{\theta_q}{A(q)T(q)}\right)
 \le C_r\exp\!\left(-c\frac{\mu_r(n,p)}n\right)
\]
by \eqref{eq:scale-comparisons}.

Lemma~\ref{lem:hypergeom-transfer} now gives the same estimate for the
uniform $k$-set, at the cost of a factor $|\Omega|+1$.  This factor is
absorbed because $\mu_r(n,p)/(n\log n)\to\infty$.  Finally,
Lemma~\ref{lem:ranked-star} identifies the conditional root
neighbourhood at a fixed rank with a uniform set.  The estimate is
pointwise in the fixed outside graph, so it has the conditional form
required later.
\end{proof}

\subsubsection{Weighted lower deviations}

The following estimate covers both the heavy-root deviation and every
lower-tail fingerprint, including those produced by a link recursion.

\begin{lemma}
\label{lem:weighted-root-lower}
\label{lem:ranked-janson}
Fix $2\le\ell\le r$, $p/4\le q\le1$, a graph $H_0$ on $\Omega$ with
$|\Omega|=n-o(n)$, a family
$\cH\subseteq K_{\ell-1}(H_0)$ and coefficients $0\le c_A\le1$.
Let $K$ be a uniform $k$-subset of $\Omega$, where
$k=(1+O(\nu))q|\Omega|$, and put
$Z(K)=\sum_{A\in\cH}c_A\ind_{\{A\subseteq K\}}$ and
$\mu_k=\EE Z(K)$.  Suppose
$\mu_k\ge c_0\mu_\ell(n,q)/n$ and
$\Lambda_\ell(H_0,q)\le C_r(1+\eta_\ell(q))$.
For every fixed $0<\delta<1$, there is $c=c(r,c_0,C_r,\delta)>0$ such that
\[
 \Pr\bigl(Z(K)\le(1-\delta)\mu_k\bigr)
 \le \exp\!\left(-c\frac{\mu_r(n,p)}n\right).
\]
The bound holds pointwise at an exact rank after the non-root edges
have been fixed and the root degree recorded.
\end{lemma}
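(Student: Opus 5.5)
The plan is to pass to independent selectors, apply the weighted form of Janson's inequality, and then return to the uniform $k$-set. First I replace $K$ by a random set $X\subseteq\Omega$ containing each vertex independently with probability $\rho=k/|\Omega|=(1+O(\nu))q$. Lemma~\ref{lem:hypergeom-transfer} gives
\[
 \Pr\bigl(Z(K)\le(1-\delta)\mu_k\bigr)\le(|\Omega|+1)\Pr\bigl(Z(X)\le(1-\delta)\mu_k\bigr).
\]
The factor $n+1$ costs only $\log n=o(\mu_r(n,p)/n)$, so it can be absorbed. As in Lemma~\ref{lem:one-root-polynomial}, $\mu_\rho:=\EE Z(X)=(1+O(1/k))\mu_k$. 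Hence the event $Z(X)\le(1-\delta)\mu_k$ implies $Z(X)\le(1-\delta/2)\mu_\rho$ for large $n$. This reduces the problem to a lower tail for a weighted sum $Z(X)=\sum_{A\in\cH}c_AI_A$ with $I_A=\prod_{u\in A}X_u$.

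Next I use the weighted version of Janson's inequality, $\Pr(Z\le(1-a)\mu)\le\exp(-a^2\mu^2/(2(\mu+\Delta)))$. Here $\Delta=\sum c_Ac_B\EE[I_AI_B]$, summed over ordered pairs $A\ne B$ that intersect. Lemma~\ref{lem:janson} is stated for $0/1$ weights, but the standard proof via $-\frac{d}{d\lambda}\log\EE e^{-\lambda Z}$ and Harris (Lemma~\ref{lem:harris}) goes through verbatim for coefficients in $[0,1]$. If one prefers to avoid that, I would partition the weights dyadically and apply the $0/1$ version to the class carrying a constant fraction of $\mu_\rho$; only $O(\log n)$ classes matter after discarding weights below $\delta/4$. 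To bound $\Delta$, fix $A$ and group the members $B$ by $t=|A\cap B|$, which ranges over $1\le t\le \ell-2$. Since $c_B\le1$, $B\supseteq T:=A\cap B$, and $B\in K_{\ell-1}(H_0)$, the number of such $B$ is at most $\binom{\ell-1}{t}\widehat\Delta_{\ell-1,t}(H_0)$. Each such $B$ contributes an extra factor $\rho^{\ell-1-t}$. Summing over $t$ and then over $A$ with weight $c_A\rho^{\ell-1}$ gives
\[
 \Delta\le C_r\mu_\rho\,\Lambda_\ell(H_0,q)\le C_r\mu_\rho(1+\eta_\ell(q)),
\]
where I use $\rho=\Theta(q)$ and the hypothesis on $\Lambda_\ell$.

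Janson's inequality therefore yields the exponent $c\,\mu_\rho/(1+\eta_\ell(q))$, which is at least $c\,c_0(\mu_\ell(n,q)/n)/(1+\eta_\ell(q))$. The main obstacle is to show that this is $\Omega(\mu_r(n,p)/n)$ uniformly in $2\le\ell\le r$ and $q\ge p/4$. For $\ell=r$ this is exactly the second inequality in \eqref{eq:scale-comparisons}: by \eqref{eq:mu-eta-relation}, $\mu_r(n,q)/(n(1+\eta_r(q)))=\Theta(nq\eta_r(q)/T(q))$, and $A(q)\ge1$. For $\ell<r$ I write $\mu_\ell(n,q)/n=\Theta(nq\eta_\ell(q))$, so the exponent is of order $\min\{nq\eta_\ell(q),\,nq\}$. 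The term $nq\ge np/4$ dominates $\mu_r(n,p)/n=\Theta(np\eta_r(p))$ because $\eta_r(p)=o(1)$. The term $nq\eta_\ell(q)$ is increasing in $q$, so it is at least $n^{\ell-1}(p/4)^{\binom\ell2}$, which I compare with $n^{r-1}p^{\binom r2}$. Their ratio is $n^{\ell-r}p^{\binom\ell2-\binom r2}$. This is large because $p\ll n^{-2/(r+1)}\le n^{-2/(r+\ell-1)}$, and because $\binom r2-\binom\ell2=(r-\ell)(r+\ell-1)/2$. This finishes the bound for independent selectors.

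Finally, the pointwise conditional statement at an exact rank follows from Lemma~\ref{lem:ranked-star}. Given the outside birth times and the recorded root degree $k$, the root neighbourhood in $\Omega$ is a uniform $k$-subset. The coefficients and $H_0$ are fixed by the outside data, so the estimate above applies for every outside configuration satisfying the hypotheses. Extra branch conditions from Lemma~\ref{lem:record-rank-data} are intersected with the root event, which can only decrease the probability.
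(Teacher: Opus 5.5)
Your proof is correct and follows the same architecture as the paper's. Both arguments use independent selectors of density $\rho=k/|\Omega|$. Both bound the overlaps by $\Delta\le C_r\mu_\rho\Lambda_\ell(H_0,q)$, fixing $A$ and counting the $B\supseteq A\cap B$ through $\widehat\Delta_{\ell-1,t}$. Both compare $\mu_\ell(n,q)/(n(1+\eta_\ell(q)))$ with $\mu_r(n,p)/n$, and both return to a uniform $k$-set and to exact ranks via Lemmas~\ref{lem:hypergeom-transfer} and~\ref{lem:ranked-star}.

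The one real difference is how the weights are handled. You rerun the Laplace-transform proof of Janson's inequality for $\sum_A c_AI_A$, and this is valid. The hypothesis $c_A\le1$ enters only through the diagonal term $\sum_A c_A^2\EE I_A\le\mu_\rho$ in the Jensen step.

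The paper instead keeps Lemma~\ref{lem:janson} as a black box. It retains each $A$ independently with probability $c_A$, so the count of retained selected sets is an unweighted Janson sum with mean $\mu_\rho$ and overlap sum exactly your $\Delta$. A conditional Markov argument then shows that $Z\le(1-\delta)\mu_\rho$ forces this count below $(1-\delta/2)\mu_\rho$ with conditional probability at least $\delta/(2-\delta)$. The cost is only a constant factor.

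Drop your dyadic fallback, because as written it does not work. The terms with $c_A<\delta/4$ may carry most of $\mu_\rho$, so they cannot be discarded. Weights varying by a factor $2$ within a class mean that a weighted deficit need not be an unweighted one. Also, among $\Theta(\log n)$ classes none need carry a constant fraction of the mean. If you want to reduce to the $0/1$ inequality, the retention trick is the clean way to do it.
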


\begin{proof}
First use independent selectors of density $\rho=k/|\Omega|=\Theta(q)$.
Write $I_A$ for the indicator that all vertices of $A$ are selected and
$\mu_\rho=\rho^{\ell-1}\sum_Ac_A=(1+o(1))\mu_k$.
For distinct intersecting $A,B$, summing first over $A$ and using
$c_B\le1$ gives
\[
 \Delta_c:=\sum_{A\sim B}c_Ac_B\EE(I_AI_B)
 \le C_r\mu_\rho\sum_{t=1}^{\ell-2}
       q^{\ell-1-t}\widehat\Delta_{\ell-1,t}(H_0)
 \le C_r\mu_\rho(1+\eta_\ell(q)).
\]
The sum is empty when $\ell=2$.
To apply Janson with weights, independently retain each $A$ with
probability $c_A$ and let $X$ count the retained selected sets.
Conditional on the selectors, $\EE X=Z$.  On
$Z\le(1-\delta)\mu_\rho$, Markov's inequality gives
$\Pr(X\le(1-\delta/2)\mu_\rho\mid(I_A)_A)\ge\delta/(2-\delta)$.
Janson's inequality for $X$ therefore bounds the weighted lower tail
by $C_\delta\exp(-c_\delta\mu_\rho^2/(\mu_\rho+\Delta_c))$.
Its exponent is at least a constant times
\[
 \frac{\mu_\ell(n,q)}{n(1+\eta_\ell(q))}
 =\Theta_r\!\left(\min\{\mu_\ell(n,q)/n,nq\}\right)
 \ge c_r\frac{\mu_r(n,p)}n.
\]
For the last inequality, $q\ge p/4$ and
$nq/(\mu_r(n,p)/n)\ge c_r/\eta_r(p)\to\infty$.  Also, writing
$e_j=\binom j2$,
\[
 \frac{\mu_\ell(n,p)}{\mu_r(n,p)}
 =\Theta_{r,\ell}\!\left(
 n^{(r-\ell)(\ell-2)/(r+1)}
 (pn^{2/(r+1)})^{-(e_r-e_\ell)}\right)\ge c_{r,\ell}.
\]
Decreasing $\delta$ absorbs the change of mean.
Lemma~\ref{lem:hypergeom-transfer} transfers the bound to a uniform
$k$-set; its polynomial cost is absorbed since
$\mu_r(n,p)/n\gg\log n$.  Lemma~\ref{lem:ranked-star} gives the
conditional exact-rank version.  Including the recorded degree in
the event can only decrease its conditional probability.
\end{proof}

\subsubsection{Completing the two-star estimate}

\begin{proof}[Proof of Lemma~\ref{lem:split-two-star}]
Let $\mu_x,\mu_y$ be the two hypergeometric means and put
$\bar\mu=(\mu_x+\mu_y)/2$.  Normal root degrees give
$\mu_x=(1+O_r(\nu))\mu_y$, while the outside clique bound gives
$\mu_x+\mu_y\le C_r\theta_q$.
Choose $\nu$ sufficiently small in terms of $a,\rho_0$.
If $\bar\mu<a\theta_q/(1+\rho_0/10)$, then
$Z(K_y)-\mu_y\ge c\theta_q$, so
Lemma~\ref{lem:one-root-polynomial} applies at the regular root $y$.

Otherwise $\bar\mu\ge c\theta_q$.  For a sufficiently small fixed
$c'>0$, the gap $Z(K_y)>(1+\rho_0)Z(K_x)$ forces
\[
 Z(K_y)-\mu_y\ge c'\bar\mu
 \quad\text{or}\quad
 \mu_x-Z(K_x)\ge c'\bar\mu.
\]
Indeed, if both fail, then
$Z(K_y)/Z(K_x)\le(1+O_r(\nu)+c')/(1-O_r(\nu)-c')\le1+\rho_0$.
The first alternative again follows from
Lemma~\ref{lem:one-root-polynomial}.  In the second,
$\mu_x\ge c\theta_q$ and the deficit is a fixed fraction of $\mu_x$,
so Lemma~\ref{lem:weighted-root-lower} applies with $\ell=r$.
Both estimates hold pointwise in the outside branch data.
\end{proof}

\subsection{Engine B: the lower-tail fingerprint}

We now state the conditional estimate for a lower-tail fingerprint.
The recorded degree is kept inside the event; only the family and its
overlap bound are conditions on the outside graph.

\begin{lemma}
\label{lem:lower-tail-fingerprint}
Fix constants $c_0,C_r>0$ and an integer $2\le\ell\le r$.
Let $W\subseteq[n]$ have $|W|=n-o(n)$, let $x\in W$, and let $j$
be a rank with $q=q_j(W)\ge p/3$.
Let $0\le k\le|W|-1$ be a fixed integer with
$k=(1+O(\nu))q(|W|-1)$ and
$0\le j-k\le\binom{|W|-1}{2}$, and put
$H_0=H_{j-k}(W\setminus\{x\})$.
Let $\cH\subseteq K_{\ell-1}(H_0)$ be determined by
$\mathscr F_W(x)$, and let $\mathcal D\in\mathscr F_W(x)$ be an event
on which
\[
 |\cH|\ge c_0n^{\ell-1}q^{\binom{\ell-1}{2}},
 \qquad \Lambda_\ell(H_0,q)\le C_r(1+\eta_\ell(q)).
\]
Write $X_x$ for the number of members of $\cH$ contained in
$N_{H_j(W)}(x)$, and set
\[
 \mathcal E=\mathcal D\cap
 \{d_{H_j(W)}(x)=k,\ X_x<c_1\mu_\ell(n,q)/n\}.
\]
There are $c_1,c>0$, depending only on $r,c_0,C_r$, such that
\[
 \Pr(\mathcal E\mid\mathscr F(x))
 =\Pr(\mathcal E\mid\mathscr F_W(x))
 \le \exp\!\left(-c\frac{\mu_r(n,p)}n\right)
 \quad\text{almost surely}.
\]
Thus $(\mathcal E,x)$ is a single-root witness with parameter $c$, local to $W$.
\end{lemma}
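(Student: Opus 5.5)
The plan is to reduce the statement to Lemma~\ref{lem:weighted-root-lower} with unit coefficients. First, the equality $\Pr(\mathcal E\mid\mathscr F(x))=\Pr(\mathcal E\mid\mathscr F_W(x))$ follows from the measurability remark in Section~\ref{sec:one-root}, since $\mathcal E\in\mathscr G_W$. It therefore suffices to work pointwise in the outside configuration. Off $\mathcal D$ the event is empty, so we fix an outside configuration in $\mathcal D$. Then $H_0$, $\cH$ and the overlap bound are all fixed, because $H_0=H_{j-k}(W\setminus\{x\})$ is determined by the non-$x$ birth times once $j,k$ are fixed.

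Next we identify the root neighbourhood. On the event $d_{H_j(W)}(x)=k$, restriction of the common ordering gives $H_j(W)-x=H_{j-k}(W\setminus\{x\})=H_0$, as in \eqref{eq:rank-bookkeeping-invariant}. Hence $\cH\subseteq K_{\ell-1}(H_0)$, and $X_x$ counts the members of $\cH$ inside $N_{H_j(W)}(x)$. By Lemma~\ref{lem:ranked-star}, conditional on $\mathscr F_W(x)$ and on the degree being $k$, this neighbourhood is a uniform $k$-subset $K$ of $\Omega:=W\setminus\{x\}$. We therefore bound
\[
 \Pr(\mathcal E\mid\mathscr F_W(x))\le \Pr\bigl(X(K)<c_1\mu_\ell(n,q)/n\bigr),
\]
where $X(K)=\sum_{A\in\cH}\ind_{\{A\subseteq K\}}$. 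Dropping the factor $\Pr(d=k\mid\mathscr F_W(x))\le1$ can only increase the bound.

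Now we apply Lemma~\ref{lem:weighted-root-lower} with $c_A\equiv1$. Its mean is
\[
 \mu_k=\frac{(k)_{\ell-1}}{(|\Omega|)_{\ell-1}}|\cH|\ge c\,q^{\ell-1}c_0n^{\ell-1}q^{\binom{\ell-1}2}=c'c_0\,n^{\ell-1}q^{\binom\ell2}\ge c''\mu_\ell(n,q)/n,
\]
using $k=(1+O(\nu))q(|W|-1)$ and $|\Omega|=n-o(n)$. The overlap hypothesis $\Lambda_\ell(H_0,q)\le C_r(1+\eta_\ell(q))$ is exactly what that lemma requires. Choose $c_1:=c''/2$, so that $X<c_1\mu_\ell/n$ implies $X\le\mu_k/2$, and take $\delta=1/2$. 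The lemma then gives the bound $\exp(-c\mu_r(n,p)/n)$, with $c$ depending only on $r,c_0,C_r$.

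The main point needing care is the range of $q$. The hypothesis here is $q\ge p/3$, and Lemma~\ref{lem:weighted-root-lower} assumes $q\ge p/4$, so this is fine. Restricting to $W\setminus\{x\}$ shifts the density only by a factor $1+O(1/n)$, which the constants absorb. The other subtlety is measurability: conditioning on the degree is legitimate only because the degree event is kept inside $\mathcal E$ rather than in the conditioning. This is precisely how Lemma~\ref{lem:ranked-star} is used, so no further work is needed.
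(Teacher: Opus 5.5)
Your proposal is correct and follows the paper's proof essentially step for step. Like the paper, you fix an outside configuration in $\mathcal D$, use Lemma~\ref{lem:ranked-star} together with $H_j(W)-x=H_0$ to make the root neighbourhood a uniform $k$-subset, bound the mean below by $c\,\mu_\ell(n,q)/n$, apply Lemma~\ref{lem:weighted-root-lower} with unit weights and $c_1$ below half that constant, and lift from $\mathscr F_W(x)$ to $\mathscr F(x)$ using $\mathcal E\in\mathscr G_W$.
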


\begin{proof}
Fix an outside configuration in $\mathcal D$ and condition temporarily
on $d_{H_j(W)}(x)=k$.  Lemma~\ref{lem:ranked-star} makes the
neighbourhood a uniform $k$-subset of $W\setminus\{x\}$, while the
outside graph is precisely $H_0$.  Hence its completion count has mean
\[
 \mu_k=\frac{(k)_{\ell-1}}{(|W|-1)_{\ell-1}}|\cH|
 \ge c\mu_\ell(n,q)/n.
\]
Choose $c_1<c/2$ and apply Lemma~\ref{lem:weighted-root-lower} with
all weights equal to one.  Multiplying by the conditional probability
of the recorded degree gives the claimed bound given
$\mathscr F_W(x)$.  Outside $\mathcal D$ the event is empty.
Finally, $\mathcal E\in\mathscr G_W$, so independence gives the same
bound conditional on $\mathscr F(x)$.
\end{proof}

The outside quantile event and any further branch conditions may be
intersected with $\mathcal E$.  Lemma~\ref{lem:record-rank-data}
therefore gives the same estimate after the bounded link recursions.

\subsection{Proof of the one-step reduction and the one-root theorem}

\begin{proof}[Proof of Proposition~\ref{prop:one-step-witness}]
Apply Proposition~\ref{prop:structural-fingerprints} and fix one branch
label.  For each output below, let $\mathcal E$ be the intersection
of the stated local conditions.  We verify its conditional bound for
every outside configuration.  The events $\cU\cap\cO_W$ guarantee
these local conditions, but are not included in $\mathcal E$.

\emph{A sample anomaly.}
Let $x$ be the abnormal root in an induced exact-rank graph on $W'$.
Include the outside quantile event from Lemma~\ref{lem:ranked-star}
and the abnormal degree in $\mathcal E$.
The quantile event is measurable outside $x$, and the lemma gives
conditional probability at most $e^{-cq|W'|}$.
This suffices because $q|W'|/(\mu_r(n,p)/n)\ge c_r/\eta_r(p)\to\infty$.

\emph{A lower-tail fingerprint.}
Fix its root $x$ and recorded degree $k$.
By \eqref{eq:rank-bookkeeping-invariant}, deleting $x$ leaves the graph
at outside rank $j-k$.  Include in $\mathcal E$ the family-size and
overlap conditions, the outside quantile event, the degree identity
and the lower deviation.  The first three conditions are measurable
outside $x$; there Lemma~\ref{lem:lower-tail-fingerprint} gives the required
bound for the last two.
A bounded link recursion adds only $n^{O_r(1)}$ labels by
Lemma~\ref{lem:record-rank-data}.

\emph{A two-star fingerprint.}
Fix the heavy root $x$ and the deleted $(r-1)$-set.
The structural proposition gives at least $c_{\rm L}|W|$ light roots
in the same induced graph.  By
Proposition~\ref{prop:uniform-input}(iv), fewer than half of these
can fail coefficient regularity.  Choose a regular light root $y$,
and branch over its choice and the rank, degree and weight-scale
labels from Lemma~\ref{lem:median-fingerprint}.
The outside clique and overlap bounds required by
Lemma~\ref{lem:split-two-star} follow from
Proposition~\ref{prop:uniform-input}(iii),(v).  Include them and the
relevant outside quantile conditions in each event below.

For the upper alternative rooted at $y$, also include the recorded
$y$-degree, coefficient regularity and the upper deviation.
The coefficient list is determined by the outside graph and the
recorded weight scale; its entries require no extra labels.
Once the branch label is fixed, it and all $x$-star data except $xy$
are measurable outside $y$.
Lemma~\ref{lem:one-root-polynomial} bounds the intersection of
regularity and the deviation, without conditioning on regularity.
For the lower alternative rooted at $x$, include
$\mu_x\ge c\theta_q$, the recorded $x$-degree and the lower deviation.  The coefficient list, the mean condition and all
$y$-star data except $xy$ are measurable outside $x$, and
Lemma~\ref{lem:weighted-root-lower} gives the bound.
The edge $xy$ belongs to the chosen root star; its recorded state,
and all other bounded rank and edge-state conditions, are handled by
Lemma~\ref{lem:record-rank-data}.

In each case $\mathcal E\in\mathscr G_W$.  At an outside configuration
violating the stated outside conditions the event is empty; at every
other configuration the root estimate gives
$\Pr(\mathcal E\mid\mathscr F_W(z))\le e^{-c_r\mu_r(n,p)/n}$,
where $z$ is the chosen root.
Independence lifts this bound to $\mathscr F(z)$.
Thus $(\mathcal E,z)$ is a single-root witness, and the number of
labels is $n^{O_r(1)}$.
\end{proof}

\begin{proof}[Proof of Theorem~\ref{prop:one-root-witness}]
Fix $W$ and work on $\cU$.  The event $\cO_W^c$ is included in the
global event $\cM_W$.  Outside $\cM_W$,
Proposition~\ref{prop:stopping-reduction} says that factor failure
forces a rank at which $\cA$ holds and $\cR$ or $\cB$ fails.
Proposition~\ref{prop:one-step-witness} covers the resulting event
by polynomially many single-root witnesses.

A label records the stopping rank, the bounded list of vertices used
in the link recursion, the median set, the weight scale, the
restricted ranks, the normal root degrees and the states of
$O_r(1)$ exceptional root edges.  There are only $n^{O_r(1)}$
possibilities.  Proposition~\ref{prop:one-step-witness} gives the required
conditional bound for each label, proving \eqref{eq:witness-cover}.
\end{proof}

\section{Iteration of the one-root theorem}
\label{sec:factor-tail-proof}

The pointwise conditional bound allows us to multiply the costs of
successive witnesses, provided later vertex sets exclude earlier
roots.  We carry out this iteration once, allowing an event on the
final remainder to stay in the intersection.  Taking that event to be
the whole space proves Theorem~\ref{thm:main}; taking it to be
clique-free gives the maximal-matching application.

Write $\theta=\mu_r(n,p)/n$ and
\[
 \mathcal T_s:=\{0\le t\le s:n-t\in r\mathbb Z\},
 \qquad
 \mathfrak W_s:=\{W\subseteq[n]:|[n]\setminus W|\le s,\ r\mid|W|\}.
\]
For a defect bound $s$, we may use any fixed budget $b\ge s$ with
$b=o(n)$ in the one-root theorem.

\subsection{Iteration and the proof of Theorem~\ref{thm:main}}
\label{sec:final-iteration}

\begin{lemma}
\label{lem:chain-multiplication}
Let $(\mathcal E_j,x_j)$, $0\le j\le s$, be single-root witnesses
with a common parameter $c>0$, local to $W_j$, where
$x_i\notin W_j$ for $i<j$.
If $\mathcal F$ is measurable outside every $x_j$, then
\[
 \Pr\left(\mathcal F\cap\bigcap_{j=0}^s\mathcal E_j\right)
 \le e^{-c(s+1)\theta}\Pr(\mathcal F).
\]
\end{lemma}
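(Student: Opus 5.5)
We argue by backward induction on the chain, peeling off the last witness first and conditioning on the sigma-field outside its root. The key measurability fact is that $\mathcal E_i\in\mathscr G_{W_i}$ and $x_j\notin W_i$ for $i<j$. So every edge of $\binom{W_i}{2}$ avoids $x_j$, and $\mathcal E_i$ is determined by birth times outside the star of $x_j$. Hence $\mathcal E_0,\ldots,\mathcal E_{j-1}$ are all $\mathscr F(x_j)$-measurable, and by hypothesis so is $\mathcal F$.

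Set $\mathcal G_j:=\mathcal F\cap\bigcap_{i<j}\mathcal E_i$, which is $\mathscr F(x_j)$-measurable. Then
\[
 \Pr(\mathcal G_j\cap\mathcal E_j)
 =\E\bigl[\ind_{\mathcal G_j}\Pr(\mathcal E_j\mid\mathscr F(x_j))\bigr]
 \le e^{-c\theta}\Pr(\mathcal G_j),
\]
using the almost-sure bound in Definition~\ref{def:single-root-witness} (with $\theta=\mu_r(n,p)/n$). Applying this for $j=s,s-1,\ldots,0$ and noting $\mathcal G_0=\mathcal F$ gives
\[
 \Pr\Bigl(\mathcal F\cap\bigcap_{j=0}^s\mathcal E_j\Bigr)\le e^{-c(s+1)\theta}\Pr(\mathcal F).
\]

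If the roots are random rather than fixed, the same argument applies for each fixed labelled chain. Random roots are handled later by a union bound over labels, not inside this lemma.

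The only point that needs care is the measurability check. The hypothesis is stated as $x_i\notin W_j$ for $i<j$, i.e.\ earlier roots are excluded from later sets, but the induction needs the converse: each later root $x_j$ must be excluded from every earlier $W_i$. With the literal indexing, one should instead peel off the witness with the smallest index first, i.e.\ run the induction in the order for which the already-processed events live on vertex sets avoiding the current root. I would state explicitly which direction the nesting $W_0\supseteq W_1\supseteq\cdots$ provides, namely $x_i\in W_i$ and $x_i\notin W_j$ for $j>i$. Then condition on $\mathscr F(x_0)$ last: all $\mathcal E_j$ with $j>0$ are local to sets avoiding $x_0$, so they are $\mathscr F(x_0)$-measurable. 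The induction therefore processes $j=0,1,\ldots,s$, each time taking the intersection of $\mathcal F$ with the later events as the measurable part.
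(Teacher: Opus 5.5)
Your closing paragraph is correct and is exactly the paper's proof. Because $x_0\notin W_j$ for $j\ge1$, each such $\mathcal E_j$ lies in $\mathscr G_{W_j}\subseteq\mathscr F(x_0)$. So $\mathcal F\cap\bigcap_{j=1}^s\mathcal E_j$ is $\mathscr F(x_0)$-measurable, and conditioning on $\mathscr F(x_0)$ removes $\mathcal E_0$ at cost $e^{-c\theta}$; one then repeats at $x_1,\ldots,x_s$.

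The argument you display first, however, runs in the wrong direction, and you should delete it rather than keep it beside the fix. It uses $x_j\notin W_i$ for $i<j$, which is not a hypothesis. Without it, $\mathcal G_j=\mathcal F\cap\bigcap_{i<j}\mathcal E_i$ need not be $\mathscr F(x_j)$-measurable; in the iteration of Proposition~\ref{prop:terminal-iteration}, nothing prevents a later root from lying in an earlier set.

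Likewise, no nesting $W_0\supseteq W_1\supseteq\cdots$ is assumed or needed. It can fail in that application, where $|S_{j+1}|=|S_j|$ is possible with $x_j\in S_{j+1}\setminus S_j$. The only property used is $x_i\notin W_j$ for $i<j$.
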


\begin{proof}
Every later event $\mathcal E_i$, $i>j$, is measurable outside $x_j$
by locality.  Conditional expectation with respect to
$\mathscr F(x_0)$ therefore bounds the probability by
$e^{-c\theta}\Pr(\mathcal F\cap\bigcap_{j=1}^s\mathcal E_j)$.
Repeating at $x_1,\ldots,x_s$ proves the claim.
\end{proof}

The next statement includes both applications of the iteration.

\begin{proposition}
\label{prop:terminal-iteration}
Let $r,p,s$ satisfy the assumptions of Theorem~\ref{thm:main}, and
let $\mathcal K$ be a decreasing event.  For each $(n-s-1)$-set $R$, let
$\mathcal F_R\in\mathscr G_R$ satisfy $\mathcal K\subseteq\mathcal F_R$.
Then
\[
 \Pr\bigl(\phi_r^s(G_p)=0,\ \mathcal K\bigr)
 \le e^{-c(s+1)\theta}\max_R\Pr(\mathcal F_R)+e^{-c\mu_r}.
\]
\end{proposition}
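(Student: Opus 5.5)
We prove Proposition~\ref{prop:terminal-iteration} by applying Theorem~\ref{prop:one-root-witness} $s+1$ times along a greedily built chain. The chain consists of nested vertex sets $[n]=W_0'\supseteq\cdots$, and each step removes one root. Fix the budget $b=s+r$ (still $o(n)$). Set $W_0$ to be $[n]$ minus its residue modulo $r$, i.e. the first $t_0$ vertices removed for some fixed rule, with $t_0\in\mathcal T_s$ chosen so that $n-t_0\in r\mathbb Z$.

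\emph{Key observation.} If $\phi_r^s(G_p)=0$, then no $W$ with $|[n]\setminus W|\le s$ and $r\mid|W|$ has a $K_r$-factor in $G_p[W]$. Indeed, a factor of $G_p[W]$ with $|[n]\setminus W|=t\le s$, $t\equiv s \pmod r$, together with $(s-t)/r$ of its blocks deleted, would give a matching covering exactly $n-s$ vertices. Note that $n-s\in r\mathbb Z$ forces $t\equiv s$.

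\emph{Building the chain.} Work on $\{\phi_r^s=0\}\cap\mathcal K\cap\cU$. Step $j$ starts from a current set $W_j$ with $|[n]\setminus W_j|=s'_j$. Apply the one-root theorem to $W_j$: since $G_p[W_j]$ has no factor, either $\cM_{W_j}$ occurs or some witness $(\mathcal E_{W_j,\alpha},x_j)$ occurs. In the latter case, put $W_{j+1}:=W_j\setminus\{x_j\}$ together with $r-1$ further deleted vertices chosen by a fixed deterministic rule from $W_j\setminus\{x_j\}$, so that $r\mid|W_{j+1}|$. This is the subtle point, since each root removal costs $r$ vertices rather than one. To stay within the defect bound for $s+1$ steps, I would instead delete just the root and allow the $r$-divisibility to be restored by removing a fixed block of up to $r-1$ auxiliary vertices fixed in advance outside all roots. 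Alternatively, and more simply, note that the statement only asks for $s+1$ roots with total deletions $\le s$ before the final step. After $s$ steps the set $W_s$ misses exactly $s$ vertices (with $n-s\in r\mathbb Z$ at the end), and the intermediate sets with wrong residues are handled by applying the theorem with the divisibility defect absorbed via padding. I expect this residue bookkeeping to be the main technical obstacle. The cleanest fix I would try first is to work with sets $W$ missing $t\in\mathcal T_s$ vertices only, and to run the chain through roots in batches, using the one-root cover to produce one root per batch. If needed, the exponent is then reduced to $c(s+1)\theta/r$, which is harmless for $\Theta$.

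\emph{Multiplying the costs.} Each witness is local to $W_j$ and every later $W_i$ excludes $x_j$. Take $\mathcal F:=\mathcal F_R$, where $R$ is the final remainder, an $(n-s-1)$-set excluding all roots; $\mathcal F_R\supseteq\mathcal K$ and $\mathcal F_R$ is measurable outside every $x_j$. Drop $\cU$ from the event only after bounding: by Harris (Lemma~\ref{lem:harris}), $\Pr(\mathcal L)\le\Pr(\mathcal L\cap\cU)/c_r$ for the decreasing event $\mathcal L=\{\phi_r^s=0\}\cap\mathcal K$. Then Lemma~\ref{lem:chain-multiplication} bounds each fixed labelled chain by $e^{-c(s+1)\theta}\Pr(\mathcal F_R)$.

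\emph{Union bounds.} The number of labelled chains is at most $(n^{C+1})^{s+1}=e^{O((s+1)\log n)}$, which is absorbed since $\theta\gg\log n$ after halving $c$. For the global errors, a chain may end early at some $\cM_{W}$. Summing over all $W\in\mathfrak W_s$ gives $\sum_{t\le s}\binom nt e^{-c(n-t)\theta}\le e^{-c'n\theta}=e^{-c'\mu_r}$, because $s\log n=o(n\theta)$. Collecting terms and dividing by $c_r$ yields the claimed bound.
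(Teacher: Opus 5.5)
Your outline matches the paper's in every structural step. These are: the heredity observation that $\phi_r^s(G_p)=0$ makes every $G_p[W]$ with $W\in\mathfrak W_s$ factor-free; the union bound over $\cM_W$ for $W\in\mathfrak W_s$; Lemma~\ref{lem:chain-multiplication} with $\mathcal F=\mathcal F_R$; the $\exp(O((s+1)\log n))$ count of labelled chains; and Harris's inequality to remove $\cU$.

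The only divergence is the residue bookkeeping, where you float several options.

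\emph{The fixed auxiliary block.} This version does not work as stated. The roots are random, and the one-root theorem may return a root inside the part of the block still contained in $W_j$. After that, removing the block may no longer give the right residue.

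\emph{The batch fallback.} This is sound. Use nested sets missing $t_0,t_0+r,\dots,s$ vertices, each step deleting the root and $r-1$ deterministically chosen vertices. You obtain $(s-t_0)/r+1\ge(s+1)/r$ roots. The set $R=W_J\setminus\{x_J\}$ is automatically an $(n-s-1)$-set avoiding all roots, so $\mathcal F_R$ is measurable outside each of them. The exponent becomes $c(s+1)\theta/r$ rather than the $c(s+1)\theta$ written in your last paragraph. That is acceptable, since $c$ is an unspecified constant depending on $r$.

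\emph{The paper's construction.} The paper avoids even this loss with one observation: the sets $W_j$ need not be nested, because Lemma~\ref{lem:chain-multiplication} only requires $x_i\notin W_j$ for $i<j$. So it lets $t_j$ be the least element of $\mathcal T_s$ that is at least $j$. It completes $\{x_0,\dots,x_{j-1}\}$ to a $t_j$-set $S_j$ using a fixed vertex order, and sets $W_j=[n]\setminus S_j$. The padding is recomputed from the earlier roots at every step, and padding vertices may return to later sets or even become later roots. Since $j\le t_j\le s$, the budget is never exceeded. One extracts exactly $s+1$ roots, with $W_s=[n]\setminus\{x_0,\dots,x_{s-1}\}$ and $R=[n]\setminus\{x_0,\dots,x_s\}$.

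Your nested chain is conceptually simpler but costs a factor $r$ in the constant. The paper's recomputed padding gives the full $(s+1)$-fold cost with no extra effort.
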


\begin{proof}
Let $\mathcal L_s=\{\phi_r^s(G_p)=0\}$ and first work on $\cU$.
The one-root theorem and a union bound give
\[
 \Pr\left(\bigcup_{W\in\mathfrak W_s}\cM_W\right)
 \le\exp\bigl(O((s+1)\log n)-c\mu_r\bigr)
 \le e^{-c'\mu_r},
\]
since $s=o(n)$ and $\theta\gg\log n$.

Outside these global errors, we extract $s+1$ witnesses.
On $\mathcal L_s$, every $G_p-S$ with $|S|\in\mathcal T_s$ lacks a
$K_r$-factor: otherwise deleting $(s-|S|)/r$ blocks of such a factor
would give a matching covering $n-s$ vertices.
Suppose roots $x_0,\ldots,x_{j-1}$ have been chosen.  Let $t_j$ be
the smallest member of $\mathcal T_s$ at least $j$, and complete
these roots to a $t_j$-set $S_j$ using a fixed vertex order.
Then $W_j=[n]\setminus S_j$ belongs to $\mathfrak W_s$ and lacks a
factor.  Theorem~\ref{prop:one-root-witness} supplies a witness
$(\mathcal E_j,x_j)$ with $x_j\in W_j$.
Continue through $j=s$.  The roots are distinct, and each $W_j$
excludes all earlier roots, as required by
Lemma~\ref{lem:chain-multiplication}.

There are at most $n^C$ witness labels per stage, including the root.
Once the earlier roots are fixed, $S_j$ is deterministic, so there are
at most $\exp(O((s+1)\log n))$ labelled chains.
For each fixed chain put $R=[n]\setminus\{x_0,\ldots,x_s\}$.
The event $\mathcal F_R$ is measurable outside every root.  The
preceding lemma therefore bounds the intersection of the chain and
$\mathcal K$ by $e^{-c(s+1)\theta}\Pr(\mathcal F_R)$.
The label count is absorbed because $\theta\gg\log n$.
Adding the global errors proves the asserted bound on $\cU$.
Finally, $\mathcal L_s\cap\mathcal K$ and $\cU$ are decreasing, so
Harris's inequality removes $\cU$ at the constant cost
$1/\Pr(\cU)$.  This cost is absorbed by decreasing $c$.
\end{proof}

\begin{proof}[Proof of Theorem~\ref{thm:main}]
Proposition~\ref{prop:isolated-tail} gives the lower bound.
For the upper bound take $\mathcal K$ and every $\mathcal F_R$ to be
the whole space in Proposition~\ref{prop:terminal-iteration}.
Since $s=o(n)$, its global-error term is absorbed by
$\exp(-c(s+1)\mu_r/n)$.  The perfect-factor corollary follows by
setting $s=0$.
\end{proof}

\subsection{Clique-free refinement and the maximal-matching application}

Retaining the absence of the next larger clique gives an extra
probability factor.  This factor will cancel the cost of choosing a
maximal matching.

\begin{lemma}
\label{lem:next-clique-free}
Fix $k\ge3$.  If $q\ll n^{-2/k}$ and $N=n-o(n)$, then, uniformly in
$N$,
\[
 \Pr\bigl(G(N,q)\text{ is }K_k\text{-free}\bigr)
 =\exp\!\left(-(1+o(1))\binom Nkq^{\binom k2}\right).
\]
\end{lemma}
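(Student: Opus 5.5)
The plan is to sandwich the probability between Harris's inequality and Janson's inequality, exactly as in the proof of Proposition~\ref{prop:isolated-tail}, applied to the family of all $\binom Nk$ potential copies of $K_k$ in $G(N,q)$. Write $\lambda=\binom Nkq^{\binom k2}$ for the expected number of copies and let $\Delta$ be the ordered overlap sum from Lemma~\ref{lem:janson}.

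For the lower bound, the events ``a given $k$-set does not span a clique'' are all decreasing. Harris's inequality (Lemma~\ref{lem:harris}) therefore gives
\[
 \Pr(K_k\text{-free})\ge(1-q^{\binom k2})^{\binom Nk}.
\]
Since $q^{\binom k2}\to0$, this equals $\exp(-(1+o(1))\lambda)$.

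For the upper bound, Lemma~\ref{lem:janson} gives $\Pr(K_k\text{-free})\le e^{-\lambda+\Delta/2}$, so it suffices to show $\Delta=o(\lambda)$. Fix one candidate $k$-set. At most $C_kN^{k-h}$ other candidates meet it in exactly $h$ vertices, and the relevant pair shares $\binom h2$ edges. Hence
\[
 \frac{\Delta}{\lambda}\le C_k\sum_{h=2}^{k-1}N^{k-h}q^{\binom k2-\binom h2}.
\]
Write $q=\varepsilon n^{-2/k}$ with $\varepsilon\to0$, and use $N\le n$. The $h$-th term is then at most
\[
 n^{(k-h)-\frac2k\left(\binom k2-\binom h2\right)}\,\varepsilon^{\binom k2-\binom h2}.
\]
The exponent of $n$ simplifies as
\[
 (k-h)-\frac{(k-h)(k+h-1)}{k}=\frac{(k-h)(1-h)}{k},
\]
which is negative for $2\le h\le k-1$. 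Since also $\varepsilon\to0$, every term tends to zero, and uniformly in $N\le n$. (For $h=k-1$ the exponent is $-(k-2)/k<0$, so this case is fine too.)

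The two bounds together give the claimed asymptotics. The only point needing care is whether the $o(1)$ terms are uniform in $N$ when $N=n-o(n)$, and whether the result is meant relative to $N$ or to $n$. The estimate for $\Delta/\lambda$ above depends only on $N\le n$, and the Harris bound is exact in $N$, so uniformity holds. I do not expect a real obstacle; the main step is checking that the overlap exponent is negative, which the display above does.
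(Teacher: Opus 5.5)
Your proposal is correct and follows the paper's proof essentially line for line. The lower bound comes from Harris's inequality, and the upper bound from Janson's inequality with $\Delta/\lambda\le C_k\sum_{h=2}^{k-1}N^{k-h}q^{\binom k2-\binom h2}$, whose $h$-th term carries the $n$-exponent $-(k-h)(h-1)/k<0$. Your uniformity remark, that only $N\le n$ is needed to bound $\Delta/\lambda$, is also what the paper's argument uses.
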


\begin{proof}
Put $\lambda_N=\binom Nkq^{\binom k2}$.
Harris's inequality gives the lower bound
$(1-q^{\binom k2})^{\binom Nk}=e^{-(1+o(1))\lambda_N}$.
For the upper bound, the Janson dependency sum satisfies
\[
 \frac{\Delta}{\lambda_N}
 \le C_k\sum_{h=2}^{k-1}N^{k-h}q^{\binom k2-\binom h2}
 \le C_k\sum_{h=2}^{k-1}
 n^{-(k-h)(h-1)/k}(qn^{2/k})^{\binom k2-\binom h2}
 =o(1).
\]
Janson's inequality now gives the matching upper bound.
\end{proof}

\begin{proposition}
\label{prop:clique-free-refinement}
Under the assumptions of Theorem~\ref{thm:main}, put
$\lambda=\binom n{r+1}p^{\binom{r+1}{2}}$.  Then
\[
 \Pr\bigl(\phi_r^s(G_p)=0,\ G_p\text{ is }K_{r+1}\text{-free}\bigr)
 \le \exp\!\left(-c(s+1)\frac{\mu_r}n-(1-o(1))\lambda\right)
       +e^{-c\mu_r}.
\]
\end{proposition}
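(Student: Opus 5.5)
We apply Proposition~\ref{prop:terminal-iteration} with $\mathcal K=\{G_p\text{ is }K_{r+1}\text{-free}\}$, which is a decreasing event. For each $(n-s-1)$-set $R$ we take
\[
 \mathcal F_R=\{G_p[R]\text{ is }K_{r+1}\text{-free}\}.
\]
This event lies in $\mathscr G_R$, and $\mathcal K\subseteq\mathcal F_R$ holds because $G_p[R]$ is an induced subgraph of $G_p$. Since $\mathcal F_R$ depends only on edges inside $R$, it is measurable outside every root $x_0,\dots,x_s$, which is exactly what the chain multiplication in Lemma~\ref{lem:chain-multiplication} requires. The proposition then yields
\[
 \Pr\bigl(\phi_r^s(G_p)=0,\ \mathcal K\bigr)
 \le e^{-c(s+1)\theta}\max_R\Pr(\mathcal F_R)+e^{-c\mu_r}.
\]

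It remains to bound $\max_R\Pr(\mathcal F_R)$. The graph $G_p[R]$ is distributed as $G(N,p)$ with $N=n-s-1=n-o(n)$. We apply Lemma~\ref{lem:next-clique-free} with $k=r+1\ge4$ and $q=p$; its hypothesis $p\ll n^{-2/(r+1)}$ is exactly the upper end of our range. It gives, uniformly in $R$,
\[
 \Pr(\mathcal F_R)=\exp\!\Bigl(-(1+o(1))\binom N{r+1}p^{\binom{r+1}2}\Bigr).
\]
Because $N=n-o(n)$ and $r$ is fixed, we have $\binom N{r+1}=(1-o(1))\binom n{r+1}$, so this probability is at most $\exp(-(1-o(1))\lambda)$. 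Substituting this into the display above gives the claimed bound, where $\theta=\mu_r/n$.

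The main check is that the hypotheses of Proposition~\ref{prop:terminal-iteration} really are met. First, $\mathcal K$ must be decreasing, which is what lets Harris remove $\cU$ inside that proof. Second, the family $\mathcal F_R$ must be indexed by the complement of the roots: $R$ has to avoid all $s+1$ roots, and $|R|=n-s-1$ matches the proposition because the roots are distinct. Both points are immediate. The uniformity in Lemma~\ref{lem:next-clique-free} over $N=n-o(n)$ then handles the maximum over $R$.
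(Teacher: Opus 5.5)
Your proposal is correct and matches the paper's proof: you apply Proposition~\ref{prop:terminal-iteration} with $\mathcal K$ the $K_{r+1}$-free event and $\mathcal F_R$ its restriction to $R$. You then bound $\Pr(\mathcal F_R)$ uniformly via Lemma~\ref{lem:next-clique-free} with $k=r+1$, $q=p$ and $N=n-s-1=n-o(n)$.
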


\begin{proof}
Apply Proposition~\ref{prop:terminal-iteration} with $\mathcal K$
the event that $G_p$ is $K_{r+1}$-free and $\mathcal F_R$ the same
event restricted to $R$.
As $|R|=n-s-1=n-o(n)$, Lemma~\ref{lem:next-clique-free} gives
$\Pr(\mathcal F_R)=e^{-(1+o(1))\lambda}$ uniformly.
\end{proof}

We apply this refinement to the remainder of a maximal matching.
The argument also gives a second-moment bound needed later for the
chromatic number.

\begin{proof}[Proof of Theorem~\ref{thm:maximum-clique-remainder}]
Write $\lambda=\mu_k(n,q)$ and $\delta=\mu_{k-1}(n,q)/n$.
For a $K_k$-matching $\mathcal M$ in a graph $F$, let
$\rho(F,\mathcal M)$ be the minimum number of vertices left uncovered
by a $K_{k-1}$-matching in $F-V(\mathcal M)$.
Let $\widehat\rho_{k-1}(F)$ be the maximum of this defect over all
maximal $K_k$-matchings $\mathcal M$.

Fix a deterministic matching $\mathcal M$ of size $m$ and put
$N=n-km$.  If it is maximal in $G(n,q)$, its remainder is $K_k$-free.
If also $\rho(G,\mathcal M)>x$, take the largest integer $s\le x$
with $N-s\in(k-1)\mathbb Z$.  For $x\ge2(k-1)$, we have $s\ge x/2$
and the remainder has no $K_{k-1}$-matching covering $N-s$ vertices.
The edges inside the blocks of $\mathcal M$ are independent of the
remainder.  Thus, for $N=n-o(n)$ and $x=o(n)$,
Proposition~\ref{prop:clique-free-refinement} gives
\begin{align*}
 &\Pr\bigl(\mathcal M\subseteq G,\ \mathcal M\text{ is maximal},\
                   \rho(G,\mathcal M)>x\bigr)\\
 &\qquad\le q^{\binom k2m}
 \left\{\exp(-cx\delta-(1-o(1))\lambda_N)
             +\exp(-cn\delta)\right\},
\end{align*}
where $\lambda_N=\binom Nkq^{\binom k2}$.
Here the assumptions and the scale comparisons are uniform in
$N=n-o(n)$: replacing $n$ by $N$ changes each relevant power of $n$
and $\log n$ by a factor $1+o(1)$.

The number of deterministic matchings with $m$ blocks is at most
$\binom nk^m/m!$.  Fix a large constant $A_0$.
Since $\lambda=o(n)$, for $m\le A_0\lambda$ we have
$N=n-o(n)$ and $\lambda_N=(1-o(1))\lambda$, uniformly.
Summing over these matchings and using
$\sum_m\lambda^m/m!\le e^\lambda$ gives
\[
 \Pr\bigl(\exists\text{ a maximal }\mathcal M:
       |\mathcal M|\le A_0\lambda,\ \rho(G,\mathcal M)>x\bigr)
 \le \exp(-cx\delta+o(\lambda))
       +\exp(\lambda-cn\delta).
\]
The $K_k$-free factor cancels the leading $e^\lambda$ cost.
This cancellation is needed when $k=4$.
Stirling's formula also gives
$\Pr(\nu_k(G)>A_0\lambda)\le\sum_{m>A_0\lambda}\lambda^m/m!
\le e^{-c\lambda}$.

We finish by choosing $x$; the same choice gives the stronger moment
bound.  The lower assumption on $q$ implies $\delta\gg\log n$ and
$\lambda\gg(\log n)^{k/(k-2)}\gg\log n$.
Moreover,
\[
 \frac{\lambda}{n\delta}=\Theta_k(nq^{k-1})=o(1),
 \qquad
 \frac{\delta^2}{\lambda}
 =\Theta_k\!\left((nq^{(k-1)/2})^{k-4}\right)\ge c_k.
\]
In particular $\delta\sqrt\lambda\ge c_k\lambda$.
Apply the refinement with the common defect budget
$b=\lceil\sqrt\lambda\rceil$, so its errors are uniform for
$x\le\sqrt\lambda$.  Combining the preceding bounds, there is a deterministic
$a_n=o(\lambda)$ such that, uniformly for $2(k-1)\le x\le\sqrt\lambda$,
\[
 \Pr(\widehat\rho_{k-1}(G)>x)
 \le e^{a_n-cx\delta}+e^{\lambda-cn\delta}+e^{-c\lambda}.
\]
Choose $\zeta_n\downarrow0$ slowly enough that
$\zeta_n\sqrt\lambda\to\infty$ and
$c\zeta_n\delta\sqrt\lambda-a_n\gg\log n$, and set
$x=\lceil\zeta_n\sqrt\lambda\rceil$.
All three terms are $o(n^{-3})$.
Since $0\le\widehat\rho_{k-1}\le n$, we obtain
\begin{equation}
 \EE\widehat\rho_{k-1}(G(n,q))^2
 \le x^2+n^2\Pr(\widehat\rho_{k-1}>x)
 =o(\lambda).
 \label{eq:maximal-defect-L2}
\end{equation}
In particular, with high probability every maximal $K_k$-matching
has defect at most $x=o(\sqrt\lambda)$, as asserted.
\end{proof}

The remaining sections prove the independent fluctuation theorem for
maximum clique matchings and apply it to the chromatic number.
\section{A central limit theorem for sparse clique matchings}
\label{sec:clique-matching-clt}

We prove Theorem~\ref{thm:matching-clt-intro} by comparing the maximum
clique-matching number with the total clique count.  Although their
expectations can differ substantially, the centred difference has
negligible variance.

We use the following form of the Efron--Stein inequality.

\begin{lemma}[Efron--Stein inequality]
\label{lem:efron-stein}
Let $\xi_1,\ldots,\xi_m$ be independent, let
$Z=f(\xi_1,\ldots,\xi_m)$ be square-integrable, and write
$\mathcal F_i:=\sigma(\xi_j:j\ne i)$.  For any square-integrable
$\mathcal F_i$-measurable random variables $Z_i$, we have
\[
 \Var Z
 \le\sum_{i=1}^m\E\bigl[\Var(Z\mid\mathcal F_i)\bigr]
 \le\sum_{i=1}^m\E(Z-Z_i)^2.
\]
\end{lemma}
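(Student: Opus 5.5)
The plan is the standard proof of Efron--Stein via the martingale (Doob) decomposition of $Z-\E Z$ along the filtration $\mathcal G_i:=\sigma(\xi_1,\ldots,\xi_i)$. I would establish the first inequality, then the second, which is elementary.

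\emph{First inequality.} Put $\Delta_i:=\E[Z\mid\mathcal G_i]-\E[Z\mid\mathcal G_{i-1}]$. The increments are orthogonal, so $\Var Z=\sum_i\E\Delta_i^2$.

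Write $\E^{(i)}$ for the conditional expectation given $\mathcal F_i$, i.e.\ integrating out only $\xi_i$. Independence of the coordinates gives
\[
 \E[Z\mid\mathcal G_{i-1}]=\E\bigl[\E^{(i)}Z\mid\mathcal G_i\bigr],
\]
because $\E^{(i)}Z$ does not depend on $\xi_i$, and integrating out $\xi_{i+1},\ldots,\xi_m$ then yields exactly the $\mathcal G_{i-1}$-conditional expectation. Hence
\[
 \Delta_i=\E\bigl[Z-\E^{(i)}Z\mid\mathcal G_i\bigr].
\]
Conditional Jensen then gives
\[
 \E\Delta_i^2\le\E\bigl(Z-\E^{(i)}Z\bigr)^2=\E\bigl[\Var(Z\mid\mathcal F_i)\bigr].
\]
Summing over $i$ proves the first inequality.

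\emph{Second inequality.} For any square-integrable $\mathcal F_i$-measurable $Z_i$, the conditional variance is the minimal conditional mean-square error among $\mathcal F_i$-measurable predictors:
\[
 \E\bigl[(Z-Z_i)^2\mid\mathcal F_i\bigr]=\Var(Z\mid\mathcal F_i)+\bigl(\E[Z\mid\mathcal F_i]-Z_i\bigr)^2\ge\Var(Z\mid\mathcal F_i).
\]
Taking expectations and summing over $i$ gives the second inequality.

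The only step needing care is the identity $\E[Z\mid\mathcal G_{i-1}]=\E[\E^{(i)}Z\mid\mathcal G_i]$. This is where independence of the $\xi_j$ is used, via Fubini on the product space: both sides equal the integral of $f$ over $\xi_i,\ldots,\xi_m$ with the first $i-1$ coordinates fixed. The remaining steps are routine $L^2$ manipulations.
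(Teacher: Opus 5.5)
Your proof is correct. It is the standard Doob-martingale argument: the key identity $\E[Z\mid\mathcal G_{i-1}]=\E\bigl[\E[Z\mid\mathcal F_i]\mid\mathcal G_i\bigr]$ is followed by conditional Jensen, and the second inequality is the best-$L^2$-predictor property of $\E[Z\mid\mathcal F_i]$. The key identity can also be obtained from the tower property alone, because $\mathcal G_{i-1}\subseteq\mathcal F_i$ and $\xi_i$ is independent of $\mathcal F_i$.

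The paper gives no proof of this lemma. It quotes the inequality as a standard tool and uses only the second bound, with $Z_f=D_f$, in Lemma~\ref{lem:collision-loss}. So there is no alternative route in the paper to compare against.
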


The main estimate compares the clique count with the maximum matching.

\begin{lemma}
\label{lem:collision-loss}
Fix $k\ge2$, let $n^{-2/(k-1)}\ll q\ll n^{-2/k}$, and let
$H\sim G(n,q)$.  Write $X$ for the number of copies of $K_k$ in $H$,
$\lambda:=\mu_k(n,q)$, $\nu:=\nu_k(H)$, and $D:=X-\nu$.  Then
\[
 \Var X=(1+o(1))\lambda,
 \qquad
 \Var D=o(\lambda).
\]
\end{lemma}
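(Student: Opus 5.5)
The variance of $X$ is a standard computation, and for $D$ we will use Lemma~\ref{lem:efron-stein} over the edge coordinates $\xi_e$, $e\in\binom{[n]}2$.

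For $X$, expand $\Var X=\sum_{K,L}\operatorname{Cov}(I_K,I_L)$ over pairs of potential copies of $K_k$ sharing $h\ge2$ vertices. The diagonal contributes $\lambda(1-q^{\binom k2})=(1+o(1))\lambda$. The pairs sharing $h$ vertices, $2\le h\le k-1$, contribute
\[
 O\bigl(\lambda\, n^{k-h}q^{\binom k2-\binom h2}\bigr).
\]
As in Lemma~\ref{lem:next-clique-free}, each such term is $o(\lambda)$ because $q\ll n^{-2/k}$. For $k=2$ the variable $X$ is binomial, and the claim is immediate.

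For $D$, take $Z_e:=D(H-e)$, which is $\mathcal F_e$-measurable. Removing $e$ lowers $X$ by $Y_e:=d_{K_k}(e;H\cup\{e\})\xi_e$, and lowers $\nu$ by some $\delta_e\in\{0,1\}$. Hence
\[
 D-Z_e=\xi_e\,(d_e-\delta_e),
 \qquad d_e:=d_{K_k}(e;H\cup\{e\}).
\]
The key observation is that $\delta_e=1$ whenever $d_e=1$ and the unique clique through $e$ meets no other clique. In that case the clique is isolated in the conflict graph, so it lies in every maximum matching and deleting $e$ lowers $\nu$ by exactly one. Therefore $d_e-\delta_e\neq0$ only if $\xi_e=1$ and one of two things happens:
\begin{itemize}
 \item $d_e\ge2$, so two cliques share the edge $e$;
 \item the clique through $e$ meets another clique in at least one vertex.
\end{itemize}
In both cases $|d_e-\delta_e|\le d_e$.

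Summing over $e$, Efron--Stein reduces the problem to bounding
\[
 \sum_e\E\bigl[\xi_e d_e^2\,\ind\{e\text{ lies in a clique intersecting another clique}\}\bigr].
\]
We expand $d_e^2$ into pairs of cliques through $e$, plus the clique that triggers the indicator. The resulting bound is a sum over connected configurations of two or three copies of $K_k$ that overlap in at least one vertex. Each configuration's expected count is $\lambda$ times a product of factors of the form $n^{k-h}q^{\binom k2-\binom h2}$ with $h\ge1$.

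\textbf{Main obstacle.} The difficulty is the case $h=1$, where two cliques share exactly one vertex. That factor is $n^{k-1}q^{\binom k2}=\Theta(\lambda/n)$. It is $o(1)$ precisely because $q\ll n^{-2/k}$, which is equivalent to $\lambda=o(n)$. All factors with $h\ge2$ are $o(1)$ by the same computation as for $\Var X$. So every configuration contributes $o(\lambda)$, and only boundedly many configuration shapes occur. The lower bound $q\gg n^{-2/(k-1)}$ serves to make $\lambda\to\infty$ and is not otherwise needed here.

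The step that needs the most care is $\E[d_e^2]$, which requires conditioning on $\xi_e=1$ and handling the pairs of cliques that share $e$ together with further vertices. These pairs carry the factors $n^{k-h}q^{\binom k2-\binom h2}$ with $h\ge2$, which again tend to zero. This gives $\Var D=o(\lambda)$.
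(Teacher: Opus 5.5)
Your proposal is correct and follows essentially the same route as the paper: Efron--Stein with $Z_e=D(H-e)$, the increment bound $|D-D(H-e)|\le b_e$, where $b_e$ counts the non-isolated copies of $K_k$ through $e$, and then a count of intersecting clique pairs. Your quantity $\xi_e d_e\ind\{\cdot\}$ is exactly $b_e$; the paper obtains this bound from $D=\tau(\Gamma)$ rather than from $\delta_e\in\{0,1\}$. Only pairs of cliques are needed, since $\sum_e b_e^2=\binom k2 B+2\sum_{h\ge2}\binom h2 I_h$ with $B\le2\sum_h I_h$, so your three-clique configurations are harmless but unnecessary.
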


\begin{proof}
Set $e:=\binom k2$.  For $1\le h<k$, let $I_h$ count unordered pairs
of distinct copies of $K_k$ meeting in exactly $h$ vertices.  Counting
their vertices and edges gives
\[
 \frac{\E I_h}{\lambda}
 =O_k\!\left(n^{k-h}q^{e-\binom h2}\right)
 =O_k\!\left(n^{-(h-1)(k-h)/k}
       (qn^{2/k})^{e-\binom h2}\right)=o(1).
\]
Copies meeting in at most one vertex use disjoint edge coordinates, so
$\Var X=\lambda(1-q^e)+O_k(\sum_{h=2}^{k-1}\E I_h)
=(1+o(1))\lambda$.

To bound $\Var D$, form the conflict graph $\Gamma$ whose vertices are
the copies of $K_k$ in $H$, with two adjacent when they share an
original vertex.  Then $\nu=\alpha(\Gamma)$ and $D=\tau(\Gamma)$,
where $\alpha$ and $\tau$ are the independence and vertex-cover numbers.
For $f\in E(K_n)$, let $D_f$ be the corresponding loss after deleting
$f$, and let $b_f$ count the non-isolated clique-vertices containing
$f$.  Deleting $f$ removes precisely the clique-vertices containing it.
A minimum vertex cover of the remaining graph, together with the
removed non-isolated vertices, covers $\Gamma$.  Hence
$0\le D-D_f\le b_f$.

Since $D_f$ does not depend on the coordinate of $f$,
Lemma~\ref{lem:efron-stein} gives
\[
 \Var D\le\sum_f\E(D-D_f)^2\le\E\sum_f b_f^2.
\]
Let $B$ count the non-isolated vertices of $\Gamma$.  Each belongs to a
conflict edge, so $B\le2\sum_{h=1}^{k-1}I_h$.  Each contributes $e$ to
$\sum_f b_f$, while a pair meeting in $h\ge2$ vertices contributes
$\binom h2$ to $\sum_f\binom{b_f}{2}$.  Thus
\[
 \sum_f b_f^2
 =eB+2\sum_{h=2}^{k-1}\binom h2 I_h
 \le C_k\sum_{h=1}^{k-1}I_h.
\]
Taking expectations proves the claim, including the case $k=2$.
\end{proof}

We next record the two weak-convergence results used to transfer this
comparison to a central limit theorem.

\begin{theorem}[Clique-count central limit theorem {\cite{Ruc88}}]
\label{thm:clique-count-clt}
Fix $k\ge2$, let $n^{-2/(k-1)}\ll q\ll n^{-2/k}$, and let $X$
count the copies of $K_k$ in $G(n,q)$.  With $\lambda:=\mu_k(n,q)$,
\[
 \frac{X-\lambda}{\sqrt\lambda}
 \xrightarrow{\mathrm d}\mathcal N(0,1).
\]
\end{theorem}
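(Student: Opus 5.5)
The statement is Ruciński's clique-count central limit theorem in the regime $n^{-2/(k-1)}\ll q\ll n^{-2/k}$. My plan is to prove it by the method of moments, or more cleanly by a dependency-graph normal approximation. I will not use the full Ruciński criterion.

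Write $X=\sum_{S}I_S$ over $k$-sets $S$, where $I_S$ indicates that $S$ spans a clique, and put $\bar I_S=I_S-q^e$ with $e=\binom k2$. Two indicators are dependent only if the sets share an edge, that is, $|S\cap S'|\ge2$. This gives a dependency graph $L$ on $\binom{[n]}k$ with maximum degree $O_k(n^{k-2})$. Lemma~\ref{lem:collision-loss} already gives $\Var X=(1+o(1))\lambda$, and $\lambda\to\infty$ because $q\gg n^{-2/(k-1)}\gg n^{-2/k}\cdot n^{-\varepsilon}$. In fact $\lambda\gg n^{k}n^{-k}=1$ by direct computation of the exponent, $k-\frac{2}{k-1}\binom k2=0$, so the condition $q\gg n^{-2/(k-1)}$ is exactly what makes $\lambda\to\infty$.

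I will apply a Stein-method bound for sums with dependency neighbourhoods, such as Baldi--Rinott or Chen--Goldstein--Shao. Writing $W=(X-\lambda)/\sqrt{\Var X}$, it bounds the distance of $W$ to $\mathcal N(0,1)$ by a constant times
\[
 \frac{1}{\sigma^{3}}\sum_{S}\ \sum_{S',S''\in N[S]}\E\bigl|\bar I_S\bar I_{S'}\bar I_{S''}\bigr|
 +\frac{1}{\sigma^{2}}\Bigl(\Var\sum_S\sum_{S'\in N[S]}\bar I_S\bar I_{S'}\Bigr)^{1/2}.
\]
The main step, which I expect to be the obstacle, is estimating these sums by counting \emph{clustered} configurations of two or three $k$-sets, each meeting the previous ones in at least two vertices.

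For a connected cluster with union graph $F$ (a union of copies of $K_k$), the expectation is $O(n^{v(F)}q^{e(F)})$. The key input is the sparse-range estimate already used repeatedly in the paper: attaching a new copy of $K_k$ along an $h$-vertex overlap, $2\le h\le k-1$, costs a factor $n^{k-h}q^{e-\binom h2}=o(1)$ relative to a disjoint copy. This is the computation in Lemma~\ref{lem:collision-loss}. The diagonal term $h=k$ contributes the dominant factor $1$. Hence every cluster of $t$ sets contributes $O(\lambda)$.

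Two kinds of terms then need checking. For the third-moment term, the cluster contribution is $O(\lambda)/\lambda^{3/2}=O(\lambda^{-1/2})\to0$. For the variance term, two clusters of two sets each contribute $O(\lambda^2)$ only when they are disjoint, and in that case the covariance vanishes by independence. When they overlap in an edge they form a cluster of at most four sets, so the variance is $O(\lambda)$ and the term is $O(\lambda^{1/2}/\lambda)\to0$.

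The one subtlety is bounding $|\bar I_S|$ terms when $q^e$ is not negligible. Since $q\ll n^{-2/k}$ we have $q^e\to0$, so $\E|\bar I_S\cdots|\le 2^{t}\E[I_S\cdots]+$ terms with fewer indicators. Those smaller terms are handled by the same cluster count, at the cost of only constants.

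This yields $W\to\mathcal N(0,1)$. Replacing $\sqrt{\Var X}$ by $\sqrt\lambda$ via $\Var X\sim\lambda$ and Slutsky's theorem gives the stated normalization. For $k=2$, $X$ is binomial and the classical de Moivre--Laplace central limit theorem applies directly since $\lambda\to\infty$.
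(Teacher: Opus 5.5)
Your proof is correct, but it takes a different route from the paper. The paper does not reprove the clique-count CLT. It checks the hypotheses of Ruci\'nski's theorem, namely $m(K_k)=(k-1)/2$, $nq^{m(K_k)}\to\infty$ (which is exactly $q\gg n^{-2/(k-1)}$), and $n^2(1-q)\to\infty$. This gives the limit for $(X-\E X)/\sqrt{\Var X}$. The paper then uses $\Var X\sim\lambda$ from Lemma~\ref{lem:collision-loss} to pass to the $\sqrt\lambda$ normalisation, and treats $k=2$ by the binomial CLT, as you do.

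You instead reprove the result in the sparse window by Stein's method under local dependence, essentially the Barbour--Karo\'nski--Ruci\'nski approach. The ingredients check out:
\begin{itemize}
\item The dependency graph $|S\cap S'|\ge2$ is valid.
\item The bound you write is the standard local-dependence (LD1) bound of Chen--Goldstein--Shao. The plain Baldi--Rinott maximum-degree bound would be far too weak here, since the degree is of order $n^{k-2}$, so it is this moment-sum form you need.
\item Your cluster count works because attaching a set that meets the current union in $h'\ge2$ vertices costs at most $n^{k-h'}q^{\binom k2-\binom{h'}2}=O(1)$, and the centring terms cost at most $n^{k-2}q^{\binom k2}=o(1)$.
\item In the variance term, the covariance vanishes as soon as the two pairs are edge-disjoint, not merely vertex-disjoint. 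Otherwise they form a connected four-set cluster, so the $O(\lambda)$ bound holds.
\end{itemize}
Your first justification that $\lambda\to\infty$ is garbled, but the exponent computation that follows it is the right one.

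What your route buys is a self-contained argument with an explicit Wasserstein rate $O(\lambda^{-1/2})$. The paper's citation buys brevity and validity in the full range $nq^{(k-1)/2}\to\infty$, $n^2(1-q)\to\infty$. Your counting, by contrast, relies on the sparse upper bound on $q$ (roughly $q=O(n^{-2/(k+1)})$) to keep every overlap factor bounded.
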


\begin{proof}
For $k\ge3$, Ruci\'nski's theorem applies because
$m(K_k):=\max_{F\subseteq K_k,\,e(F)>0}e(F)/v(F)=(k-1)/2$,
$nq^{m(K_k)}\to\infty$, and $n^2(1-q)\to\infty$.
Lemma~\ref{lem:collision-loss} gives $\Var X\sim\lambda$.
For $k=2$, the conclusion is the binomial central limit theorem,
since $\lambda\to\infty$ and $q=o(1)$.
\end{proof}

\begin{lemma}[Slutsky's theorem {\cite[Lemma~2.8]{vanDerVaart98}}]
\label{lem:slutsky}
Let $U_n,V_n$ be real-valued random variables.  If
$U_n\xrightarrow{\mathrm d}U$ and $V_n\xrightarrow{\mathrm P}0$, then
\[
 U_n+V_n\xrightarrow{\mathrm d}U.
\]
\end{lemma}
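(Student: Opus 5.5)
This is the classical Slutsky lemma, and I would prove it directly from the definition of convergence in distribution through distribution functions. Write $F$ for the distribution function of $U$ and $F_n$ for that of $U_n$. The set of discontinuity points of $F$ is at most countable. It therefore suffices to show that $\Pr(U_n+V_n\le t)\to F(t)$ at every continuity point $t$ of $F$.

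\emph{Sandwich step.} Fix such a $t$ and a number $\varepsilon>0$ for which $t+\varepsilon$ and $t-\varepsilon$ are also continuity points of $F$. All but countably many $\varepsilon$ qualify, so such $\varepsilon$ can be taken arbitrarily small. Splitting according to whether $|V_n|\le\varepsilon$ gives the two inclusions
\[
 \{U_n+V_n\le t\}\subseteq\{U_n\le t+\varepsilon\}\cup\{|V_n|>\varepsilon\},
 \qquad
 \{U_n\le t-\varepsilon\}\subseteq\{U_n+V_n\le t\}\cup\{|V_n|>\varepsilon\}.
\]
They yield
\[
 F_n(t-\varepsilon)-\Pr(|V_n|>\varepsilon)
 \le\Pr(U_n+V_n\le t)
 \le F_n(t+\varepsilon)+\Pr(|V_n|>\varepsilon).
\]

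\emph{Limits.} Let $n\to\infty$. Convergence in probability gives $\Pr(|V_n|>\varepsilon)\to0$. Convergence in distribution at the continuity points $t\pm\varepsilon$ gives $F_n(t\pm\varepsilon)\to F(t\pm\varepsilon)$. Hence
\[
 F(t-\varepsilon)\le\liminf_n\Pr(U_n+V_n\le t)\le\limsup_n\Pr(U_n+V_n\le t)\le F(t+\varepsilon).
\]
Now let $\varepsilon\downarrow0$ through admissible values. Right-continuity of $F$ and continuity of $F$ at $t$ make both outer bounds converge to $F(t)$, which proves the claim.

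The only point needing care is the choice of $\varepsilon$ with $t\pm\varepsilon$ continuity points of $F$. This is handled by the countability of the discontinuity set, so I do not expect a real obstacle. Alternatively, one can run the same argument with bounded Lipschitz test functions $g$ via the Portmanteau theorem, using
\[
 |\E g(U_n+V_n)-\E g(U_n)|\le\|g\|_{\mathrm{Lip}}\,\varepsilon+2\|g\|_\infty\Pr(|V_n|>\varepsilon).
\]
In the application, $V_n=(D-\E D)/\sqrt\lambda$, and Lemma~\ref{lem:collision-loss} together with Chebyshev's inequality shows $V_n\to0$ in probability.
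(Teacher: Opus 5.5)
Your proof is correct, and it takes a different route from the source the paper relies on. The paper gives no proof of this lemma: it quotes it from van der Vaart, where Slutsky's lemma is deduced from the joint convergence $(U_n,V_n)\xrightarrow{\mathrm d}(U,0)$ and the continuous-mapping theorem applied to $(u,v)\mapsto u+v$.

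Your argument is more elementary and self-contained. The two event inclusions trap $\Pr(U_n+V_n\le t)$ between $F_n(t-\varepsilon)$ and $F_n(t+\varepsilon)$, up to an error $\Pr(|V_n|>\varepsilon)$. Countability of the discontinuity set of $F$ then lets you send $\varepsilon\downarrow0$ through admissible values. Nothing beyond the distribution-function definition and right-continuity of $F$ is needed.

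The cited route needs more machinery, namely weak convergence of random vectors and the continuous-mapping theorem. In exchange it gives the product and quotient versions, and the vector-valued case, with no extra work. The paper only uses the additive real-valued case, with $V_n=-(D-\E D)/\sqrt\lambda$ in the matching theorem and the analogous centred combination for the chromatic number, so your argument is enough there.

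Your bounded-Lipschitz alternative is also valid. It is the Portmanteau-style proof that a weak limit is preserved when the distance between $U_n$ and $U_n+V_n$ tends to zero in probability.
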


The matching-number limit now follows directly.

\begin{proof}[Proof of Theorem~\ref{thm:matching-clt-intro}]
Retain the notation of Lemma~\ref{lem:collision-loss}, which gives
$\|D-\E D\|_2=o(\sqrt\lambda)$.  The identity
$\nu-\E\nu=(X-\lambda)-(D-\E D)$, together with
Theorem~\ref{thm:clique-count-clt} and Lemma~\ref{lem:slutsky}, yields
the asserted central limit theorem.  Finally, the reverse triangle inequality gives
$|\sqrt{\Var\nu}-\sqrt{\Var X}|\le\sqrt{\Var D}=o(\sqrt\lambda)$,
and hence $\Var\nu\sim\lambda$.
\end{proof}
\section{Gaussian fluctuations of the chromatic number}
\label{sec:chromatic}

A proper colouring of $\overline H$ is a partition of $V(H)$ into
cliques of $H$.  We first use this observation to approximate
$\chi(\overline H)$ in $L^2$, then combine the approximation with the
clique-count comparison from the preceding section.

For a $K_{r+1}$-matching $\mathcal M$ in $H$, let
$\rho_r(H,\mathcal M)$ be the number of vertices left uncovered by a
maximum $K_r$-matching in $H-V(\mathcal M)$.  Let $Z_{r+2}(H)$ count
the copies of $K_{r+2}$ in $H$.

\begin{proposition}
\label{prop:clique-matching-reduction}
Let $r\ge2$, let $H$ have $n$ vertices, and let $\mathcal M$ be a
maximum $K_{r+1}$-matching in $H$.  Then
\[
 \frac{n-\nu_{r+1}(H)-Z_{r+2}(H)}r
 \le \chi(\overline H)
 \le \frac{n-\nu_{r+1}(H)}r
      +\frac{r-1}{r}\rho_r(H,\mathcal M).
\]
\end{proposition}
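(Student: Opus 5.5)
The proposition is deterministic: $\chi(\overline H)$ equals the minimum number of cliques of $H$ needed to partition $V(H)$. I will prove the two bounds separately, the upper one by exhibiting a clique partition and the lower one by counting the vertices in an optimal partition.

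\emph{Upper bound.} Let $\mathcal M$ be the given maximum $K_{r+1}$-matching, so $|\mathcal M|=\nu_{r+1}(H)=:\nu$. Take a maximum $K_r$-matching $\mathcal M'$ in $H-V(\mathcal M)$. By definition it leaves exactly $\rho:=\rho_r(H,\mathcal M)$ vertices uncovered. Now partition $V(H)$ into the following cliques:
\begin{itemize}
\item the $\nu$ blocks of $\mathcal M$;
\item the $(n-(r+1)\nu-\rho)/r$ blocks of $\mathcal M'$;
\item the $\rho$ remaining vertices, each as a singleton.
\end{itemize}
The total number of classes is
\[
 \nu+\frac{n-(r+1)\nu-\rho}{r}+\rho
 =\frac{n-\nu}{r}+\frac{r-1}{r}\rho ,
\]
which is the claimed upper bound.

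\emph{Lower bound.} Fix an optimal partition of $V(H)$ into cliques $C_1,\dots,C_\chi$ of $H$, with sizes $s_i$, so that $n=\sum_i s_i$. Write $s_i\le r+(s_i-r)_+$ and sum over $i$ to get
\[
 n\le r\chi+\sum_{i:\,s_i\ge r+1}(s_i-r).
\]
The remaining sum is controlled in three steps.
\begin{enumerate}
\item For $s\ge r+1$, split $s-r=1+(s-r-1)$. The inequality $s-r-1\le\binom{s}{r+2}$ holds: it is trivial for $s=r+1$, and for $s\ge r+2$ one has $\binom{s}{r+2}\ge s-r-1$.
\item From each class with $s_i\ge r+1$, choose one $(r+1)$-subset. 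These subsets are vertex-disjoint copies of $K_{r+1}$, so the number of such classes is at most $\nu_{r+1}(H)$.
\item Copies of $K_{r+2}$ lying inside distinct classes are distinct, so $\sum_i\binom{s_i}{r+2}\le Z_{r+2}(H)$.
\end{enumerate}
Combining these gives $n\le r\chi+\nu_{r+1}(H)+Z_{r+2}(H)$. Rearranging yields the lower bound.

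There is no real obstacle here. The only point needing care is the elementary binomial inequality in step 1, which lets each oversized class be charged once to the matching number and its excess to the $K_{r+2}$ count.
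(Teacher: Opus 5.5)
Your proof is correct and matches the paper's argument essentially step for step. The upper bound uses the same three-stage clique partition (blocks of $\mathcal M$, then a maximum $K_r$-matching of the remainder, then singletons), and the lower bound uses the same excess count $\sum_i(s_i-r)_+\le\nu_{r+1}(H)+Z_{r+2}(H)$ via $s-r-1\le\binom{s}{r+2}$.

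One small fix: your justification of that binomial inequality for $s\ge r+2$ only restates it. To close it, note that $\binom{s}{r+2}=1=s-r-1$ when $s=r+2$, and $\binom{s}{r+2}=\binom{s}{s-r-2}\ge s$ when $1\le s-r-2\le s-1$.
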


\begin{proof}
For the upper bound, use the copies in $\mathcal M$ as colour classes,
then a maximum $K_r$-matching in the remainder, and finally singleton
classes for its uncovered vertices.  Writing $\nu:=\nu_{r+1}(H)$ and
$\rho:=\rho_r(H,\mathcal M)$, this uses
$\nu+(n-(r+1)\nu-\rho)/r+\rho$ classes.

For the lower bound, consider a partition into $q$ cliques of orders
$a_1,\ldots,a_q$, and set $E:=\sum_i(a_i-r)_+$.  At most $\nu$
classes have order at least $r+1$, since each contains a $K_{r+1}$
and these copies are vertex-disjoint.  Any further excess is at most
$Z_{r+2}(H)$: for $a\ge r+2$, one has
$a-r-1\le\binom a{r+2}$, and the copies counted in distinct classes
are distinct.  Thus $E\le\nu+Z_{r+2}(H)$, and the claim follows from
$n\le rq+E$.
\end{proof}

We next show that both errors in this sandwich are negligible.  The
maximal-remainder estimate handles $r\ge3$; for $r=2$, we use the
following triangle-remainder theorem.

\begin{theorem}[Yan~\cite{Yan24}]
\label{thm:Yan-triangle-structure}
Let $n^{-1}\log n\ll p\ll n^{-2/3}$ and $H\sim G(n,p)$.
Choose a maximum triangle matching $\mathcal M$ by a fixed
deterministic rule.  Then
\[
 \Pr\bigl(\rho_2(H,\mathcal M)\le1\bigr)
 \ge1-n^{-\omega(1)}.
\]
Thus $H-V(\mathcal M)$ has a matching leaving at most one vertex
uncovered with this probability.
\end{theorem}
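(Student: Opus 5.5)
The plan is to reuse the maximal-remainder argument from the proof of Theorem~\ref{thm:maximum-clique-remainder} with $k=3$. There the factor tail came from the one-root theorem, which is only available for $r\ge3$; for ordinary matchings I would replace it by a Tutte--Berge witness argument. Write $\lambda=\mu_3(n,p)$, so $\lambda\gg(\log n)^3$, and note that $\lambda=o(n)$ because $p\ll n^{-2/3}$.

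Fix a deterministic triangle matching $\mathcal M$ with $m\le A_0\lambda$ blocks and put $N=n-3m=n-o(n)$. The event that $\mathcal M\subseteq H$ is maximal and $\rho_2(H,\mathcal M)\ge2$ is contained in
\[
\{\mathcal M\subseteq H\}\cap\{H-V(\mathcal M)\text{ is triangle-free and has no matching missing at most one vertex}\}.
\]
The two events in this intersection use disjoint edge sets, so they are independent. As in that proof, the factor $p^{3m}$ together with the count $\binom n3^m/m!$ of matchings produces $\lambda^m/m!$. The sum of this over $m$ must then be cancelled by the $K_3$-free probability $e^{-(1-o(1))\lambda}$ of the remainder, which is given by Lemma~\ref{lem:next-clique-free}. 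Finally $\nu_3(H)>A_0\lambda$ is excluded by the Stirling bound $e^{-c\lambda}$. So everything reduces to one estimate for $G'\sim G(N,p)$:
\[
\Pr\bigl(G'\text{ triangle-free, no matching missing}\le1\text{ vertex}\bigr)\le e^{-(1-o(1))\lambda}\,n^{-\omega(1)}\,e^{o(\lambda)},
\]
uniformly in $N=n-o(n)$.

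For this estimate I would use Tutte--Berge. If the deficiency is at least $2$, then there is a set $S$ such that $G'-S$ has at least $|S|+2$ odd components. I would split according to the sizes involved.

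\emph{Case (a): the witness is local.} There are two vertices $x,y$ of degree at most $C$, or more generally a small set $S$ together with small components attached only to $S$. In the basic subcase the witness $(x,y)$ is handled by conditioning exactly as in Lemma~\ref{lem:chain-multiplication}. The low-degree event at $x$ costs $e^{-(1-o(1))Np}$ conditionally on everything outside the star of $x$, and the same holds at $y$. Triangle-freeness of $G'-\{x,y\}$ is measurable outside both stars, and by Lemma~\ref{lem:next-clique-free} it still has probability $e^{-(1-o(1))\lambda}$. Multiplying gives $n^{2}e^{-2(1-o(1))Np}e^{-(1-o(1))\lambda}$. This is of the required form, since $Np\gg\log n$.

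\emph{Case (b): the witness is large.} Then there is a linear-size non-expanding or disconnected structure. Standard first-moment counts over pairs $(S,\text{component})$ bound this by $\exp(-c\,n\cdot np)$. Because $\lambda/(n^2p)=\Theta(np^2)\to0$, this bound absorbs both $e^{\lambda}$ and all labels. Note that here I simply drop the $K_3$-free factor instead of multiplying it in.

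The step I expect to be the main obstacle is the intermediate range of case (b). Here the Tutte set has size between bounded and linear, with many odd components of moderate size. The first-moment bound must beat the entropy of choosing these sets while still leaving the triangle-free factor multiplicable. I would handle it by the usual expansion lemma for $G(N,p)$ with $Np\gg\log n$: every set $U$ with $|U|\le N/(4)$ and minimum degree at least $2$ has $|N(U)|\ge(1+\varepsilon)|U|$. The failure of this lemma has probability $\sum_u\exp(-c\,u\,Np)$, which is $n^{-\omega(1)}$ and is measurable outside any two chosen roots after deleting their stars. Summing all cases over $m$ then gives the claimed $1-n^{-\omega(1)}$.
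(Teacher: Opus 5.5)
There is a genuine gap, in the step you treat as bookkeeping: the cancellation of the matching entropy by the $K_3$-free factor. A matching $\mathcal M$ is maximal exactly when $\mathcal M\subseteq H$ and $H-V(\mathcal M)$ is $K_3$-free. Hence $\sum_{\mathcal M}p^{3|\mathcal M|}\Pr(G(n-3|\mathcal M|,p)\text{ is }K_3\text{-free})$ is precisely the expected number of maximal triangle matchings. This is $\exp(\Theta(n^5p^6))=\exp(\Theta(\lambda^2/n))$: there are typically $\Theta(n^5p^6)$ isolated pairs of triangles sharing one vertex, and each offers two independent choices. So the residual $e^{o(\lambda)}$ that you leave unabsorbed is genuinely of size $e^{\Theta(\lambda^2/n)}$. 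Note that $n^{-\omega(1)}e^{o(\lambda)}$ is not $n^{-\omega(1)}$, since $\lambda\gg(\log n)^3$.

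A bounded defect buys only $e^{-\Theta(np)}$, and this is sharp for your scheme. Fix two vertices and require them to be isolated in the remainder; by Harris, each term of your union bound is then at least $p^{3m}\Pr(K_3\text{-free})(1-p)^{2N}$. So the union bound is at least $\exp(\Theta(n^5p^6)-(2+o(1))np)$, which diverges for $n^{-4/5}\ll p\ll n^{-2/3}$. No refinement of your cases (a) and (b) can fix this. Restricting to maximum matchings does not help either, since the same isolated pairs give $2^{\Theta(n^5p^6)}$ of them. This is exactly the failure of $\delta^2/\lambda\ge c_k$ at $k=3$, where $\delta^2/\lambda=\Theta(1/(np))\to0$. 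That is why Theorem~\ref{thm:maximum-clique-remainder} is stated only for $k\ge4$, and with defect $\zeta\sqrt\lambda$ rather than bounded defect.

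Separately, your intermediate case does not work as written. Expansion failure is a global decreasing event, so by Harris it is positively correlated with triangle-freeness; you therefore cannot multiply the $K_3$-free factor in. Nor is that event measurable outside two root stars. Its own probability $e^{-\Theta(np)}$ cannot absorb $e^{\lambda}$.

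The paper avoids any union bound over $\mathcal M$ and imports the argument of~\cite{Yan24}. Lemmas~3.1 and~3.3 there place $H$ in a deterministic good family with probability $1-n^{-\omega(1)}$; this family is defined without reference to any matching. Theorem~4.1 there shows that, for every graph in this family, the remainder of a maximum triangle matching has a matching missing at most one vertex. The idea your proposal lacks is this deterministic use of the optimality of $\mathcal M$ on a single good event, so that the entropy of choosing $\mathcal M$ is never paid.
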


\begin{proof}
The result is stated with high probability in~\cite{Yan24}, but its
proof gives the error above.  Lemmas~3.1 and~3.3 there place $H$ in a
deterministic good family with probability $1-n^{-\omega(1)}$, and
Theorem~4.1 gives the required matching for every graph in this family.
\end{proof}

We can now treat all clique windows together.

\begin{lemma}
\label{lem:chromatic-L2-structure}
Fix $r\ge2$, let
$n^{-2/r}(\log n)^{1/\binom r2}\ll p\ll n^{-2/(r+1)}$, and let
$H\sim G(n,p)$.  With $\lambda:=\mu_{r+1}(n,p)$, we have
\[
 \E\left[\left(\chi(\overline H)
                  -\frac{n-\nu_{r+1}(H)}r\right)^2\right]
 =o(\lambda).
\]
For $r=2$, the left-hand side is in fact $O(1)$.
\end{lemma}

\begin{proof}
Choose a maximum $K_{r+1}$-matching $\mathcal M$ by a fixed
deterministic rule, and write $\rho:=\rho_r(H,\mathcal M)$.
For $r\ge3$, the second-moment estimate~\eqref{eq:maximal-defect-L2}
gives $\E\rho^2=o(\lambda)$, since $\mathcal M$ is also maximal.
For $r=2$, Theorem~\ref{thm:Yan-triangle-structure} and the bound
$\rho\le n$ give $\E\rho^2\le1+n^2n^{-\omega(1)}=O(1)=o(\lambda)$,
since $\lambda\gg(\log n)^3$.

It remains to bound $Z:=Z_{r+2}(H)$, uniformly for $r\ge2$.  Put
$j:=r+2$ and $b:=pn^{2/(j-1)}=o(1)$.  Then
$\E Z=\Theta_j(b^{\binom j2})=o(1)$.  The contribution to
$\E[Z(Z-1)]$ from pairs meeting in $h$ vertices is
\[
 O_j\!\left(n^{2j-h}p^{2\binom j2-\binom h2}\right)
 =O_j\!\left(n^{h(h-j)/(j-1)}
                  b^{2\binom j2-\binom h2}\right)=o(1)
 \qquad(0\le h<j).
\]
Thus $\E Z^2=o(1)$.  By
Proposition~\ref{prop:clique-matching-reduction}, the error
$R:=\chi(\overline H)-(n-\nu_{r+1}(H))/r$ lies between $-Z/r$ and
$(r-1)\rho/r$.  Hence $\E R^2\le C_r(\E Z^2+\E\rho^2)=o(\lambda)$,
with the stated $O(1)$ bound when $r=2$.
\end{proof}

We can now identify the centred fluctuations and derive all three
conclusions at once.

\begin{proof}[Proof of Theorem~\ref{thm:chromatic-clt} and
Corollary~\ref{cor:chromatic-count-equivalence}]
Let $H\sim G(n,p)$, and write $Y:=\chi(\overline H)$,
$X:=X_{r+1}(H)$, $\lambda:=\mu_{r+1}(n,p)$, and
$\nu:=\nu_{r+1}(H)$.  Set $D:=X-\nu$ and
$R:=Y-(n-\nu)/r$.  Lemmas~\ref{lem:collision-loss}
and~\ref{lem:chromatic-L2-structure} give
$\Var D=o(\lambda)$ and $\Var R=o(\lambda)$.  Since $\E X=\lambda$,
\[
 r(Y-\E Y)+(X-\lambda)
 =(D-\E D)+r(R-\E R)
 =o_{L^2}(\sqrt\lambda).
\]
This proves the centred $L^2$ approximation in the corollary.
Theorem~\ref{thm:clique-count-clt}, with $k=r+1$, and
Lemma~\ref{lem:slutsky} give the asserted Gaussian limit for $Y$.
The reverse triangle inequality also gives
$|r\sqrt{\Var Y}-\sqrt{\Var X}|=o(\sqrt\lambda)$; since
$\Var X\sim\lambda$, it follows that $\Var Y\sim\lambda/r^2$.
Finally, expanding the squared norm of the displayed approximation
yields $r\operatorname{Cov}(Y,X)=-(1+o(1))\lambda$.  Dividing by
the standard deviations proves $\operatorname{Corr}(Y,X)\to-1$.
\end{proof}
\section{Concluding remarks}\label{sec:remark}

In this paper, we determine the exponential order of almost-factor
failure throughout the full sparse clique window, prove the corresponding
chromatic central limit theorem, and identify the centred count of maximum
independent sets as its complete first-order fluctuation source.  We finish
with three questions which are not resolved by the present argument.

\subsection*{The exact logarithmic rate}

Theorem~\ref{thm:main} determines the exponent up to constants, whereas
the local obstruction is known more precisely.  Proposition
~\ref{prop:isolated-tail} gives
\[
 -\log\Pr(Y_r\ge s+1)
 \sim(s+1)\binom{n-1}{r-1}p^{\binom r2}.
\]
It is natural to expect that the same leading term governs the whole
failure event:
\[
 -\log\Pr\bigl(\phi_r^s(G(n,p))=0\bigr)
 \sim(s+1)\binom{n-1}{r-1}p^{\binom r2}.
\]
The one-root reduction suggests studying the cheapest local ways in
which the factor-count process can stop.  One possible approach is to
compare these witnesses with the event that the root lies in no copy
of $K_r$.  The exponential-moment and Janson estimates used here keep
only a fixed positive constant in their exponents, so they do not
provide the precision needed for this comparison.

\subsection*{A linear number of uncovered vertices}

Theorem~\ref{thm:main} is uniform for every $s=o(n)$.  The proof uses
this assumption in a transparent place: every graph in the adaptive
extraction has $n-o(n)$ vertices, so all clique scales are uniformly
comparable with their values on $n$ vertices.  If $s$ is a positive
fraction of $n$, the remaining order changes by a constant factor during
the iteration.  The one-root estimates should then be written in terms
of the current number of vertices rather than the original value of
$n$, and the local obstruction must be integrated along the whole
chain.  It would be interesting to determine the corresponding rate
function for $s=\Theta(n)$.

\subsection*{General graph factors}

The proof is also specialised to cliques.  The sharp-threshold theory for
strictly $1$-balanced factors~\cite{BHKMP24} suggests the broader
question of whether an analogous rare-event estimate holds for a fixed
strictly $1$-balanced graph $F$.  The main difficulty is structural: the
median replacement and the root-star polynomials used here exploit the
simple extension structure of a clique.  For a general $F$, one would
need a replacement which again turns every stopping pattern into a
conditional event supported on a bounded number of root coordinates,
and preferably on one root.

\medskip

\section*{Acknowledgement}
I am grateful to my advisor, Professor Rob Morris, for leading me to
this question and to combinatorics, and for his helpful guidance and
discussions during my PhD studies.  I also thank Professor Hong Liu for
helpful discussions on this project.

\medskip

\noindent\textbf{Declaration on the use of generative AI.}
During the preparation of this manuscript, the author used GPT-5.5 Pro
and GPT-5.6 Pro to explore an Efron--Stein/Slutsky approach to the
centred collision-loss estimate in
Theorem~\ref{thm:matching-clt-intro}. The main contribution of AI is refining the factor-tail argument, including using the common birth-time coupling to substitute the transfer between $G(n,p)$ and $G(n,M)$, and removing an $\varepsilon$-gap on the $p$-range from an early version:
$$n^{-2/r}(\log n)^{1/\binom r2}\ll p\leq n^{-2/(r+1)-\varepsilon}.$$
To do this, GPT 5.6 Pro provides the careful estimate in Lemma~\ref{lem:coefficient-average-bounds} (with proof presented in Appendix ~\ref{app:coefficient-profile}) and some related adjustments. The author also used GPT to check the proof details and modify the presentation.

The main idea and the proof framework of developing the adjusted JKV method and application to chromatic number belongs to the author (discussion with Prof Rob Morris). Every argument, calculation and citation generated by AI was independently checked, revised and incorporated into the manuscript by
the author, who wrote the final proofs and takes full responsibility for the paper.

\appendix

\section{Proof of the averaged rooted-coefficient estimate}
\label{app:coefficient-profile}

We prove Lemma~\ref{lem:coefficient-average-bounds} by first estimating
one coefficient.  Put $d=r-2$, $e=\binom r2$,
$\eta_r(q)=n^dq^{e-1}$ and $T(q)=1+\eta_r(q)$.
Choose $0<\sigma<1/\max\{1,d-1\}$ and set
$L_n=(\log n)^{1+\sigma}$.  For constants $c_0,K>0$, define
\[
 R(q):=K(\eta_r(q)+\log n),\qquad
 \Psi_q(z):=\min\{z,R(q)\}^2
 e^{c_0\min\{z,R(q)\}/T(q)}.
\]
We choose $\sigma$ first, then $c_0=c_0(r)$, and finally
$K=K(r,B_0)$ for the prescribed tail exponent $B_0$.

\begin{lemma}
\label{lem:truncated-coefficient}
For every fixed $B_0>0$ there are constants $K=K(r,B_0)$,
$c_0=c_0(r)>0$ and $C=C(r,B_0)>0$ such that the following holds
uniformly for $p/8\le q\le1$, $x\ne u$ and $0\le k\le d$.  If
$G_q\sim G(n,q)$ and $Y=\Gamma_{x,u,k}(G_q,q)$, then
\[
 \Pr(Y>R(q))\le n^{-B_0},
 \qquad
 \EE\Psi_q(Y)\le C\eta_r(q)T(q).
\]
The same bounds hold after deleting vertices
and after arbitrary pointwise deletion of summands or multiplication by weights in $[0,1]$, including every colour-restricted or
weighted sub-sum of~\eqref{eq:Gamma-local}.
\end{lemma}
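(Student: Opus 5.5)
Fix $q$ on the grid with $q<1$ (the endpoint $q=1$ is deterministic, since then $\Gamma_{x,u,k}=\binom{n-2}{d}\binom dk$ and $\eta_r(1)$ dominates). The plan is to expose $G_q$ in two stages. First reveal the graph on $V\setminus\{x\}$, i.e.\ the family $\mathcal B_u$ of $d$-sets $B$ with $u\cup B$ a clique; its size $|\mathcal B_u|=d_{K_{r-1}}(u)$ has mean $\Theta(n^dq^{\binom{r-1}2})=\Theta(\eta_r(q)/q^{d})\cdot q^{\,\cdot}$, and more usefully $q^{d}|\mathcal B_u|$ has mean $\Theta(\eta_r(q))$ up to the factor $q^{\binom{r-1}{2}-(e-1)+d}=1$. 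Then reveal the star of $x$ as independent $\operatorname{Ber}(q)$ selectors $X_v$. Conditionally, $Y=q^{d-k}\sum_{B\in\mathcal B_u}\binom{|B\cap N(x)|}{k}$ is a multilinear polynomial of degree $k$ in the selectors, with conditional mean $\binom dk q^d|\mathcal B_u|$. So the tail $\Pr(Y>R(q))$ will follow from two applications of Theorem~\ref{thm:polynomial-concentration}: the upper-tail form for $|\mathcal B_u|$ (derivative conditions as in Lemma~\ref{lem:rooted-derivatives}, with $M$ of order $n^{d}q^{\binom{r-1}2}+\log n\cdot q^{-d}$-type cap), and then the upper-tail form for the selector polynomial, whose partial derivatives in a set $L$ of root-neighbours are $q^{d-k}$ times codegree counts $\widehat d_{r-1}(\{u\}\cup L)$ times $q^{k-|L|}$. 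Those codegree counts are controlled by the same overlap caps as in Lemma~\ref{lem:janson-overlap-caps}, giving $\max_L \EE'_L\le n^{-\alpha}(\eta_r(q)+\log n)$ once $q\ge p/8$; here the choice $L_n=(\log n)^{1+\sigma}$ serves as the intermediate threshold for codegrees, with $\sigma<1/(d-1)$ ensuring the product of $d-1$ such logarithmic losses stays below $\log n$ times the polynomial margin. Taking $K$ large in terms of $B_0$ converts $n^{-\omega(1)}$ into $n^{-B_0}$ uniformly.

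For the moment bound $\EE\Psi_q(Y)$, I will split according to $Y\le T(q)$ and $T(q)<Y\le R(q)$. On the first range $\Psi_q(Y)\le e^{c_0}Y^2$, and $\EE Y^2$ is a direct second-moment count: pairs $(B,B')$ with $u\cup B,u\cup B'$ cliques and weighted by shared root neighbours, giving $\EE Y^2\le C(\EE Y+(\EE Y)^2)\le C\eta_r T$ since $\EE Y=\Theta(\eta_r)$, with the overlap terms absorbed exactly as in the Janson computations of Section~\ref{sec:local-obstruction}. On the second range I need an exponential tail $\Pr(Y>z)\le \exp(-c z/T(q))\cdot\min\{1,\eta_r\}$-type bound for $z\ge CT$, so that $\int z^2 e^{c_0z/T}\,d\Pr$ converges with $c_0$ smaller than that $c$; this is the main obstacle. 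I would try a Kim--Vu/JKV moment bound: $\EE Y^{t}\le (C(t\,T+ t^{\,\cdot}\log\text{-loss}))^t$ for $t\le R/T$, obtained by the same two-stage exposure with the truncation at $L_n$ on codegrees, then optimize $t\approx z/T$. The truncation $\min\{z,R\}$ in $\Psi_q$ means we never need tails beyond $R$, where the moment method breaks down, and that range is handled by the first estimate.

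Finally, the robustness statements are monotonicity: deleting vertices, deleting summands or multiplying them by weights in $[0,1]$ only decreases $Y$ pointwise, and both $\Pr(Y>R)$ and $\EE\Psi_q(Y)$ are monotone in $Y$ since $\Psi_q$ is nondecreasing. Uniformity over $x,u,k$ and $q\in[p/8,1]$ comes from the derivative margins being uniform, with the continuum of $q$ handled via the grid and Lemma~\ref{lem:fixed-rank-monotonicity} exactly as elsewhere.
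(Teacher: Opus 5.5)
\textbf{There is a genuine gap, at exactly the step you flag as the main obstacle.}

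The hard regime is $\eta=\eta_r(q)=O(\log n)$, and above all $\eta=o(1)$. This is typical, since $\eta_r(p)=o(1)$, and $\eta$ can be polynomially small. There $T\approx1$, so $\EE\Psi_q(Y)\le C\eta T$ forces a tail $\Pr(Y>t)\le C\eta e^{-ct}$ for all $1\lesssim t\le R=K(\eta+\log n)$, with the factor $\eta$ in front. Neither tool you invoke gives this.

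\emph{Polynomial concentration.} Theorem~\ref{thm:polynomial-concentration} requires $\EE f=\omega(\log n)$ or $M=\omega(\log n)$, and it only controls $\{f>(1+\delta)M\}$. It says nothing at level $K(\eta+\log n)$ with $K$ fixed when $\eta=O(\log n)$. It also has no quantitative dependence on $K$, so taking $K$ large does not turn $n^{-\omega(1)}$ into $n^{-B_0}$. Your proof of $\Pr(Y>R)\le n^{-B_0}$ therefore already fails in this regime.

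\emph{Kim--Vu type moment bounds.} For a polynomial of degree $\binom{d+1}{2}+k$ in the edge variables, these give only stretched-exponential tails with a polynomial prefactor, and no factor $\eta$. Even a clean bound $\EE Y^t\le(Ct)^t$ would give $\EE\Psi_q(Y)=O(1)$, not $O(\eta)$.

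\emph{Two-stage exposure.} This does not rescue the argument. For $r=3$ the conditional polynomial is linear in the root selectors and Chernoff would suffice. For $r\ge4$ and $k\ge2$ it is a genuine higher-degree upper-tail problem, and your moment bound leaves precisely the relevant loss term unspecified.

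\textbf{The missing idea is the cluster expansion used in the paper when $\eta\le L_n$.}
\begin{enumerate}
\item Write $Y=C_d\sum_BZ_B$ with $0\le Z_B\le1$, and take intersection of the $d$-sets as the dependency graph.
\item Bound the total weight $C_h$ of connected families of $h$ indexing sets. A block meeting the earlier union in $b_i$ vertices costs an extra $q^{a_ib_i/2}\le q^{a_i/2}$ beyond first-moment scaling, where $a_i=d-b_i$. Part of the resulting factor $q^{(v-d)/2}$ absorbs the entropy $h\log h$ of choosing the cluster. This works because $h\le R\le C(\log n)^{1+\sigma}$ with $\sigma<1/(d-1)$. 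That is the actual role of $L_n$ and $\sigma$, not codegree truncation.
\item Use $\sum_\ell\EE e_\ell(Z)z^\ell\preceq\exp\bigl(\sum_{j\le M}C_jz^j\bigr)$ together with $\{Y>t\}\subseteq\{e_M(Z)\ge1\}$ for $M\asymp t$. This yields $\Pr(Y>t)\le C\eta e^{C\eta-ct}$ for all $t\le R$; the prefactor $\eta$ comes from $C_1=\Theta(\eta)$.
\item Tail integration then gives both conclusions.
\end{enumerate}

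Polynomial concentration enters only for $\eta>L_n$, and there no exponential tail at scale $T$ is needed. Off an $n^{-\omega(1)}$ event one has $Y\le C\eta$ and $T\asymp\eta$, so the weight $e^{c_0Y/T}$ is bounded.

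Your second-moment computation on $\{Y\le T\}$, the $q=1$ case, and the monotonicity reduction for sub-sums and vertex deletions are all fine.
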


For small $\eta_r(q)$, we group overlapping indexing sets into connected
components and derive an exponential tail up to the cap $R(q)$.  Tail
integration then gives the second assertion.  For large $\eta_r(q)$,
polynomial concentration suffices.

\begin{proof}[Proof of Lemma~\ref{lem:truncated-coefficient}]
Fix $q,x,u,k$ and abbreviate $\eta=\eta_r(q)$, $T=T(q)$ and
$\beta=(d+3)/2$, so $\beta d=e-1$.
For $B\in\binom{[n]\setminus\{x,u\}}d$ and $S\in\binom Bk$, let
$X_{B,S}$ indicate that $u\cup B$ spans a clique and every edge from
$x$ to $S$ is present.  Then
\[
 Y=q^{d-k}\sum_B\sum_{S\in\binom Bk}X_{B,S},
 \qquad
 \EE Y=\binom{n-2}{d}\binom dk q^{e-1}=\Theta_r(\eta).
\]
Set $C_d:=\max_j\binom dj$ and
$Z_B:=C_d^{-1}q^{d-k}\sum_{S\in\binom Bk}X_{B,S}$.
Thus $Y=C_d\sum_BZ_B$ and $0\le Z_B\le1$.
Disjoint indexing sets use disjoint edge coordinates, so intersection
of the $d$-sets is a dependency graph for $(Z_B)_B$.

\begin{claim}\label{claim:1}
Assume $\eta\le L_n$, and let
\[
 C_h:=\sum_{\substack{\mathcal S\subseteq
        \binom{[n]\setminus\{x,u\}}d\\
        |\mathcal S|=h,\ \mathcal S\text{ connected}}}
       \EE\prod_{B\in\mathcal S}Z_B.
\]
Then $C_1=\Theta_r(\eta)$.  If $d=1$, then $C_h=0$ for $h\ge2$.
If $d\ge2$, uniformly for $2\le h\le\lfloor R(q)\rfloor$,
\[
 C_h\le
 \begin{cases}
  \eta n^{-c_rh^{1/d}},&\eta\le1,\\
  \eta^h n^{-c_rh^{1/d}},&1\le\eta\le L_n.
 \end{cases}
\]
\end{claim}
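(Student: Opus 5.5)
For $C_1$ I will only use linearity: $C_1=\sum_B\EE Z_B=C_d^{-1}\EE Y=\Theta_r(\eta)$, by the formula for $\EE Y$ in the proof of Lemma~\ref{lem:truncated-coefficient}. If $d=1$, the indexing sets are singletons. Two distinct singletons never intersect, so no connected family has $h\ge2$ members, and $C_h=0$. The real content is the case $d\ge2$, which I treat by a cluster-expansion count.

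\emph{Reduction to a graph count.} For $d\ge2$, fix a connected family $\mathcal S=\{B_1,\dots,B_h\}$ and put $U=\bigcup_iB_i$ with $v=|U|$. I will bound $\EE\prod_iZ_{B_i}$ by separating the two kinds of edges.
\begin{itemize}
\item The clique indicators of the sets $u\cup B_i$ use only edges inside $u\cup U$. Their union is a graph $F_{\mathcal S}$, and its probability is $q^{e(F_{\mathcal S})}$.
\item The $x$-edges form an independent block.
\end{itemize}
Since $0\le Z_B\le1$, for the $x$-part I will keep one honest factor for a distinguished set $B_1$ and bound the remaining $x$-factors by $1$. The first step is to check that the retained factor averages to $O_r(q^{d})$, namely $q^{d-k}\EE\binom{|B_1\cap N(x)|}{k}=\binom dk q^d$. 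This gives
\[
 \EE\prod_iZ_{B_i}\le C_r\,q^{d}\,q^{e(F_{\mathcal S})}.
\]
Thus $C_h$ is at most $C_r q^d$ times a weighted count of connected configurations: a sum over isomorphism types of $n^{v}q^{e(F)}$, times the number of ways to choose $h$ of the $d$-sets of $U$ consistently with the type.

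\emph{Growing the configuration.} I order $\mathcal S$ along a spanning tree of its intersection graph, so that each $B_i$ with $i\ge2$ meets some earlier set. Each step $i\ge2$ is of one of two kinds.
\begin{itemize}
\item \emph{Vertex step.} $B_i$ shares $t\ge1$ vertices with the current union and adds $d-t\ge1$ new vertices. Relative to $\eta=n^dq^{e-1}$, its factor is $n^{-t}q^{1-\binom{t+1}2}$, up to the new-edge count. By the exponent computation $(t-1)\bigl(\tfrac{t+2}{r}-1\bigr)\le0$ already used in Lemma~\ref{lem:janson-overlap-caps}, this factor is $\le1$ for $1\le t\le d$. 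It is at most $n^{-c_r}$ when $2\le t<d$, with the logarithmic room in $p$ handling the boundary case.
\item \emph{Free step.} $B_i\subseteq U$ adds no vertices, and its factor is at most $1$ (edges only help).
\end{itemize}
Since $h\le\binom{v}{d}$ and each vertex step adds at most $d$ vertices, the number of vertex steps is at least $c_rh^{1/d}$. The enumeration of tree shapes and overlap patterns costs $h^{O_r(h)}$ only on the free steps. Those choices are confined to $\binom vd$ sets, and $h\le R(q)=O(L_n)$, so this loss is $\exp(O(h^{1/d}\log\log n))$. That is absorbed by $n^{-c_rh^{1/d}}$ after shrinking $c_r$, which is where $\sigma<1/(d-1)$ and the cap at $R(q)$ enter.
\begin{itemize}
\item In the regime $1\le\eta\le L_n$, each vertex step contributes at most $C\eta$, and there are at most $h-1$ of them. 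The product is therefore $\le\eta^h$ times the gained $n^{-c_rh^{1/d}}$.
\item In the regime $\eta\le1$, each vertex step contributes at most $C$, with the same gain, so the product is $\le\eta\,n^{-c_rh^{1/d}}$. Here the leading $\eta$ comes from $B_1$ together with the retained $q^d$ factor.
\end{itemize}

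\emph{Main obstacle.} The hard step is $t=1$, where the overlap ratio is exactly $1$ and gives no power-of-$n$ gain. Such pairs share only one vertex besides $u$, so their cliques use almost disjoint edges; the saving there has to come from the $x$-edges rather than from the cliques. I would first try keeping, for each vertex step with $t=1$, the $x$-factor of the new set restricted to its new vertices, which are disjoint from earlier $x$-coordinates. This should yield an extra $q^{d-k}\cdot q^{\,k-|S\cap\text{old}|}\le q^{\,d-1}\le n^{-c}$ relative to the clique count. If this fails for $k=d$ with all of $S$ old, I would instead charge that step against the tree enumeration, using that such $S$ are among the boundedly many old $k$-sets.
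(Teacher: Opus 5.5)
\textbf{There is a genuine gap in the case $d\ge2$.} Your treatment of $C_1$ and of $d=1$ is fine. The gap is in your first reduction: bounding the $x$-weights $C_d^{-1}q^{d-k}\binom{|B_i\cap N(x)|}{k}$ of $B_2,\dots,B_h$ by $1$ throws away exactly the factor that produces the gain.

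Suppose $B_i$ has $b_i$ old and $a_i=d-b_i$ new vertices. Its new clique edges number $\binom{a_i}{2}+a_ib_i+a_i$. The natural cost of $a_i$ new vertices is $q^{\beta a_i}$, where $\beta=(d+3)/2$ and $nq^\beta=\eta^{1/d}$. The difference of exponents is $a_i(b_i-2)/2$. So your bound \emph{loses} a factor $q^{-a_i/2}$ when $b_i=1$, and gains nothing when $b_i=2$.

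For example, take $d=2$ and two pairs sharing one vertex. Your per-family bound $C_rq^{2}q^{5}$, summed over the $\Theta(n^3)$ such families, gives $n^3q^7=\eta\cdot nq^2$. Here $nq^2\to\infty$, because $q\ge p/8\gg n^{-1/2}$ when $r=4$. The claim, however, requires $\eta n^{-c}$ or $\eta^2n^{-c}\le\eta L_nn^{-c}$.

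The step factor $n^{-t}q^{1-\binom{t+1}{2}}$ that you quote comes from the overlap comparison in Lemma~\ref{lem:janson-overlap-caps}, which concerns a different quantity. It does not describe this count. It also contradicts your own later remark that $t=1$ has ratio exactly $1$, since it gives $n^{-1}$ there. Your tentative patch for $t=1$ points the right way, but steps with $t=2$ (possible when $d\ge3$) need it too. So the $x$-part must be kept uniformly, not only as a fallback.

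\textbf{The repair, as in the paper.} Expand $\prod_iZ_{B_i}$ over the choices $S_i\in\binom{B_i}{k}$; the $\binom{d}{k}^h$ terms are paid for by $C_d^{-h}$. In every block keep both $q^{d-k}$ and the $x$-edges to $S_i$, and count only edges not required earlier, with the sets ordered so that each meets the union of its predecessors.

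At most $\binom{b_i}{2}+2b_i$ of the required edges of block $i$ are old: those inside the old part, those from $u$, and those from $x$. So its new exponent is at least $\tfrac{d(d+3)-b_i(b_i+3)}{2}=\beta a_i+\tfrac{a_ib_i}{2}\ge\beta a_i+\tfrac{a_i}{2}$. The first block has exponent $\beta d$. Hence $\EE\prod_{B\in\mathcal S}Z_B\le q^{\beta v+(v-d)/2}$, which is a gain of $q^{1/2}$ per new vertex rather than per step.

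For the count, choose the union (at most $n^v$ ways) and then the family inside it (at most $\binom{\binom{v}{d}}{h}$ ways). This entropy is $\exp(O(h\log h))$, not $\exp(O(h^{1/d}\log\log n))$ as you write. It is absorbed by $q^{(v-d)/4}$ for three reasons:
\begin{itemize}
\item $v-d\ge c_dh^{1/d}$;
\item $\log(1/q)\ge c_r\log n$;
\item $h^{1-1/d}\log h=o(\log n)$ for $h\le R(q)=O((\log n)^{1+\sigma})$ with $\sigma<1/(d-1)$.
\end{itemize}
This leaves $C_h\le C_r\sum_v\eta^{v/d}q^{(v-d)/4}$, summed over $d+c_dh^{1/d}\le v\le dh$. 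The two regimes then follow from $\eta^{v/d}\le\eta$ when $\eta\le1$, and $\eta^{v/d}\le\eta^h$ when $\eta\ge1$.
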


\begin{poc}
The assertion for $h=1$ follows from the mean of $Y$.
For $d=1$, distinct indexing sets are disjoint.  Suppose $d\ge2$,
and let $B_1,\ldots,B_h$ be a connected family with union of size $v$.
Since $h\le\binom vd$, we have $v-d\ge c_dh^{1/d}$.
Order the sets so that each meets the union of its predecessors.
For $i\ge2$, let $b_i$ be the size of that intersection and put
$a_i=d-b_i$.

Fix choices $S_i\in\binom{B_i}k$.
The first block, including the factor $q^{d-k}$, has total
$q$-exponent $\beta d$.  In the $i$th block, at most
$\binom{b_i}{2}+2b_i$ required edges have appeared earlier.
Its new total exponent is at least
\[
 \frac{d(d+3)-b_i(b_i+3)}2
 =\beta a_i+\frac{a_ib_i}{2}
 \ge\beta a_i+\frac{a_i}{2}.
\]
Since $\sum_{i\ge2}a_i=v-d$, the joint contribution is at most
$q^{\beta v+(v-d)/2}$.

We next count the possible families.  The assumption $\eta\le L_n$
gives $q\le n^{-2/(d+3)}L_n^{1/(e-1)}$, so
$\log(1/q)\ge c_r\log n$.
For $h\le R\le C_{r,K}(\log n)^{1+\sigma}$ and every feasible $v$,
\[
 \frac{h\log(h+1)}{(v-d)\log(1/q)}
 \le C_{r,K}(\log n)^{(1+\sigma)(d-1)/d-1}\log\log n=o(1).
\]
Thus $C_r^h\binom{\binom vd}{h}\le q^{-(v-d)/4}$ for large $n$.
Choosing the union and then the family inside it gives
\[
 C_h\le C_r\sum_v
 n^vq^{\beta v+(v-d)/2}C_r^h\binom{\binom vd}{h}
 \le C_r\sum_v\eta^{v/d}q^{(v-d)/4},
\]
where both sums run over the feasible union sizes
$d+c_dh^{1/d}\le v\le dh$.
If $\eta\le1$, then $\eta^{v/d}\le\eta$; otherwise it is at most
$\eta^h$.  The remaining factor is at most $n^{-c_rh^{1/d}}$,
and the $O_r(h)$ possible values of $v$ are absorbed by decreasing
$c_r$.  This proves the claim.
\end{poc}

\begin{claim}\label{claim:2}
After choosing $c_0=c_0(r)>0$ sufficiently small and then
$K=K(r,B_0)$ sufficiently large, the two conclusions of the lemma hold
whenever $\eta\le L_n$.
\end{claim}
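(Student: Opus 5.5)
We derive both conclusions from the cluster expansion bounds of Claim~\ref{claim:1}, applied to $Y=C_d\sum_BZ_B$ with the dependency graph given by intersection of the $d$-sets. The first step is the tail bound, via a moment-generating estimate. For $0\le\lambda\le c_0'$ (a small constant) we bound $\log\EE e^{\lambda\sum_BZ_B}$ by the cluster sum $\sum_{h\ge1}C_h(e^{\lambda}-1)^h$. When $d=1$ the variables are independent, so this step is immediate. When $d\ge2$ we use the standard Janson/cluster-type upper tail estimate for $[0,1]$-valued variables with a dependency graph. It gives, for every integer $t\le R(q)$,
\[
 \Pr\Bigl(\sum_BZ_B\ge t\Bigr)\le \sum_{\text{disjoint clusters}}\ \prod\EE\prod Z_B ,
\]
which is bounded by $\frac{1}{t!}\bigl(\sum_h C_h\bigr)^t$ after grouping according to the connected components of the selected sets.

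We expect this grouping to be the main obstacle. We have to make sure that restricting to $t\le R(q)$ is enough: only components of size at most $R(q)$ occur, and this is exactly the range in which Claim~\ref{claim:1} applies.

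\emph{Case $\eta\le1$.} Claim~\ref{claim:1} gives $\sum_{h\ge1}C_h\le C\eta$. Hence
\[
 \Pr(Y\ge C_dt)\le (Ce\eta/t)^t .
\]
Taking $t=R(q)/C_d\ge K\log n/C_d$ with $K$ large gives the bound $n^{-B_0}$.

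\emph{Case $1\le\eta\le L_n$.} The terms $\eta^hn^{-c_rh^{1/d}}$ are not summable naively. Instead we use the component count with a size-weighted bound. A component of size $h$ contributes a value of $\sum Z_B$ of at most $h$, and its weight is $\eta^hn^{-c_rh^{1/d}}$, whereas $h$ disjoint singletons have weight $\eta^h/h!$. Since
\[
 n^{-c_rh^{1/d}}\le (\log n)^{-h}\ll 1/h!
\]
for $h\le R\le C(\log n)^{1+\sigma}$ (here the choice $\sigma<1/(d-1)$ is used), the non-singleton components are dominated. This yields
\[
 \Pr\Bigl(\sum_BZ_B\ge t\Bigr)\le (C\eta/t)^{t}
\]
for $t\le R$, and the same choice of $t$ at scale $K(\eta+\log n)$ gives $n^{-B_0}$.

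From the exponential tail we obtain the moment bound. We have
\[
 \Pr(Y\ge z)\le \exp\bigl(-c\,z\log(z/(e C\eta))\bigr)\quad\text{for } C'\eta\le z\le R,
\]
and in particular $\Pr(Y\ge z)\le e^{-2c_0z/T}$ for $z\ge C'T$. Writing $\EE\Psi_q(Y)=\int_0^{R}\Psi_q'(z)\Pr(Y>z)\,dz$, the region $z\le C'T$ contributes at most $\EE\min\{Y,C'T\}^2e^{c_0C'}\le C(\EE Y)T\le C\eta T$. For this we use $Y^2\le YT$ there, together with $\EE Y=\Theta(\eta)$. The region $z>C'T$ contributes $O(1)\cdot T^2e^{-c_0C'}$. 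This is at most $C\eta T$ when $\eta\ge1$. When $\eta<1$ the factor $(C\eta/z)^{z}$ supplies an extra factor $\eta$, so this contribution is $O(\eta)$ as well. Choosing $c_0$ small relative to the tail constant $c$ makes the integrand summable; choosing $c_0$ before $K$ keeps $c_0$ dependent on $r$ only. Finally, deleting vertices or summands, or multiplying summands by weights in $[0,1]$, only decreases each $Z_B$ pointwise while keeping the same dependency graph. Therefore $C_h$, and with it every bound above, is unchanged.
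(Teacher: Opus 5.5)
Your overall architecture matches the paper's: cluster expansion from Claim~\ref{claim:1}, elementary symmetric moments, then tail integration split at a multiple of $T$. The key coefficient-extraction step, however, is wrong.

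Grouping the selected sets by connected components does not give $\EE e_t((Z_B)_B)\le\frac1{t!}(\sum_hC_h)^t$. It gives the coefficient-wise domination $\sum_{\ell\le t}\EE e_\ell((Z_B)_B)\,z^\ell\preceq\exp(\sum_{h\le t}C_hz^h)$. The $z^t$-coefficient of the right side is $\sum_k\frac1{k!}\sum_{h_1+\cdots+h_k=t}C_{h_1}\cdots C_{h_k}$. The factorial is $k!$, the number of components, not $t!$; the event $\sum_BZ_B\ge t$ does not force $t$ components.

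In particular, the $k=1$ term is $C_t$ itself. For $\eta\le1$ and $d\ge2$, Claim~\ref{claim:1} bounds it only by $\eta n^{-c_rt^{1/d}}$. Since $\eta$ can be polynomially small here (down to $n^{-1+2/r+o(1)}$), this is far larger than $(C\eta)^t/t!$ when $t$ is of order $\log n$. So your Poisson tail $(Ce\eta/t)^t$ in that case is not justified. The summability $\sum_hC_h\le C\eta$ that you use is not the relevant input: the decay of $C_h$ must beat a geometric factor.

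In the case $1\le\eta\le L_n$, the comparison with ``$h$ disjoint singletons'' is not an argument either. Moreover, $(\log n)^{-h}\ll1/h!$ fails once $h\gg\log n$, which is allowed since $R$ can be of order $(\log n)^{1+\sigma}$.

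Two smaller points. First, passing from $\sum_BZ_B\ge t$ to $e_t((Z_B)_B)\ge1$ for fractional $Z_B$ needs an argument. Second, your initial MGF bound involves $C_h$ for every $h$, whereas Claim~\ref{claim:1} controls only $h\le R$.

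The repair is the paper's. Put $Q_M(z)=\exp(\sum_{h\le M}C_hz^h)$ and $M=\lfloor t/(3C_d)\rfloor$. If $Y>t$ then $\sum_BZ_B>3M$, and greedily forming $M$ disjoint groups of weight in $[1,2]$ shows $e_M((Z_B)_B)\ge1$. Nonnegativity of the coefficients then gives $\Pr(Y>t)\le[z^M]Q_M\le Q_M'(2)/(M2^{M-1})$.

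Claim~\ref{claim:1}, together with $h\le R$ and $\sigma<1/(d-1)$, gives $\sum_{h\ge2}h2^hC_h=o(\eta)$ in both regimes. Hence $Q_M'(2)\le C_r\eta e^{C_r\eta}$ and $\Pr(Y>t)\le C_r\eta\exp(C_r\eta-c_rt)$. This merely exponential tail suffices:
\begin{itemize}
\item at $t=R$ it gives $n^{-B_0}$ for large $K$;
\item its prefactor $\eta$ is exactly what makes the range $t>AT$ contribute $O(\eta)$ to $\EE\Psi_q(Y)$ when $\eta<1$, the role you gave to the unproved factor $(C\eta/z)^z$.
\end{itemize}

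With this tail, your treatment of the moment bound goes through as in the paper: $\EE\min\{Y,C'T\}^2\le C'T\,\EE Y$ below $C'T$, and exponential decay above. For restricted or weighted sums, pointwise domination by $Y$ and monotonicity of $\Psi_q$ suffice; you do not need the dependency graph to be preserved.
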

\begin{poc}
Let $a_\ell:=\EE e_\ell((Z_B)_B)$ and
$Q_M(z):=\exp(\sum_{j=1}^MC_jz^j)$.  Every family of indexing sets
splits uniquely into connected components.  Different components use
disjoint edge coordinates, so their weights multiply.  In the expansion of $Q_M$, the orders of each collection of components
cancel the factorial, giving its exact weight; all extra terms are
non-negative.  Thus, with $\preceq$ denoting coefficient-wise
domination,
\[
 \sum_{\ell=0}^Ma_\ell z^\ell\preceq Q_M(z).
\]

Claim~\ref{claim:1} gives, uniformly for $M\le R$,
$\sum_{h=2}^M h2^hC_h=o(\eta)$.  Indeed, for $\eta\le1$ its terms are
at most $\eta h2^hn^{-c_rh^{1/d}}$.  For $1\le\eta\le L_n$, the
positive term $h\log(2\eta)+\log h$ in their logarithms is
$o(h^{1/d}\log n)$, since
$R^{1-1/d}\log\log n=o(\log n)$.  When $d=1$, the sum is empty.
Since $C_1=O_r(\eta)$, it follows that
$Q'_M(2)\le C_r\eta e^{C_r\eta}$.

Fix $3C_d\le t\le R$ and set $M:=\lfloor t/(3C_d)\rfloor$.
If $Y>t$, then $\sum_BZ_B>3M$.  As $0\le Z_B\le1$, we can greedily
form $M$ disjoint groups of indices, each with total weight in $[1,2]$.
Selecting one index from each group shows that $e_M((Z_B)_B)\ge1$.
Consequently, coefficient-wise differentiation gives
\[
 \Pr(Y>t)\le a_M
 \le \frac{Q'_M(2)}{M2^{M-1}}
 \le C_r\eta\exp(C_r\eta-c_rt).
\]
Taking $t=R=K(\eta+\log n)$ proves the required tail bound when $K$
is large enough.

For the second assertion, put $Z:=\min\{Y,R\}$ and
$f(t):=t^2e^{c_0t/T}$.  Then
$\EE f(Z)=\int_0^R f'(t)\Pr(Y>t)\,dt$.
Choose a fixed $A=A(r)$ large enough.  For $t\le AT$, Markov's
inequality gives $\Pr(Y>t)\le C_r\eta/t$, while
$f'(t)\le C_rt$, so this part of the integral is $O_r(\eta T)$.
For $t>AT$, the preceding tail bound is at most
$C_r\eta e^{-c_rt/2}$.  Choose $c_0<c_r/4$.  Since $T\ge1$,
$f'(t)\le(2t+c_0t^2)e^{c_0t}$, and the remaining integral is
$O_r(\eta)$.  Thus $\EE\Psi_q(Y)=O_r(\eta T)$, as required.
\end{poc}

\begin{claim}\label{claim:3}
After increasing $K=K(r,B_0)$ if necessary, the two conclusions of the
lemma hold whenever $\eta>L_n$.
\end{claim}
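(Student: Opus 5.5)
The plan is to use the polynomial concentration theorem, Theorem~\ref{thm:polynomial-concentration}, on $Y=\Gamma_{x,u,k}(G_q,q)$ itself, viewed as a multilinear polynomial in the edge indicators. The first step is to note that when $\eta>L_n=(\log n)^{1+\sigma}$ we have $\EE Y=\Theta_r(\eta)=\omega(\log n)$. Indeed, $Y=q^{d-k}\sum_B\sum_{S\in\binom Bk}X_{B,S}$, where each $X_{B,S}$ is a monomial of degree at most $e-1$ in the edge variables. The coefficients $q^{d-k}\le1$ lie in $[0,1]$, so the degree and coefficient bounds are uniform. The derivative condition $\max_{L\ne\varnothing}\EE'_L[Y]\le n^{-\alpha}\EE Y$ is supplied by the derivative calculations in the appendix (the analogue of Lemma~\ref{lem:rooted-derivatives} for $\Gamma_{x,u,k}$). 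Heuristically, differentiating in a set $L$ of edges fixes some vertices of $B$ or edges at $x$. Since $q\ll n^{-2/(r+1)}$, each fixed vertex costs a factor $nq^{\beta}$-type gain that is $n^{-\Omega(1)}$ relative to the full mean; the edges from $x$ to $S$ behave similarly because the weight $q^{d-k}$ compensates exactly for the unfixed star edges. The constant $q=1$ case is deterministic and is handled directly, as elsewhere.

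Granting the derivative bound, the first part of Theorem~\ref{thm:polynomial-concentration} gives
\[
\Pr\bigl(Y>(1+\delta)\EE Y\bigr)\le n^{-\omega(1)}\le n^{-B_0}.
\]
Because $\EE Y\le C_r\eta$, taking $K\ge 2C_r$ yields $\Pr(Y>R(q))\le n^{-B_0}$. For the second assertion, observe that $\Psi_q(Y)\le \min\{Y,R\}^2e^{c_0R/T}$. Since $\eta>L_n\gg\log n$, we have $R/T\le K(\eta+\log n)/(1+\eta)\le 2K$, so the exponential factor is at most $e^{2c_0K}=O_{r,B_0}(1)$. Then
\[
\EE\Psi_q(Y)\le C\bigl(\EE Y^2\wedge\ldots\bigr)\le C\bigl((2C_r\eta)^2+R^2n^{-B_0}\bigr).
\]
This uses the bound $Y\le 2C_r\eta$ off the tail event and the bound $\min\{Y,R\}\le R$ on it. Since $\eta^2\le\eta T$ and $R^2n^{-B_0}=o(1)$ for $B_0\ge 1$ (noting $R\le n^{O(1)}$), we get $\EE\Psi_q(Y)\le C\eta T$.

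Weighted or pointwise-deleted sub-sums, including colour-restricted ones, and vertex deletions are handled by monotonicity. Such a sub-sum $Y'$ satisfies $Y'\le Y$ pointwise, and $\Psi_q$ is nondecreasing, so both conclusions transfer immediately. Uniformity over $q,x,u,k$ follows because the degree, coefficient bound and derivative margin $\alpha$ depend only on $r$.

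The main obstacle is verifying the derivative hypothesis uniformly in $k$ and for all $q$ up to $1$, including near $q\asymp n^{-2/(r+1)}$ where margins are thinnest. The check I would carry out first is a case split on whether $L$ touches the star of $x$ or only $u\cup B$. In each case the expected partial derivative is a sum over the remaining $v$ free vertices with exponent $q^{m'}$, and I would show $n^{v}q^{m'}/\eta\le n^{-\alpha}$ using $\beta d=e-1$ and $q\le n^{-2/(d+3)}$. Should the margin fail for large $q$ near $1$, I would instead invoke the second, upper-tail part of Theorem~\ref{thm:polynomial-concentration} with $M=K(\eta+\log n)/2$, which only requires the derivatives to be $n^{-\alpha}M$.
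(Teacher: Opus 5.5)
Your route is the paper's: apply Theorem~\ref{thm:polynomial-concentration} to $Y$ (mean $\Theta_r(\eta)\gg\log n$, bounded degree and coefficients), take $K$ above the concentration constant to get the tail bound, treat $q=1$ directly, and pass to sub-sums by monotonicity. The argument is complete once you cite Lemma~\ref{lem:rooted-derivatives}. That lemma is not just an ``analogue'': it covers exactly $Y=\Gamma_{x,u,k}(G_q,q)$ with $\eta_r(q)>L_n$.

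Your own explanation of why the derivative hypothesis holds, however, runs the wrong way, and the check you propose would fail. In this claim $q$ is not below $n^{-2/(r+1)}$. The restriction $p\ll n^{-2/(r+1)}$ concerns $p$, whereas the lemma is uniform over $p/8\le q\le 1$. Moreover, $\eta=n^dq^{e-1}>L_n$ forces $q>n^{-2/(d+3)}=n^{-2/(r+1)}$. A derivative fixing $a$ free vertices of $B$ has ratio $O_r(n^{-a}q^{-m_a})$ to the mean, with $m_a\le\binom a2+2a$ for $a<d$ and $m_d\le e-2$. This ratio is decreasing in $q$. It is therefore the \emph{lower} bound $q\ge n^{-2/(d+3)}$ that gives the bounds $n^{-a(d-a)/(d+3)}$ and $n^{-2/(d+3)}$. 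The upper bound $q\le n^{-2/(d+3)}$ that you propose is false here, and in any case cannot control $q^{-m_a}$. For the same reason the margins are thinnest at the bottom of this regime and improve as $q\to1$, so your fallback for $q$ near $1$ is unnecessary.

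There is also a smaller slip. The claim $R^2n^{-B_0}=o(1)$ fails for arbitrary fixed $B_0$, since $R$ can be of order $n^{d}$. This is harmless: you already have an $n^{-\omega(1)}$ bound. More simply, $\eta>L_n\ge\log n$ gives $R\le 2K\eta$, so $\Psi_q(Y)\le R^2e^{c_0R/T}\le 4K^2e^{2c_0K}\eta^2\le C\eta T$ pointwise. Thus in this regime the second conclusion needs no probabilistic input at all. This is a slight simplification of both your computation and the paper's split on $\{Y\le C_r\eta\}$.
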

\begin{poc}
If $q=1$, then $G_q$ is complete and
$Y=\Theta_r(n^d)=\Theta_r(\eta)$ deterministically.  After increasing
$K$ and $C$, both the tail bound and the estimate
$\EE\Psi_q(Y)=O_r(\eta T)$ are immediate.  We may therefore assume
$q<1$.

The coefficient case of Lemma~\ref{lem:rooted-derivatives} gives
$\max_{L\ne\varnothing}\EE'_L[Y]\le n^{-\alpha}\EE Y$ for some
$\alpha=\alpha(r)>0$.  The polynomial $Y$ has bounded degree and
coefficients, uses at most $\binom n2$ edge coordinates, and has mean
$\Theta_r(\eta)\gg\log n$.  Theorem~\ref{thm:polynomial-concentration}
therefore gives, uniformly in $q,x,u,k$,
\begin{equation}
 \Pr(Y>C_r\eta)=n^{-\omega(1)}.
 \label{eq:high-eta-concentration}
\end{equation}
Choose $K$ larger than the constant in this display.  Since
$R=K(\eta+\log n)\ge K\eta$, the required tail bound follows.

On $Y\le C_r\eta$, one has $T=\Theta(\eta)$, the truncation is inactive,
and
 $Y^2e^{c_0Y/T}\le C_r\eta Y.$
Its expectation is therefore $O_r(\eta^2)=O_r(\eta T)$.  On the
exceptional event, $R=O_{r,B_0}(\eta)$ and
$\Psi_q(Y)\le C_{r,B_0}n^{2d}$, so the uniform
$n^{-\omega(1)}$ estimate in~\eqref{eq:high-eta-concentration} makes
this contribution $o(\eta T)$.  This proves the claim.
\end{poc}

Claims~\ref{claim:2} and~\ref{claim:3} cover both regimes.  Every
restricted or weighted sum in the statement is pointwise at most $Y$.
The two conclusions therefore pass to it by monotonicity.
\end{proof}

\begin{proof}[Proof of Lemma~\ref{lem:coefficient-average-bounds}]
Take $B_0=20$ in Lemma~\ref{lem:truncated-coefficient}, and use the
common grid $\mathfrak Q=\{q_0,\ldots,q_J\}$.
Write $\eta_i=\eta_r(q_i)$, $T_i=T(q_i)$ and $A_i=A(q_i)$, and set
\[
 \mathsf E_i:=\sum_{xu\in E(G_{q_i})}\Psi_{q_i}(D_{xu}(G_{q_i})),
 \qquad
 \mathsf C_i:=\sum_{x\ne u}\sum_{k=0}^d
 \Psi_{q_i}(\Gamma_{x,u,k}(G_{q_i},q_i)).
\]
The coordinate $xu$ is independent of $D_{xu}=\Gamma_{x,u,d}$,
since this coefficient counts copies of $K_r-xu$.
The one-coefficient bound therefore gives
$\EE\mathsf E_i\le C_rn^2q_i\eta_iT_i$ and
$\EE\mathsf C_i\le C_rn^2\eta_iT_i$.
Since $\sum_iA_i^{-1}=O_r(1)$ on the geometric grid, the variable
\[
 \Xi:=\sum_{i=0}^JA_i^{-1}
 \left(\frac{\mathsf E_i}{n^2q_i\eta_iT_i}
       +\frac{\mathsf C_i}{n^2\eta_iT_i}\right)
\]
has expectation at most a constant $C_*=C_*(r)$.
The decreasing event $\cU_{\rm av}:=\{\Xi\le4C_*\}$ has probability
at least $3/4$ by Markov's inequality.  On this event,
$\mathsf E_i\le C_rA_in^2q_i\eta_iT_i$ and
$\mathsf C_i\le C_rA_in^2\eta_iT_i$ for every $i$.

There are $O_r(n^2\log n)$ choices of $(i,x,u,k)$.
The tail bound gives a decreasing event $\cU_{\rm max}$ of
probability $1-o(1)$ on which every coefficient is at most $R(q_i)$.
This includes every edge load, since $D_{xu}=\Gamma_{x,u,d}$.
On $\cU_{\rm max}$ the truncations in both sums are inactive.
Thus $\cU_{\rm coeff}:=\cU_{\rm av}\cap\cU_{\rm max}$ has
probability at least $2/3$ and gives all three estimates at every
grid point.

For $q\in[p/8,1]$, take the first grid point $q_i\ge q$; then
$q_i\le2q$ and $G_q\subseteq G_{q_i}$.
Lemma~\ref{lem:fixed-rank-monotonicity}, with a smaller exponential
constant, transfers the estimates to $q$.  Induced subgraphs and
arbitrary pointwise deletion or $[0,1]$-weighting of terms only reduce
the coefficients and the sums, so the same event covers every
restriction in the statement.
\end{proof}

\section{Rooted extension calculations}
\label{app:rooted-derivatives}

We collect the derivative estimates for the three families of graph
polynomials used above.  This also verifies all applications of
Theorem~\ref{thm:polynomial-concentration}.

\begin{lemma}
\label{lem:rooted-derivatives}
There is $c_*=c_*(r)>0$ with the following properties, uniformly for
$p/8\le q<1$.  For a rooted clique-degree polynomial from
Lemma~\ref{lem:rooted-degree-caps}, or a coefficient
$Y=\Gamma_{x,u,k}(G_q,q)$ with $\eta_r(q)>L_n$, every non-constant
non-empty derivative has expectation at most $n^{-c_*}$ times the
mean.  For a dominating $t$-rooted extension polynomial from
Lemma~\ref{lem:janson-overlap-caps}, with mean $\mu_{\ell,t}$,
\[
 \max_{L\ne\varnothing}\EE'_L[f]
 \le
 \begin{cases}
  n^{-c_*}\mu_{\ell,t},&\mu_{\ell,t}\ge(\log n)^2,\\
  n^{-c_*},&\mu_{\ell,t}<(\log n)^2.
 \end{cases}
\]
\end{lemma}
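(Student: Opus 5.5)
The plan is to verify, for each of the three families, a uniform bound on every partial derivative by a direct first-moment count over rooted extensions. For a multilinear polynomial $f=\sum_F c_F\prod_{f\in F}X_f$ whose monomials are copies of a fixed rooted graph $(G,R)$, fixing a nonempty set $L$ of edge coordinates leaves $\EE'_L[f]$ as a sum, over copies containing $L$ with at least one further edge, of $q^{|F\setminus L|}$. Up to constants this equals $\max_{J}n^{v(G)-v(J)}q^{e(G)-e(J)}$, where $J$ ranges over subgraphs of $G$ that contain the root set, span $L$, and are proper in edges. Hence the ratio to the mean is $\max_J n^{-(v(J)-|R|)}q^{-(e(J)-e_R(J))}$, with $e_R$ the edges already fixed inside the roots. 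The task therefore reduces to showing that every such rooted subgraph $J$ with at least one edge outside the root has
\[
 n^{v_J}q^{e_J}\ge n^{c_*},\qquad v_J:=v(J)-|R|,\quad e_J:=\text{number of edges of }J\text{ not inside }R,
\]
uniformly on the range $q\ge p/8$, with the modifications described below.

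For the rooted clique-degree polynomials (a vertex $v$, count $K_\ell$ through $v$, $2\le\ell\le r$) the root is a single vertex. A rooted subgraph $J$ with $j$ non-root vertices has $e_J\le\binom{j+1}{2}$. Using $q\ge c\,n^{-2/r}$, we get $n^{j}q^{\binom{j+1}2}\ge n^{j-j(j+1)/r}=n^{j(r-j-1)/r}$, which is at least $n^{1/r}$ for $1\le j\le r-2$. For $j=\ell-1=r-1$ (the full $K_r$) there is no nonconstant derivative left. The subcase $j=r-1$ with $J$ not the full clique is handled by a missing edge, which gives an extra factor $q^{-1}\ge n^{2/(r+1)}$. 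Vertex deletions only remove terms. Because the range is $q<1$ with possibly $q\gg p$, I would also note that increasing $q$ beyond $n^{-2/r}$ only improves these exponents for fixed $J$, except in one place: the $q^{e}$ factor decreases in $q$. I would handle this by writing the ratio as $1/\EE[\text{extension count of }J]$ and using that extension counts of rooted cliques are increasing in $q$, so the bound at $q=p/8$ suffices.

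For $\Gamma_{x,u,k}$ with $\eta_r(q)>L_n$, the roots are $x$ and $u$, and the extensions are $d$-sets $B$ with $u\cup B$ a clique together with $k$ edges from $x$. The mean is $\Theta(\eta)$. A rooted subgraph using $j$ of the $B$-vertices and $i\le\min(j,k)$ of the $x$-edges has extension mean about $n^jq^{\binom{j+1}2+i}$. Here I would use the hypothesis $\eta=n^dq^{e-1}>L_n$, i.e.\ $q>n^{-2/(r+1)+o(1)}$, which makes $n^jq^{\binom{j+1}2+j}\ge n^{j(1-(j+3)/(r+1))-o(1)}$ positive-power for $j\le d-1$. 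For $j=d$ with $i<k$ the deficit is one $x$-edge, giving a factor $q^{-1}\ge n^{c}$ since $q\le n^{-2/(r+1)}\cdot o(1)$ can fail. This is the delicate point: $q$ may be close to $1$, so I would instead bound by $\eta$ with missing $x$-edges weighted by the prefactor $q^{d-k}$, which is already included in the coefficients so that the ratio is again of the above form.

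For the $t$-rooted extension polynomials $\widehat d_{\ell-1}(T;\cdot)$, the same count gives, for a subextension on $j\le v=\ell-1-t$ new vertices, a ratio $n^{-j}q^{-m_j}$. I would bound it by a positive power of $n$ whenever $\mu_{\ell,t}\ge(\log n)^2$, using the exponent identity $(t-1)((t+2)/r-1)\le0$ already computed in Lemma~\ref{lem:janson-overlap-caps}. In the regime $\mu<(\log n)^2$, I would bound the derivative expectation absolutely: a derivative is an extension count of a larger root set, whose mean is at most $n^{j'}q^{m'}\le n^{-c_*}$ by the same calculation as in the $\mu<\log^2 n$ case there.

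I expect the main obstacle to be uniformity in $q$ up to $1$ for the coefficient case, where extension densities are not monotone in a simple way. I would split $q$ dyadically and treat $q\ge n^{-\varepsilon}$ separately, since there the relevant counts are essentially deterministic.
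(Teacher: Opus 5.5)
Your reduction to the ratio $n^{-a}q^{-m_a}$ (a derivative on $a$ free vertices fixing $m_a$ coordinates, with expected value $D_a=O(n^{v-a}q^{m-m_a})$) is the same counting rule the paper uses. Your treatment of the rooted clique-degree family is sound, because that ratio is decreasing in $q$.

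\textbf{The gap: the $t$-rooted overlap family.} The identity $(t-1)((t+2)/r-1)\le0$ from Lemma~\ref{lem:janson-overlap-caps} compares the overlap term $q^{v}\widehat\Delta_{\ell-1,t}$ with $\eta_\ell(q)$. It says nothing about $n^{-a}q^{-(ta+\binom a2)}$. Nor can an argument that uses only $q\ge p/8\ge n^{-2/r}$ succeed. Take $r=\ell=7$ and $t=4$, so $v=2$ and $m=9$. The derivative in the four edges from $T$ to one fixed vertex has ratio $\asymp n^{-1}q^{-4}$. This is $n^{1/7-o(1)}$ at $q=p/8$ when $p$ is near the lower end of the window. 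That point lies outside the large-mean regime only because $\mu=\Theta(n^2q^9)$ is tiny there, so the proof must use $\mu$ to control $q$.

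The missing idea is strict balance of the rooted extension. For $a<v$ one has $m_a/a=t+(a-1)/2<t+(v-1)/2=m/v$, and $m_v\le m-1$. Combine this with eliminating $q$ through $q^m=\Theta(\mu/n^v)$. This gives $D_a/\mu\le Cn^{-c_a}\mu^{-m_a/m}$ and $D_a\le Cn^{-c_a}\mu^{1-m_a/m}$, where $c_a=a-vm_a/m>0$. These two bounds cover $\mu\ge\log^2n$ and $\mu<\log^2n$ simultaneously, which is what the paper does. Your small-mean step has the same hole. The $\mu<\log^2n$ computation in that lemma only shows $q^v\log^2n=o(1)$, whereas the bound $n^{v-a}q^{m-m_a}\le n^{-c}$ is exactly the balance inequality.

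\textbf{The coefficient case at the top level $a=d$.} This step is also mishandled.
\begin{itemize}
\item The prefactor $q^{d-k}$ multiplies every monomial and cancels in $\EE'_L[Y]/\EE Y$, so it cannot help.
\item $q$ near $1$ is harmless, because the ratio is monotone decreasing in $q$, contrary to your closing remark.
\item What is actually needed is short. A non-constant derivative on all $d$ free vertices fixes at most $m-1\le e-2$ coordinates. This holds whether the missing coordinate is an $x$-edge or a clique edge; you treated only the former. Meanwhile $\eta_r(q)>L_n$ gives $q\ge n^{-2/(d+3)}$. Hence the ratio is at most $n^{-d}q^{-(e-2)}\le n^{-2/(d+3)}$.
\item Your proposed split at $q=n^{-\varepsilon}$ could be made to work: below it the ratio is at most $q/\eta_r(q)\le n^{-\varepsilon}$. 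But it does not work for the reason you give. Near-determinism of the counts is irrelevant, since the derivative hypothesis concerns expectations only.
\end{itemize}
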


\begin{proof}
The same counting rule applies to each family.  Suppose a monomial
has $v$ free vertices, $m$ required edges, and a deterministic factor
$q^b$.  A derivative supported on $a$ free vertices fixes those
vertices and at most $m_a$ edge coordinates.  If $D_a$ is its expected
non-constant part, then
\[
 D_a=O_r(n^{v-a}q^{b+m-m_a}),
 \qquad
 \frac{D_a}{\EE f}=O_r(n^{-a}q^{-m_a}).
\]
Here and below, only feasible derivatives with a non-zero
non-constant part need be considered.

For copies of $K_\ell$ through one root, take $v=\ell-1$,
$m=\binom\ell2$ and $b=0$.  If $a<v$, then
$m_a\le\binom{a+1}{2}$; if $a=v$, at least one coordinate remains,
so $m_a\le m-1$.  Since $q\ge n^{-2/r}$ for large $n$, the derivative
to mean ratios are bounded, respectively, by constant multiples of
$n^{-a(r-a-1)/r}$ and $n^{-2/r}$.  Their exponents are uniformly
negative for $2\le\ell\le r$.

For $Y=\Gamma_{x,u,k}$, take $v=d=r-2$,
$m=\binom{d+1}{2}+k$ and $b=d-k$.  The coordinate $xu$ is absent.
Thus $m_a\le\binom a2+2a$ when $a<d$, whereas $m_d\le m-1\le e-2$.
The assumption $n^dq^{e-1}>L_n$ gives $q\ge n^{-2/(d+3)}$.
The corresponding ratios are therefore at most constant multiples
of $n^{-a(d-a)/(d+3)}$ and $n^{-2/(d+3)}$.  This proves the
coefficient assertion.

Finally, fix an overlap pattern with $t$ roots and put
$v=\ell-1-t$ and $m=tv+\binom v2$.  Edges inside the root set are
not required.  The mean is $\mu=\Theta_r(n^vq^m)$, and we may take
$m_a=ta+\binom a2$ for $a<v$ and $m_v=m-1$.
In either case $m_a/a<m/v$, so $c_a:=a-vm_a/m>0$.
Substituting $q^m=\Theta_r(\mu/n^v)$ in the counting rule gives
\[
 \frac{D_a}{\mu}\le C_rn^{-c_a}\mu^{-m_a/m},
 \qquad
 D_a\le C_rn^{-c_a}\mu^{1-m_a/m}.
\]
For $\mu\ge\log^2n$, use the first bound; for $\mu<\log^2n$, use
the second.  Absorbing logarithmic factors and taking the minimum
over the finitely many patterns proves the lemma.

All these polynomials have bounded degree and coefficients and use at
most $\binom n2$ Bernoulli coordinates.  The rooted degree means are
$\omega(\log n)$ by the lower bound on $p$, and the coefficient means
are $\Theta_r(\eta_r(q))\gg\log n$ in the range used here.  Thus the
first part of Theorem~\ref{thm:polynomial-concentration} applies to
these families and to overlap patterns with mean at least $\log^2n$.
For the remaining overlap patterns, take $M=C\log^2n$ and choose the
derivative exponent smaller than $c_*$; then the second part applies.
The endpoint $q=1$ is deterministic.
\end{proof}

\end{document}